\DeclareMathOperator{\Ran}{Ran}
\DeclareMathOperator{\Dom}{Dom}
\DeclareMathOperator{\Ker}{Ker}
\DeclareMathOperator{\supp}{supp}
\DeclareMathOperator{\Span}{span}
\newcommand{\BMO}{\operatorname{BMO}}
\newcommand{\BMOA}{\operatorname{BMOA}}
\newcommand{\VMOA}{\operatorname{VMOA}}
\newcommand{\simp}{\text{\rm simp}}
\newcommand{\dist}{\operatorname{dist}}
\renewcommand{\Im}{\operatorname{Im}}
\renewcommand{\Re}{\operatorname{Re}}
\newcommand{\spn}{\operatorname{span}}
\newcommand{\abs}[1]{\lvert#1\rvert}
\newcommand{\norm}[1]{\lVert#1\rVert}
\newcommand{\jap}[1]{\langle#1\rangle}
\newcommand{\fdot}{\,\cdot\,}
\newcommand{\dd}{\mathrm{d}}
\newcommand{\ute}{\ut{e}}
\newcommand{\R}{\mathbb{R}} 
\newcommand{\bS}{{\mathbf S}}
\newcommand{\bC}{{\mathbf C}}
\newcommand{\bJ}{{\mathbf J}}
\newcommand{\bbD}{{\mathbb D}}
\newcommand{\bbT}{{\mathbb T}}
\newcommand{\bbR}{{\mathbb R}}
\newcommand{\bbN}{{\mathbb N}}
\newcommand{\bbZ}{{\mathbb Z}}
\newcommand{\calA}{{\mathcal A}}
\newcommand{\calB}{{\mathcal B}}
\newcommand{\calH}{{\mathcal H}}
\newcommand{\calM}{\mathcal{M}}
\newcommand{\calC}{\mathcal{C}}
\newcommand{\calU}{\mathcal{U}}
\newcommand{\calD}{\mathcal{D}}
\newcommand{\calS}{\mathcal{S}}
\newcommand{\fC}{\mathfrak{C}}
\newcommand{\fS}{\mathfrak{S}}
\newcommand{\f}{\varphi}
\numberwithin{equation}{section}
\theoremstyle{plain}
\newtheorem{theorem}{\bf Theorem}[section]
\newtheorem*{theorem*}{Theorem}
\newtheorem{lemma}[theorem]{\bf Lemma}
\newtheorem{proposition}[theorem]{\bf Proposition}
\newtheorem*{proposition*}{\bf Proposition}
\newtheorem{corollary}[theorem]{\bf Corollary}
\theoremstyle{definition}
\newtheorem*{definition*}{\bf Definition}
\theoremstyle{remark}
\newtheorem*{remark*}{\bf Remark}
\newtheorem{remark}[theorem]{\bf Remark}
\newcommand{\wt}{\widetilde}
\newcommand{\wh}{\widehat}
\newcommand{\eps}{\varepsilon}
\newcommand{\1}{\mathbbm{1}}
\newcommand{\ci}[1]{{\rule[-0.65ex]{0ex}{1.0ex}}_{#1}}
\newcommand{\ti}[1]{_{\scriptstyle \text{\rm #1}}}
\newcommand{\ut}[1]{^{\text{\rm #1}}}
\renewcommand{\labelenumi}{\textup{(\roman{enumi})}}
\newcounter{vremennyj}
\newcommand\cond[1]{\setcounter{vremennyj}{\theenumi}\setcounter{enumi}{#1}\labelenumi\setcounter{enumi}{\thevremennyj}}
\begin{document}

\title[Inverse problem for Hankel operators]{An inverse spectral problem for non-compact Hankel operators with simple spectrum}

\author{Patrick G\'erard}
\address{Universit\'e Paris-Saclay, Laboratoire de Math\'ematiques d'Orsay, CNRS, UMR 8628, France}
\email{patrick.gerard@universite-paris-saclay.fr}

\author{Alexander Pushnitski}
\address{Department of Mathematics, King's College London, Strand, London, WC2R~2LS, U.K.}
\email{alexander.pushnitski@kcl.ac.uk}

\author{Sergei Treil}
\address{Department of Mathematics, Brown University, Providence, RI 02912, USA}
\email{treil@math.brown.edu}
\thanks{Work of S.~Treil was supported  in part by the National Science Foundation under the grants 
DMS-1856719, DMS-2154321}

\keywords{Hankel operator, cubic Szeg\H{o} equation, simple spectrum, inverse problem}

\subjclass[2000]{Primary 47B35, secondary 37K15}

\begin{abstract}
We consider an inverse spectral problem for a class of non-compact Hankel operators $H$ such that the modulus of $H$ (restricted onto the orthogonal complement to its kernel) has simple spectrum. Similarly to the case of compact operators, we prove a uniqueness result, i.e. we prove that a Hankel operator from our class is uniquely determined by the spectral data. In other words, the spectral map, which maps a Hankel operator to the spectral data, is injective. Further, in contrast to the compact case, we prove the failure of surjectivity of the spectral map, i.e. we prove that not all spectral data from a certain natural set correspond to Hankel operators. We make some progress in describing the image of the spectral map. We also give applications to the cubic Szeg\H{o} equation. In particular, we prove that not all solutions with initial data in $\BMOA$ are almost periodic; this is in a sharp contrast to the known result for initial data in $\VMOA$. 
\end{abstract}

\maketitle

\setcounter{tocdepth}{1}
\tableofcontents
\setcounter{tocdepth}{3}

%%%%%%%%%%%%%%%%%%%%%%%
\section{Introduction}\label{sec.a}
%%%%%%%%%%%%%%%%%%%%%%%

\subsection{Overview}
In the mid-1980s, Khrushchev and Peller \cite{KP},
motivated by the spectral theory of stationary Gaussian processes, 
asked to  describe all 
non-negative self-adjoint operators that are unitarily equivalent to the \emph{modulus} of a 
Hankel operator $\Gamma$ (i.e.~to the operator $|\Gamma|:=(\Gamma^*\Gamma)^{1/2}$). 

This problem was actively studied from the mid 1980s to early 1990s,  
see \cite{Tr-ISP85-Dokl, Tr-ISP85, VasTr-ISP89, Ober87}, 
until the final result was obtained by Treil \cite{Tr-ISP90}: any 
positive semi-definite self-adjoint operator that is non-invertible and whose kernel is either trivial or 
infinite-dimensional is unitarily equivalent to the modulus of a Hankel operator.  This gives a 
complete solution to the problem, since it is easy to see that any Hankel operator is not invertible and cannot 
have a finite-dimensional kernel. 

Later, motivated by problems in control theory,  Megretskii, Peller and Treil started investigation 
of the analogous problem for self-adjoint Hankel operators. The question was to describe all
possible types of spectral measures  and the multiplicity functions, corresponding to 
self-adjoint Hankel operators.       

A complete solution to this problem was given in \cite{MPT} (also see \cite{MPT} for the history of 
the problem). The answer was slightly more complicated than for the modulus 
of a Hankel operator: besides the obvious properties of non-invertibility and the absence of a
finite-dimensional kernel, some ``almost symmetry'' property of the spectral multiplicity function was also 
required. 

In both of the these problems, the spectral datum\footnote{we use the convention \emph{singular:} 
datum, \emph{plural:} data.} (i.e.~the type of the spectral measure and the multiplicity function) 
does not determine the corresponding Hankel operator uniquely: in fact, with the exception of 
trivial cases, there are infinitely many self-adjoint Hankel operators with the same spectral 
datum. 

In the early 2010s, the interest to inverse spectral problems for Hankel operators was renewed due 
to the work of G\'erard and Grellier \cite{GG00,GG0} on the \emph{cubic Szeg\H{o} equation}. This 
is a totally non-dispersive evolution equation which is completely integrable and possesses a Lax 
pair, which involves a Hankel operator (see Section~\ref{sec.a11} for the details). Motivated by 
this, in \cite{GG1} G\'erard and Grellier developed a new type of direct and inverse spectral 
theory for \emph{compact} Hankel operators. The Hankel operators appearing in this theory are generally not 
self-adjoint, and the language of anti-linear operators gives a convenient way to represent the 
spectral datum in this case. 

Another new feature of this theory is that the spectral datum was constructed from the pair of 
Hankel operators $\Gamma$ and $\Gamma S$, where $S$ is the \emph{shift operator} in the Hardy space 
$H^2$. 
In this case there is a bijection between compact Hankel operators and the corresponding spectral 
data, and the class of spectral data sets corresponding to the compact operators can be explicitly 
described. In this construction, the evolution of the spectral datum under the cubic Szeg\H{o} equation is very simple, which makes the bijectivity very desirable.

The next natural step in this line of research is the study of the direct and inverse spectral 
problem for \emph{non-compact} Hankel operators. For a few years, the work of two of the authors 
(G\'erard and Pushnitski) was motivated by the conjecture that the bijective spectral map of 
\cite{GG1} admits a natural extension to the non-compact case; some preliminary steps  in this 
direction  were made in \cite{GP}. One of the aims of the present paper is to show that \emph{this 
conjecture is false}, in some precise sense to be explained below. For a suitable class of Hankel 
operators (which includes many non-compact ones), we construct a natural extension of the spectral 
map of \cite{GG1} and show that it is injective, but \emph{not surjective}. We also give an 
application to the cubic Szeg\H{o} equation, corresponding to non-compact Hankel operators. We show that in general, solutions to this equation with the initial data in $\BMOA$ are NOT 
almost-periodic, in contrast with the case of the initial data in $\VMOA$.

An important new component of the present work is the functional model for contractions 
($=$operators of norm $\leq1$) on a Hilbert space.  A key ingredient to proving that a given spectral 
datum corresponds to some Hankel operator is checking that a certain contraction, constructed from the spectral datum, is asymptotically stable. (A contraction $T$ is called \emph{asymptotically 
stable} if $T^n\to0$ in the strong operator topology as $n\to\infty$.) In the compact case, it 
turns out that the asymptotic stability always holds. In the non-compact case, we show that the 
asymptotic stability sometimes holds and sometimes doesn't, depending on some spectral properties 
of the contraction. Here we use some latest advances \cite{LT} from the theory of the Clark model. 
A more precise discussion is postponed to Section~\ref{sec.bb}.

\subsection{The structure of the paper}
In this section we introduce Hankel operators, describe the direct spectral problem and the spectral data, and present our first main result: uniqueness. Proofs are postponed to Section~\ref{sec.b}. In Section~\ref{sec.bb}, we discuss the problem of surjectivity of the spectral map and informally describe our main results concerning the failure of surjectivity. In Section~\ref{sec.e} we collect without proof some operator theoretic background (which mainly concerns the spectral theory of contractions on Hilbert spaces and the Clark model) that is required for the construction of the rest of the paper.  Sections~\ref{sec.dd1}--\ref{sec.8} are the core of the paper; here we state and prove our main results concerning the failure of surjectivity and the description of the image of the spectral map. In Section~\ref{sec.d} we describe the special case of self-adjoint Hankel operators. In Section~\ref{sec.sz} we give an application to the cubic Szeg\H{o} equation. Some technical parts of proofs are postponed to Appendices.

\subsection{Notation}
For a Hilbert space $X$, we denote the inner product  of elements $f,g\in X$ by $\jap{f,g}_X$; we 
omit the subscript $X$ if there is no danger of confusion.
For a bounded self-adjoint operator $A$ in $X$ and for $v\in X$, we denote by
\[
\langle v\rangle _A:={\rm clos\ span}\{ A^nv, n=0,1,2,\dots\} 
\]
the cyclic subpace of $A$ generated by $v$. We recall that $A$ is said to have simple spectrum if 
$X=\langle v\rangle_A$ for some element $v\in X$; any such element is called \emph{cyclic} for $A$. 
We denote by $\rho_v^A$ the spectral measure of $A$ corresponding to $v$, i.e. 
\begin{equation}
\jap{f(A)v,v}=\int_\bbR f(s)d\rho_v^A(s)
\label{b1}
\end{equation}
for any continuous function $f$. 

We denote by $\bS_p$, $p>0$, the standard Schatten class of compact operators; in particular, 
$\bS_1$ is trace class and $\bS_2$ is the Hilbert-Schmidt class. 

For a finite measure $\rho$ on $\bbR$, we denote 
$L^2(\rho)\equiv L^2(\bbR,d\rho)$, and we usually use the letter $s$ to denote the independent 
variable in $\bbR$. We denote by  $\1\in L^2(\rho)$ the function identically equal to one.

We denote by $H^2=H^2(\bbT)$ is the standard Hardy space of functions on the unit circle $\bbT$, 
\[
f=f(z)=\sum_{j=0}^\infty \wh f_jz^j, \quad \abs{z}=1, \quad \sum_{j=0}^\infty \abs{\wh 
f_j}^2<\infty; 
\]
the above series converges in $L^2(\bbT)$. Note that this series also converges uniformly on 
compact subsets of $\bbD$, so $f$ can be interpreted as an analytic function in the unit disc 
$\bbD$. The values of $f$ on $\bbT$ can be found as the \emph{non-tangential boundary values} of 
this analytic function; according to classical results these non-tangential limits exist a.e.~on $\bbT$. 
Note also that the set $H^\infty=H^\infty(\bbD)$ of all bounded analytic function is a subset of 
$H^2$.  

We denote by $\{z^m\}_{m=0}^\infty$ the standard basis in the Hardy space $H^2$; in 
particular, we denote by $z^0$ the element of $H^2$ identically equal to one (as notation 
$\1$ is already taken).   
The Szeg\H{o} projection $P$ is the orthogonal projection onto $H^2$ in $L^2(\bbT)$, 
\begin{align*}
P: \sum_{k=-\infty}^\infty \wh f_k z^k\mapsto \sum_{k=0}^\infty \wh f_k z^k .
\end{align*}
Recall that the shift operator $S$ on $H^2$ is the multiplication by $z$, $Sf =zf(z)$, $f\in H^2$, 
and its adjoint (the backward shift) $S^*$ is given by 
\[
S^* f(z) =\frac{f(z) - f(0)}{z}. 
\]
We refer e.g. to \cite[Appendix 2]{Peller} for the definition of the classes $\BMOA(\bbT)$ and 
$\VMOA(\bbT)$.

We shall denote by $A_{\rm ac}$ the a.c. part of a self-adjoint operator $A$ and by $\simeq$ the 
unitary equivalence between operators. For a linear operator $A$, we denote by $\overline{ \Ran}A$ the closure of the range of $A$.

\subsection{Hankel operators $\Gamma_u$}\label{sec.a3}

A \emph{Hankel matrix} is an infinite matrix of the form $\{\gamma_{j+k}\}_{j,k=0}^\infty$, i.e. 
the entries must depend on the sum of indices. A \emph{Hankel operator} is a bounded operator in 
the Hardy space $H^2$, whose matrix in the standard basis $\{z^k\}_{k=0}^\infty$ is a 
Hankel matrix. 
An equivalent alternative definition is that a Hankel operator is a bounded operator $\Gamma$ in 
$H^2$ such that the commutation relation 
\begin{align}
\label{e: Gamma S}
\Gamma S = S^*\Gamma , 
\end{align}
is satisfied, where $S$ is the shift operator in $H^2$. 

For a Hankel operator $\Gamma$ one can define its \emph{analytic symbol} $u$ as 
\[
u(z) := \Gamma z^0 = \sum_{k=0}^\infty \gamma_k z^k. 
\]
In this paper we will skip the word \emph{analytic} and use the term 
\emph{symbol} for $u$. We will also use the notation $\Gamma_u$ to indicate the Hankel operator with the 
symbol $u$.  
It is a well-known fact \cite[Theorem~1.1.2]{Peller} that the 
operator $\Gamma_u$ is bounded if and only if 
the symbol $u$ belongs to the class $\BMOA(\bbT)$ of the functions of bounded mean oscillation. 
On the other hand, we have $u=\Gamma_uz^0\in H^2$; it will be important for us to consider the 
symbol $u$ as an element of $H^2$.

One can give a more ``analytic'' formula for the Hankel operator $\Gamma_u$. 
Namely, denote by $J$ the involution in $L^2(\bbT)$, 
\[
Jf(z)=f(\overline{z}).  
\]
Then for $u\in\BMOA$, the Hankel operator $\Gamma_u$ with the matrix $\{\wh u_{j+k}\}_{j,k=0}^\infty$ is defined by 
\[
\Gamma_uf=P(uJf), 
\]
initially on the set of polynomials $f\in H^2$.

\subsection{Anti-linear Hankel operators $H_u$}

Clearly, Hankel matrices $\{\gamma_{j+k}\}_{j,k=0}^\infty$ are symmetric (with respect to transposition). This can be expressed as the statement that Hankel operators belong to the class of so-called \emph{complex symmetric operators}. Namely, let us denote by  $\bC$ the anti-linear (a.k.a.\ conjugate-linear)  
involution in $H^2$, 
\begin{align}
\label{e:def bC}
\bC f(z)=\overline{f(\overline{z})};
\end{align}
in other words, for $f(z) = \sum_{k=0}^\infty a_k z^k$ we have $\bC f(z) = \sum_{k=0}^\infty \overline {a}_k 
z^k$. Then the symmetry of Hankel matrices means that 
Hankel operators satisfy the identity 
\begin{align}
\Gamma_u\bC=\bC \Gamma_u^*,
\label{b1a}
\end{align}
which is exactly the definition of the so-called $\bC$-symmetric operators, cf.\ \cite{Garcia2014}. 

As it is customary in the theory of complex symmetric operators, it will be convenient to deal with 
the \emph{anti-linear} version of Hankel operators: 
$$
H_uf=\Gamma_u\bC f=P(u\overline{f}), \quad f\in H^2.
$$
Through the rest of the paper, we focus on anti-linear Hankel operators $H_u$; one exception is the 
discussion of the self-adjoint case, when it is more convenient to talk about the linear version 
$\Gamma_u$. 
Since $\bC$ satisfies
$$
\jap{\bC f,g}=\jap{\bC g,f},\quad f,g\in H^2,
$$
from the symmetry property \eqref{b1a} it follows that 
Hankel operators (in fact, all complex symmetric operators) satisfy the identity
\begin{align}
\jap{H_uf,g}=\jap{H_ug,f}, \quad f,g\in H^2. 
\label{a0a}
\end{align}
Note that for the anti-linear Hankel operator  $H_u$ we have 
\begin{align*}
H_u^2 = \Gamma_u\bC \Gamma_u\bC = \Gamma_u\bC^2 \Gamma_u^* =  \Gamma_u  \Gamma_u^*;
\end{align*}
thus $H_u^2$ is linear, self-adjoint and positive semi-definite. 
Furthermore, since the conjugation $\bC$ commutes with the shift $S$, it follows from \eqref{e: 
Gamma S} that the anti-linear  Hankel operators also satisfy the commutation relation
\begin{align}
H_uS=S^*H_u,
\label{a00}
\end{align} 
and that any bounded anti-linear operator $H_u$ on $H^2$ satisfying this commutation relation 
is a Hankel operator. 

By \eqref{a00}, the kernel of $H_u$ is an $S$-invariant subspace of $H^2$. It follows that $\Ker 
H_u$ is either trivial or infinite-dimensional. Furthermore, $\overline{\Ran} H_u$ is an invariant subspace for $S^*$. 

One ot the advantages of working with the anti-linear Hankel operators $H_u$ instead of their 
linear counterparts $\Gamma_u$ is that $\overline{ \Ran} H_u = (\Ker H_u)^\perp$. Indeed, 
\begin{align*}
\overline{\Ran} H_u = \overline{\Ran} \Gamma_u \bC = \overline{\Ran}\Gamma_u = (\Ker \Gamma_u^* 
)^\perp = (\Ker \bC\Gamma_u^* )^\perp ,  
\end{align*}
and the desired identity follows since  $\bC\Gamma_u^* = \Gamma_u\bC = H_u$.

We will denote by $H_u\ute$ the \emph{essential} part of the Hankel operator $H_u$, 
\begin{align*}
H_u\ute:= H_u|_{\overline{\Ran} H_u }. 
\end{align*}
The subspace $\overline{\Ran} H_u$ is invariant for $H_u$, and for any element $f\in H^2$ we have
$$
H_u f=H_u\ute P_{\overline{\Ran}H_u}f,
$$
where $P_{\overline{\Ran}H_u}$ is the orthogonal projection onto $\overline{\Ran}H_u$. 

\subsection{The truncated operators $\wt\Gamma_u$ and $\wt H_u$}
Along with the Hankel operators $\Gamma_u$ and $H_u$ we will consider their truncated versions
$$
\wt\Gamma_u=\Gamma_u S = S^* \Gamma_u = \Gamma\ci{S^*u},\quad  
\wt H_u = H_u S = S^* H_u = H\ci{S^*u}.
$$ 
Note that $\wt\Gamma_u$ is also a Hankel operator (with symbol $S^*u$), and 
its matrix is obtained from the matrix of $\Gamma_u$ by removing the first row (or the first column). 

As it turns out, under the assumptions discussed below, the spectral invariants of the Hankel operators $H_u$ and $\wt H_u$, described in 
Proposition~\ref{prp.spth} below,  uniquely determine the symbol $u$. 

We recall that the shift operator satisfies the identities
$$
S^*S=I, \quad SS^*=I-\jap{\cdot,z^0}z^0,
$$
where $\jap{\cdot,z^0}z^0$ is the rank one projection onto constant functions in $H^2$. From here and from the definition of $\wt H_u$ we get the rank one identity
\begin{align}
\label{e: rk1 H_u}
\wt H_u^2 = H_u^2 - \langle \fdot , u\rangle u . 
\end{align}
This identity is key to the whole inverse spectral theory of Hankel operators.

Similarly to $H_u\ute$, we denote by $\wt H_u\ute$ the \emph{essential} part of  $\wt H_u$, viz.
\begin{align*}
\wt H_u\ute:= \wt H_u|_{\overline{\Ran} H_u };
\end{align*}
since $\overline{\Ran}H_u$ is an invariant subspace for both $H_u$ and $S^*$, it is also an invariant subspace for $\wt H_u$.  We should emphasize that 
unlike $H_u\ute$, the operator $\wt H_u\ute$ can have a non-trivial (one-dimensional) kernel. 
The rank one identity \eqref{e: rk1 H_u} translates to 
\begin{align}
\label{e: rk1 H_u ess}
(\wt H_u\ute)^2 = (H_u\ute)^2 - \langle \fdot , u\rangle u . 
\end{align}

\subsection{The simplicity of the spectrum}

Our main assumption on $H_u$ and $\wt H_u$ in this paper is
\begin{align}
(H_u\ute)^2 
\quad \text{ and }\quad (\wt H_u\ute)^2 
\qquad
\text{have simple spectra.}
\label{a4}
\end{align}
We will denote by $\BMOA_\simp(\bbT)$ the set of all $u\in\BMOA(\bbT)$ satisfying \eqref{a4}. 

\begin{remark*}
On the one hand, it is very easy to construct examples of Hankel operators that do not satisfy this assumption: it suffices to consider self-adjoint Hankel operators with eigenvalues with multiplicity $>1$. 
On the other hand, there is one important particular case when the simplicity condition  \eqref{a4} holds true. This case is most conveniently described in terms of the linear realisation of Hankel operators. 
By \cite[Theorem~2.4]{GP}, if both $\Gamma_u$ and $\Gamma_{S^*u}$ are positive semi-definite, then 
the simplicity condition \eqref{a4} holds. 
\end{remark*}

Our first auxiliary result (proved in Section~\ref{sec.b}) is 
%%%%%%%%%%%%%%%%%
\begin{theorem}\label{thm.b1}
%%%%%%%%%%%%%%%%%
Let $u\in\BMOA_\simp(\bbT)$, i.e. \eqref{a4} holds. 
Then  
$u$ is a cyclic element for both $(H_u\ute)^2$ and $(\wt H_u\ute)^2$, i.e.
$$
\jap{u}_{H_u^2}=\jap{u}_{\wt H_u^2}=\overline{\Ran} H_u .
$$
\end{theorem}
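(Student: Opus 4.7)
My plan is to proceed by contradiction: assume $u$ is not cyclic for $A := (H_u\ute)^2$ on $M := \overline{\Ran} H_u$, set $N := \jap{u}_A$ and $L := M\ominus N$, and show $L = \{0\}$. The rank-one identity \eqref{e: rk1 H_u ess} reads $B := (\wt H_u\ute)^2 = A - \jap{\fdot,u}u$, and a routine induction shows $B^n u \in \spn\{u, Au, \dots, A^n u\}$ (and conversely), whence $\jap{u}_A = \jap{u}_B$. In particular $N$ is invariant under both $A$ and $B$, hence so is $L$, and therefore $L = \{0\}$ will give cyclicity of $u$ for both operators at once.

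The key step is to promote $A$- and $B$-invariance of $L$ to invariance under the anti-linear operators $H_u$ and $\wt H_u$ themselves, and then into $S^*$-invariance. Since $A$ has simple spectrum on $M$, the spectral theorem represents $A|_M$ as multiplication by $\lambda$ on an $L^2(\rho)$, and every closed $A$-invariant subspace of $M$ is of the form $\1_E(A) M$ for some Borel $E \subset \bbR$. The anti-linear commutation $H_u A = A H_u$ (immediate from $A = H_u^2$), together with the reality of $\1_E$, gives $H_u \1_E(A) = \1_E(A) H_u$; hence $H_u$ preserves every $A$-invariant subspace, so $H_u L \subset L$. The same argument applied to $B$ and $\wt H_u$ yields $\wt H_u L \subset L$. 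Now I exploit $u \perp L$ via the symmetry \eqref{a0a}: for every $v \in L$, $0 = \jap{u,v} = \jap{H_u z^0, v} = \jap{H_u v, z^0} = (H_u v)(0)$, so $H_u L \subset zH^2$. Moreover $\Ker(A|_L) = \{0\}$, since $\|H_u v\|^2 = \jap{Av, v}$ by \eqref{a0a} and $H_u|_M$ is injective; therefore $\Ran(A|_L) \subset H_u L$ is dense in $L$, which forces $H_u L$ itself to be dense in $L$, and taking closures inside $zH^2$ yields $L \subset zH^2$. Finally $\wt H_u = S^* H_u$ combined with $\wt H_u L \subset L$ gives $S^* H_u L \subset L$, which upgrades by density of $H_u L$ in $L$ and boundedness of $S^*$ to $S^* L \subset L$.

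Thus $L$ is a closed $S^*$-invariant subspace of $zH^2 = (z^0)^\perp$. For any $v \in L$ one has $v(0) = 0$, and iterating $(S^*)^k v \in L \subset zH^2$ for $k \ge 0$ shows that every Fourier coefficient of $v$ vanishes; hence $v = 0$ and $L = \{0\}$, the desired contradiction. The main obstacle is the invariance step of the second paragraph: turning the algebraic rank-one identity into genuine $S^*$-invariance of $L$ crucially requires simple spectrum of \emph{both} $A$ and $B$, which is precisely what preserves $L$ under $H_u$ and $\wt H_u$ respectively and then allows the Hankel bridge $\wt H_u = S^* H_u$ to do its work.
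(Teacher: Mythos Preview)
Your proof is correct and takes a genuinely different route from the paper's. The paper first invokes Lemma~\ref{l: cyclicity} to produce a common cyclic vector $v=u+v_2$ for both $A$ and $B$, transports everything to $L^2(\rho_v)$, and uses Proposition~\ref{prp.spth} to compute the explicit action of the model operators $\calH$ and $\wt\calH$; it then shows that on $\jap{u}^\perp$ the model $\Sigma^*$ of $S^*|_{\overline{\Ran}H_u}$ acts as multiplication by a unimodular function, so $\norm{(\Sigma^*)^n g}=\norm{g}$ there, contradicting $(S^*)^n\to0$ strongly. Your argument bypasses the model entirely: you use only that simple spectrum forces every closed $A$-invariant (resp.\ $B$-invariant) subspace to be a spectral subspace $\1_E(A)M$ (resp.\ $\1_F(B)M$), hence automatically preserved by the anti-linear $H_u$ (resp.\ $\wt H_u$) via commutation with real Borel functions of its own square; then the Hankel identities $u=H_u z^0$ and $\wt H_u=S^*H_u$ deliver $L\perp z^0$ and $S^*L\subset L$ directly. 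Both proofs ultimately rest on the asymptotic stability of $S^*$ (equivalently, the totality of $\{z^k\}$ in $H^2$); yours is shorter and more self-contained for this particular statement, while the paper's approach has the side benefit of setting up the model $(\calH,\wt\calH,\Sigma^*)$ that is reused immediately afterwards for the uniqueness theorem and the analysis of $\Sigma^*$.
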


\begin{remark*}
In general, $\overline{\Ran} \wt H_u\not=\jap{u}_{\wt H_u^2}$. For example, if $u=1$, then 
$\wt H_u=0$ and so $\{0\}=\overline{\Ran} \wt H_u\not=\jap{u}_{\wt H_u^2}=\Span(u)$.
\end{remark*}

\subsection{Anti-linear operators with simple spectrum of modulus}

Here we discuss a ``spectral theorem'' for a class of anti-linear operators that have 
properties mirroring those of Hankel operators. 
Let $A$ be a bounded anti-linear operator in a Hilbert space $X$, satisfying the 
identity (cf.  \eqref{a0a})
\begin{align}
\jap{Af,g}=\jap{Ag,f}
\label{a0aa}
\end{align}
for any elements $f$ and $g$ in the Hilbert space; we will call such operators \emph{symmetric 
anti-linear  operators.} Then 
\[
\jap{A^2f,f}=\jap{Af,Af}\geq0,
\]
and so $A^2$ is a (linear) positive semi-definite operator. 

Recall that for a \emph{linear} operator $T$ its \emph{modulus} $|T|$ is defined as 
$|T|:=(T^*T)^{1/2}$; the operator $T^*T$ is positive semi-definite, so its non-negative square root is well defined. 
Similarly, for an anti-linear operator $A$ satisfying \eqref{a0aa} the operator $A^2$ is 
positive semi-definite, so the non-negative square root is well defined, and we set  $|A|:=(A^2)^{1/2}$; this is a linear positive semi-definite operator.

Let us assume that $A^2$ has a simple spectrum with a cyclic element $v$.
Then trivially, $v$ is also a cyclic vector for $|A|:=(A^2)^{1/2}$. 
Let 
$\rho=\rho^{|A|}_v$ be the scalar  spectral measure for $|A|$ corresponding to the vector $v$, see \eqref{b1}. 
Note that $\rho$ is a finite measure with $\supp\rho\subset[0,\infty)$.

The spectral theorem for self-adjoint operators says that the operator $|A|$ is unitarily 
equivalent to the 
multiplication by the independent variable $s$ in $L^2(\rho)$, and the corresponding unitary 
operator $U:  L^2(\rho)\to X$ intertwining $|A|$ and the multiplication operator is given by 
\begin{align}
\label{e: U spectral Thm}
U f = f(|A|) v
\end{align}
(defined initially on polynomials $f$ and  extended by continuity).

The statement below can be regarded as a substitute for polar decomposition of linear operators. 
%%%%%%%%%%%%%%%%
\begin{proposition}[Spectral Theorem for symmetric anti-linear operators]\label{prp.spth}
%%%%%%%%%%%%%%%%
Let $A$ be a bounded symmetric anti-linear operator in a Hilbert space. Assume that $|A|$
has a simple spectrum with a cyclic element $v$,  and let $\rho =\rho^{|A|}_v$. 
Then there exists a unimodular Borel function $\psi$ such that 
the operator $A$ is unitariy equivalent to its model $\calA$ in $L^2(\rho)$, 
\begin{align}
\label{e: model A 01}
\calA f(s) = s\psi(s)  \overline{f(s)}\ , \qquad f\in L^2(\rho), 
\end{align}
where the unitary operator $U:L^2(\rho)\to X$, $AU=U\calA$ is given by \eqref{e: U spectral Thm}.
\end{proposition}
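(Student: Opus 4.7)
The plan is to transport $A$ to the $L^2(\rho)$ spectral representation of $|A|$ and to read off the form of the transported operator from its commutation with multiplication by the spectral variable $s$. As a preliminary step I would establish $A\,|A|=|A|\,A$. The trivial identity $A\cdot A^2 = A^3 = A^2\cdot A$ shows that $A$ commutes with the linear operator $A^2$, hence with every polynomial $p(A^2)$; since $|A|=(A^2)^{1/2}$ is a strong operator limit of such polynomials and $A$ is norm-continuous, passing to the strong limit in $A\,p_n(A^2)=p_n(A^2)\,A$ yields the commutation.

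Next, let $U:L^2(\rho)\to X$, $Uf=f(|A|)v$, be the unitary of \eqref{e: U spectral Thm}, so that $U^{-1}|A|U=M_s$ (multiplication by $s$). Set $\mathcal{A}:=U^{-1}AU$; this is a bounded antilinear operator on $L^2(\rho)$ which, by the preceding step, satisfies $\mathcal{A}\,M_s=M_s\,\mathcal{A}$. Define $g:=\mathcal{A}\1\in L^2(\rho)$. For any polynomial $p(s)=\sum a_k s^k$, antilinearity of $\mathcal{A}$ combined with this commutation gives, since the variable $s$ is real,
\[
\mathcal{A}p \;=\; \sum \overline{a_k}\,M_s^k\,\mathcal{A}\1 \;=\; \sum \overline{a_k}\,s^k\,g \;=\; \overline{p}\,g.
\]
A direct computation then yields $\mathcal{A}^2 f = f\,|g|^2$, which must agree with $\mathcal{A}^2 = U^{-1}A^2U = M_{s^2}$, forcing $|g(s)|=s$ for $\rho$-almost every $s$. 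Writing $g(s)=s\psi(s)$ with $\psi$ any Borel unimodular extension off $\{0\}$ gives exactly the model \eqref{e: model A 01}.

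The step I expect to be most delicate is not any of these algebraic identities but the upgrade from $g\in L^2(\rho)$ to $g\in L^\infty(\rho)$, together with the extension of the formula $\mathcal{A}f=\overline{f}\,g$ from polynomials to all of $L^2(\rho)$. The relevant observation is that on polynomials one has $\int |p|^2|g|^2\,d\rho = \|\mathcal{A}p\|^2 \leq \|\mathcal{A}\|^2\|p\|^2$, which (by density of polynomials in $L^2(\rho)$, valid since $\rho$ has compact support) is exactly the condition $g\in L^\infty(\rho)$; the formula then extends by continuity. I would also remark that the symmetry \eqref{a0aa} of $A$ is automatically reflected in the model, since the form $\int s\psi(s)\,\overline{f(s)}\,\overline{h(s)}\,d\rho(s)$ is manifestly symmetric in $f$ and $h$, and consequently imposes no further constraint on $\psi$.
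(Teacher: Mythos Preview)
Your proof is correct and follows essentially the same strategy as the paper's: transport $A$ to $L^2(\rho)$ via $U$, use that the transported operator $\calA$ commutes with multiplication by $s$, and read off the multiplier. The one substantive difference in execution is that the paper linearizes first, setting $\calB:=\fC\calA$ (with $\fC f=\overline f$); since $\fC$ commutes with $M_s$, so does the \emph{linear} operator $\calB$, and the paper then simply invokes the standard fact that any bounded linear operator commuting with $M_s$ on $L^2(\rho)$ is multiplication by some $g\in L^\infty(\rho)$. This sidesteps entirely the ``delicate step'' you flag: $g\in L^\infty$ comes for free from the commutant theorem rather than from your norm estimate. Your route is perfectly fine too, but note that passing from $\int|p|^2|g|^2\,d\rho\le C\int|p|^2\,d\rho$ for polynomials to $|g|^2\le C$ $\rho$-a.e.\ does require a one-line justification (e.g.\ take $p_n\to\chi_E$ in $L^2(\rho)$, pass to an a.e.\ convergent subsequence, and apply Fatou).

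One small imprecision: when you say $A$ ``commutes with every polynomial $p(A^2)$'', this is literally true only for \emph{real} polynomials, since antilinearity gives $A\,p(A^2)=\overline p(A^2)\,A$ in general. This does not affect the argument, as $|A|=(A^2)^{1/2}$ is a strong limit of real polynomials in $A^2$, but it is worth stating correctly. Also, your computation $\calA^2 f=f|g|^2$ is only valid once the formula $\calA f=\overline f\,g$ has been extended beyond polynomials, so strictly speaking the order of your last two paragraphs should be reversed; you clearly recognize this, but the write-up would be cleaner with the extension done first.
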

The proof is given in Section~\ref{sec.b}.

\begin{remark}
\label{r: uniq psi} 
It will be seen from the proof of the proposition that the function $\psi$ is uniquely defined as 
an element of $L^\infty(\rho_0)$, where $\rho_0$ is the restriction of the measure $\rho$ to 
$(0,\infty)$. Note that $\rho_0$ differs from $\rho$ if and ony if $\rho$ has an atom at $0$. 
On the other hand, it is clear that the value $\psi(0)$ is of no importance for the action of $\calA$. 
\end{remark}

\begin{remark*}
One can see from the definition \eqref{e: U spectral Thm} of the unitary operator $U$ that 
\begin{align*}
X = \{  f(|A|) v  :  f\in L^2(\rho) \};
\end{align*}
while the operators $f(|A|)$ can be unbounded, the vector $v$ is always in the domain of $f(|A|)$ 
for $f\in L^2(\rho)$. 
Thus, we can rewrite the representation \eqref{e: model A 01} for the model $\calA$ as an abstract 
representation for $A$, 
\begin{align}
\label{e: model A 02}
A f(|A|)v = |A| \psi(|A|) \overline f (|A|) v. 
\end{align}
\end{remark*}

\subsection{Direct spectral problem: spectral measures and unimodular functions}\label{sec.direct}
Let $u\in \BMOA_\simp(\bbT)$, i.e. \eqref{a4} is satisfied. 
Let us apply Proposition~\ref{prp.spth} to the anti-linear  
operators $H_u\ute$ and $\wt H_u\ute$; 
we will use the same cyclic vector $v=u$ in both cases.  

For the operator $H_u\ute$ we get its spectral measure 
$\rho=\rho_{u}^{\abs{H_u\ute}}$; note that since $u\in \Ran H_u$, we have
$\rho_{u}^{\abs{H_u\ute}}=\rho_u^{\abs{H_u}}$; we will use the notation 
$\rho_{u}^{\abs{H_u}}$ for typographical reasons. 
We also get  the unitary operator 
 $U:L^2(\rho)\to \overline{\Ran}H_u^2$  given by \eqref{e: U spectral Thm} with $A= 
H_u\ute $ and $v=u$, 
\begin{align}
\label{e: U spectral Thm 01}
U f  = f( | H_u\ute|  ) u , 
\qquad f\in L^2(\rho),  
\end{align}
so 
\[
U^*|H_u\ute|U =\calM, 
\]
where $\calM$ is the operator of multiplication by the independent variable $s$ in $L^2(\rho)$. 
By Proposition~\ref{prp.spth} we have
\begin{align}
\label{e: model H 01}
\left[U^*  H_u\ute U \right] f(s) =  \overline{\Psi_u(s)} s 
\overline{f(s)}, 
\qquad &f\in L^2(\rho) 
\end{align}
where $\Psi_u$ is a complex-valued unimodular Borel function; we write $\overline{\Psi}_u$ rather 
than $\Psi_u$ in the above formula for consistency of notation with \cite{GGAst}. 

Similarly, defining the spectral measure 
$\wt\rho=\rho_{u}^{\abs{\wt H_u}}$ (again, it coincides with the spectral measure of the operator 
$|\wt H_u\ute|$) and the unitary operator $\wt U: L^2(\wt\rho) \to \overline{\Ran}H_u^2$ by 
\begin{align*}
\wt U f  = f( |\wt H_u\ute| ) u, \qquad f\in L^2(\wt\rho), 
\end{align*}
we get that 
\begin{align}
\label{e: model H 02}
\left[\wt U^* \wt H_u\ute \wt U \right] f(s) = {\wt \Psi_u(s)} 
s 
\overline{f(s)}, 
\qquad &f\in L^2(\wt\rho) , 
\end{align}
where $\wt \Psi_u$ is a Borel unimodular function. 

To summarise: we have two  measures $\rho$, $\wt\rho$ and two unimodular functions $\Psi_u$ and $\wt\Psi_u$ as spectral characteristics of the Hankel operator $H_u$.

Since the measure $\rho$ does not have an atom at $0$, by Remark~\ref{r: uniq psi}  the function 
$\Psi_u$ is unique as element of $L^\infty(\rho)$. However, the measure $\wt\rho$ can have an atom 
at 
$0$, so we can only say that $\wt\Psi_u$ is unique as an element of $L^\infty(\wt\rho_0)$, 
where $\wt\rho_0$ is the restriction of $\wt\rho$ to $(0,\infty)$.  Also, one can see from 
\eqref{e: model H 02} that the value $\wt \Psi_u(0)$ does not matter for the action of $\wt H_u\ute$, so we can assume that $\wt\Psi_u$ is unique in $L^\infty(\wt\rho)$. 

\subsection{Remarks about the measures $\rho$ and $\wt\rho$}\label{s: 
normalization rho}

The measure $\rho$ must satisfy 
\begin{align}
\int_0^\infty \frac{d\rho(s)}{s^2}\leq1.   
\label{rholeq1}
\end{align}
Indeed, we  know that 
\begin{align*}
u=H_u z^0 =  H_u\ute P^{\vphantom{\dagger}}_{\overline{\Ran} H_u} z^0, 
\qquad 
U^* u =\1 , 
\end{align*}
so the representation \eqref{e: model H 01} implies that $U^*$ 
maps the vector $P\ci{\overline{\Ran} H_u} z^0$ to the function $q\in L^2(\rho)$,  $q(s) = 
\overline{\Psi_u(s)}/s$. Since $\| P\ci{\overline{\Ran} H_u} z^0\|\ci{H^2}\le \| 
z^0\|\ci{H^2}=1$, we conclude that $\|q\|\ci{L^2(\rho)}\le 1$, which is exactly the estimate 
\eqref{rholeq1}.

The measures $\rho$ and $\wt\rho$ are not  independent, and that $\wt\rho$ is 
uniquely defined by $\rho$. To explain this, we introduce two important operators $\calM$ and $\wt\calM$ in $L^2(\rho)$ that will play a key role in our construction below. We have already defined  $\calM$ in the previous subsection; this is the multiplication operator by 
the independent variable $s$ in $L^2(\rho)$. Now consider the operator 
\begin{align*}
\calM^2 - \jap{\fdot , \1} \1 = \calM ( I -  \jap{\fdot , q_0}q_0  ) \calM, 
\end{align*}
where $q_0(s) =1/s$. 
The inequality \eqref{rholeq1} implies  that $\|q_0\|\ci{L^2(\rho)}\le 1$, so the 
above operator is trivially non-negative. Let us consider its (non-negative) square root   
\begin{equation}
\wt\calM := \left( \calM^2 - \jap{\fdot , \1} \1  \right)^{1/2}.
\label{defwtcalM}
\end{equation}
The definition of $\wt\calM$ can be equivalently rewritten as
$$
\wt\calM^2=\calM^2-\jap{\cdot,\1}\1,
$$
which mirrors the rank one identity \eqref{e: rk1 H_u ess}. 

We can easily see that the unitary equivalence $U$ maps the triple $(\calM,\wt\calM,\1)$ to the triple $(\abs{H_u\ute},\abs{\wt H_u\ute},u)$,  so  $\wt\rho$ is the spectral measure of the operator $\wt\calM$ with respect to the vector 
$\1\in L^2(\rho)$. Thus $\wt\rho$ is uniquely determined by $\rho$.

\subsection{The spectral data and Uniqueness}
\label{s: SpecData}
To conclude, with each Hankel operator $H_u$ with $u\in \BMOA_\simp(\bbT)$ we associate the 
following \emph{spectral datum:}
\begin{enumerate}
\item The measure $\rho$ with bounded support on $(0,\infty)$ satisfying the normalization 
\eqref{rholeq1} (and 
the measure $\wt\rho$ on $[0,\infty)$, uniquely defined by $\rho$ as described above in Section~\ref{s: normalization rho}). 
\item Two unimodular functions $\Psi_u\in L^\infty(\rho)$ and $\wt\Psi_u\in L^\infty(\wt\rho_0)$, 
where 
$\wt\rho_0 :=\wt\rho\big|_{(0,\infty)}$; the functions $\Psi_u$ and $\wt\Psi_u$ are unique as 
vectors 
in the corresponding $L^\infty$ spaces.  
\end{enumerate}
So, formally speaking the spectral datum for $u$ (equivalently $H_u$) is given by the triple  
$$
\Lambda(u):=(\rho,\Psi_u,\wt \Psi_u).
$$ 
We do not include the measure $\wt\rho$ in the spectral data because $\wt\rho$ is determined by $\rho$, as explained in the previous subsection. 

Our first main result is 
%%%%%%%%%%%%%%%%%
\begin{theorem}[Uniqueness]\label{thm.b3}
%%%%%%%%%%%%%%%%%
Any symbol $u\in\BMOA_\simp(\bbT)$ is uniquely determined by the spectral datum 
$\Lambda(u)$, i.e.~the spectral map 
\begin{equation}
\BMOA_{\simp}(\bbT)\ni u\mapsto \Lambda(u)=(\rho,\Psi_u,\wt\Psi_u)
\label{eq.specmap}
\end{equation}
is injective.
\end{theorem}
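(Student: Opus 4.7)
Plan: I aim to recover $u$ explicitly from its spectral data $\Lambda(u)=(\rho,\Psi_u,\wt{\Psi}_u)$ via an explicit formula for the Fourier coefficients $\wh{u}_n$; injectivity of the spectral map then follows at once. Throughout I work in the model $L^2(\rho)$, in which $H_u\ute$ is represented by the anti-linear operator $\calA f(s)=s\overline{\Psi_u(s)}\overline{f(s)}$ via the unitary $U\colon L^2(\rho)\to\overline{\Ran}\,H_u$ with $U\1=u$ provided by Proposition~\ref{prp.spth}.

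The first preparatory step is to realize $\wt{H}_u\ute$ in the \emph{same} model $L^2(\rho)$. The self-adjoint operator $\wt{\calM}$ defined by \eqref{defwtcalM} satisfies $U\wt{\calM}U^*=|\wt{H}_u\ute|$, and so by Theorem~\ref{thm.b1} the vector $\1$ is cyclic for $\wt{\calM}$ with scalar spectral measure $\wt{\rho}$. The spectral theorem therefore supplies a canonical unitary $V\colon L^2(\wt{\rho})\to L^2(\rho)$ with $V\1=\1$ intertwining multiplication by $s$ on $L^2(\wt{\rho})$ with $\wt{\calM}$; applying the same criterion to $W:=U^*\wt{U}$ forces $W=V$. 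Consequently
\[
U^*\,\wt{H}_u\ute\,U \;=\; V\wt{\calA}V^* \;=:\; \calA_1, \qquad \wt{\calA}f(s)=s\wt{\Psi}_u(s)\overline{f(s)},
\]
so $\calA_1$ is determined by the data alone.

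The crucial input is the iterated commutation $H_u z^n=S^{*n}u$ for $n\ge 0$, which follows from $H_uS=S^*H_u$. Since $\overline{\Ran}\,H_u$ is $S^*$-invariant, $T:=S^*|_{\overline{\Ran}\,H_u}$ is a linear contraction on $\overline{\Ran}\,H_u$, and the restriction of $\wt{H}_u=S^*H_u$ to $\overline{\Ran}\,H_u$ reads $\wt{H}_u\ute=T\cdot H_u\ute$. Hence $T$ corresponds in the model to the linear contraction
\[
\calT \;:=\; \calA_1\calA^{-1}
\]
on $L^2(\rho)$. Writing $q_n:=U^*P_{\overline{\Ran} H_u}z^n\in L^2(\rho)$ (so $q_0=\overline{\Psi_u}/s$ as in \S\ref{s: normalization rho}), and equating $H_u z^n=H_u\ute(P_{\overline{\Ran} H_u}z^n)$ with $H_u z^n=T^n u$, I obtain the key identity
\[
\calA\,q_n \;=\; \calT^n\,\1 \qquad (n\ge 0),
\]
i.e.\ $q_n=\calA^{-1}(\calT^n\,\1)$ is read off explicitly from the spectral data. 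Since $u\in\overline{\Ran}\,H_u$, its Fourier coefficients are then recovered by
\[
\wh{u}_n \;=\; \jap{u,z^n} \;=\; \jap{u,P_{\overline{\Ran} H_u}z^n} \;=\; \jap{\1,q_n}_{L^2(\rho)} \;=\; \int \overline{q_n(s)}\,d\rho(s),
\]
which depends only on $(\rho,\Psi_u,\wt{\Psi}_u)$, and two symbols with identical spectral data therefore have identical Fourier coefficients.

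The two places needing care are (i) verifying that $\calT^n\1\in\Ran\calA$, so that $\calA^{-1}$ may legitimately be applied; this is immediate because $U(\calT^n\1)=T^nu=H_u z^n\in\Ran H_u$. And (ii) the identification $U^*\wt{H}_u\ute U=\calA_1$, which rests on the uniqueness of the spectral-theorem unitary $V$, and hence on the cyclicity of $\1$ for $\wt{\calM}$ provided by Theorem~\ref{thm.b1}; without this cyclicity, the two spectral realizations of $\wt{H}_u\ute$ would not be canonically identified and the argument would collapse.
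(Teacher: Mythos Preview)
Your proof is correct and follows essentially the same route as the paper: both arguments transport $S^*|_{\overline{\Ran}H_u}$ to the model $L^2(\rho)$ and then read off $\wh u_n$ as an inner product determined by the data (your $\calT$ coincides with the paper's $\Sigma^*$, and your formula $\wh u_n=\jap{\1,q_n}$ is equivalent to the paper's $\wh u_n=\jap{(\Sigma^*)^n\1,q}$ via the symmetry $\jap{\calA q_n,q_0}=\jap{\calA q_0,q_n}$). The only difference is cosmetic: the paper writes down the explicit formula $\Sigma^*=\wt\Psi(\wt\calM)\wt\calM\calM^{-1}\Psi(\calM)$ directly (so no $\calA^{-1}$ and no domain checks are needed), whereas you obtain the data-dependence of $\calT$ via the uniqueness argument $W=V$ for the spectral-theorem unitary.
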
 
Moreover, we will give an explicit formula for the symbol $u$ in terms of the spectral datum, see 
\eqref{e: u_k 01} and \eqref{e: u(z) 01} below. 
The proof of Theorem~\ref{thm.b3} is given in Section~\ref{sec.b}.

Recall that $\Ker H_u$ is either trivial or infinite dimensional. 
It turns out that one can easily distinguish between these two cases by looking at the spectral data.

%%%%%%%%%%%%%%%%%
\begin{theorem}[Triviality of kernel]\label{thm.b3aa}
%%%%%%%%%%%%%%%%%
For $u\in\BMOA_\simp(\bbT)$, we have $\Ker H_u=\{0\}$ if and only if 
\begin{align}
\label{e: triv ker}
\int \frac{d\rho(s)}{s^2} =1 \quad\text{and}\qquad \int \frac{d\rho(s)}{s^4} = \infty. 
\end{align}
\end{theorem}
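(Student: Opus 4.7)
The plan is to reformulate both conditions in \eqref{e: triv ker} as direct statements about the operator $H_u$, and then to exploit the $S$-invariance of $\Ker H_u$ and of $\Ker \wt H_u$ via Beurling's theorem. From Section~\ref{s: normalization rho} we have $U^*(P_{\overline{\Ran}H_u} z^0)(s) = \overline{\Psi_u(s)}/s$, so $\int d\rho(s)/s^2 = \|P_{\overline{\Ran}H_u} z^0\|^2$. Hence the first condition in \eqref{e: triv ker} is equivalent to $z^0 \in \overline{\Ran}H_u = (\Ker H_u)^\perp$; since $\Ker H_u$ is either $\{0\}$ or $\theta H^2$ for some inner $\theta$ (Beurling), and $z^0 \perp \theta H^2$ iff $\theta(0)=0$, this says $\Ker H_u=\{0\}$ or $\Ker H_u = z\theta_1 H^2$ for some inner $\theta_1$. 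Similarly $\int d\rho/s^4<\infty$ iff $1/s^2\in L^2(\rho)$ iff $\1\in\Ran\calM^2$, which through $U$ is equivalent to $u\in\Ran (H_u\ute)^2 = \Ran H_u^2$; using the additivity of $H_u$ together with the fact that $H_u\circ H_u = H_u^2$ is linear, this is in turn equivalent to $z^0\in\Ran H_u+\Ker H_u$.

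For the implication $\Ker H_u = \{0\} \Rightarrow$ \eqref{e: triv ker}, the first condition is immediate, and for the second it suffices (by the above) to show $z^0\notin\Ran H_u$. Suppose for contradiction that $H_u v = z^0$ for some $v\in H^2$. Then $\wt H_u v = S^* H_u v = S^* z^0 = 0$, so $v\in\Ker\wt H_u$. Using $\wt H_u = H_u S$, $\Ker \wt H_u = \{w: Sw\in\Ker H_u\} = \{0\}$ since $S$ is injective and $\Ker H_u = \{0\}$, which forces $v=0$, contradicting $H_u v = z^0 \neq 0$.

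For the converse, assume both conditions hold. By the first paragraph, either $\Ker H_u=\{0\}$ (and we are done), or $\Ker H_u=z\theta_1 H^2$ for some inner $\theta_1$. In the latter case, $\Ker \wt H_u = \{w : Sw \in z\theta_1 H^2\} = \theta_1 H^2$ strictly contains $\Ker H_u$, so $\theta_1\in\Ker\wt H_u\setminus\Ker H_u$; using now the alternative formula $\wt H_u = S^* H_u$, we get $H_u\theta_1\in\Ker S^* = \bbC z^0$ and $H_u\theta_1\neq 0$, so $H_u \theta_1 = c z^0$ with $c\neq 0$. By antilinearity, $z^0 = H_u(\theta_1/\bar c)\in\Ran H_u$, hence $u = H_u z^0 = H_u^2(\theta_1/\bar c)\in\Ran H_u^2$; this yields $\int d\rho/s^4<\infty$, contradicting the second condition, so in fact $\Ker H_u=\{0\}$. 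The main subtlety is to bookkeep the anti-linearity of $H_u$ correctly, in particular in the translation $u\in\Ran H_u^2\Leftrightarrow z^0\in\Ran H_u+\Ker H_u$, and to exploit both formulas $\wt H_u = H_u S$ and $\wt H_u = S^* H_u$, which give two different descriptions of $\Ker\wt H_u$, one tailored to each direction of the proof.
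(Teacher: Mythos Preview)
Your proof is correct and follows essentially the same route as the paper's: both translate the two integral conditions into the geometric statements $z^0\in\overline{\Ran}H_u$ and $z^0\notin\Ran H_u$ (your version $z^0\notin\Ran H_u+\Ker H_u$ reduces to this under the first condition), invoke Beurling's theorem to write $\Ker H_u=z\theta_1 H^2$ in the nontrivial case, and then produce a preimage of $z^0$ from $\theta_1$ via the commutation relation $H_uS=S^*H_u$. The only cosmetic difference is that you organize the argument through $\Ker\wt H_u$ and the model operator $\calM$, whereas the paper works directly with spectral measures and the identity $H_uS=S^*H_u$; the content is the same.
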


This theorem was proved in \cite[Theorem 4]{GG1}. More precisely, in \cite{GG1}, it was stated for compact $H_u$ and in slightly different terms, but the idea of the proof remains the same. 
For the case of self-adjoint Hankel operators  it also appeared earlier in  \cite[Theorem 
III.2.1]{MPT}; a similar dynamical systems approach also works in the general case.

For completeness we give a proof in the Appendix~\ref{app.b}.
We note that the first condition in \eqref{e: triv ker} is equivalent to $z^0\in\overline{\Ran H_u}$,  and the second one is equivalent to $z^0\notin\Ran H_u$, see the proof. 

\subsection{The self-adjoint case}
Here we discuss the interesting special case when the linear Hankel operator $\Gamma_u$ is self-adjoint. Evidently, $\Gamma_u$ is self-adjoint if and only if all Fourier coefficients $\widehat u_j$ are real; if $\Gamma_u$ is self-adjoint, then so is $\wt\Gamma_u$. 

%%%%%%%%%%%%%%%
\begin{theorem}\label{thm.sa1}
%%%%%%%%%%%%%%%
Let $u\in\BMOA_\simp(\bbT)$; then $\Gamma_u$ is self-adjoint if and only if both $\Psi_u$ and 
$\wt \Psi_u$ are functions with values $\pm1$. 
\end{theorem}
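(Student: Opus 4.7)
The plan is to prove both directions of the theorem simultaneously by combining the uniqueness result (Theorem~\ref{thm.b3}) with an explicit computation of how the spectral datum transforms under the map $u\mapsto \bC u$. Since $\Gamma_u$ is self-adjoint iff all Fourier coefficients $\wh u_k$ are real iff $\bC u=u$ in $H^2$, Theorem~\ref{thm.b3} tells us that self-adjointness is equivalent to $\Lambda(u)=\Lambda(\bC u)$. Thus the whole statement reduces to computing $\Lambda(\bC u)$ in terms of $\Lambda(u)$ and verifying that the resulting identity matches the $\pm 1$-valued condition on $\Psi_u$ and $\wt\Psi_u$.

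The heart of the argument will be to establish the transformation rule
\begin{equation*}
\Lambda(\bC u) = (\rho,\ \overline{\Psi_u},\ \overline{\wt\Psi_u}).
\end{equation*}
First I would observe that a direct matrix-entry computation gives $\Gamma_{\bC u}=\Gamma_u^*$, so \eqref{b1a} together with $\bC^2=I$ yields the conjugation law $H_{\bC u}=\bC H_u\bC$; the same argument using $\bC S=S\bC$ gives $\wt H_{\bC u}=\bC\wt H_u\bC$. Since $\bC$ is an anti-unitary involution, squaring and taking square roots produces $|H_{\bC u}|=\bC|H_u|\bC$ and similarly for $\wt H$. Using $\jap{\bC f,\bC g}=\overline{\jap{f,g}}$, a short moment calculation shows that the scalar spectral measures of $|H_{\bC u}|$ and $|\wt H_{\bC u}|$ with cyclic vector $\bC u$ agree with those of $|H_u|$ and $|\wt H_u|$ with cyclic vector $u$, so the pair $(\rho,\wt\rho)$ is preserved under $u\mapsto\bC u$.

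Next I would transfer the conjugation law to the models. A short direct check (verify first for real $f$, then extend by anti-linearity of $\bC$) gives $U_{\bC u}f=\bC U_u\bar f$ on $L^2(\rho)$, and the analogous formula for $\wt U_{\bC u}$. Substituting into \eqref{e: model H 01} then yields
\begin{equation*}
\bigl[U_{\bC u}^* H_{\bC u} U_{\bC u}\bigr]f(s) = \overline{[U_u^* H_u U_u]\bar f\,(s)} = \overline{s\,\overline{\Psi_u(s)}\,f(s)} = s\,\Psi_u(s)\,\overline{f(s)},
\end{equation*}
and matching with the canonical form $s\,\overline{\Psi_{\bC u}(s)}\,\overline{f(s)}$ gives $\Psi_{\bC u}=\overline{\Psi_u}$ in $L^\infty(\rho)$. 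The parallel calculation with $\wt H$, keeping in mind the different convention in \eqref{e: model H 02} (where $\wt\Psi_u$ appears without a bar), gives $\wt\Psi_{\bC u}=\overline{\wt\Psi_u}$ in $L^\infty(\wt\rho_0)$. This establishes the transformation rule, and the theorem then follows at once: $\Lambda(u)=\Lambda(\bC u)$ is equivalent to $\Psi_u=\overline{\Psi_u}$ and $\wt\Psi_u=\overline{\wt\Psi_u}$ a.e., which, given unimodularity, is equivalent to both functions taking values $\pm 1$.

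The main obstacle I anticipate is the bookkeeping of the anti-linear $\bC$ acting alongside the already anti-linear Hankel operators, particularly because \eqref{e: model H 01} and \eqref{e: model H 02} use opposite conventions (bar versus no bar) for $\Psi_u$ versus $\wt\Psi_u$. The signs must be tracked carefully at each passage through the model, but in the end both spectral functions get conjugated and the $\pm 1$ characterization falls out cleanly from injectivity.
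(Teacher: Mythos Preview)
Your argument is correct and takes a genuinely different route from the paper's. The paper treats the two directions separately: for the forward implication it rewrites \eqref{e: model A 02} in terms of $\Gamma_u$, uses $\bC u=u$ to strip off the conjugation, and reads off that $\Gamma_u\ute=\abs{\Gamma_u\ute}\overline{\Psi}_u(\abs{\Gamma_u\ute})$ is the polar decomposition, forcing $\Psi_u$ to be $\pm1$-valued; for the converse it substitutes into the explicit formula \eqref{e: u_k 01} and checks by hand that $(BA)^{m-1}B$ is self-adjoint when $\Psi,\wt\Psi$ are real, so each $\wh u_m$ is real. Your approach instead computes once and for all that $\Lambda(\bC u)=(\rho,\overline{\Psi_u},\overline{\wt\Psi_u})$ and then invokes injectivity of $\Lambda$ (Theorem~\ref{thm.b3}) to conclude both directions simultaneously. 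This is cleaner and more conceptual; the price is that it does not immediately yield the polar decomposition formulas \eqref{pd} of Theorem~\ref{thm.sa2}, which in the paper fall out of the forward-direction calculation for free. Both proofs ultimately rest on the uniqueness theorem (the paper's backward direction uses \eqref{e: u_k}, which is the content of that theorem), so there is no difference in logical depth.
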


Moreover, in the self-adjoint  case formulas \eqref{e: model H 01}, \eqref{e: model H 02} for the action of $H_u$ and $\wt H_u$ can be interpreted as polar decompositions of $\Gamma_u$ and $\wt\Gamma_u$. In order to state this precisely, we recall the relevant key definitions and facts.

For a bounded operator $T$ on a Hilbert space, there exists a unique partial isometry $\Phi$ with the initial subspace $\overline{\Ran}T^*$ and the final subspace $\overline{\Ran}T$ such that the \emph{polar decomposition} $T=\Phi\abs{T}$ holds, where $\abs{T}=\sqrt{T^*T}$. If $T$ is self-adjoint, then $\Phi$ is also self-adjoint and commutes with $\abs{T}$. Furthermore, if the spectrum of $\abs{T}$ is simple, then one can write $\Phi=\varphi(\abs{T})$, where $\varphi$ is a Borel function with values $\pm1$. The function $\varphi$ is uniquely defined up to values on sets of measure zero with respect to the spectral measure of $\abs{T}$. One can also write $\Phi=\varphi(\abs{T})$ if $T$ has a multi-dimensional kernel but the spectrum of the restriction $\abs{T}|_{\overline{\Ran}T}$ is simple; in this case one must set $\varphi(0)=0$. 

We apply this to the case $T=\Gamma_u$ or $T=\wt\Gamma_u$; note that in this case $\abs{\Gamma_u}=\abs{H_u}$ and $\abs{\wt\Gamma_u}=\abs{\wt H_u}$. 

%%%%%%%%%%%%%%%
\begin{theorem}\label{thm.sa2}
%%%%%%%%%%%%%%%
Let $u\in\BMOA_\simp(\bbT)$ be such that then $\Gamma_u$ is self-adjoint. Then the polar decompositions of $\Gamma_u$ and $\wt\Gamma_u$ can be written as
\begin{equation}
\Gamma_u=\Psi_u(\abs{\Gamma_u})\abs{\Gamma_u}, \quad
\wt\Gamma_u=\wt\Psi_u(\abs{\wt\Gamma_u})\abs{\wt\Gamma_u},
\label{pd}
\end{equation}
where one should set $\Psi_u(0)=\wt\Psi_u(0)=0$ in case of non-trivial kernels. 
\end{theorem}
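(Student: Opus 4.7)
The plan is to transfer the identity $\Gamma_u=H_u\bC$ through the functional model of $H_u\ute$ provided by Proposition~\ref{prp.spth}, read off the polar decomposition of $\Gamma_u\ute$ from the resulting multiplication operator on $L^2(\rho)$, and invoke the uniqueness of the polar decomposition.

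First I would pass to the essential part. In the self-adjoint case the symmetry \eqref{b1a} becomes $\bC\Gamma_u=\Gamma_u\bC$; thus $\bC$ commutes with $|\Gamma_u|$ and preserves $\Ker\Gamma_u$. Combined with $H_u^2=\Gamma_u\Gamma_u^*=\Gamma_u^2$ this yields $|\Gamma_u|=|H_u|$ and $\overline{\Ran}\Gamma_u=\overline{\Ran}H_u$. Writing $\Gamma_u\ute:=\Gamma_u|_{\overline{\Ran}H_u}$, the identity $H_u=\Gamma_u\bC$ descends to $H_u\ute=\Gamma_u\ute\,\bC|_{\overline{\Ran}H_u}$, where the restriction of $\bC$ to $\overline{\Ran}H_u$ makes sense since $\bC$ commutes with $|H_u|$.

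The decisive step is the identification of $\bC$ inside the model. Let $U:L^2(\rho)\to\overline{\Ran}H_u$ be the unitary \eqref{e: U spectral Thm 01}, and set $J:=U^*\bC U$. Then $J$ is an anti-linear involution on $L^2(\rho)$ that commutes with $\calM$ (because $\bC$ commutes with $|H_u|$) and satisfies $J\1=\1$ (because $\bC u=u$, the Fourier coefficients of $u$ being real in the self-adjoint case). Consequently, for any polynomial $p(s)=\sum a_k s^k$, anti-linearity together with $J\calM^k\1=\calM^k J\1=s^k$ gives $Jp=\sum\overline{a_k}\,s^k=\overline{p}$; cyclicity of $\1$ for $\calM$ extends this by continuity to $Jg=\overline{g}$ on all of $L^2(\rho)$, so $J$ is pointwise complex conjugation.

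With this in hand, \eqref{e: model H 01} yields, for $g\in L^2(\rho)$,
\begin{align*}
(U^*\Gamma_u\ute U)g(s)=(U^*H_u\ute U)(Jg)(s)=\overline{\Psi_u(s)}\,s\,\overline{(Jg)(s)}=\overline{\Psi_u(s)}\,s\,g(s).
\end{align*}
By Theorem~\ref{thm.sa1}, $\Psi_u\in\{\pm 1\}$ $\rho$-a.e., so $\overline{\Psi_u}=\Psi_u$ and $U^*\Gamma_u\ute U$ is multiplication by $\Psi_u(s)\,s$; since $s\geq 0$ on $\supp\rho$, this is exactly the polar decomposition $\Psi_u(|\Gamma_u\ute|)\,|\Gamma_u\ute|$ read in the model, which yields the first formula in \eqref{pd} after extending by zero on $\Ker\Gamma_u$ with the convention $\Psi_u(0)=0$. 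The argument for $\wt\Gamma_u$ is identical: $\wt\Gamma_u=\Gamma_u S$ is self-adjoint (real Fourier coefficients), $\bC$ commutes with $S$ and hence with $\wt\Gamma_u$ and $|\wt H_u|$, so $\wt U^*\bC\wt U$ is again pointwise conjugation on $L^2(\wt\rho)$, and \eqref{e: model H 02} combined with $\wt\Psi_u\in\{\pm 1\}$ gives the second formula. The only non-routine step in this plan is the identification of $\bC$ with pointwise conjugation under $U$; everything else reduces to direct inspection of the model together with uniqueness of the polar decomposition.
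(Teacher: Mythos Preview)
Your proposal is correct and follows essentially the same route as the paper: both identify $\bC$ with pointwise conjugation in the model (the paper phrases this as $\bC f(|\Gamma_u\ute|)u=\overline f(|\Gamma_u\ute|)u$, you as $U^*\bC U=$ conjugation on $L^2(\rho)$), then read off $\Gamma_u\ute$ as multiplication by $\overline{\Psi_u(s)}\,s$ and recognise this as the polar decomposition. The one small logical wrinkle is that you invoke Theorem~\ref{thm.sa1} to pass from $\overline{\Psi_u}$ to $\Psi_u$, whereas in the paper Theorems~\ref{thm.sa1} and~\ref{thm.sa2} are proved in a single argument and the real-valuedness of $\Psi_u$ is obtained directly from the self-adjointness of the multiplication operator; since that deduction is immediate from your own computation, this is not a genuine gap.
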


The proofs of the above two theorems are given in Section~\ref{sec.b}. 

\subsection{What can be said about the case of non-trivial spectral multiplicity?}
We conclude this section with remarks on the case when the simplicity assumption \eqref{a4} is not 
satisfied. What would be the natural choice for the spectral datum in this case? 

%The answer in known for some special cases. 

This question was answered in \cite{GGAst} for the case of compact Hankel operators $H_u$. Observe 
that in this case, the measure $\rho$ is purely atomic, supported on the set of singular values of 
$H_u$. The spectral datum is still the triple $(\rho,\Psi_u,\widetilde\Psi_u)$, but the functions 
$\Psi_u$ and $\widetilde\Psi_u$ (defined on the set of singular values of $H_u$ and $\wt H_u$ 
respectively) are no longer scalar-valued but take values in the set of all finite \emph{Blaschke 
products}. In \cite{GGAst} it is proved that the spectral map, defined in a suitable way, is 
injective and surjective. 

Another case was considered in \cite{GP2}: all Hankel operators $H_u$ such that the spectrum of 
$\abs{H_u}$ is finite.  In a similar spirit, the spectral datum is the triple 
$(\rho,\Psi_u,\widetilde\Psi_u)$, where $\Psi_u$ and $\widetilde\Psi_u$ are functions from the 
spectrum of $\abs{H_u}$ and $\abs{\wt H_u}$ into the set of all \emph{inner functions}, and the 
spectral map was proved to be injective and surjective. 

As for the general case, in \cite{LiangTr} an abstract approach to the inverse spectral problem for
general Hankel operators was considered. The abstract spectral datum there is similar in spirit to
what is presented here, but the values of functions $\Psi_u$ and $\wt \Psi_u$ are unitary operators.
In addition, a special anti-linear conjugation $\bJ$, commuting with both $|H_u|$ and $|\wt
H_u|$ (which is implicit in this paper) is also a part of spectral datum. The spectral map is
injective, if one treats the spectral data as natural equivalence classes. And similarly to the
present paper, the abstract spectral datum corresponds to a Hankel operator if and only if an
appropriately constructed operator is asymptotically stable. 

For the case of compact operators, the (non-trivial) translation from the language used in
\cite{LiangTr} to the description in \cite{GP2} was provided in \cite{LiangTr}.
%The translation is not exactly trivial, and involves the theory of Clark measures. 

It is likely that the constructions of \cite{GGAst} and \cite{GP2} can be combined to give a
description of spectral map in the case when $\abs{H_u}$ has only point spectrum. It could also 
be possible to use the ideas from \cite{LiangTr} to extend the result to the case of purely singular
spectrum.

However, the fundamental question  of transparent representation of the spectral data in the 
general case when $\abs{H_u}$ has non-trivial absolutely continuous spectrum and non-trivial 
multiplicity remains a mystery.

%%%%%%%%%%%%%%%%%%%%%%%%%%%%%%%%%%%%
%%%%%%%%%%%%%%%%%%%%%%%%%%%%%%%%%%%%
\section{The problem of surjectivity}\label{sec.bb}
%%%%%%%%%%%%%%%%%%%%%%%%%%%%%%%%%%%%
%%%%%%%%%%%%%%%%%%%%%%%%%%%%%%%%%%%%

\subsection{The abstract spectral data and the problem of surjectivity} 
\label{s: ASD}

First let us discuss

\medskip
\textbf{Question:}
\emph{What is the natural target space for the spectral map \eqref{eq.specmap}?} 
\medskip

\noindent
Below we describe the set of triples $(\rho,\Psi,\wt\Psi)$, that we call the \emph{abstract spectral data}, that plays the role of the target space. 

Let  $\rho$ be a finite Borel measure with a bounded support on $(0,\infty)$,  satisfying the normalization condition \eqref{rholeq1}. We then define the operators $\calM$ and $\wt\calM$ in $L^2(\rho)$ exactly as explained in Section~\ref{s: normalization rho}, i.e. 
$\calM$ is the multiplication by the independent variable and 
$$
\wt\calM := \left( \calM^2 - \jap{\fdot , \1} \1  \right)^{1/2}.
$$ 
Let $\wt\rho$ be the spectral measure of $\wt\calM$, corresponding to the vector $\1$. 
Picking two unimodular functions $\Psi\in L^\infty(\rho)$ and $\wt\Psi\in 
L^\infty(\wt\rho_0)$ (where $\wt\rho_0 :=\wt\rho\big|_{(0,\infty)}$), we get the triple 
$$
\Lambda = (\rho, \Psi, \wt\Psi),
$$ 
which we will call the \emph{abstract spectral datum}: the word 
\emph{abstract} here emphasizes the fact that this datum a priori does not have to come from a Hankel operator. The set of all abstract spectral data is the natural target space for the spectral map  \eqref{eq.specmap}. 

We arrive at the main problem addressed in this paper: 

\medskip
\textbf{Question:}
\emph{Is the spectral map \eqref{eq.specmap} surjective?}
\medskip

\noindent
In other words, does every abstract spectral datum come from a Hankel operator?

For several years, the authors of this paper believed that the answer is ``yes''. For example, as it was proved in 
\cite{GG1}, the answer is affirmative in the case of compact Hankel operators: in this case the 
measure $\rho$ is a purely atomic measure with $0$ being the only possible accumulation point of 
its support.

The other case is the so-called \emph{double positive} case, treated in \cite{GP}, where both 
operators $\Gamma_u$ and $\wt \Gamma_u$ are non-negative self-adjoint operators. It was shown 
in \cite[Theorem~2.4]{GP} that in this case the simplicity condition \eqref{a4} is satisfied. 
In this case both unimodular functions $\Psi_u$ and $\wt \Psi_u$ are identically equal to $1$.  It 
was also shown in \cite{GP} that in this case any abstract spectral datum (i.e.~any measure $\rho$ satisfying the normalization condition \eqref{rholeq1}) comes from a self-adjoint Hankel operator $\Gamma_u$.

\subsection{Informal description of main results} 
In order to simplify our discussion, we introduce the following notation. For an abstract spectral datum $\Lambda_*=(\rho,\Psi,\wt\Psi)$, we will write $\Lambda_*\in\Lambda(\BMOA_\simp)$, if $\Lambda_*$ is in the range of the spectral map \eqref{eq.specmap}, i.e. if $\Lambda_*$ is the spectral datum of some Hankel operator $H_u$. 

Here we informally describe our main results.
\begin{itemize}
\item
The spectral map \eqref{eq.specmap} is NOT surjective, i.e. there are abstract spectral data with $\Lambda_*\notin\Lambda(\BMOA_\simp)$. 
\end{itemize}

We do not have a simple easy-to-check criterion for an abstract spectral datum to be in $\Lambda(\BMOA_\simp)$, but we come close to it. 

\begin{itemize}
\item
For an abstract spectral datum $\Lambda_*$, 
we have $\Lambda_*\in\Lambda(\BMOA_\simp)$ if and only if a certain contraction $\Sigma^*$, constructed from $\Lambda_*$, is asymptotically stable (i.e. $(\Sigma^*)^n\to0$ strongly as $n\to\infty$.) See Theorem~\ref{thm.a5} for the precise statement. 
\end{itemize}

The asymptotic stability of $\Sigma^*$ is not easy to check. However, in many cases we can reduce it to a more explicit condition. 
\begin{itemize}
\item
Under some mild additional assumptions (e.g. $\Psi$ and $\wt\Psi$ are H\"older continuous at $0$), we have $(\rho,\Psi,\wt\Psi)\in\Lambda(\BMOA_\simp)$ if and only if the unitary operator 
$$
\wt\Psi(\wt\calM)\Psi(\calM)
$$ 
has a purely singular spectrum. Here $\calM$ and $\wt\calM$ are the operators in $L^2(\rho)$ defined in the previous subsection. 

\item
Using the previous result, we construct a wide range of examples of spectral data that  are (or are not) in $\Lambda(\BMOA_\simp)$. 
\end{itemize}

\subsection{Introducing the model $(\calH, \wt\calH,\Sigma^*)$}\label{sec.model}
Let $u\in\BMOA_{\simp}(\bbT)$. 
Restricting the identity $\wt H_u = S^*H_u$ to the $S^*$-invariant subspace $\overline{\Ran} H_u$ we write 
\begin{equation}
\wt H_u \ute = \left(S^* \big|_{\overline{\Ran} H_u} \right) 
 H_u \ute.
\label{w1}
\end{equation}
Recall also the rank one identity \eqref{e: rk1 H_u ess}. 
Let us map these identities to $L^2(\rho)$, where $\rho=\rho_u^{\abs{H_u}}$, by using the unitary operator $U$ defined in \eqref{e: U spectral Thm 01}. 
In order to do this, let us define the anti-linear operators $\calH$, $\wt\calH$ and the (linear) contraction $\Sigma$ in $L^2(\rho)$ by
\begin{align}
\notag
\calH    & = U^*   H_u\ute  U, \\
\notag
\wt\calH    & = U^*  \wt H_u\ute  U, \\
\label{e: model Sigma*}
\Sigma^* & = U^*\left(S^* \big|_{\overline{\Ran} H_u} \right) U , \qquad \Sigma := (\Sigma^*)^* . 
\end{align}
Multiplying \eqref{w1} and \eqref{e: rk1 H_u ess} by  $U^*$ on the left and by $U$ on the right, we obtain the identities
\begin{align}
\wt \calH&=\Sigma^*\calH, 
\label{w5}
\\
\wt\calH^2&=\calH^2-\jap{\cdot,\1}\1
\label{w6}
\end{align}
in $L^2(\rho)$.

\emph{The triple $(\calH, \wt\calH,\Sigma^*)$ is our model for $(H_u\ute,\wt H_u\ute,S^*|_{\overline{\Ran}H_u})$; this model plays a central role in our construction.}

Rewriting \eqref{e: model H 01}, \eqref{e: model H 02} in terms of the model operators $\calH$, $\wt\calH$, we obtain 
\begin{align}
\calH f&=\calM\overline{\Psi}(\calM)\overline{f}, \quad f\in L^2(\rho),
\label{w3}
\\
\wt\calH f&=\wt\calM\wt\Psi(\wt\calM)\overline{f}, \quad f\in L^2(\rho),
\label{w4}
\end{align}
where $\Psi=\Psi_u$, $\wt\Psi=\wt\Psi_u$ and the operators $\calM$ and $\wt\calM$ are as discussed in Section~\ref{s: normalization rho}.

\subsection{Model coming from abstract spectral data}\label{sec.absmodel}
One can also set up a triple $(\calH,\wt\calH,\Sigma^*)$ starting from an abstract spectral datum $\Lambda=(\rho,\Psi,\wt\Psi)$. We define the operators $\calM$ and $\wt\calM$ as described in Section~\ref{s: ASD} and define the anti-linear operators $\calH$ and $\wt\calH$ by \eqref{w3} and \eqref{w4}. In order to define $\Sigma$, we first note that $\wt\calM^2\leq \calM^2$, i.e. 
$$
\norm{\wt\calM f}\leq\norm{\calM f}, \quad \forall f\in L^2(\rho),
$$
and therefore (see Douglas' lemma in Section~\ref{sec.e}) the operator $\wt\calM\calM^{-1}$, defined initially on the dense set $\Ran \calM$, extends to $L^2(\rho)$ as a contraction. We then define the contraction
\begin{equation}
\Sigma^*=\wt\Psi(\wt\calM)\wt\calM\calM^{-1}\Psi(\calM)
\label{w7}
\end{equation}
and set $\Sigma=(\Sigma^*)^*$. 
With these definitions, the key identities \eqref{w5} and \eqref{w6} are satisfied.

\subsection{Surjectivity: reduction to the asymptotic stability of $\Sigma^*$}
Let $\Lambda=(\rho,\Psi,\wt\Psi)$ be an abstract spectral datum, and let $\Sigma^*$ be as defined in \eqref{w7}. 
Our first main result concerning surjectivity is
%%%%%%%%%%%%%%%%%%
\begin{theorem}\label{thm.a5}
%%%%%%%%%%%%%%%%%%
The triple $\Lambda=(\rho,\Psi,\wt\Psi)$ is the spectral datum for some Hankel operator $H_u$ with $u\in\BMOA_\simp$ if and only if $\Sigma^*$ is asymptotically stable (i.e. $\Sigma^{*n}\to0$ in the strong operator topology). 
\end{theorem}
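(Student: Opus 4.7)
The plan is to separate the two directions. Necessity is easy: if $\Lambda = \Lambda(u)$ for some $u \in \BMOA_\simp$, then by definition \eqref{e: model Sigma*} we have $\Sigma^* = U^*(S^*|_{\overline{\Ran} H_u}) U$ for a unitary $U$. The standard identity $\|(S^*)^n f\|^2 = \sum_{k \geq n} |\wh f_k|^2$ shows that $S^*$ is asymptotically stable on $H^2$, and hence on every $S^*$-invariant subspace; thus $(\Sigma^*)^n \to 0$ strongly.

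For sufficiency I would proceed constructively. First, from the explicit formula \eqref{w7} together with the rank-one identity $\wt\calM^2 = \calM^2 - \jap{\fdot,\1}\1$, a direct calculation using unimodularity of $\Psi$ and $\wt\Psi$ yields
\[
I - \Sigma \Sigma^* = \jap{\fdot,\overline{\Psi} q_0}\, \overline{\Psi} q_0, \qquad q_0(s) = 1/s,
\]
and a parallel computation shows $I - \Sigma^*\Sigma$ has rank at most one. Thus $\Sigma^*$ is a completely non-unitary contraction of class $C_{0\cdot}$ with scalar defect indices. The Sz.-Nagy--Foias and Clark model theory (Section~\ref{sec.e} and \cite{LT}) then produces an isometric embedding $W : L^2(\rho) \hookrightarrow H^2$ with $\calK := W(L^2(\rho))$ being $S^*$-invariant, $S^* W = W \Sigma^*$, and $W(\overline{\Psi} q_0) = P_\calK z^0$. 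I then set $u := W\1 \in H^2$ and $H := W \calH W^*$. A short algebraic manipulation based on \eqref{w3} and \eqref{w7}, using the unimodularity of $\Psi$ and $\wt\Psi$ together with $\wt\calM\calM^{-1}\calM = \wt\calM$, delivers the intertwining $\calH \Sigma = \Sigma^* \calH$ on $L^2(\rho)$. Combined with the adjoint identity $W^* S = \Sigma W^*$ and the elementary computation $\calH(\overline{\Psi} q_0) = \1$, iteration gives $H z^k = (S^*)^k u$ for every $k \geq 0$. Hence $H$ is a bounded anti-linear operator on $H^2$ coinciding with the Hankel operator with symbol $u$; boundedness forces $u \in \BMOA$ by Nehari's theorem. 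Proposition~\ref{prp.spth} then pins down $\Psi_u = \Psi$ and $\wt\Psi_u = \wt\Psi$; cyclicity of $\1$ for $\wt\calM$ (needed to conclude $u \in \BMOA_\simp$) follows from Stone--Weierstrass, since polynomials in $s^2$ are dense in $L^2(\rho)$ when $\supp\rho \subset (0,\infty)$.

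The crux of the argument is the functional-model step producing $W$ with the defect vector correctly placed at $P_\calK z^0$. This is precisely where asymptotic stability enters: without it, the minimal isometric dilation of $\Sigma^*$ would acquire a unitary summand and $\Sigma^*$ could not be realized as a restriction of the scalar backward shift on $H^2$. Handling the generic presence of a nontrivial absolutely continuous spectrum in the Clark-model unitary associated with $\Sigma^*$ is what makes this step delicate and where the recent results of \cite{LT} are essential.
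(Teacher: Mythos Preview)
Your argument is correct and follows essentially the same route as the paper: the isometric embedding $W$ you obtain from the functional model is precisely the map $\calU f(z)=\sum_{k\ge0}\jap{(\Sigma^*)^k f,q}z^k$ that the paper writes down explicitly and shows to be an isometry directly from the rank-one identity $I-\Sigma\Sigma^*=\jap{\fdot,q}q$ together with asymptotic stability. The one point worth correcting is your final paragraph: the embedding step is the elementary Rota model for a $C_{0\cdot}$ contraction with one-dimensional defect $\calD_{\Sigma^*}$, and neither the Clark model nor \cite{LT} is needed here---those tools enter only later, in characterizing \emph{when} asymptotic stability holds (Theorem~\ref{thm2} and Section~\ref{s: asy and sing spectr}), not in the present construction.
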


\subsection{Explicit formula for the symbol}
One can give an explicit formula for the symbol $u$ in terms of the corresponding operator $\Sigma^*$, defined via the spectral data of $u$. The following statement is logically part of the uniqueness Theorem~\ref{thm.b3}, but we place it here because it was convenient for us to state the uniqueness theorem before describing the model $(\calH,\wt\calH,\Sigma^*)$.

%%%%%%%%%%%%%%%%%%%%%%%
\begin{theorem}\label{thm.a5a}
%%%%%%%%%%%%%%%%%%%%%%%
Let $u\in\BMOA_\simp$, and let $\Sigma^*$ be defined by \eqref{e: model Sigma*}. Then $u$ can be found through the explicit formula
\begin{align}
\label{e: u_k 01}
\widehat u_k &= \jap{(\Sigma^*)^k \1, q}\ci{L^2(\rho)}, \qquad k\ge 0, 
\intertext{or equivalently}
\label{e: u(z) 01}
u(z) &= \jap{(I-z\Sigma^*)^{-1} \1, q}\ci{L^2(\rho)} , \qquad z\in\bbD ,  
\end{align}
where $q\in L^2(\rho)$, $q(s) := \overline{\Psi(s)}/s$. 
\end{theorem}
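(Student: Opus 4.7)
The plan is to derive the formula for $\widehat u_k$ by rewriting the Fourier coefficient $\langle u,z^k\rangle_{H^2}$ in terms of the model data, and then deduce \eqref{e: u(z) 01} by summing a geometric series.

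First, I would express $\widehat u_k$ through the action of $H_u$ on standard monomials. Since $u = H_u z^0$, the complex symmetry identity \eqref{a0a} gives
\begin{equation*}
\widehat u_k=\jap{u,z^k}_{H^2}=\jap{H_u z^0,z^k}_{H^2}=\jap{H_u z^k,z^0}_{H^2}.
\end{equation*}
The commutation relation \eqref{a00} combined with $z^k=S^k z^0$ and anti-linearity gives $H_u z^k=(S^*)^k H_u z^0=(S^*)^k u$. Hence
\begin{equation*}
\widehat u_k=\jap{(S^*)^k u,z^0}_{H^2}.
\end{equation*}
Since $u\in\overline{\Ran}H_u$ and $\overline{\Ran}H_u$ is $S^*$-invariant, the vector $(S^*)^k u$ lies in $\overline{\Ran}H_u$, so we may replace $z^0$ by its projection $P_{\overline{\Ran}H_u}z^0$ without changing the inner product.

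Next I would transport the identity to $L^2(\rho)$ via the unitary $U$ of \eqref{e: U spectral Thm 01}. Three identifications are needed: (a) $U^* u=\1$, which is immediate from \eqref{e: U spectral Thm 01} applied to the constant function $\1$; (b) $U^*\bigl(S^*|_{\overline{\Ran}H_u}\bigr)U=\Sigma^*$, which is the definition \eqref{e: model Sigma*}; and (c) $U^* P_{\overline{\Ran}H_u}z^0=q$, where $q(s)=\overline{\Psi(s)}/s$. Identification (c) is precisely the fact extracted in Section~\ref{s: normalization rho}: from \eqref{e: model H 01} and $u=H_u z^0 = H_u\ute P_{\overline{\Ran}H_u}z^0$ one computes that $U^*$ sends $P_{\overline{\Ran}H_u}z^0$ to the function $q$. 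Combining these, and using that $U^*$ intertwines the respective operators, yields
\begin{equation*}
\widehat u_k=\jap{(\Sigma^*)^k \1,q}_{L^2(\rho)},
\end{equation*}
which is \eqref{e: u_k 01}.

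Finally, for \eqref{e: u(z) 01}, since $\Sigma^*$ is a contraction the series $\sum_{k\ge 0}z^k (\Sigma^*)^k\1$ converges in $L^2(\rho)$ for $z\in\bbD$ to $(I-z\Sigma^*)^{-1}\1$. Taking the inner product with $q$ (which is linear in the first argument) and using the analytic formula $u(z)=\sum_{k\ge 0}\widehat u_k z^k$ for $z\in\bbD$ gives the claimed representation.

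No step here is a real obstacle; the only point that requires care is the identification (c), but it is already recorded in Section~\ref{s: normalization rho} as a consequence of the model formula for $H_u\ute$ together with the estimate $\|q\|_{L^2(\rho)}\le 1$. Everything else is a direct transcription of the identity $\widehat u_k=\jap{(S^*)^k u,z^0}$ through the unitary $U$.
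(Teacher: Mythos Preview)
Your proof is correct and follows essentially the same route as the paper: both arrive at $\widehat u_k=\jap{(S^*)^k u,z^0}_{H^2}$, transport this through the unitary $U$ using the identifications $U^*u=\1$, $U^*P_{\overline{\Ran}H_u}z^0=q$, and $U^*(S^*|_{\overline{\Ran}H_u})U=\Sigma^*$, and then sum the geometric series. The only cosmetic difference is that the paper reaches $\jap{(S^*)^k u,z^0}$ directly via $\jap{u,S^kz^0}=\jap{(S^*)^ku,z^0}$, whereas you take a slightly longer (but equally valid) detour through the complex symmetry identity and the commutation relation.
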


%%%%%%%%%%%%%%%%%%%%%%%%%%%%%%%%%%%%%%%%%
%%%%%%%%%%%%%%%%%%%%%%%%%%%%%%%%%%%%%%%%%
\section{Proofs of preliminary results}\label{sec.b}
%%%%%%%%%%%%%%%%%%%%%%%%%%%%%%%%%%%%%%%%%
%%%%%%%%%%%%%%%%%%%%%%%%%%%%%%%%%%%%%%%%%

\subsection{Proof of the ``spectral theorem'' (Proposition~\ref{prp.spth})}

Let $\calA$ be the anti-linear operator in $L^2(\rho)$, defined by $\calA := U^* A U$, where 
$U$ is 
the unitary operator defined by \eqref{e: U spectral Thm}. Then, since $A$ trivially commutes 
with 
$|A|^2=A^2$ and $U^*|A|U = \calM$, where $\calM$ is the multiplication by the independent variable 
$s$ 
in $L^2(\rho)$, we conclude that $\calA$ commutes with $\calM^2$ and so with $\calM$. 

Denote by $\fC$ the standard conjugation acting on functions on $\R$, 
\[
\fC f (s) = \overline{f(s)}, 
\]
and define the \emph{linear} operator $\calB$ in $L^2(\rho)$ as $\calB:= \fC \calA$. 
Since $\fC$ commutes with $\calM$, we find that  $\calB$ also commutes with $\calM$. 
Therefore, $\calB$ is the multiplication by 
a a 
function $g\in L^\infty(\rho)$, 
\begin{align*}
\calB f = g f \qquad \forall f \in L^2(\rho); 
\end{align*}
note that $g$ as an element of $L^\infty(\rho)$ is unique. 

For any $f\in L^2(\rho)$
\begin{align*}
\|gf\|\ci{L^2(\rho)}^2 = \|\calB f \|\ci{L^2(\rho)}^2 =  \|\calA f \|\ci{L^2(\rho)}^2 = 
\jap{\calA^2 f,f} =\jap{\calM^2 f,f}= \int_\R s^2 |f(s)|^2 \dd\rho (s);
\end{align*}
the second equality holds because $\fC$ preserves the norm.  
So we conclude that $|g(s)|^2 = s^2$ $\rho$-a.e., therefore it can be 
represented 
as 
\begin{align*}
g(s) = s \overline{\psi(s)}, 
\end{align*}
where $\psi$ is a unimodular function, i.e.~$|\psi(s)|=1$ $\rho$-a.e.\ (the reason for the 
complex 
conjugation is purely notational, and will be clear in a moment). 

Using the fact that $\calA=\fC\calB$ we conclude that 
\begin{align*}
\calA f(s) = \overline{ g(s) f(s)} =\overline{ s \overline{\psi(s)} f(s) } = s \psi(s) 
\overline{f(s)}, 
\end{align*}
which is exactly the conclusion of the proposition. 
\hfill\qed

%%%%%%%%%%%%%%%%%%%%%%%%%%%%%%%%%%%%%%%
%%%%%%%%%%%%%%%%%%%%%%%%%%%%%%%%%%%%%%%
\subsection{Cyclicity of $u$: preliminaries}
\label{sec.b3}
%%%%%%%%%%%%%%%%%%%%%%%%%%%%%%%%%%%%%%%
%%%%%%%%%%%%%%%%%%%%%%%%%%%%%%%%%%%%%%%

To prove  Theorem~\ref{thm.b1} we start with a trivial observation.
%%%%%%%%%%%%%%%%%%%
\begin{lemma}\label{l: cyclicity} 
%%%%%%%%%%%%%%%%%%%
Let $R=R^*$ be a bounded self-adjoint operator, and let $R_\alpha : = R +\alpha 
\jap{\fdot, p}p$, $\alpha\in\R$, be its rank one perturbation. Then 
\begin{enumerate}
\item There holds $\jap{p}\ci R =\jap{p}\ci{R_\alpha}$.
\item If both $R$ and $R_\alpha$ have  simple spectrum, then there exists a vector 
$v_2\in\jap{p}\ci{R}^\perp$ such that  the vector $v=p+v_2$
cyclic  for both $R$ and $R_\alpha$. 
\end{enumerate}
\end{lemma}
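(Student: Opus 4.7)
For (i), I observe that $R_\alpha-R=\alpha\jap{\fdot,p}p$ has range in $\Span(p)\subset\jap{p}\ci R$. Hence $\jap{p}\ci R$ is closed, contains $p$, and is invariant under both $R$ and $R_\alpha$; consequently $\jap{p}\ci{R_\alpha}\subset\jap{p}\ci R$. Swapping the roles of $R$ and $R_\alpha$ (using $R=R_\alpha-\alpha\jap{\fdot,p}p$) gives the reverse inclusion, hence equality.

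For (ii), set $\calH_1:=\jap{p}\ci R=\jap{p}\ci{R_\alpha}$ and $\calH_2:=\calH_1^\perp$. Since $R$ and $R_\alpha$ are self-adjoint and keep $\calH_1$ invariant, they are reduced by the decomposition $\calH_1\oplus\calH_2$. Because $p\in\calH_1$, the rank-one perturbation vanishes on $\calH_2$, so the restrictions agree: $R|_{\calH_2}=R_\alpha|_{\calH_2}$. The plan is to exploit the following standard Hahn--Hellinger consequence: if a self-adjoint operator $T=T_1\oplus T_2$ on $X_1\oplus X_2$ has simple spectrum, then so do $T_1$ and $T_2$, and for any cyclic vectors $e_i$ of $T_i$ the scalar spectral measures $\rho^{T_1}_{e_1}$ and $\rho^{T_2}_{e_2}$ are mutually singular (else $T$ would have multiplicity $\geq 2$ on their common part, contradicting simplicity).

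Applied to $R$, this produces a cyclic vector $v_2\in\calH_2$ for $R|_{\calH_2}$ whose spectral measure $\mu:=\rho^{R|_{\calH_2}}_{v_2}$ satisfies $\mu\perp\rho^{R|_{\calH_1}}_p$. Since $R|_{\calH_2}=R_\alpha|_{\calH_2}$, this same $v_2$ is cyclic for $R_\alpha|_{\calH_2}$ with the same $\mu$, and applying the fact again to $R_\alpha$ yields $\mu\perp\rho^{R_\alpha|_{\calH_1}}_p$. I then claim $v:=p+v_2$ is cyclic for both operators. Via the spectral theorem, $R$ is unitarily equivalent to multiplication by $s$ on $L^2(\rho^{R|_{\calH_1}}_p)\oplus L^2(\mu)$, with $p$ and $v_2$ each going to the constant $\1$, so $v\mapsto(\1,\1)$. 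By mutual singularity this direct sum is naturally identified with $L^2(\rho^{R|_{\calH_1}}_p+\mu)$, under which $v$ becomes the constant $\1$ and $R$ still acts by multiplication by $s$; since the measure has compact support, polynomials are dense in this $L^2$ space and $\1$ is cyclic. Repeating the argument with $\rho^{R_\alpha|_{\calH_1}}_p$ in place of $\rho^{R|_{\calH_1}}_p$ handles $R_\alpha$.

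The only substantive ingredient is the Hahn--Hellinger principle; it is precisely this step that uses both hypotheses---simplicity of $R$ \emph{and} of $R_\alpha$---and it is what allows a single $v_2\in\calH_2$ to serve as the $\calH_2$-component of a cyclic vector for both operators simultaneously.
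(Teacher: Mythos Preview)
Your proof is correct and follows essentially the same approach as the paper's. For (ii) the paper is more terse---it simply says ``take $v=p+v_2$ where $v_2$ is cyclic for $R\big|\jap{p}\ci{R}^\perp$'' and leaves the verification that $p+v_2$ is cyclic implicit---whereas you spell out the underlying reason (simplicity of $R$, resp.\ $R_\alpha$, forces mutual singularity of the spectral measures on $\calH_1$ and $\calH_2$, so $p+v_2$ corresponds to $\1$ in $L^2$ of the sum measure); but this is exactly what the paper is relying on.
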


\begin{proof}
The first statement is easy: by induction we find 
\begin{align*}
R_\alpha^n p \in \spn \{R^k p : 0\le k \le n\}, 
\end{align*}
which implies  the inclusion $\jap{p}\ci{R_\alpha} \subset \jap{p}\ci{R}$ for all $\alpha\in\R$. 
Since $R = R_\alpha -\alpha \jap{\fdot, p}p$, the converse inclusion follows. 

By the statement \cond1, the subspace $\jap{p}\ci{R}$ is an invariant subspace for both $R$ and 
$R_\alpha$; since both operators are self-adjoint it is in fact reducing for both. Furthemore, the action of the operators $R$ and $R_\alpha$ coincide on $\jap{p}\ci R^\perp$. Now it remains to take $v=p+v_2$, where $v_2$ is a cyclic vector for $R\,\big|\jap{p}\ci{R}^\perp$. 
\end{proof}

In the proof of Theorem~\ref{thm.b1} we use the model $(\calH,\wt\calH,\Sigma^*)$ introduced in Section~\ref{sec.model}. Note, however, that in Section~\ref{sec.model} we have used the fact that $u$ is cyclic for $H_u \ute$ and $\wt H_u \ute$. In order to avoid a circular argument, here we start the proof by setting up a slight modification of the same model, using another cyclic vector, which exists by Lemma~\ref{l: cyclicity}.

Thus, let $v=u+v_2\in\overline{\Ran} H_u$ be the cyclic vector for both 
$(H_u\ute)^2$ and $(\wt H_u\ute)^2$, which exists by 
statement \cond2 (with $p=u$) of Lemma~\ref{l: cyclicity}.  Let $\rho=\rho_v^{|H_u|}$ be the 
spectral measure of $|H_u\ute|$ corresponding to $v$. 
Let $U$  be  the unitary operator  given by \eqref{e: U spectral Thm} with $A=H_u\ute$, 
\[
Uf := f(|H_u\ute|) v ,
\qquad f\in L^2(\rho) . 
\]
As in Section~\ref{sec.model}, we define the operators $\calH$, $\wt\calH$ and $\Sigma$ in $L^2(\rho)$ by
\begin{align*}
\calH    & = U^*   H_u  U, \\
\wt\calH    & = U^*  \wt H_u\ute  U, \\
\Sigma^* & = U^*\left(S^* \big|_{\overline{\Ran} H_u} \right) U , \qquad \Sigma := (\Sigma^*)^* . 
\end{align*}
For these operators, from the definition of $\wt H_u$ and from the rank one identity \eqref{e: rk1 H_u ess}
we obtain 
\begin{align}
\label{e: Sigma* 01}
\wt \calH&= \Sigma^* \calH =\calH \Sigma, 
\\
\wt\calH^2&= \calH^2 - \jap{\fdot , p} p, 
\label{e: Sigma* 02-1}
\end{align}
where $p=U^* u$. 
Note that $U^* v=\1$, so $U^*u=\chi\ci E $ for some Borel set $E\subset\sigma(|H_u|)$; what will be essential here is that both $U^*v$ and $U^*u$ are real-valued.  

\subsection{Proof of Theorem~\ref{thm.b1}}

Step 1: the action of $\calH$ and $\wt\calH$ on $L^2(\rho)$. 

By construction, $v$ is a cyclic element for both $\abs{H_u\ute}$ and $\abs{\wt H_u\ute}$, thus $\1=U^*v$ is a cyclic element for both $\abs{\calH}$ and $\abs{\wt\calH}$. 
By Proposition~\ref{prp.spth}, we find
\begin{align*}
\calH f(\abs{\calH})\1&=\abs{\calH}\psi(\abs{\calH})\overline{f}(\abs{\calH})\1,
\\
\wt\calH f(\abs{\wt\calH})\1&=\abs{\wt\calH}\wt\psi(\abs{\wt\calH})\overline{f}(\abs{\wt\calH})\1
\end{align*}
for some unimodular functions $\psi$ and $\wt\psi$ and for all admissible $f$ (i.e. $f\in L^2(\rho)$ for the first identity and $f(\abs{\wt\calH})\1\in L^2(\rho)$ for the second one).

Step 2: conjugations on $L^2(\rho)$. 

Since $\abs{\calH}$ coincides with the operator $\calM$ of multiplication by the independent variable in $L^2(\rho)$, we find that 
$$
\overline{g}=\overline{f}(\abs{\calH})\1, \quad \text{ if $g=f(\abs{\calH})\1$ }
$$
for any admissible $f$. Further, if $f(x)=x^{2n}$, using the fact that $p$ is real-valued, from \eqref{e: Sigma* 02-1} we find that 
\begin{equation}
\overline{g}=\overline{f}(\abs{\wt\calH})\1, \quad \text{ if $g=f(\abs{\wt\calH})\1$.}
\label{c.*1}
\end{equation}
Taking linear combinations and using an approximation argument, we obtain \eqref{c.*1} for all admissible $f$. To conclude, combining with the previous step, we find that
$$
\calH g=\abs{\calH}\psi(\abs{\calH})\overline{g},
\qquad
\wt\calH g=\abs{\wt\calH}\wt\psi(\abs{\wt\calH})\overline{g}
$$
for all $g\in L^2(\rho)$. 

Step 3: The action of $\Sigma^*$ in $L^2(\rho)$. 

Recall that $\jap{p}_{\calH^2}^\perp$ is an invariant (in fact, reducing) subspace for both operators $\abs{\calH}$ and $\abs{\wt\calH}$ and the actions of these operators coincide on this subspace. 
Thus, for all $g\in\jap{p}_{\calH^2}^\perp$ we have
$$
\calH g=\abs{\calH}\psi(\abs{\calH})\overline{g},
\qquad
\wt\calH g=\abs{\calH}\wt\psi(\abs{\calH})\overline{g}.
$$
By \eqref{e: Sigma* 01}, we find that $\jap{p}_{\calH^2}^\perp$ is an invariant subspace for $\Sigma^*$ and the action of $\Sigma^*$ on this subspace reduces to the multiplication by a unimodular function:
$$
\Sigma^*g=\wt\psi(\abs{\calH})\overline{\psi}(\abs{\calH})g, \quad g\in \jap{p}_{\calH^2}^\perp. 
$$
It follows that 
$$
\norm{(\Sigma^*)^n g}_{L^2(\rho)}=\norm{g}_{L^2(\rho)}, \quad g\in \jap{p}_{\calH^2}^\perp
$$
for all $n\geq0$. On the other hand, $\Sigma^*$ is unitarily equivalent to the restriction of $S^*$ onto its invariant subspace $\overline{\Ran}H_u$, and we know that $(S^*)^n\to0$ in the strong operator topology as $n\to\infty$. It follows that $(\Sigma^*)^n\to0$ in the strong operator topology. We have arrived at a contradiction. 
\qed

%%%%%%%%%%%%%%%%%%%%%%%%%%%%%%%%%%%%%%%
\subsection{Proof of Theorems~\ref{thm.b3} (uniqueness) and \ref{thm.a5a} (formula for 
$u$)}\label{sec.dd}
%%%%%%%%%%%%%%%%%%%%%%%%%%%%%%%%%%%%%%%
Throughout this section, we fix $u\in\BMOA_\simp(\bbT)$ and the corresponding Hankel operator 
$H_u$, and set  $\rho=\rho_u^{\abs{H_u}}$. We use the model $(\calH,\wt\calH,\Sigma)$ of Section~\ref{sec.model}.

We first recall that by the definition \eqref{e: U spectral Thm 01} of $U$, we have $U^*u=\1$. Further, as discussed in Section~\ref{s: normalization rho}, we have $u=H_u\ute P_{\overline{\Ran}H_u}z^0$ and 
$$
U^*P_{\overline{\Ran}H_u}z^0=q, \quad q(s)=\overline{\Psi_u(s)}/s. 
$$
Thus, for $k\geq0$ we have
\begin{align}
\wh u_k&=\jap{u,z^k}=\jap{u,S^kz^0}=\jap{(S^*)^ku,z^0}
=\jap{(S^*|_{\overline{\Ran}H_u})^ku,P_{\overline{\Ran}H_u}z^0}
\notag
\\
&=\jap{(\Sigma^*)^kU^*u,U^*P_{\overline{\Ran}H_u}z^0}
=\jap{(\Sigma^*)^k\1,q}.
\label{e: u_k}
\end{align}
Since all objects in the right hand side are defined in terms of the spectral datum $\Lambda(u) = 
(\rho, \Psi_u, \wt\Psi_u)$, the injectivity of the map $u\mapsto \Lambda(u)$ is proved. 
The proof of Theorem~\ref{thm.b3} is complete. \qed

Multiplying both sides of \eqref{e: u_k} by $z^k$ and summing over $k\ge0$ we get an explicit formula for $u$, 
\[
u(z) = \jap{(I-z\Sigma^*)^{-1} \1, q}\ci{L^2(\rho)} , \qquad z\in\bbD . 
\]
The proof of Theorem~\ref{thm.a5a} is complete. \qed

%%%%%%%%%%%%%%%%%%%%%%%%%%%%%
\subsection{Self-adjoint case: proof of Theorems~\ref{thm.sa1} and \ref{thm.sa2}}
%%%%%%%%%%%%%%%%%%%%%%%%%%%%%
\label{s:self-adjoint-easy}
First let us assume that $\Gamma_u$ is self-adjoint, i.e. that all coefficients $\wh u_j$ are real. We will prove that both $\Psi_u$ and $\wt\Psi_u$ take values $\pm1$ and the polar decomposition \eqref{pd} holds. 

Let us rewrite \eqref{e: model A 02} for $A = H_u\big|_{\overline{\Ran} H_u}$, $v=u$   in terms 
of the linear realization $\Gamma_u$. Since $\Gamma_u=\Gamma_u^*$, we have  $|H_u| = 
|\Gamma_u^*|=|\Gamma_u|$, and $H_u=\bC \Gamma_u =\Gamma_u\bC$ (so~$\Gamma_u$ commutes with the 
conjugation $\bC$ defined in \eqref{e:def bC}). 

Noticing that $\overline{\Ran}\Gamma_u = \overline{\Ran}H_u$ is $\Gamma_u$-invariant, and defining 
$\Gamma_u\ute := \Gamma_u \big|_{\overline{ \Ran}\Gamma_u} = \Gamma_u \big|_{\overline{ \Ran} H_u} 
$,  we can rewrite \eqref{e: model A 
02} for $A = H_u\ute :=H_u\big|_{\overline{\Ran} H_u}$, with $v=u$  as
\begin{align}
\label{e: Gamma_u bC=}
\Gamma_u\ute\bC 
f(\abs{\Gamma_u\ute})u=\abs{\Gamma_u\ute}\overline{\Psi}_u(\abs{\Gamma_u\ute})\overline{f}(\abs{\Gamma_u})u,
 \quad f\in L^2(\rho) . 
\end{align}

We have $\bC u=u$ and therefore $\bC 
f(\abs{\Gamma_u\ute})u=\overline{f}(\abs{\Gamma_u\ute})u$ (by a standard approximation from 
polynomials $f$). Using this, we can rewrite \eqref{e: Gamma_u bC=} as
$$
\Gamma_u\ute \overline{f}(\abs{\Gamma_u\ute})u
=\abs{\Gamma_u\ute}\overline{\Psi}_u(\abs{\Gamma_u\ute})\overline{f}(\abs{\Gamma_u\ute})u, \quad 
f\in L^2(\rho),
$$
so
$$
\Gamma_u\ute=\abs{\Gamma_u\ute}\overline{\Psi}_u(\abs{\Gamma_u\ute}) = 
\overline{\Psi}_u(\abs{\Gamma_u\ute}) \abs{\Gamma_u\ute} . 
$$
The last identity gives the polar decomposition of $\Gamma_u\ute$, and since it is self-adjoint, 
the operator $\Psi_u(\abs{\Gamma_u\ute})$ is a self-adjoint unitary operator, so $\Psi_u$ takes 
values $\pm1$. If we assign $\Psi_u(0):=0$, we get the polar decomposition 
\[
\Gamma_u = \abs{\Gamma_u}  \overline{\Psi}_u(|\Gamma_u|)= \overline{\Psi}_u(|\Gamma_u|) \abs{\Gamma_u}  , 
\]
where $\Psi_u(|\Gamma_u|)$ is a self-adjoint partial isometry, $\Ker \Psi_u(|\Gamma_u|) =\Ker\Gamma_u$.

Similar argument can be applied to $\wt\Psi_u$. If $\Ker \wt\Gamma_u\ute=\{0\}$, the reasoning is 
exactly the same; if $\Ker \wt\Gamma_u\ute\ne\{0\}$ (which may happen), a slight modification is 
needed. Namely, we need first to consider the polar decomposition of $\wt\Gamma_u\big|_{\overline{ 
\Ran} \wt \Gamma_u}$. Noticing that $\wt\rho_0$ is the spectral measure of the operator 
$|\wt\Gamma_u|\,\big|_{\overline{\Ran}\wt \Gamma_u}$ with respect to the vector $\wt u:= 
P_{\overline{ \Ran}\wt\Gamma_u}u$, we the can write, assigning $\wt\Psi_u(0):=0$, that 
\begin{align*}
\wt\Gamma_u  = \abs{\wt\Gamma_u}  \wt\Psi_u(|\wt\Gamma_u|) = \wt\Psi_u(|\wt\Gamma_u|) \abs{\wt\Gamma_u}. 
\end{align*}
So $\wt\Psi_u$ takes values $\pm1$ and the polar decomposition for $\wt\Gamma_u$ has the required form \eqref{pd}. 

Finally, assume that both $\Psi$ and $\wt \Psi$ are real-valued, and let us prove that the 
Fourier coefficients $\wh u_m$ are real for all $m$. We use formula \eqref{e: u_k}. 
Denote $A=\calM^{-1}\Psi(\calM)$ and $B=\wt\Psi(\wt \calM)\wt \calM$. By our assumptions, both 
$A$ and $B$  are self-adjoint, $A$ may be unbounded, but  $BA$ is 
bounded (extends to a bounded operator from a dense set). The operator
$$
(\Sigma^*)^m\wt\Psi(\wt \calM)\wt \calM=(BA)^mB
$$
is self-adjoint for all $m\geq0$. 
Since $\1=\Psi(\calM)\calM q$, we can write 
$$
\wh u_m=\left\langle(\Sigma^*)^{m-1}\wt\Psi(\wt \calM)\wt \calM q,q\right\rangle_{L^2(\rho)}
=
\left\langle(BA)^{m-1}Bq,q\right\rangle_{L^2(\rho)}, 
$$
and since $(BA)^{m-1} B$ is self-adjoint, $\wh u_m$  is real for all $m\geq1$. 
The proof of Theorems~\ref{thm.sa1} and \ref{thm.sa2} is complete. 
\qed

%%%%%%%%%%%%%%%%%%%%%%%%%%%%%%%%%%
%%%%%%%%%%%%%%%%%%%%%%%%%%%%%%%%%%
\section{Operator theoretic background}\label{sec.e}
%%%%%%%%%%%%%%%%%%%%%%%%%%%%%%%%%%
%%%%%%%%%%%%%%%%%%%%%%%%%%%%%%%%%%

In the following sections, we will use some more specialised operator theoretic material, related 
mainly to the functional model for contractions. In this section, we collect without proof the corresponding background facts. 
\subsection{Douglas' Lemma}
\label{s:DLemma}
We will need the following simple fact. 
\begin{lemma}
\label{l: DLemma}
Let $A$, $B$ be operators in a Hilbert space such that $\Ker A =\Ker A^* =\{0\}$ and 
\begin{align*}
B^*B \le A^*A. 
\end{align*}
Then the operator $BA^{-1}$, defined  on a dense set $\Ran A$ extends to a contraction $T$.
The adjoint $T^*$ is given by the formula $T^* = (A^*)^{-1} B^*$; note that the boundedness of $T$ 
implies that $\Ran B^*\subset \Dom (A^*)^{-1}$, so the above expression is defined on the whole 
space.  
\end{lemma}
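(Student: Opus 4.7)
The plan is to pass from the operator inequality $B^*B\leq A^*A$ to the pointwise estimate $\|Bx\|\leq\|Ax\|$, define $T$ on the dense set $\Ran A$ by the formula $T(Ax):=Bx$, and then read off the adjoint by a direct duality computation.

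First I would unpack the hypothesis $B^*B\leq A^*A$: for every $x$ in the common domain,
\begin{equation*}
\|Bx\|^2=\langle B^*Bx,x\rangle\leq\langle A^*Ax,x\rangle=\|Ax\|^2.
\end{equation*}
This is the key estimate that makes the definition below consistent. Since $\Ker A=\{0\}$, the prescription $T_0(Ax):=Bx$ defines a single-valued map $T_0:\Ran A\to\fH$: if $Ax=Ax'$ then $A(x-x')=0$, so $x=x'$, and $Bx-Bx'=0$ follows from $\|B(x-x')\|\leq\|A(x-x')\|=0$. The estimate above also gives $\|T_0(Ax)\|\leq\|Ax\|$, so $T_0$ is a contraction on $\Ran A$. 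Because $\Ker A^*=\{0\}$, the range $\Ran A$ is dense, and $T_0$ extends uniquely by continuity to a contraction $T$ on the whole space; this is exactly the required extension of $BA^{-1}$.

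To identify the adjoint, I would fix $z\in\fH$ and compute, for every $x\in\Dom A$,
\begin{equation*}
\langle Ax,T^*z\rangle=\langle T(Ax),z\rangle=\langle Bx,z\rangle=\langle x,B^*z\rangle,
\end{equation*}
assuming $z\in\Dom B^*$ (and the general case will follow by density or by interpreting $B^*z$ accordingly; if $B$ itself is bounded there is nothing to worry about). The resulting identity $\langle Ax,T^*z\rangle=\langle x,B^*z\rangle$ valid for all $x\in\Dom A$ says precisely that $T^*z\in\Dom A^*$ and $A^*T^*z=B^*z$. Since $\Ker A^*=\{0\}$, the operator $A^*$ is injective, and so $(A^*)^{-1}$ is well defined on its range; the identity $A^*T^*z=B^*z$ then forces $B^*z\in\Ran A^*=\Dom(A^*)^{-1}$ and $T^*z=(A^*)^{-1}B^*z$, which is the claimed formula.

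Since $T^*$ is bounded on all of $\fH$, this also yields the side remark of the lemma: the inclusion $\Ran B^*\subset\Dom(A^*)^{-1}$ holds automatically. There is no real obstacle in the argument; the only point that requires a little care is the domain issue in the adjoint computation, ensuring that the formal identity $T^*=(A^*)^{-1}B^*$ is meaningful on the entire space and not just on a dense subset—this is handled by combining injectivity of $A^*$ with the boundedness of $T^*$.
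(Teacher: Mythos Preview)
Your proof is correct and complete. The paper itself does not prove this lemma: Section~\ref{sec.e} explicitly states that it collects background facts without proof, and this is the standard Douglas' Lemma. Your argument---passing from the operator inequality to the norm estimate, defining $T$ on the dense set $\Ran A$, and reading off the adjoint via the duality computation $\langle Ax,T^*z\rangle=\langle x,B^*z\rangle$---is exactly the expected one.
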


In this paper we will often apply this lemma to self-adjoint operators $\calM$, $\wt\calM$, 
$\Ker\calM=\{0\}$, such that $\wt\calM^2\le \calM^2$, to define contractions $\wt\calM\calM^{-1}$, 
$\calM^{-1}\wt\calM$.

\subsection{Inner functions, model spaces and the compressed shift}\label{sec.e1}
A non-constant function $\theta\in H^2(\bbT)$ is called \emph{inner}, if 
$\abs{\theta}=1$ a.e. on the unit circle. For an inner function $\theta$, the \emph{model 
space} $K_\theta$ is the subspace of $H^2(\bbT)$, defined by 
$$
K_\theta=H^2(\bbT)\cap (\theta H^2(\bbT))^\perp.
$$
We refer to \cite{Nikolski,GMR} for background on model spaces.

\emph{Beurling's theorem} states that if a non-trivial subspace $K\subsetneq H^2(\bbT)$ is 
invariant for the backward shift $S^*$, then $K=K_\theta$ for some inner $\theta$.  

Let $\theta$ be an inner function and let $P_\theta$ be the orthogonal projection onto $K_\theta$ 
in $H^2(\bbT)$. The operator $S_\theta =P_\theta S$ on $K_\theta$ is called the \emph{compressed 
shift}. Since $K_\theta$ is an invariant subspace for $S^*$, we have $S_\theta^*f=S^*f$ for $f\in 
K_\theta$. 
It is not difficult to compute that 
\begin{align}
I-S_\theta S_\theta^*=\jap{\cdot,P_\theta\1}P_\theta\1, 
\quad
I-S_\theta^* S_\theta=\jap{\cdot,S^*\theta}S^*\theta, 
\label{e1a}
\end{align}
where $P_\theta\1=1-\overline{\theta(0)}\theta$.

\subsection{Contractions in a Hilbert space}
Let $T$ be a contraction in a Hilbert space. The defect spaces of $T$ and $T^*$ are defined as 
\begin{align*}
\calD\ci{T}:=\overline{\Ran} (I-T^*T), \qquad \calD\ci{T^*}:=\overline{\Ran} (I-TT^*), 
\end{align*}
and the \emph{defect indices} of $T$ is the 
(ordered) pair of numbers
$$
(\partial_T,\partial_{T^*}), \qquad
\partial_T=\dim\calD\ci{T}, \quad \partial_{T^*}=\dim\calD\ci{T^*}.
$$
In particular, the shift operator $S$ has the defect indices $(0,1)$ and  the compressed shift 
$S_\theta$ (for any inner $\theta$) has the defect indices $(1,1)$, see \eqref{e1a}.

A contraction $T$ is called \emph{completely non-unitary} (c.n.u.), if $T$ is not unitary on any of 
its invariant subspaces. 
The following result is known as Langer's lemma (see e.g. \cite[Lemma 1.2.6]{Nikolski2}).

%%%%%%%%%%%%%%%
\begin{lemma}\label{lma.b2}
	%%%%%%%%%%%%%%%
	Let $T$ be a contraction in a Hilbert space $X$. 
	Then $X$ can be represented as an orthogonal sum 
	$X=X_{\rm u}\oplus X_{\rm cnu}$, such that 
	$$
	T=\begin{pmatrix} 
	T_{\rm u} & 0
	\\
	0 & T_{\rm cnu}
	\end{pmatrix}
	\quad \text{ in $X_{\rm u}\oplus X_{\rm cnu}$,}
	$$
	where $T_{\rm u}$ is unitary and $T_{\rm cnu}$ is completely non-unitary. 
\end{lemma}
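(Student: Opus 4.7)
The plan is to construct $X_{\rm u}$ explicitly as the set of vectors on which all positive powers of $T$ and $T^*$ act isometrically, and then verify the four required properties: closedness, invariance under both $T$ and $T^*$, unitarity of the restriction, and maximality.

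First, I would define
\[
X_{\rm u} := \bigcap_{n\ge 1}\Ker(I-T^{*n}T^n)\cap\bigcap_{n\ge 1}\Ker(I-T^nT^{*n}) ,
\]
which is a closed linear subspace as an intersection of kernels of bounded operators. The key elementary observation that drives everything is: for a contraction $T$, the equality $\|Tx\|=\|x\|$ forces $T^*Tx=x$, because
\[
\|(I-T^*T)x\|^2 = \|x\|^2 - 2\|Tx\|^2 + \|T^*Tx\|^2 \le \|x\|^2-2\|x\|^2+\|x\|^2 = 0 ,
\]
using $\|T^*Tx\|\le\|Tx\|$. Consequently $x\in X_{\rm u}$ iff $\|T^n x\|=\|T^{*n}x\|=\|x\|$ for every $n\ge 1$.

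Next I would check invariance. Given $x\in X_{\rm u}$ and any $n\ge 1$, the identity $T^{*n}T^n x=x$ immediately yields $\|T^n(Tx)\|=\|T^{n+1}x\|=\|x\|=\|Tx\|$, and the telescoping identity $T^{*n}(Tx)=T^{*(n-1)}(T^*Tx)=T^{*(n-1)}x$ gives $\|T^{*n}(Tx)\|=\|T^{*(n-1)}x\|=\|x\|=\|Tx\|$; hence $Tx\in X_{\rm u}$. The symmetric argument applied with $T$ and $T^*$ exchanged shows $T^*x\in X_{\rm u}$. Therefore $X_{\rm u}$ is reducing for $T$, so relative to the decomposition $X=X_{\rm u}\oplus X_{\rm cnu}$ with $X_{\rm cnu}:=X_{\rm u}^\perp$, the operator $T$ has the stated block-diagonal form. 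Moreover $T^*T=I$ on $X_{\rm u}$ (from $n=1$ in the first family) and $TT^*=I$ on $X_{\rm u}$ (from the second family), so $T_{\rm u}:=T|_{X_{\rm u}}$ is unitary.

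Finally, I would verify that $T_{\rm cnu}:=T|_{X_{\rm cnu}}$ is completely non-unitary. Suppose $Y\subset X_{\rm cnu}$ is a nonzero closed subspace reducing $T_{\rm cnu}$ (equivalently, reducing $T$) such that $T|_Y$ is unitary. Then every $y\in Y$ satisfies $\|T^n y\|=\|y\|=\|T^{*n}y\|$ for all $n\ge 1$, so $Y\subset X_{\rm u}$. Combined with $Y\subset X_{\rm u}^\perp$, this forces $Y=\{0\}$, a contradiction.

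The proof is essentially bookkeeping; the only delicate point is confirming that $X_{\rm u}$ is closed under $T^*$ as well as $T$, and I expect the main obstacle to lie in handling the $T^{*n}(Tx)$ computation cleanly, which the identity $T^*Tx=x$ (valid on $X_{\rm u}$) resolves.
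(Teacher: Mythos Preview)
The paper does not prove this lemma; it simply cites it as Langer's lemma (referencing \cite[Lemma~1.2.6]{Nikolski2}). Your argument is the standard proof and is correct. One small point worth tightening: in the final step you assume the hypothetical subspace $Y\subset X_{\rm cnu}$ is \emph{reducing} for $T$, whereas the paper's definition of c.n.u.\ speaks only of \emph{invariant} subspaces. This is harmless, because if $Y$ is $T$-invariant and $T|_Y$ is unitary, then $Y$ is automatically reducing: for $y\in Y$ write $y=Tz$ with $z\in Y$, $\|z\|=\|y\|$; your own observation that $\|Tz\|=\|z\|$ forces $T^*Tz=z$ then gives $T^*y=z\in Y$. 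With that remark added, the maximality step goes through exactly as you wrote it.
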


\subsection{Contractions with defect indices $(0,1)$ and $(1,1)$}

\begin{theorem}\label{thm.b3a}
Let $T$ be a c.n.u. contraction with defect indices $(0,1)$.  
Then $T$ is unitarily equivalent to the forward shift operator $S$. 
In this case $\Re T$ has a purely a.c. spectrum $[-1,1]$ of multiplicity one. 
\end{theorem}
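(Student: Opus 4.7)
The plan is to first show the unitary equivalence $T\simeq S$ via the Wold decomposition of isometries, and then compute the spectral data of $\operatorname{Re} S$ directly as a Jacobi (free) operator.

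Since $\partial_T = \dim\calD_T = 0$, the operator $I - T^*T$ is zero, so $T$ is an isometry. For an isometry, the Wold decomposition gives an orthogonal splitting of the Hilbert space into a reducing subspace on which $T$ acts unitarily and a reducing subspace on which $T$ is a unilateral shift of some multiplicity equal to $\dim\Ker T^*$. The c.n.u.\ hypothesis forces the unitary summand to vanish, so $T$ is a pure unilateral shift. Its multiplicity is $\dim\Ker T^* = \dim\calD_{T^*} = \partial_{T^*} = 1$. Thus $T$ is unitarily equivalent to the shift $S$ on $H^2$.

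For the statement about $\operatorname{Re} T$, it suffices to compute the spectral properties of $\operatorname{Re} S = \tfrac12(S+S^*)$ on $H^2$. In the standard basis $\{z^k\}_{k=0}^\infty$, the operator $2\operatorname{Re} S$ is the Jacobi (tridiagonal) matrix with zero diagonal and off-diagonal entries equal to $1$. The cyclicity of $z^0$ for $2\operatorname{Re} S$ is immediate by induction: one checks that $\operatorname{span}\{(2\operatorname{Re} S)^k z^0 : 0\le k \le n\} = \operatorname{span}\{z^0,z^1,\dots,z^n\}$ using $2\operatorname{Re} S \cdot z^k = z^{k+1} + z^{k-1}$ (with the convention $z^{-1}=0$). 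Hence $\operatorname{Re} S$ has simple spectrum.

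To identify the spectral type, I would use the well-known fact that the spectral measure of the free Jacobi operator at the cyclic vector $z^0$ is the Chebyshev measure of the second kind, i.e.\ $d\mu(s) = \tfrac{2}{\pi}\sqrt{1-s^2}\,\chi_{[-1,1]}(s)\,ds$. This can be verified directly by observing that the orthonormal polynomials generated by the three-term recurrence $2s\,p_n(s) = p_{n+1}(s) + p_{n-1}(s)$ with $p_0 = 1$ are precisely the (normalized) Chebyshev polynomials of the second kind, which are orthonormal with respect to $d\mu$. Consequently $\operatorname{Re} S$ is unitarily equivalent to multiplication by $s$ on $L^2([-1,1], d\mu)$, which has purely absolutely continuous spectrum $[-1,1]$ of multiplicity one.

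The only mildly delicate point is making the identification of the spectral measure fully rigorous; this is standard from the theory of orthogonal polynomials on the real line, so no real obstacle arises. The principal content of the theorem is really the first step, the passage from defect indices $(0,1)$ plus c.n.u.\ to the shift via Wold.
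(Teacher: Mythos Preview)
Your proof is correct and follows essentially the same route as the paper: the first part is the Kolmogorov--Wold decomposition (an isometry with no unitary part and $\dim\Ker T^*=1$ is the unilateral shift), and the second part identifies $2\Re S$ with the free Jacobi matrix whose spectrum is purely a.c.\ on $[-2,2]$ with multiplicity one. The paper simply cites Sz.-Nagy--Foia\c{s} and Teschl for these two facts, whereas you supply slightly more detail (cyclicity of $z^0$ and the Chebyshev measure), but the argument is the same.
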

The first part follows from  the Kolmogorov--Wold decomposition, see \cite[Theorem 
I.1.1]{SzNF2010}. For the second part, we note that the matrix of $2\Re S$ in the standard basis in 
$H^2(\bbT)$ is 
the Jacobi matrix
$$
2\Re S=
\begin{pmatrix}
0 & 1 & 0 & 0 & \cdots
\\
1 & 0 & 1 & 0 & \cdots
\\
0 & 1 & 0 & 1 & \cdots
\\
0 & 0 & 1 & 0 & \cdots
\\
\vdots & \vdots & \vdots & \vdots & \ddots
\end{pmatrix}
$$
and it is well known that the spectrum of this matrix is purely a.c., coincides with the interval
$[-2,2]$ and has multiplicity one (see e.g. \cite[Section 1.1.3]{Teschl}). 

The following statement will be crucial in  our construction. 

%%%%%%%%%%%%%%
\begin{theorem}\label{thm2}
%%%%%%%%%%%%%%
Let $T$ be a c.n.u. contraction with defect indices 
$(1,1)$. 
Then the following statements are equivalent: 
\begin{enumerate}[\rm (i)]
	\item
	${T^*}^n\to0$ strongly as $n\to\infty$;
	\item
	$T^n\to0$ strongly as $n\to\infty$; 
	\item
	The operator $\Re T$ has a purely singular spectrum; 
	\item
	The operator $T$ is unitarily equivalent to the compressed shift operator $S_\theta$
	for some inner function $\theta$. 
\end{enumerate}
\end{theorem}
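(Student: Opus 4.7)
My plan is to use the Sz.-Nagy–Foias functional model, combined with Clark's theory of rank-one unitary perturbations. Since $T$ is c.n.u.\ with defect indices $(1,1)$, its characteristic function $\theta_T$ is a scalar-valued contractive analytic function on $\bbD$, and essentially all spectral information about $T$ is encoded by $\theta_T$.

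\emph{Step 1: equivalence of (i), (ii), (iv).} By Sz.-Nagy–Foias theory, for a c.n.u.\ contraction with scalar characteristic function both membership in class $C_{\cdot 0}$ (i.e.\ $T^{*n}\to 0$ strongly) and membership in class $C_{0\cdot}$ (i.e.\ $T^{n}\to 0$ strongly) are individually equivalent to the single boundary condition $|\theta_T(e^{it})|=1$ a.e., i.e.\ $\theta_T$ being inner. This yields (i) $\Leftrightarrow$ (ii). When $\theta_T$ is inner, the functional model explicitly identifies $T$ with $S_{\theta_T}$ on $K_{\theta_T}$, giving (iv). The converse (iv) $\Rightarrow$ (i) is immediate from $K_\theta$ being $S^*$-invariant together with $(S^*)^n\to 0$ strongly on $H^2$; and (iv) $\Rightarrow$ (ii) follows from a short direct computation showing $S_\theta^n f = P_\theta(z^n f)$ and $\|S_\theta^n f\|^2 = \|f\|^2 - \|P_+(\bar\theta z^n f)\|^2\to 0$, so this direction need not invoke Sz.-Nagy–Foias theory.

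\emph{Step 2: equivalence of (iii) and (iv).} Here I would invoke Clark's theory of rank-one unitary perturbations along the defect vectors of $T$. For each $\alpha\in\bbT$ there is a unitary $U_\alpha$ (the Clark unitary) of the form, in the model $T=S_\theta$,
\[
U_\alpha \;=\; S_\theta + \alpha\, c(\theta)\, \jap{\fdot,\, S^*\theta}\, P_\theta\1 ,
\]
for an explicit normalizing constant $c(\theta)$. Its spectral measure $\mu_\alpha$ on $\bbT$ is the Aleksandrov–Clark measure of $\theta_T$, and the classical Aleksandrov–Clark theorem identifies the absolutely continuous part of $\mu_\alpha$ as having density proportional to $(1-|\theta_T|^2)/|1-\bar\alpha\,\theta_T|^2$ with respect to Lebesgue measure. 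Hence $\mu_\alpha$ is purely singular (for some, equivalently all, $\alpha$) precisely when $\theta_T$ is inner. Because $\Re T - \Re U_\alpha$ is a self-adjoint operator of rank at most two, the Kato–Rosenblum theorem gives unitary equivalence of the absolutely continuous parts of $\Re T$ and $\Re U_\alpha$; and since the map $e^{i\varphi}\mapsto\cos\varphi$ preserves singularity of measures, $\Re U_\alpha$ has purely singular spectrum iff $U_\alpha$ does. Chaining these equivalences, (iii) $\Leftrightarrow$ ($\theta_T$ inner) $\Leftrightarrow$ (iv).

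\emph{Main obstacle.} The delicate point is the Aleksandrov–Clark correspondence linking the a.c.\ density of $\mu_\alpha$ to $|\theta_T|$ on the boundary. In particular, the sufficiency direction (iii) $\Rightarrow$ (iv) requires constructing and analyzing the Clark unitary without knowing a priori that $\theta_T$ is inner, and then extracting inner-ness of $\theta_T$ from the purely singular character of $\mu_\alpha$. This is precisely the circle of ideas from the refined Clark model theory referenced as \cite{LT} in the introduction, and I would expect the bulk of the technical work to reside there.
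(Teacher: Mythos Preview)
Your proposal is correct and follows essentially the same route as the paper: the equivalence (i)$\Leftrightarrow$(ii)$\Leftrightarrow$(iv) via the Sz.-Nagy--Foia\c{s} characterization ($\theta_T$ inner), and (iii)$\Leftrightarrow$(iv) via a rank-one unitary extension (Clark unitary), the Aleksandrov--Clark identification of the a.c.\ part of its spectral measure with $1-|\theta_T|^2$, and Kato--Rosenblum applied to the finite-rank difference $\Re T-\Re U_\alpha$. The paper packages the second step by introducing an auxiliary condition (v) (``any rank-one unitary extension has purely singular spectrum''), but the content is identical to what you wrote.

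One remark: your ``main obstacle'' is somewhat overstated. The rank-one unitary extension $V=T+K$ exists for \emph{any} contraction with defect indices $(1,1)$, inner $\theta_T$ or not, and the formula relating $\theta_T$ to the spectral measure of $V$ (via the Herglotz transform, as in the paper's \eqref{e: theta0}--\eqref{e: LFT theta0}) holds in this generality; one reads off directly that $\theta_T$ is inner iff the spectral measure of $V$ is singular. So the direction (iii)$\Rightarrow$(iv) does not require the more delicate parts of \cite{LT}, only the classical Clark/Herglotz correspondence.
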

We discuss the proof in Appendix~\ref{app.c}.

\subsection{Dilations of contractions}
Let $T$ be a contraction on a Hilbert space $X$. Further, let $Y$ be another Hilbert space such that $X$ is a subspace of $Y$, let $P_X$ be the orthogonal projection onto $X$ in $Y$ and let $V$ be a bounded operator in $Y$. Then $V$ is called a \emph{dilation} of $T$, if for any $n\geq0$ we have
\begin{equation}
T^nf=P_X V^n f, \quad \forall f\in X.
\label{eq.dilation}
\end{equation}

%%%%%%%%%%%%%%%%%%%
\begin{theorem}\cite[Theorem II.6.4]{SzNF2010}
\label{thm.dilation}
%%%%%%%%%%%%%%%%%%%
Let $T$ be a c.n.u. contraction. Then there exists a dilation $V$ of $T$ such that $V$ is a unitary operator with a purely a.c. spectrum. 
\end{theorem}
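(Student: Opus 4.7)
The plan has two stages: construct the minimal unitary dilation $V$ by the classical Sz.-Nagy procedure, and then use the c.n.u.\ hypothesis to rule out any non-a.c.\ spectral component. For the construction, put $D_T = (I-T^*T)^{1/2}$, $D_{T^*}=(I-TT^*)^{1/2}$, with closed ranges $\calD\ci T$, $\calD\ci{T^*}$, and form $\calH_+ = X \oplus \ell^2(\bbN,\calD\ci T)$. Define
\[
V_+(x;d_1,d_2,\dots) = (Tx;\, D_T x, d_1, d_2, \dots).
\]
The identity $\|Tx\|^2 + \|D_T x\|^2 = \|x\|^2$ makes $V_+$ an isometry, and by induction $T^n = P_X V_+^n|_X$ for all $n\ge 0$. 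Enlarging by a ``past'' $\calH_- = \ell^2(\bbZ_-,\calD\ci{T^*})$ and extending $V_+$ to a unitary $V$ on $\calK = \calH_-\oplus\calH_+$ that shifts the past and ties the two halves through $X$ via $D_{T^*}$ produces a unitary dilation of $T$; the formal verification of unitarity and of the dilation property is a direct computation. Finally restrict to the minimal space $\calK_0 = \bigvee_{n\in\bbZ}V^n X$.

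For the spectral stage, I would follow the standard Sz.-Nagy--Foias strategy. By the spectral theorem decompose $V = V_{\rm ac}\oplus V_{\rm s}$ on $\calK_0 = \calK_{\rm ac}\oplus \calK_{\rm s}$ (with $V_{\rm s}$ collecting both singular continuous and pure-point components) and let $P_{\rm s}$ be the projection onto $\calK_{\rm s}$, which commutes with $V$. The key claim is
\[
P_{\rm s} X \subset X, \qquad V|_{P_{\rm s}X} = T|_{P_{\rm s}X}.
\]
Granting this, $P_{\rm s}X$ is an invariant subspace of $X$ on which $T$ acts as the restriction of the unitary $V$ (and is preserved by $V^{-1}$ as well), hence on which $T$ itself is unitary. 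This contradicts the c.n.u.\ hypothesis unless $P_{\rm s}X = \{0\}$; by minimality $P_{\rm s}\calK_0 = \bigvee_n V^n P_{\rm s}X = \{0\}$, so $V$ is purely absolutely continuous.

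The main obstacle is establishing the claim. Geometrically, $V h - T h$ lies in the first copy of $\calD\ci T$ adjoined to $X$ and has norm $\|D_T h\|$; a symmetric statement holds for $V^{-1}$ and $D_{T^*}$, so the claim amounts to $D_T V^n P_{\rm s} h = 0$ and $D_{T^*} V^{-n} P_{\rm s} h = 0$ for all $n\ge 0$ and $h\in X$. The cleanest route, and the one I would take, is the Sz.-Nagy--Foias functional model: realise $\calK_0$ as a subspace of $L^2(\bbT,\calD\ci{T^*})\oplus L^2(\bbT,\calD\ci T)$ on which $V$ acts as multiplication by $e^{i\theta}$. Then the spectral measure of $V$ is dominated by Lebesgue measure on $\bbT$, so $V$ is a.c., and the claim drops out of the explicit structure of the model. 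The cost of this approach is constructing the characteristic function $\Theta_T$ and proving the unitary equivalence with the model, which is the substantial content of Chapter II of \cite{SzNF2010}. A more direct path uses that the subspaces $\bigvee_{n}V^{n}\calD\ci T$ and $\bigvee_{n}V^{-n}\calD\ci{T^*}$ in $\calK_0$ carry shift-type structures with purely Lebesgue spectrum, so $P_{\rm s}$ annihilates them; pushing $P_{\rm s}$ through these defect directions reduces the problem to a statement internal to $X$, where the c.n.u.\ hypothesis closes the argument.
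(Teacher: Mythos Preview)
The paper does not prove this theorem; it appears in Section~\ref{sec.e} among the background facts that are ``collect[ed] without proof'' and is cited directly to \cite[Theorem II.6.4]{SzNF2010}. So there is no proof in the paper to compare against.

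Your outline is correct and follows the standard argument. Two remarks. First, the ``more direct path'' you sketch at the end is the efficient route and deserves to be the main argument rather than an afterthought. Once you verify that $L := \overline{(V-T)X}$ is wandering for $V$ (a two-line computation from the dilation identity $P_X V^n|_X = T^n$), the restriction of $V$ to $\bigvee_{n\in\bbZ} V^n L$ is a bilateral shift, hence purely a.c.; likewise for $L_* := \overline{(V^{-1}-T^*)X}$. In your Sch\"affer-type realisation, $\calK_0 \ominus X$ is exactly the orthogonal sum of the forward iterates $V^n L$ ($n\ge 0$) and the backward iterates $V^{-m} L_*$ ($m\ge 1$), so the singular subspace $\calK_{\rm s}$, being orthogonal to every $V^n L$ and every $V^m L_*$, must lie inside $X$. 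This is stronger than your stated claim $P_{\rm s} X \subset X$ and finishes cleanly: for $h \in \calK_{\rm s} \subset X$ one has $Vh - Th \in L \cap X = \{0\}$, so $T$ agrees with the unitary $V$ on the reducing subspace $\calK_{\rm s}$, and the c.n.u.\ hypothesis forces $\calK_{\rm s}=\{0\}$.

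Second, routing the proof through the full functional model is circular as you present it: realising $V$ as multiplication by $e^{i\theta}$ on an $L^2(\bbT)$-valued space \emph{is} the conclusion that $V$ is a.c., so it cannot serve as a lemma toward your intermediate claim about $P_{\rm s}X$. Either take the functional-model route and stop as soon as $V$ is so realised, or run the wandering-subspace argument; do not nest one inside the other.
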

In fact, any \emph{minimal} unitary dilation (this means that the span of $V^n X$ for $n\geq0$ is dense in $Y$) of a c.n.u. contraction has a purely a.c. spectrum; see \cite{SzNF2010} for details.

\subsection{Trace class perturbations}\label{sec.e2}

\begin{theorem}[Kato-Rosenblum]\label{thm.KR}
	Let $A$ and $B$ be self-adjoint (or unitary) operators in a Hilbert space $X$ such that the 
	difference $A-B$ 
	is trace class. Then the absolutely continuous parts of $A$ and $B$ are unitarily equivalent.
\end{theorem}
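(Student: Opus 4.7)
The plan is to follow the classical time-dependent scattering approach via wave operators. I focus on the self-adjoint case; the unitary case is analogous with $e^{itA}$ replaced by powers $A^n$ (or reduced to the self-adjoint setting through the Cayley transform). Setting $V := A - B \in \bS_1$, I would define
\begin{equation*}
W_\pm(A,B) \ := \ \slim_{t\to\pm\infty} e^{itA} e^{-itB} P_{\mathrm{ac}}(B),
\end{equation*}
where $P_{\mathrm{ac}}(B)$ is the projection onto the absolutely continuous subspace of $B$. The goal is to show (i) these strong limits exist, (ii) they are partial isometries with initial subspace $\Ran P_{\mathrm{ac}}(B)$ intertwining $A$ and $B$, and (iii) their ranges fill $\Ran P_{\mathrm{ac}}(A)$.

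The analytic heart of the argument is step (i). By Cook's criterion, $W_\pm f$ exists provided $t \mapsto \norm{V e^{-itB} f}$ is integrable on a half-line, since $\tfrac{d}{dt}(e^{itA}e^{-itB}f) = i e^{itA} V e^{-itB} f$. To establish this integrability on a dense subset of $\Ran P_{\mathrm{ac}}(B)$, I would invoke the Rosenblum--Kato smoothness estimate: for Hilbert--Schmidt $K$ and any $f$ whose spectral measure $\mu_f^B$ with respect to $B$ has bounded Radon--Nikodym derivative,
\begin{equation*}
\int_{-\infty}^{\infty} \norm{K e^{-itB} f}^2\, dt \ \leq\ 2\pi \,\bigl\lVert d\mu_f^B/ds\bigr\rVert_\infty \, \norm{K}_{\bS_2}^2 .
\end{equation*}
Factoring $V = K_1^* K_2$ with $K_1, K_2 \in \bS_2$ and applying Cauchy--Schwarz in $t$ converts this $L^2$-bound into the needed $L^1$-bound on $\norm{V e^{-itB} f}$. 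Such vectors $f$ are dense in $\Ran P_{\mathrm{ac}}(B)$, so a standard $3\varepsilon$-argument using the fact that $e^{itA} e^{-itB}$ is a contraction uniformly in $t$ upgrades existence to all of $\Ran P_{\mathrm{ac}}(B)$.

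The remaining steps are formal. The intertwining relation $A W_\pm(A,B) = W_\pm(A,B) B$ follows from $e^{isA} W_\pm(A,B) = W_\pm(A,B) e^{isB}$, an immediate consequence of the defining limit. Isometry on $\Ran P_{\mathrm{ac}}(B)$ is automatic because $e^{itA} e^{-itB}$ is unitary for each $t$, so $W_\pm(A,B)$ realises a unitary equivalence of $B_{\mathrm{ac}}$ with the restriction of $A$ to $\Ran W_\pm(A,B) \subset \Ran P_{\mathrm{ac}}(A)$. Completeness, i.e.\ $\Ran W_\pm(A,B) = \Ran P_{\mathrm{ac}}(A)$, follows by symmetry: since $B - A = -V \in \bS_1$, the same existence proof applied with the roles of $A$ and $B$ interchanged produces $W_\pm(B,A)$, and the chain-rule identity $W_\pm(B,A) = W_\pm(A,B)^*$ on the relevant a.c.\ subspaces forces both ranges to be the full a.c.\ subspaces. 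The main obstacle throughout is the Rosenblum--Kato smoothness estimate; everything else is algebraic or a density argument.
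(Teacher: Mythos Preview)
The paper does not prove this theorem; it is quoted as background in Section~\ref{sec.e} without proof. Your outline via wave operators, intertwining, and completeness-by-symmetry is indeed the classical route, and the Rosenblum smoothness estimate you state is correct and is the central analytic input.

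There is, however, a genuine gap at precisely the step you flag as the heart of the argument. The smoothness bound gives $t\mapsto\norm{K_2e^{-itB}f}\in L^2(\bbR)$, and hence $t\mapsto\norm{Ve^{-itB}f}\in L^2(\bbR)$; but Cauchy--Schwarz on the whole line does \emph{not} convert this into $\norm{Ve^{-itB}f}\in L^1(\bbR)$, which is what Cook's criterion demands. Indeed $L^2(\bbR)\not\subset L^1(\bbR)$, and for a generic trace-class (even rank-one) perturbation the Cook integral $\int_{\bbR}\norm{Ve^{-itB}f}\,dt$ diverges for every nonzero $f$. So Cook's method in its direct form is not available here.

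The actual existence proof does use the factorisation $V=K_1^*K_2$ and Cauchy--Schwarz, but in a \emph{bilinear} way that avoids the $L^1$ criterion altogether: one pairs $W(t_2)f-W(t_1)f=i\int_{t_1}^{t_2}e^{isA}K_1^*K_2e^{-isB}f\,ds$ against a test vector $g$, obtains
\[
\abs{\jap{W(t_2)f-W(t_1)f,\,g}}\le\Bigl(\int_{t_1}^{t_2}\norm{K_2e^{-isB}f}^2ds\Bigr)^{1/2}\Bigl(\int_{t_1}^{t_2}\norm{K_1e^{-isA}g}^2ds\Bigr)^{1/2},
\]
and then controls the second factor uniformly over the relevant $g$ by a further argument (this is where the real work lies, and it is not a one-line consequence of the smoothness bound for Hilbert--Schmidt operators, since that bound depends on $\norm{d\mu_g^A/ds}_\infty$ rather than on $\norm{g}$). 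Kato's original stationary proof via resolvents sidesteps this issue entirely. See Reed--Simon~III, Theorem~XI.8, or Kato's \emph{Perturbation Theory}, Chapter~X, for the precise mechanism.
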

The following generalisation of the Kato-Rosenblum theorem was found by Ismagilov in 
\cite{Ismagilov}; see also \cite{HowlandKato,Suslov} for different proofs. 
%%%%%%%%%%%%%%%%%
\begin{theorem}[Ismagilov]
%%%%%%%%%%%%%%%%%
Let $A$ and $B$ be bounded self-adjoint operators such that $AB$ is trace class. 
Then the a.c. parts of the operators $A+B$ and $A\oplus B$ are unitarily equivalent.
\end{theorem}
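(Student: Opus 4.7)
I would approach this via two-Hilbert-space wave-operator theory (Pearson, Birman--Kato). Writing $H_1 := A \oplus B$ on $\wt X := X \oplus X$ and $H_2 := A + B$ on $X$, the goal is to construct wave operators
\[
W_\pm \;:=\; \slim_{t \to \pm\infty} e^{it H_2}\, J\, e^{-it H_1}\, P_{\mathrm{ac}}(H_1)
\]
for a suitable identification $J : \wt X \to X$, together with wave operators going the other way, and then to identify these as a unitary equivalence of the a.c.\ parts.

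\textbf{Choice of $J$ and the key commutator.} The hypothesis $AB \in \bS_1$ points directly to the identification
\[
J(f, g) \;:=\; A f + B g ,
\]
for which a one-line computation gives
\[
[H_2 J - J H_1](f, g) \;=\; (A+B)(A f + B g) - (A^2 f + B^2 g) \;=\; BA\, f + AB\, g ,
\]
so $H_2 J - J H_1 = (BA\ \ AB) : \wt X \to X$ is trace class (using $BA = (AB)^* \in \bS_1$). Pearson's theorem then furnishes the wave operators $W_\pm$ intertwining the a.c.\ parts, $H_2 W_\pm = W_\pm H_1$. A symmetric argument with $J^* : X \to \wt X$, $J^* f = (Af, Bf)$, whose commutator $H_1 J^* - J^* H_2 = -\bigl(\begin{smallmatrix} AB \\ BA \end{smallmatrix}\bigr)$ is also trace class, supplies wave operators $\wt W_\pm$ in the opposite direction.

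\textbf{Main obstacle.} The difficulty is that $J$ is far from an approximate isometry---one has $JJ^* = A^2 + B^2$ and $J^* J = \bigl(\begin{smallmatrix} A^2 & AB \\ BA & B^2 \end{smallmatrix}\bigr)$, neither of which is a compact perturbation of the identity---so isometry of $W_\pm$ does not follow automatically from Pearson's theorem. By the chain rule, $\wt W_+ W_+ = \slim_{t \to \infty} e^{it H_1} J^* J\, e^{-it H_1} P_{\mathrm{ac}}(H_1)$; the off-diagonal trace-class pieces of $J^* J$ vanish under the strong limit on a.c.\ vectors by a RAGE-type argument, but the diagonal $A^2 \oplus B^2$ commutes with $H_1$ and persists, so this composition equals $(A^2 \oplus B^2) P_{\mathrm{ac}}(H_1)$ rather than $P_{\mathrm{ac}}(H_1)$. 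The principal technical step is therefore to correct for this: either modify $J$ by spectral-calculus factors in $|H_1|$ so that the surviving diagonal is absorbed into the identity on the a.c.\ subspace, or follow Ismagilov's original strategy of exhibiting an auxiliary self-adjoint operator $\wt T$ on $\wt X$ with $\wt T - H_1 \in \bS_1$ (so Kato--Rosenblum applies) and $\wt T$ simultaneously unitarily equivalent to $H_2 \oplus 0$ on its a.c.\ subspace. In either route, the hypothesis $AB \in \bS_1$---rather than a mere trace-class condition on $A$, $B$, or $A-B$---is decisively used to cancel the surviving cross terms in the relevant strong limit.
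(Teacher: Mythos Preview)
The paper does not prove this theorem: it is quoted in Section~\ref{sec.e} as background, with references to Ismagilov's original paper and to the alternative proofs of Howland--Kato and Suslov. So there is no ``paper's own proof'' to compare against; your write-up is effectively a sketch of the Howland--Kato argument (your identification $J(f,g)=Af+Bg$ and the trace-class commutator $H_2J-JH_1=(BA\ \ AB)$ are exactly theirs).

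Your diagnosis of the obstacle is correct, but both proposed fixes are left too vague to count as a proof, and the first one (modifying $J$ by spectral-calculus factors in $|H_1|$) is unlikely to work cleanly, since any such factor multiplies the commutator and typically destroys the trace-class condition. The actual resolution is shorter than you suggest and does not require changing $J$. You already have $\wt W_+W_+=(A^2\oplus B^2)P_{\rm ac}(H_1)$, which is injective on $X_{\rm ac}(H_1)$ because $\Ker(A^2\oplus B^2)=\Ker A\oplus\Ker B\subset\Ker H_1$. For the other composition, write
\[
JJ^*=A^2+B^2=(A+B)^2-(AB+BA),
\]
so that $e^{itH_2}JJ^*e^{-itH_2}=(A+B)^2-e^{itH_2}(AB+BA)e^{-itH_2}$; the second term is compact and vanishes strongly on a.c.\ vectors by the same RAGE argument you used for the off-diagonal of $J^*J$. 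Hence $W_+\wt W_+=H_2^2\,P_{\rm ac}(H_2)$, again injective on $X_{\rm ac}(H_2)$. Now a standard polar-decomposition step finishes: since $W_+$ intertwines and $W_+^*W_+$ commutes with $H_1$, the partial isometry in $W_+=V|W_+|$ is an isometry of $X_{\rm ac}(H_1)$ into $X_{\rm ac}(H_2)$ intertwining the two operators, so the spectral multiplicity of $(H_1)_{\rm ac}$ is dominated by that of $(H_2)_{\rm ac}$; the injectivity of $W_+\wt W_+$ gives the reverse inequality, and unitary equivalence follows.
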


We will also need the following result on trace class perturbations, due to M.~G.~Krein \cite{Krein}. 
(Much more precise results in terms of the class of $f$ are now 
available, see e.g. \cite{Peller2}).

\begin{theorem}\label{thm.krein}
Let $A$ and $B$ be bounded self-adjoint operators in a Hilbert space $X$ such that the	difference $A-B$ is trace class. Let $f$ be a differentiable function on $\bbR$ such that the derivative $f'$ is a Fourier transform of a finite complex-valued measure on $\bbR$.  Then $f(A)-f(B)$ is also trace class. 
\end{theorem}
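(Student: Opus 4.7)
\medskip

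\noindent\textbf{Proof proposal for Theorem~\ref{thm.krein}.}

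The strategy is to use the Fourier representation of $f'$ to reduce the question to a bound on $\|e^{itA}-e^{itB}\|_{\bS_1}$, which one obtains via the Duhamel formula. By hypothesis there is a finite complex Borel measure $\mu$ on $\bbR$ with
\[
f'(x)=\int_\bbR e^{itx}\,d\mu(t),\qquad x\in\bbR .
\]
Integrating from $y$ to $x$ and applying Fubini (the integrand is bounded by $|x-y|$ uniformly in $t$, and $|\mu|$ is finite), one obtains the representation
\[
f(x)-f(y)=\int_\bbR \frac{e^{itx}-e^{ity}}{it}\,d\mu(t),
\]
where the integrand at $t=0$ is understood as its limit $x-y$. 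This is a scalar identity valid on $\sigma(A)\cup\sigma(B)$, and by the bounded functional calculus it extends to the operator identity
\[
f(A)-f(B)=\int_\bbR \frac{e^{itA}-e^{itB}}{it}\,d\mu(t),
\]
where the integral is a priori understood in the strong operator topology.

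The first key step is the Duhamel formula. For $t\in\bbR$, using $\tfrac{d}{ds}\bigl(e^{i(t-s)A}e^{isB}\bigr)=-ie^{i(t-s)A}(A-B)e^{isB}$ and integrating from $0$ to $t$, one gets
\[
e^{itA}-e^{itB}=i\int_0^t e^{i(t-s)A}(A-B)e^{isB}\,ds .
\]
Since $e^{i(t-s)A}$ and $e^{isB}$ are unitary and $\bS_1$ is a two-sided ideal, the triangle inequality for the trace norm yields
\[
\Bigl\|\frac{e^{itA}-e^{itB}}{it}\Bigr\|_{\bS_1}\le \|A-B\|_{\bS_1}\qquad\text{for all }t\ne 0,
\]
and the same bound holds at $t=0$ where the quotient equals $A-B\in\bS_1$ by hypothesis.

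Combining the two steps, the integrand in the representation for $f(A)-f(B)$ is a Bochner-measurable $\bS_1$-valued function which is uniformly bounded in $\bS_1$-norm by $\|A-B\|_{\bS_1}$. Since $|\mu|(\bbR)<\infty$, the Bochner integral converges in $\bS_1$ and agrees with the strong-operator integral, giving $f(A)-f(B)\in\bS_1$ together with the quantitative estimate
\[
\|f(A)-f(B)\|_{\bS_1}\le |\mu|(\bbR)\cdot \|A-B\|_{\bS_1}.
\]

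The only non-routine point is justifying the passage from the scalar Fourier representation to the corresponding operator identity and the interchange of the integral with $|\mu|$ and the trace norm; this is handled by an approximation argument (e.g.\ multiplying $d\mu(t)$ by $\mathbbm{1}_{|t|\le N}$, passing to the limit in the strong operator topology, and using that the $\bS_1$-valued integrand is dominated by the constant $\|A-B\|_{\bS_1}$). I do not expect any serious obstacle beyond this bookkeeping.
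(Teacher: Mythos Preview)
Your proof is correct; it is the standard argument via the Duhamel formula and yields the sharp operator-Lipschitz bound $\|f(A)-f(B)\|_{\bS_1}\le|\mu|(\bbR)\,\|A-B\|_{\bS_1}$. The paper does not supply its own proof of this statement: Theorem~\ref{thm.krein} is quoted as a known result of Krein \cite{Krein} (with a pointer to \cite{Peller2} for refinements) and is used as a black box in Lemma~\ref{l: I-Sigma trace class} and Theorem~\ref{Psi0}. So there is nothing to compare against, but your write-up would serve perfectly well as a self-contained proof if one were desired.
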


\subsection{Spectral measures}
We recall that for a unitary operator in a Hilbert space, its spectral measure is a projection-valued measure on $\bbT$ and for a self-adjoint operator, its spectral measure is a projection-valued measure on $\bbR$. Furthermore, if $U$ is unitary and $\Re U=(U+U^*)/2$, then the spectral measure of $\Re U$ on $\bbR$ is the pushforward of the spectral measure of $U$ on $\bbT$ by the map $z\mapsto (z+\overline{z})/2$. From here we obtain the following simple conclusion, which we will use throughout the paper. 
%%%%%%%%%%%%%%%%%%%%%
\begin{proposition}\label{prp.realpart}
%%%%%%%%%%%%%%%%%%%%%
The spectrum of a unitary operator $U$ is purely a.c. (resp. purely singular) if and only if the spectrum of the self-adjoint operator $\Re U$ is purely a.c. (resp. purely singular). 
\end{proposition}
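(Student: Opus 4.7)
The plan is to reduce the proposition to a purely measure-theoretic statement about pushforwards of measures on the circle under the real part map $\varphi(z):=(z+\bar z)/2$, and then to verify that statement by splitting $\bbT$ into two semi-circles on which $\varphi$ is a diffeomorphism.

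First, I would use the observation stated immediately before the proposition: for the unitary $U$ and any vector $v$ in the Hilbert space, the scalar spectral measure $\nu_v$ of $\Re U$ at $v$ is exactly the pushforward $\varphi_*\mu_v$ of the scalar spectral measure $\mu_v$ of $U$ at $v$. Since a self-adjoint (resp.\ unitary) operator has purely a.c.\ spectrum iff $\nu_v$ (resp.\ $\mu_v$) is absolutely continuous with respect to Lebesgue for every $v$, and purely singular iff every such measure is singular, the proposition reduces to the following measure-theoretic claim: \emph{for every finite positive Borel measure $\mu$ on $\bbT$, setting $\nu:=\varphi_*\mu$, the measure $\mu$ is a.c.\ (resp.\ singular) with respect to Lebesgue measure on $\bbT$ if and only if $\nu$ is a.c.\ (resp.\ singular) with respect to Lebesgue measure on $[-1,1]$.}

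To prove the claim, I would decompose $\bbT=\bbT_+\cup\bbT_-$, where $\bbT_\pm = \{e^{i\theta}:\theta\in[0,\pi]\}$ and $\{e^{i\theta}:\theta\in[\pi,2\pi]\}$ respectively, and write $\mu=\mu_++\mu_-$ accordingly (the two boundary points $\pm 1$ are singletons and contribute nothing to either the a.c.\ or singular parts). On each semi-circle the restriction $\varphi|_{\bbT_\pm}$ is a smooth bijection onto $[-1,1]$ whose restriction to the open arc is a diffeomorphism onto $(-1,1)$, with Jacobian $|d\theta/ds|=1/\sqrt{1-s^2}$. Hence the pushforward of Lebesgue measure on $\bbT_\pm$ is $ds/\sqrt{1-s^2}$, which is mutually absolutely continuous with Lebesgue measure on $[-1,1]$. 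By the standard change-of-variables argument, $\mu_\pm$ is a.c.\ with respect to Lebesgue iff $(\varphi|_{\bbT_\pm})_*\mu_\pm$ is a.c.\ with respect to Lebesgue; likewise for singular measures, since the diffeomorphism sends Lebesgue-null sets to Lebesgue-null sets in both directions.

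Combining the two halves, $\nu=(\varphi|_{\bbT_+})_*\mu_++(\varphi|_{\bbT_-})_*\mu_-$ is a.c.\ iff both summands are a.c., iff $\mu_+$ and $\mu_-$ are a.c., iff $\mu$ is a.c. The same chain of equivalences holds with ``singular'' in place of ``a.c.'', which proves the claim and hence the proposition. I do not anticipate a real obstacle here; the only subtlety is the presence of the two critical points $\pm1$ of $\varphi$, where the Jacobian degenerates, but since they are isolated they are irrelevant for the decomposition into absolutely continuous and singular parts.
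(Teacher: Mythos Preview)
Your argument is correct and is exactly the approach the paper has in mind: the paper does not give a formal proof at all, merely stating that the spectral measure of $\Re U$ is the pushforward of that of $U$ under $z\mapsto(z+\bar z)/2$ and declaring the proposition an immediate consequence. Your write-up supplies precisely the missing details (the two-semicircle decomposition and the diffeomorphism argument), so there is nothing to add.
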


%%%%%%%%%%%%%%%%%%%%%%%%%%%%%%%%%%
%%%%%%%%%%%%%%%%%%%%%%%%%%%%%%%%%%
\section{Reduction to asymptotic stability: proof of Theorem~\ref{thm.a5}}\label{sec.dd1}
%%%%%%%%%%%%%%%%%%%%%%%%%%%%%%%%%%
%%%%%%%%%%%%%%%%%%%%%%%%%%%%%%%%%%

\subsection{The ``only if'' part}
We use the model $(\calH, \wt\calH, \Sigma^*)$ of Section~\ref{sec.model}.
If the triple $(\rho,\Psi,\wt\Psi)$ is the spectral datum for some Hankel operator $H_u$, then by 
the definition \eqref{e: model Sigma*}, the operator $\Sigma^*$ is unitarily equivalent to the restriction of the 
backward shift $S^*$ to the $S^*$-invariant subspace $\overline{\Ran} H_u$ (this 
subspace may coincide with the whole space $H^2(\bbT)$). 

The operator $S^*$ is asymptotically stable, and so its restriction to any invariant subspace is also asymptotically stable. We conclude that $\Sigma^*$ is asymptotically stable. 

\medskip

In the rest of this section, we prove the ``if'' part; this requires several steps. 
Throughout the proof, we use the model of Section~\ref{sec.absmodel}.

\subsection{The ``if'' part: checking the commutation relations}
Let us show that the model operators $\calH$, $\wt\calH$, $\Sigma^*$ (defined in  \eqref{w3}, \eqref{w4}, \eqref{w7})
satisfy the relations 
\begin{align}
\label{e: Sigma* 02}
\wt \calH = \Sigma^* \calH =\calH \Sigma .
\end{align}
Using the fact that $\Psi$ is unimodular, we have
\begin{align*}
\Sigma^*\calH f &= \wt\Psi(\wt\calM)\wt\calM\calM^{-1}\Psi(\calM) \overline{\Psi}(\calM) \calM \overline{f} \\
&= \wt\Psi(\wt\calM)\wt\calM \overline{f} = \wt\calH f.
\end{align*}
For any bounded Borel function $\Phi$ we have 
$$
\overline{(\Phi(\calM)f)}=\overline{\Phi}(\calM)\overline{f}
\quad\text{ and }\quad
\overline{(\Phi(\wt\calM)f)}=\overline{\Phi}(\wt\calM)\overline{f}
$$
and therefore
$$
\overline{(\Sigma f)}=\Psi(\calM)\calM^{-1}\wt\calM\wt\Psi(\wt\calM)\overline{f}.
$$
It follows that
$$
\calH\Sigma f=\overline{\Psi}(\calM)\calM\overline{\Sigma f}=\wt\calM\wt\Psi(\calM)\overline{f}=\wt\calH f. 
$$
We have checked \eqref{e: Sigma* 02}. 

\subsection{The ``if'' part: setting up the unitary equivalence}

Define the operator $\calU : L^2(\rho)\to H^2(\bbT)$ as 
\begin{equation}
\calU f (z) := \sum_{k=0}^\infty \jap{ (\Sigma^*)^k f, q}z^k =  \sum_{k=0}^\infty \jap{f,\Sigma^k 
q}z^k, \qquad z\in \bbT\ ,
\label{defU}
\end{equation}
where $q(s)=\overline{\Psi(s)}/s$. 
Below we check that $\calU$ is an isometry.

It follows from the definition \eqref{w7} of $\Sigma^*$ that 
\begin{align}
\notag
\Sigma\Sigma^* & =  \Psi(\calM)^* \calM^{-1} \wt\calM^2 \calM^{-1}\Psi(\calM) \\
\notag
& = \Psi(\calM)^* \calM^{-1} \left( \calM^2 - \jap{\fdot, \1} \1  \right)\calM^{-1} \Psi(\calM) \\
\label{c13}
& = I - \jap{\fdot, q}q.
\end{align}
From here it follows that that 
\begin{align*}
\norm{f}^2-\norm{\Sigma^*f}^2=\abs{\jap{f,q}}^2.
\end{align*}
Applying this identity to  $(\Sigma^*)^k f$ and summing over $k$ from $0$ to $n-1$ we get that 
 for any $f\in L^2(\rho)$ and any $n\in\bbN$, 
\begin{align*}
\norm{f}^2- \norm{(\Sigma^*)^n f}^2 = \sum_{k=0}^{n-1}\abs{\jap{f,\Sigma^k q}}^2.
\end{align*}
Here comes the crucial point in the proof: 
\emph{by the asymptotic stability of $\Sigma^*$}, we have that $\norm{(\Sigma^*)^n f}^2\to 0$ as 
$n\to\infty$, and so 
\[
\sum_{m=0}^{\infty}\abs{\jap{f,\Sigma^mq}}^2=\norm{f}^2 ,  
\]
i.e.~the map $\calU : L^2(\rho)\to H^2(\bbT)$, defined in \eqref{defU}, is an isometry.

\subsection{The ``if'' part: defining the Hankel operator}

Define the operators $A$ and $\wt A$ on $H^2(\bbT)$ by 
\begin{align*}
A:= \calU \calH \calU^*, \qquad \wt A:= \calU \wt\calH \calU^* .
\end{align*}
We would like to check that $A$ and $\wt A$ are Hankel operators. First we show that $\calU$ intertwines $S^*$ and $\Sigma^*$. 
By the definition \eqref{defU} of the map $\calU$, we have 
\begin{align*}
\calU\Sigma^* f(z)
=\sum_{k=0}^\infty \jap{\Sigma^*f,\Sigma^k q}z^k
=\sum_{k=0}^\infty \jap{f,\Sigma^{k+1}q}z^k
= S^*\calU f(z), 
\end{align*}
and so we find that 
\begin{align}
\calU\Sigma^* & =S^*\calU
\label{c6}
\intertext{and by taking adjoints}
\label{e: U*S}
\Sigma \calU^*& =\calU^* S \ .
\end{align}
Note that \eqref{c6} implies that $\Ran \calU$ is a $S^*$-invariant subspace of $H^2(\bbT)$.

Using \eqref{e: U*S} and \eqref{e: Sigma* 02},  we find
\begin{align*}
A S = \calU \calH \calU^* S = \calU \calH \Sigma \calU^* = \calU \wt\calH \calU^* = \wt A.
\end{align*}
Similarly, 
\begin{align*}
S^* A = S^* \calU \calH \calU^* = \calU \Sigma^* \calH \calU^* = \calU \wt\calH \calU^* =\wt A .
\end{align*}
Therefore $A$ satisfies the commutation relation 
\begin{align*}
AS=S^*A =\wt A, 
\end{align*}
and so $A$ is a Hankel operator. Setting $u:=Az^0$, we can write $A=H_u$ and 
then $\wt A = H_u S=\wt H_u$. It remains to prove that $u\in\BMOA_\simp(\bbT)$ and that the spectral datum of $u$ coincides with the abstract spectral datum $(\rho,\Psi,\wt\Psi)$.

\subsection{The ``if'' part: concluding the proof}
Denote by $U$ the operator $\calU$ with the target space restricted to $\Ran \calU$, so $U$ is a 
unitary operator. Here we use the same notation as for the map \eqref{e: U spectral Thm 01};  
as we shall soon see, this is indeed the same map in disguise. 

Since $\overline{\Ran}\calH=L^2(\rho)$, 
from the definition $H_u=\calU \calH \calU^*$ we find that 
\begin{align*}
\overline{\Ran} H_u=\Ran \calU.
\end{align*}
Thus, in our new notation we find
\begin{equation}
H_u\ute=U\calH U^*, \quad \wt H_u\ute=U\wt\calH U^*. 
\label{eq.*e1}
\end{equation}
Let us check that $u\in\BMOA_\simp(\bbT)$. By the definition of $H_u$, it is a bounded operator and therefore $u\in\BMOA(\bbT)$. Next, from \eqref{eq.*e1} we find 
$$
(H_u\ute)^2=U\calH^2 U^*=U\calM^2 U^*, 
\quad
(\wt H_u\ute)^2=U\wt \calH^2 U^*=U\wt\calM^2 U^*;
$$
recall that here $\calM$ is the multiplication by the independent variable in $L^2(\rho)$ and $\wt\calM$ is defined by \eqref{defwtcalM}.
It is obvious that $\calM^2$ has a simple spectrum with the cyclic element $\1$. 
By Lemma~\ref{l: cyclicity}(i), the same is true for $\wt\calM^2$. Thus, the simplicity of spectrum condition \eqref{a4} is satisfied and so $u\in\BMOA_\simp(\bbT)$. 

Our next step is to check the identity $U^*u=\1$. We first note that by the definition \eqref{defU} of $\calU$, for any $f\in L^2(\rho)$ we have
$$
\jap{\calU f,z^0}_{H^2}=\jap{f,q}_{L^2(\rho)},
$$
and therefore $\calU^*z^0=q$. Further, we have
$$
u=H_u z^0=\calU\calH\calU^*z^0=\calU\calH q. 
$$
Recalling formula \eqref{w3} for the action of $\calH$, we find that 
$$
\calH q(s)=s\overline{\Psi}(s)\overline{q(s)}=s\overline{\Psi}(s)\overline{\overline{\Psi}(s)/s}=1,
$$
and so we conclude that $u=\calU\1=U\1$ and therefore $U^*u=\1$. 

Finally, we check that the map $U$ coincides with the map defined by \eqref{e: U spectral Thm 01}. For $f\in L^2(\rho)$, we find
$$
f(\abs{H_u\ute})u=Uf(\abs{\calH})U^*u=Uf(\calM)\1=Uf, 
$$
as required. 

We conclude that for the Hankel operator $H_u$ and the map $U$, satisfying \eqref{e: U spectral Thm 01}, we have the identities \eqref{eq.*e1}, where $\calH$ and $\wt\calH$ correspond to our abstract spectral datum $\Lambda=(\rho,\Psi,\wt\Psi)$. This means that the abstract spectral datum $\Lambda$ coincides with the spectral datum $\Lambda(u)$.

%%%%%%%%%%%%%%%%%%%%%%%%%%%%%%%%%%
%%%%%%%%%%%%%%%%%%%%%%%%%%%%%%%%%%
\section{Initial results about asymptotic stability} \label{s: asy 1}
%%%%%%%%%%%%%%%%%%%%%%%%%%%%%%%%%%
%%%%%%%%%%%%%%%%%%%%%%%%%%%%%%%%%%

In this section, as a warm-up, we present some easy initial results on asymptotic stability. 
In what follows, $(\rho,\Psi,\wt\Psi)$ is an abstract spectral datum. We recall that this means that $\rho$ is a finite Borel measure with a bounded support on $(0,\infty)$, satisfying the normalisation condition \eqref{rholeq1}, and $\Psi\in L^\infty(\rho)$ and $\wt\Psi\in L^\infty(\wt\rho_0)$ are unimodular complex-valued functions. 

\subsection{The operator $\Sigma_0^*$ is asymptotically stable }
Let the operators $\calM$ and $\wt\calM$  in $L^2(\rho)$ be as defined in Section~\ref{s: ASD}. 
Recall that the operator $\Sigma_0^* $ in $L^2(\rho)$  was defined by $\Sigma_0^*:= \wt\calM 
\calM^{-1}$. Our purpose here is to prove
%%%%%%%%%%%%%
\begin{theorem}\label{lma.*f1}
%%%%%%%%%%%%%
The operator $\Sigma_0^*$ is asymptotically stable.
\end{theorem}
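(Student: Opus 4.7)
The strategy is to deduce this from the already-established ``only if'' direction of Theorem~\ref{thm.a5}, applied to the particular abstract spectral datum $\Lambda_0 := (\rho,\1,\1)$ in which both unimodular functions are identically equal to $1$.

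First, I would observe that $\Sigma_0^* = \wt\calM\calM^{-1}$ is precisely the operator $\Sigma^*$ that the abstract construction of Section~\ref{sec.absmodel} assigns to $\Lambda_0$: substituting $\Psi \equiv 1$ and $\wt\Psi \equiv 1$ in formula~\eqref{w7} gives
\[
\Sigma^* \;=\; \wt\Psi(\wt\calM)\,\wt\calM\,\calM^{-1}\,\Psi(\calM) \;=\; \wt\calM\,\calM^{-1} \;=\; \Sigma_0^*.
\]

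Second, I would invoke the ``double positive'' existence result from \cite{GP}, recalled in Section~\ref{s: ASD}: for every finite Borel measure $\rho$ with bounded support in $(0,\infty)$ satisfying the normalisation~\eqref{rholeq1}, there exists $u\in\BMOA_\simp(\bbT)$ such that both $\Gamma_u$ and $\wt\Gamma_u$ are positive semi-definite and the spectral datum of $u$ is exactly $\Lambda_0$. Hence $\Lambda_0$ lies in the range of the spectral map $u\mapsto\Lambda(u)$.

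Now I can apply the ``only if'' direction of Theorem~\ref{thm.a5}, proved at the very beginning of this section: since $\Lambda_0$ comes from a Hankel operator $H_u$, by the definition~\eqref{e: model Sigma*} the corresponding $\Sigma^*$ is unitarily equivalent to the restriction of the backward shift $S^*$ to the $S^*$-invariant subspace $\overline{\Ran}\,H_u\subset H^2$. Since $S^*$ is asymptotically stable on all of $H^2$, its restriction to any invariant subspace is also asymptotically stable, and therefore $\Sigma_0^*=\Sigma^*$ is asymptotically stable.

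The only substantive analytic ingredient here is the existence result of \cite{GP}; the rest of the argument is a matching of definitions. A self-contained proof would need to verify directly, via the telescoping identity
\[
\|f\|^2 - \|(\Sigma_0^*)^n f\|^2 \;=\; \sum_{k=0}^{n-1}\bigl|\jap{f,\Sigma_0^k q_0}\bigr|^2
\]
(an immediate consequence of $\Sigma_0\Sigma_0^* = I - \jap{\fdot,q_0}q_0$, with $q_0(s)=1/s$), that the right-hand side converges to $\|f\|^2$ as $n\to\infty$, i.e.\ that $\{\Sigma_0^k q_0\}_{k\geq 0}$ is a Parseval sequence in $L^2(\rho)$. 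This appears to be the principal obstacle to a direct approach, and it is essentially equivalent to reproducing the construction of the double-positive Hankel operator from \cite{GP}.
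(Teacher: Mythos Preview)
Your argument is correct. You reduce the claim to the existence result of \cite{GP} together with the ``only if'' direction of Theorem~\ref{thm.a5}, both of which are established independently of the present statement, so there is no circularity.

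The paper takes a genuinely different, self-contained route. It introduces the \emph{symmetrization} $\fS_0 := \calM^{-1/2}\Sigma_0^*\calM^{1/2} = \calM^{-1/2}\wt\calM\calM^{-1/2}$, observes that $\fS_0 = Q^*Q$ with $Q=\wt\calM^{1/2}\calM^{-1/2}$ (so $0\le\fS_0\le I$ is self-adjoint), and then proves directly that $1$ is not an eigenvalue of $\fS_0$: from the identity $\fS_0\calM\fS_0 = \calM - \jap{\fdot,b}b$ with $b(s)=s^{-1/2}$, any eigenvector $f$ with $\fS_0 f=f$ must satisfy $f\perp b$, and then $\calM f\in\Ker(\fS_0-I)$ as well, so $\Ker(\fS_0-I)$ is $\calM$-invariant and orthogonal to the cyclic vector $b$, hence trivial. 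Asymptotic stability of $\fS_0$ then follows immediately, and is transferred to $\Sigma_0^*$ via a simple intertwining lemma ($\calM^{1/2}\fS_0=\Sigma_0^*\calM^{1/2}$, with $\Ran\calM^{1/2}$ dense).

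What each approach buys: yours is shorter but outsources the analytic content to \cite{GP}; you are right that the Parseval-sequence obstacle you identify is essentially what \cite{GP} overcomes. The paper's approach avoids that import and, more importantly, produces intermediate results that are reused later: the strict-contraction property of $Q$ (a corollary of the eigenvalue argument) and the intertwining lemma are both needed in the proof of Theorem~\ref{t: G positive} in Section~\ref{s: positivity}. So if you adopt your shortcut here, you would still need to supply those tools separately for the subsequent arguments.
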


By Theorem~\ref{thm.a5}, this implies that any spectral datum of the form $(\rho,\1,\1)$ is in $\Lambda(\BMOA_\simp)$; this was one of the main results of \cite{GP}. 

In order to prove Theorem~\ref{lma.*f1}, we consider the \emph{symmetrization}  $\fS_0$  of $\Sigma_0^*$
\begin{align*}
\fS_0 := \calM^{-1/2} \Sigma_0^*\calM^{1/2} = \calM^{-1/2} \wt\calM \calM^{-1/2}.  
\end{align*}
Note that $\|\fS_0\|\le 1$. Indeed, from $0\le \wt\calM^2 \le \calM^2$ by the Heinz inequality 
we find $\wt\calM^{1/2} \le \calM^{1/2}$, and therefore by Douglas' Lemma 
(Lemma~\ref{l: DLemma}) the operator $Q:= \wt\calM^{1/2}\calM^{-1/2}$ extends from a dense set to a 
contraction, and its adjoint is given by $Q^*=\calM^{-1/2} \wt \calM^{1/2}$. Thus 
\begin{align}\label{e: fS=QQ*}
\fS_0 = Q^* Q, 
\end{align}
so $\fS_0$ is a contraction. 
%%%%%%%%%%%%%%%
\begin{lemma}\label{lma.*f2}
%%%%%%%%%%%%%%%
The operator $\fS_0$ is asymptotically stable.
\end{lemma}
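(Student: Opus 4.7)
The plan is to exploit the fact that $\fS_0 = Q^*Q$ is a positive self-adjoint contraction on $L^2(\rho)$. By the spectral theorem (dominated convergence applied to the scalar-valued spectral measures $\jap{E_{\fS_0}(\cdot)f,f}$), one has $\fS_0^n \to E_{\fS_0}(\{1\})$ strongly, where $E_{\fS_0}(\{1\})$ is the spectral projection of $\fS_0$ onto the $1$-eigenspace; hence asymptotic stability of $\fS_0$ is equivalent to $\ker(I - \fS_0) = \{0\}$. The crucial link with $\calM$ and $\wt\calM$ is the bounded-operator identity
\[
\calM^{1/2}(I - \fS_0)\calM^{1/2} = \calM - \wt\calM,
\]
which I would verify via the equality $Q\calM^{1/2} = \wt\calM^{1/2}$ (valid on all of $L^2(\rho)$, since $\Ran \calM^{1/2} \subset \Dom(\calM^{-1/2})$); this yields $\jap{\calM^{1/2}\fS_0\calM^{1/2}g, g} = \norm{Q\calM^{1/2}g}^2 = \jap{\wt\calM g, g}$ for every $g \in L^2(\rho)$.

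The identity reduces the kernel question to $\ker(\calM - \wt\calM) = \{0\}$, which I would settle by a moment-problem argument. Suppose $g \in L^2(\rho)$ satisfies $\calM g = \wt\calM g$. Taking norms squared together with the rank-one identity $\wt\calM^2 = \calM^2 - \jap{\fdot,\1}\1$ forces $\abs{\jap{g,\1}}^2 = \norm{\calM g}^2 - \norm{\wt\calM g}^2 = 0$, so $\jap{g,\1} = 0$. A second application of the rank-one identity then gives
\[
\wt\calM(\calM g) = \wt\calM^2 g = \calM^2 g - \jap{g,\1}\1 = \calM^2 g = \calM(\calM g),
\]
placing $\calM g$ again in $\ker(\calM - \wt\calM)$. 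Iterating, $\calM^n g \in \ker(\calM - \wt\calM) \subset \{\1\}^\perp$ for every $n \geq 0$, i.e.\ $\int s^n g(s)\,d\rho(s) = \jap{\calM^n g, \1} = 0$ for every $n \geq 0$. Since $\rho$ has bounded support, polynomials are dense in $L^2(\rho)$, and we conclude $g = 0$.

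The main obstacle, and the technical step I would need to be most careful with, is the bridging step: converting $\ker(\calM - \wt\calM) = \{0\}$ into $\ker(I - \fS_0) = \{0\}$, because the natural similarity $I - \fS_0 = \calM^{-1/2}(\calM - \wt\calM)\calM^{-1/2}$ uses the unbounded operator $\calM^{-1/2}$. I would handle this via the bounded operator $K := \calM^{1/2}(I - \fS_0)^{1/2}$: the displayed identity yields $KK^* = \calM - \wt\calM$, so the moment argument forces $\ker K^* = \{0\}$ and hence $\overline{\Ran K} = L^2(\rho)$. Since $\calM^{1/2}$ is injective, $\ker K = \ker(I - \fS_0)^{1/2} = \ker(I - \fS_0)$, so it remains to show $K$ itself is injective. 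For this I would approximate any $f \in \ker(I - \fS_0)$ by $f_n = \calM^{1/2}g_n \in \Ran\calM^{1/2}$; the quadratic-form version of the key identity gives $\jap{(I - \fS_0)f_n, f_n} = \jap{(\calM - \wt\calM)g_n, g_n}$, and as the left side tends to $\jap{(I - \fS_0)f, f} = 0$, we obtain $(\calM - \wt\calM)^{1/2}g_n \to 0$ while $\calM^{1/2}g_n \to f$. A weak-compactness argument for $(g_n)$ together with the triviality of $\ker(\calM - \wt\calM)$ then forces $f = 0$.
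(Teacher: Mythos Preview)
Your moment/invariance argument showing $\ker(\calM-\wt\calM)=\{0\}$ is correct and is, in fact, the same mechanism the paper uses. The gap is entirely in the bridging step. To pass from $\ker(\calM-\wt\calM)=\{0\}$ to $\ker(I-\fS_0)=\{0\}$ you appeal to weak compactness of an approximating sequence $g_n$ with $\calM^{1/2}g_n\to f$; but if $f\notin\Ran\calM^{1/2}$ --- precisely the case you have not ruled out --- then every such sequence satisfies $\|g_n\|\to\infty$, so no weakly convergent subsequence exists and the argument collapses. Your observation that $K=\calM^{1/2}(I-\fS_0)^{1/2}$ has dense range (equivalently $\ker K^*=\{0\}$) does \emph{not} yield $\ker K=\{0\}$: a bounded operator can have dense range and nontrivial kernel (think of $S^*$).

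The paper avoids this unboundedness trap by never leaving the bounded-operator world. Instead of the identity $\calM^{1/2}(I-\fS_0)\calM^{1/2}=\calM-\wt\calM$, it squares the relation $\wt\calM=\calM^{1/2}\fS_0\calM^{1/2}$ to obtain the bounded identity
\[
\fS_0\,\calM\,\fS_0 \;=\; \calM-\jap{\fdot,b}b,\qquad b(s)=s^{-1/2}\in L^2(\rho),
\]
and then runs \emph{your} invariance argument directly on $\ker(I-\fS_0)$: if $\fS_0 f=f$, the quadratic form of the identity gives $f\perp b$, and substituting back gives $\fS_0(\calM f)=\calM f$. Thus $\ker(I-\fS_0)$ is $\calM$-invariant and orthogonal to the cyclic vector $b$, hence trivial. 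The detour through $\calM-\wt\calM$ is what forces the unbounded bridge; working with $\fS_0\calM\fS_0$ instead makes it disappear.
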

\begin{proof}
By \eqref{e: fS=QQ*}, the operator $\fS_0$ is self-adjoint and 
$0\le \fS_0 \le I$. So in order to prove the asymptotic stability of $\fS_0$, it is sufficient to show that $1$ is not an eigenvalue of  
$\fS_0$. Let us prove this. 
We have 
\begin{align*}
\wt\calM =\calM^{1/2}\fS_0 \calM^{1/2} , 
\end{align*}
and therefore
\begin{align*}
\wt\calM^2 = \calM^{1/2}\fS_0 \calM \fS_0 \calM^{1/2}\ .
\end{align*}
On the other hand, 
\begin{align*}
\wt\calM^2 =\calM^2 -\jap{\fdot,\1}\1 = \calM^{1/2} \bigl(\calM -\jap{\fdot,b}b \bigr)   \,  
\calM^{1/2}, 
\end{align*}
where $b=\calM^{-1/2} \1$, i.e.~$b(s) = s^{-1/2}$. 

Comparing these two representations for $\wt\calM^2$ and using the fact that $\Ker\calM^{1/2} 
=\{0\}$ we find 
\begin{align}\label{c1-1}
\fS_0 \calM \fS_0 = \calM -\jap{\fdot,b}b.
\end{align}
Suppose $f\in\Ker(\fS_0-I)$, i.e.~$\fS_0 f =f$. 
Evaluating the quadratic form of the last identity on $f$, we find 
\begin{align*}
\jap{\calM f,f}=\jap{\calM f,f}-\abs{\jap{f,b}}^2,
\end{align*}
and so $f\perp b$. 
Substituting $f\perp b$ back into \eqref{c1-1}, we get
\begin{align*}
\fS_0 \calM f = \calM f , 
\end{align*}
and so $ \calM f\in\Ker (\fS_0-I)$. 

Thus, $\Ker (\fS_0-I)$ is an invariant subspace of $\calM$ which is orthogonal to $b$. 
Since $b$ is a cyclic element for $\calM$, it follows that $\Ker (\fS_0-I)=\{0\}$. 
\end{proof}

\begin{corollary}\label{c: Q strict}
The operator $Q = \wt\calM^{1/2}\calM^{-1/2}$ is a strict contraction, i.e.
\begin{align*}
\|Q x\|<\|x\| \qquad \forall x \ne 0. 
\end{align*}
\end{corollary}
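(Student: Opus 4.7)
The plan is to deduce this corollary directly from the identity $\fS_0 = Q^*Q$ in \eqref{e: fS=QQ*} together with the fact, established inside the proof of Lemma~\ref{lma.*f2}, that $\Ker(\fS_0 - I) = \{0\}$. Strict contractivity of $Q$ amounts to saying that the quadratic form $\langle (I - Q^*Q)x, x\rangle$ is strictly positive on nonzero vectors, so everything reduces to checking that $Q^*Q = \fS_0$ has no fixed vector other than $0$.

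More concretely, the first step is to write, for any $x \in L^2(\rho)$,
\[
\|x\|^2 - \|Qx\|^2 = \langle (I - Q^*Q)x, x\rangle = \langle (I - \fS_0)x, x\rangle,
\]
using \eqref{e: fS=QQ*}. Suppose for contradiction that $\|Qx\| = \|x\|$ for some $x \neq 0$. Then $\langle (I - \fS_0)x, x\rangle = 0$, and since $0 \le \fS_0 \le I$, the positive operator $I - \fS_0$ has a well-defined non-negative square root $(I - \fS_0)^{1/2}$, and the vanishing of its quadratic form at $x$ gives $(I - \fS_0)^{1/2}x = 0$, hence $\fS_0 x = x$. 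This places $x$ in $\Ker(\fS_0 - I)$, contradicting the conclusion of Lemma~\ref{lma.*f2}.

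There is really no hard step here: the argument is a one-line consequence of the previous lemma together with the elementary fact that a bounded positive operator $A \ge 0$ satisfies $\langle A x, x \rangle = 0$ if and only if $Ax = 0$. The substantive work was done in Lemma~\ref{lma.*f2}, where the asymptotic stability of the self-adjoint contraction $\fS_0$ was reduced to the absence of the eigenvalue $1$, and the latter was ruled out using the cyclicity of $b(s) = s^{-1/2}$ for $\calM$ together with the rank-one identity \eqref{c1-1}. The present corollary merely repackages $\Ker(\fS_0 - I) = \{0\}$ as strict contractivity of $Q$.
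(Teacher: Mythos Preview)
Your proof is correct and essentially identical to the paper's: both argue that $\|Qx\|=\|x\|$ forces $\jap{(I-\fS_0)x,x}=0$, hence $\fS_0 x=x$ (you via the square root of $I-\fS_0$, the paper via $\|\fS_0\|\le 1$), which contradicts Lemma~\ref{lma.*f2}.
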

\begin{proof}
By construction, $Q$ is a contraction, $\norm{Qx}\leq \norm{x}$ for all $x$. 
Assume that $\|Qx\|=\|x\|$ for some $x\ne0$. Since $\fS_0=Q^*Q$, we conclude that $\jap{\fS_0 x,x} = 
\|Qx\|^2=\|x\|^2$. But $\|\fS_0\|\le 1$, so $\fS_0 x =x$, which contradicts  Lemma~\ref{lma.*f2}.
\end{proof}

%%%%%%%%%%%%%%%%%
\begin{lemma}\label{lma.*f3}
Let bounded operators $A$, $B$, $K$ satisfy
\begin{align}
\label{e: KA=BK}
KA = BK,  
\end{align}
and let $\|B\|\le1$. 
Assume that $\Ran K$ is dense, and that the operator $A$ is asymptotically stable. 
Then $B$ is also asymptotically stable. 
\end{lemma}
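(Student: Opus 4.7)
The plan is to exploit the intertwining relation \eqref{e: KA=BK} by iterating it, so that the asymptotic stability of $A$ transfers to $B$ through the dense range of $K$, with $\|B\|\le1$ providing the uniform bound needed for the density argument.

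First, I would observe that the relation $KA=BK$ iterates inductively to $KA^n = B^n K$ for every $n\ge 0$. Consequently, for any vector of the form $y=Kx$ in $\Ran K$, we have
\begin{align*}
B^n y = B^n K x = K A^n x,
\end{align*}
and since $K$ is bounded and $A^n x\to 0$ by the asymptotic stability of $A$, we conclude that $B^n y \to 0$ for every $y\in\Ran K$.

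To extend this to all vectors in the Hilbert space, I would use the density of $\Ran K$ together with the uniform bound $\|B^n\|\le\|B\|^n\le 1$. Given an arbitrary $z$ and $\eps>0$, one chooses $y\in\Ran K$ with $\|z-y\|<\eps/2$, and then for $n$ large enough, $\|B^n y\|<\eps/2$, so
\begin{align*}
\|B^n z\| \le \|B^n(z-y)\| + \|B^n y\| \le \|z-y\| + \|B^n y\| < \eps.
\end{align*}
This yields $B^n z\to 0$ strongly, i.e.\ $B$ is asymptotically stable.

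There is really no obstacle here; the argument is a routine combination of intertwining, density of $\Ran K$, and uniform boundedness via $\|B\|\le 1$. The only mild subtlety worth emphasising is that the contractivity hypothesis $\|B\|\le 1$ cannot be dropped: without a uniform bound on $\|B^n\|$, the pointwise convergence on a dense set would not propagate to the whole space.
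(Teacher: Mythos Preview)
Your proof is correct and follows essentially the same approach as the paper: iterate the intertwining relation to get $KA^n=B^nK$, deduce $B^n y\to 0$ on $\Ran K$, and then pass to the whole space using the uniform bound $\|B^n\|\le 1$ together with density. The paper simply invokes the ``$\eps/3$-Theorem'' for the last step, whereas you write out the $\eps$-argument explicitly.
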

\begin{proof}
Iterating \eqref{e: KA=BK} we get that 
\begin{align*}
KA^n = B^nK, \qquad n\in\bbN . 
\end{align*}
Since $A$ is asymptotically stable, we  see that for all $ x\in \Ran K$
\begin{align}\label{e: Bnx to 0}
\| B^nx\|\to 0 \ \text{as }n\to\infty .  
\end{align}
But $\|B^n\|\le1$, so operators $B^n$ are uniformly bounded. Since $\Ran K$ is dense, the 
$\eps/3$-Theorem says that \eqref{e: Bnx to 0} holds for all $x$, i.e. $B$ is asymptotically 
stable. 
\end{proof}

\begin{proof}[Proof of Theorem~\ref{lma.*f1}]
From the definition of $\fS_0$ we see that 
$$
\calM^{1/2} \fS_0 = \Sigma_0^* \calM^{1/2},
$$
and $\Ran\calM^{1/2}$ is dense in $L^2(\rho)$. 
Now we apply Lemma~\ref{lma.*f3} with $K=\calM^{1/2}$, $A=\fS_0$ and $B=\Sigma_0^*$. 
\end{proof}

\subsection{Self-adjoint Hankel operators and  positivity}
\label{s: positivity}

In this subsection we discuss the self-adjoint case, when both operators $\Gamma$ and 
$\wt\Gamma = \Gamma S $ are self-adjoint. According to Theorem~\ref{thm.sa1}, in terms of the spectral data, this corresponds to the case when  both unimodular functions $\Psi$ and 
$\wt\Psi$ are real-valued. 

\begin{theorem}\label{t: G positive}
Let $\Lambda=(\rho,\Psi,\wt\Psi)$ be an abstract spectral datum such that $\Psi$ and $\wt\Psi$ are real-valued and one of them is identically equal to $1$. Then $\Sigma^*$ is asymptotically stable, i.e. $\Lambda\in\Lambda(\BMOA_\simp)$. 
\end{theorem}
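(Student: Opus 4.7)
By the symmetric roles played by $\Psi$ and $\wt\Psi$ in the definition of $\Sigma^*$, up to exchanging $\calM$ with $\wt\calM$ (modulo the kernel of the latter), it suffices to treat the case $\wt\Psi\equiv 1$ with $\Psi$ real-valued, i.e.\ $\pm 1$-valued. In this setting a direct calculation gives
$$
\Sigma^* \,=\, \Sigma_0^*\,\Psi(\calM),
$$
where $\Sigma_0^* = \wt\calM\calM^{-1}$ is the contraction of Theorem~\ref{lma.*f1} and $J':=\Psi(\calM)$ is a self-adjoint unitary that commutes with $\calM$. Conjugating by $\calM^{1/2}$, which commutes with $J'$, yields the symmetrized contraction
$$
\fS \,:=\, \calM^{-1/2}\Sigma^*\calM^{1/2} \,=\, \fS_0 J',
$$
where $\fS_0=\calM^{-1/2}\wt\calM\calM^{-1/2}$ is the self-adjoint contraction of Lemma~\ref{lma.*f2}. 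Applying Lemma~\ref{lma.*f3} with $K=\calM^{1/2}$ (which has dense range since $\calM$ is injective), the asymptotic stability of $\Sigma^*$ follows from that of $\fS$.

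The plan is now to exploit the clean identity $\fS\fS^* = \fS_0\, J'^{\,2}\, \fS_0 = \fS_0^2$, which shows that the rank-one defect structure of the positive case is preserved by the twist. Setting $z_n:=\fS^{*n}x$, a computation using $\fS^* = J'\fS_0$ gives $\|z_n\| = \|\fS_0 z_{n-1}\|$, and therefore
$$
\|z_{n-1}\|^2 - \|z_n\|^2 \,=\, \langle (I-\fS_0^2)z_{n-1},\, z_{n-1}\rangle.
$$
Summing in $n$ yields $\sum_n \|(I-\fS_0^2)^{1/2} z_n\|^2 \le \|x\|^2$, so $(I-\fS_0^2)^{1/2}z_n \to 0$ in norm. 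Since $\Ker(I-\fS_0)=\{0\}$ by Lemma~\ref{lma.*f2} and $I-\fS_0^2=(I-\fS_0)(I+\fS_0)$ with $I+\fS_0\ge I$, every weak subsequential limit of $z_n$ lies in $\Ker(I-\fS_0)=\{0\}$. Hence $z_n\rightharpoonup 0$ weakly.

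The principal obstacle is promoting this weak convergence to strong convergence, i.e.\ ruling out $\|z_n\|\to L>0$. For this I would exploit the factorisation $\fS_0=Q^*Q$ where $Q=\wt\calM^{1/2}\calM^{-1/2}$ is a \emph{strict} contraction by Corollary~\ref{c: Q strict}. Suppose for contradiction that $L>0$; the normalised sequence $w_n:=z_n/\|z_n\|$ then satisfies $\|w_n\|=1$, $w_n\rightharpoonup 0$, and $\|\fS_0 w_n\|\to 1$, which forces $\|Q w_n\|\to 1$ as well. Combined with $w_n\rightharpoonup 0$ this forces, after careful analysis of the interplay between $Q$ and $Q^*$, that $w_n$ lies asymptotically in the ``spectral mass at $1$'' of $Q^*Q$, contradicting $\Ker(I-\fS_0)=\{0\}$. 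An alternative route is via the functional model: one first checks that $\fS$ is completely non-unitary (since $\|\fS x\|=\|x\|$ forces $\fS_0 x = x$, hence $x=0$), then uses the defect identity $\fS\fS^*=\fS_0^2$ and the abstract theory of Section~\ref{sec.e} (in the spirit of Theorem~\ref{thm2}) to identify $\fS$ with a compressed-shift-like contraction that is automatically asymptotically stable.
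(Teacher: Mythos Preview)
Your argument has a genuine gap at the crucial step of promoting weak convergence $z_n\rightharpoonup 0$ to strong convergence. You argue that if $\|z_n\|\to L>0$ then the normalised vectors $w_n$ satisfy $\|w_n\|=1$, $w_n\rightharpoonup 0$, and $\|\fS_0 w_n\|\to 1$, and claim this contradicts $\Ker(I-\fS_0)=\{0\}$. But it does not: all you know from Lemma~\ref{lma.*f2} is that $1$ is not an \emph{eigenvalue} of $\fS_0$; nothing prevents $1$ from lying in the continuous spectrum of $\fS_0$, in which case Weyl sequences with exactly these properties exist. The vague phrase ``asymptotically in the spectral mass at $1$'' does not produce an actual eigenvector, and the alternative route you sketch via Theorem~\ref{thm2} is inapplicable because $\fS$ need not have defect indices $(1,1)$. (A secondary issue: the roles of $\Psi$ and $\wt\Psi$ are \emph{not} symmetric, since $\calM$ is the multiplication operator in $L^2(\rho)$ while $\wt\calM$ is a square root that may have a kernel; you cannot simply ``exchange'' them.)

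The paper's proof bypasses this difficulty entirely by a purely algebraic trick that reduces to a \emph{self-adjoint} contraction. Writing $\fS^*=Q^*Q\Psi(\calM)$ with $Q=\wt\calM^{1/2}\calM^{-1/2}$, one observes
\[
(\fS^*)^n = Q^*\bigl(Q\Psi(\calM)Q^*\bigr)^{n-1}Q\Psi(\calM),
\]
and the middle operator $B:=Q\Psi(\calM)Q^*$ is self-adjoint. Since $Q^*$ is a strict contraction (this follows from Corollary~\ref{c: Q strict} because $Q^*Q$ and $QQ^*$ share nonzero eigenvalues), $\pm1$ are not eigenvalues of $B$, hence $B^n\to0$ strongly and the factorisation gives $(\fS^*)^n\to0$. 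For the case $\Psi\equiv1$ the symmetrization $\fS^*=Q^*\wt\Psi(\wt\calM)Q$ is already self-adjoint and the same strict-contraction argument applies directly. The point is that for a self-adjoint contraction, ``$\pm1$ not eigenvalues'' \emph{is} equivalent to asymptotic stability, with no need to upgrade weak to strong convergence.
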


This theorem gives us a complete description of the spectral data in the case of self-adjoint Hankel 
operators, when one of the operators $\Gamma$, $\wt\Gamma$ is non-negative. 

\begin{proof}[Proof of Theorem~\ref{t: G positive}]
Let us introduce the \emph{symmetrization} $\fS^*$ of $\Sigma^*$, 
\begin{align*}
\fS^* := \calM^{-1/2} \Sigma^* \calM^{1/2} = Q^* \wt\Psi(\wt\calM) Q \Psi(\calM), 
\end{align*}
where $Q= \wt\calM^{1/2}\calM^{-1/2}$ is as in Corollary~\ref{c: Q strict}.
Since $\Psi$ and $\wt\Psi$ are real-valued, the operators $\Psi(\calM)$ and $\wt\Psi(\wt\calM)$ are 
self-adjoint. Let us prove that $\fS^*$ is asymptotically stable. 

If $\Psi\equiv1$, we have $\fS^* =  Q^* \wt\Psi(\wt\calM) Q$, so $\fS^*$ is self-adjoint. The fact 
that $Q$ is a strict contraction (see Corollary~\ref{c: Q strict}) implies that $\pm1$ are not 
eigenvalues of $\fS^*$, so $\fS^*$ is asymptotically stable. 

If $\wt\Psi\equiv1$, we get that $\fS^*=Q^*Q\Psi(\calM)$. This operator is not self-adjoint, but 
\begin{align}\label{e: fS* 02}
\left(\fS^*\right)^n = Q^* (Q\Psi(\calM) Q^*)^{n-1} Q\Psi(Q), 
\end{align}
and the operator $Q\Psi(\calM) Q^*$ is self-adjoint. Since $Q^*$ is a strict 
contraction, the points $\pm1$ are not the eigenvalues of $Q\Psi(\calM) Q^*$, so $Q\Psi(\calM) Q^*$ 
is  asymptotically stable. Identity \eqref{e: fS* 02} together with Lemma~\ref{lma.*f3} shows that $\fS^*$ is asymptotically stable as well. 

Finally, we have 
$$
\calM^{1/2} \fS^* = \Sigma^* \calM^{1/2},
$$ 
so by Lemma~\ref{lma.*f3}
with $K=\calM^{1/2}$ the asymptotic stability of $\fS^*$ implies the asymptotic stability of
$\Sigma^*$.
\end{proof}

%%%%%%%%%%%%%%%%%%%%%%%%%%%%%%%%%%
%%%%%%%%%%%%%%%%%%%%%%%%%%%%%%%%%%
\section{Asymptotic stability and singular spectrum}\label{s: asy and sing spectr}
%%%%%%%%%%%%%%%%%%%%%%%%%%%%%%%%%%
%%%%%%%%%%%%%%%%%%%%%%%%%%%%%%%%%%
In this section we present one of our key result which related the asymptotic stability of $\Sigma^*$ to its spectral properties. As in the previous section, below $(\rho,\Psi,\wt\Psi)$ is an abstract spectral datum, and $\Sigma^*$ is the operator in $L^2(\rho)$ defined  in \eqref{w7}.

\subsection{Defect indices of $\Sigma^*$} 
In what follows, the consideration of $\Sigma^*$ will proceed in two slightly different ways depending on the defect indices of $\Sigma^*$. In the following lemma we describe these two possible cases. 

\begin{lemma}\label{lma.e1} Let $\Lambda= (\rho, \Psi, \wt\Psi)$ be an abstract spectral datum, and 
let $\Sigma^*$ be the operator \eqref{w7} constructed from it. 
\begin{enumerate}
\item
If 
\begin{align}
\int_0^\infty \frac{d\rho(s)}{s^2}=1
\quad \text{ and }\quad
\int_0^\infty \frac{d\rho(s)}{s^4}=\infty, 
\label{ker0a}
\end{align}
then the defect indices of $\Sigma^*$  are $(1,0)$, so $\Sigma $ is an isometry. 
\item
If \eqref{ker0a} fails, i.e.\ if we have either 
\begin{align}
\int_0^\infty \frac{d\rho(s)}{s^2}=1 \quad \text{ and }\quad 
\int_0^\infty \frac{d\rho(s)}{s^4}<\infty,
\label{a2}
\end{align}
or 
\begin{align}
\int_0^\infty \frac{d\rho(s)}{s^2}<1, 
\label{a3}
\end{align}
then the defect indices of $\Sigma^*$ are $(1,1)$. 
\end{enumerate}
\end{lemma}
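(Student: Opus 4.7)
The first defect index is immediate: by \eqref{c13}, $\Sigma\Sigma^*=I-\jap{\fdot,q}q$ with $q(s)=\overline{\Psi(s)}/s$ nowhere vanishing, so $I-\Sigma\Sigma^*$ has rank one and $\partial_{\Sigma^*}=1$ in both cases. The bulk of the argument goes into computing $\partial_\Sigma$.

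To that end, the plan is to first strip off the unimodular factors. Writing $A=\wt\calM\calM^{-1}$ for the Douglas contraction of Lemma~\ref{l: DLemma}, unimodularity of $\Psi$ and $\wt\Psi$ gives $\Sigma^*\Sigma = \wt\Psi(\wt\calM)\,AA^*\,\wt\Psi(\wt\calM)^*$, so $\partial_\Sigma=\dim\overline{\Ran(I-AA^*)}$. A direct computation parallel to \eqref{c13} yields $A^*A=I-\jap{\fdot,q_0}q_0$ with $q_0(s)=1/s$, and in particular $\|q_0\|=\|q\|$. The next step is to identify $\Ker A^*$: taking adjoints in the defining identity $A\calM=\wt\calM$ produces $\calM A^*=\wt\calM$ on all of $L^2(\rho)$, so injectivity of $\calM$ gives $\Ker A^*=\Ker\wt\calM$; solving $\wt\calM^2 f=0$ explicitly, i.e.\ $s^2 f(s)=\jap{f,\1}$, one finds $\Ker\wt\calM=\Span\{q_0^{(2)}\}$ with $q_0^{(2)}(s):=1/s^2$ precisely when $\|q_0\|=1$ and $q_0^{(2)}\in L^2(\rho)$ (which is case \eqref{a2}), and $\Ker\wt\calM=\{0\}$ otherwise.

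In case (i) \eqref{ker0a}, $\|q\|=1$ makes $\Sigma\Sigma^*$ an orthogonal projection, so $\Sigma$ is a partial isometry; since $\Ker\Sigma=\wt\Psi(\wt\calM)\Ker A^*=\{0\}$, this partial isometry is an isometry and $\partial_\Sigma=0$. For case (ii) the plan is to split further. Under \eqref{a2}, $\Sigma$ is still a partial isometry but now $\Ker\Sigma$ is one-dimensional, and since $I-\Sigma^*\Sigma$ is the projection onto $\Ker\Sigma$ for any partial isometry, $\partial_\Sigma=1$. The most delicate subcase is \eqref{a3}: here $\|q\|<1$ so $\Sigma\Sigma^*$ is not a projection and the partial-isometry shortcut is unavailable, but $\Ker A^*=\{0\}$ guarantees that $\Ran A$ is dense. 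On this dense subspace I would compute $AA^*(Af)=A(A^*Af)=Af-\jap{f,q_0}v$ with $v:=Aq_0$, and then eliminate $\jap{f,q_0}$ via $\jap{A^*g,q_0}=\jap{f,q_0}(1-\|q_0\|^2)$ (which is solvable precisely because $\|q_0\|<1$) to obtain the closed form $AA^* = I - (1-\|q_0\|^2)^{-1}\jap{\fdot,v}v$ on $\Ran A$, extending to $L^2(\rho)$ by continuity. A direct check $\|v\|^2=\|q_0\|^2(1-\|q_0\|^2)>0$ shows $v\neq0$, so $I-AA^*$ is rank one and $\partial_\Sigma=1$. The main obstacle is this last subcase, since the clean partial-isometry characterisation is no longer available and one must transfer the rank-one structure of $A^*A$ to that of $AA^*$ by an explicit algebraic manipulation combined with the density of $\Ran A$.
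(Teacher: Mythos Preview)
Your proof is correct and takes a genuinely different route from the paper's. The paper argues indirectly: since $\Sigma^*$ and $\Sigma_0^*$ differ by unitary factors they share defect indices, and by Theorem~\ref{lma.*f1} the operator $\Sigma_0^*$ is asymptotically stable, hence by Theorem~\ref{thm.a5} the triple $(\rho,\1,\1)$ is the spectral datum of some Hankel operator $H_u$; then Theorem~\ref{thm.b3aa} decides whether $\Ker H_u$ is trivial, and Beurling's theorem identifies $\Sigma_0^*$ with either $S^*$ (defects $(1,0)$) or a compressed shift $S_\theta^*$ (defects $(1,1)$). Your argument is instead a direct computation of the defect operators: you read off $\partial_{\Sigma^*}=1$ from \eqref{c13}, reduce $\partial_\Sigma$ to the rank of $I-AA^*$, identify $\Ker A^*=\Ker\wt\calM$ via $\calM A^*=\wt\calM$, and then handle the three subcases by (a) recognising $\Sigma$ as a partial isometry when $\|q_0\|=1$ and checking its kernel, and (b) transporting the rank-one structure from $A^*A$ to $AA^*$ by the explicit formula $AA^*=I-(1-\|q_0\|^2)^{-1}\jap{\fdot,v}v$ when $\|q_0\|<1$. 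Your approach is more elementary and self-contained---it needs only Douglas' lemma and linear algebra, not the asymptotic stability of $\Sigma_0^*$, the bijection with Hankel operators, or Beurling's theorem---whereas the paper's approach, though shorter given the machinery already in place, has the added payoff of identifying $\Sigma_0^*$ concretely as $S^*$ or $S_\theta^*$, which is used elsewhere (e.g.\ Remark~\ref{r: not shift}).
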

\begin{proof}
We have $\Sigma^* = \wt\Psi(\wt\calM) \Sigma_0^* \Psi(\calM)$, where $\Sigma_0^*:=  \wt\calM 
\calM^{-1}$. The operators $\Psi(\calM)$ and $ \wt\Psi(\wt\calM)$ are unitary, and so the defect 
indices of $\Sigma^*$ and $\Sigma_0^*$ coincide. Thus, it suffices to consider the defect indices of $\Sigma_0^*$.

By Theorem~\ref{lma.*f1}, the operator $\Sigma_0^*$ is asymptotically 
stable. By Theorem~\ref{thm.a5}, this means that the triple $(\rho, \1, \1)$ is the spectral datum of some Hankel operator $H_u$. 

(i) Suppose \eqref{ker0a} is satisfied. Note that \eqref{ker0a} is identical to \eqref{e: triv ker}, and so by Theorem~\ref{thm.b3aa}, we have $\Ker H_u=\{0\}$, and therefore (see \eqref{e: model Sigma*}) the operator  $\Sigma_0^*$ is unitarily equivalent to the backward shift $S^*$, and so the defect indices of $\Sigma^*$ are $(1,0)$. 

(ii) Suppose \eqref{ker0a} fails. Then again by Theorem~\ref{thm.b3aa}, the kernel of $H_u$ is non-trivial and so $\Sigma_0^*$ is unitarily equivalent to the restriction of $S^*$ to the subspace $\overline{\Ran}H_u$. By Beurling's theorem, this subspace is a model space  $K_\theta:=H^2\ominus\theta H^2$ for some inner function $\theta$ and so $\Sigma_0^*$ is unitarily equivalent to $S_\theta^*$, where $S_\theta$ is the compressed shift on $K_\theta$. It follows (see \eqref{e1a}) that the defect indices of $\Sigma_0^*$ are $(1,1)$. 
\end{proof}

\subsection{Asymptotic stability and singular spectrum.}

Recall that the a.c.\ spectrum of a self-adjoint or unitary operator is said to equal to a Borel 
set $E$ if the a.c.\ part of the spectral measure is mutually absolutely continuous 
with the Lebesgue measure restricted to $E$. 

\begin{theorem}\label{thm.a6}
Let the triple $\Lambda=(\rho, \Psi, \wt\Psi)$ be an abstract spectral datum. 
\begin{enumerate}
\item
Assume that \eqref{ker0a} holds, i.e.\ that $\Sigma^*$ has defect indices $(1,0) $. Then 
$\Sigma^*$ is asymptotically stable iff the a.c. spectrum of 
$\Re\Sigma$ is $[-1,1]$ with multiplicity one. 
\item
Assume that \eqref{ker0a} does not hold, i.e.\ that $\Sigma^*$ has defect indices $(1,1) $. Then 
$\Sigma^*$ is asymptotically stable iff the a.c. part of $\Re\Sigma$ is empty.
\end{enumerate}
\end{theorem}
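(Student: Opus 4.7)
The plan is to decompose $\Sigma$ via Langer's lemma and then apply the two classification theorems for c.n.u.\ contractions (Theorems~\ref{thm.b3a} and~\ref{thm2}). I would start by writing $L^2(\rho) = X_{\mathrm u}\oplus X_{\mathrm{cnu}}$ with $\Sigma = \Sigma_{\mathrm u}\oplus \Sigma_{\mathrm{cnu}}$ as in Lemma~\ref{lma.b2}. Since $(\Sigma_{\mathrm u}^*)^n$ is unitary and cannot tend to $0$ strongly unless $X_{\mathrm u}=\{0\}$, asymptotic stability of $\Sigma^*$ is equivalent to the conjunction ``$X_{\mathrm u}=\{0\}$ and $(\Sigma_{\mathrm{cnu}}^*)^n\to 0$ strongly''. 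Because $\Sigma_{\mathrm u}$ is unitary, the defect indices of $\Sigma_{\mathrm{cnu}}$ agree with those of $\Sigma$.

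Next I would apply the classification to the c.n.u.\ piece. In case (i), $\Sigma_{\mathrm{cnu}}$ is a c.n.u.\ isometry with defect indices $(0,1)$, so Theorem~\ref{thm.b3a} yields $\Sigma_{\mathrm{cnu}}\simeq S$; hence $(\Sigma_{\mathrm{cnu}}^*)^n\to 0$ is automatic, and $\Re\Sigma_{\mathrm{cnu}}\simeq\Re S$ has purely absolutely continuous spectrum $[-1,1]$ of multiplicity one. In case (ii), $\Sigma_{\mathrm{cnu}}$ is c.n.u.\ with defect indices $(1,1)$, so Theorem~\ref{thm2} says that $(\Sigma_{\mathrm{cnu}}^*)^n\to 0$ if and only if $\Re\Sigma_{\mathrm{cnu}}$ is purely singular. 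The forward implication of each ``iff'' in the statement then drops out immediately: if $\Sigma^*$ is asymptotically stable, $X_{\mathrm u}=\{0\}$ forces $\Re\Sigma=\Re\Sigma_{\mathrm{cnu}}$, and the stated spectral condition holds by the preceding sentences.

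For the converse, the hypothesis on $\Re\Sigma = \Re\Sigma_{\mathrm u}\oplus\Re\Sigma_{\mathrm{cnu}}$ will force $\Re\Sigma_{\mathrm u}$ to be purely singular: any a.c.\ contribution from $\Re\Sigma_{\mathrm u}$ would either push the multiplicity past one in case (i) or violate ``empty a.c.\ part'' in case (ii). Combined with the classification, this yields $(\Sigma_{\mathrm{cnu}}^*)^n\to 0$, so what is left to prove is that ``$\Re\Sigma_{\mathrm u}$ purely singular'' implies $X_{\mathrm u}=\{0\}$; equivalently, any non-trivial unitary summand $\Sigma_{\mathrm u}$ of $\Sigma$ must have purely absolutely continuous spectrum.

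This last implication is the main obstacle. The strategy will be to exploit the rank-one identity $I-\Sigma\Sigma^* = \jap{\fdot,q}q$ (and, in case (ii), the fact that $I-\Sigma^*\Sigma$ is rank one as well) to realize $\Sigma$ as a finite-rank perturbation of a well chosen unitary operator $V$ on $L^2(\rho)$. Krein's theorem (Theorem~\ref{thm.krein}) then gives that $\Re V-\Re\Sigma$ is trace class, so by Kato--Rosenblum (Theorem~\ref{thm.KR}) the a.c.\ parts of $\Re V$ and $\Re\Sigma$ are unitarily equivalent. The delicate part is a Clark-model analysis of $V$, drawing on the techniques of \cite{LT}, to pin down the absolutely continuous type of the unitary summand $\Sigma_{\mathrm u}$ sitting inside $V$; I expect essentially all of the technical work to concentrate there.
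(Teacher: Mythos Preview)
Your overall architecture matches the paper's: Langer decomposition, classify the c.n.u.\ part via Theorems~\ref{thm.b3a} and~\ref{thm2}, and reduce everything to the assertion that any non-trivial unitary summand $\Sigma_{\mathrm u}$ must be purely absolutely continuous (this is the paper's Lemma~\ref{lma.e3}). The gap is in your attack on this last assertion. A perturbation-theoretic or Clark-model argument of the kind you sketch cannot work, because the assertion is \emph{false} for a generic contraction with the given defect indices: $S\oplus U$ with $U$ unitary and purely singular is an isometry with defect $(0,1)$ whose unitary part is singular, and $S_\theta\oplus U$ works likewise for defect $(1,1)$. Your plan uses only the defect indices and the rank-one identity $I-\Sigma\Sigma^*=\jap{\fdot,q}q$, which these counterexamples also satisfy; the Clark model of $V$ sees $\Sigma_{\mathrm u}$ only as an inert direct summand of $V$ and imposes no constraint on its spectral type. (In case~(i) there is a further obstruction: an isometry of Fredholm index $-1$ is never a finite-rank perturbation of a unitary, so no such $V$ exists.)

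The missing ingredient is the commutation relation $\Sigma^*\calH=\calH\Sigma$ with the anti-linear model operator $\calH$; this is what distinguishes the $\Sigma$ built from a spectral datum from an arbitrary contraction. The paper splits $L^2(\rho)=X_{\mathrm{sing}}\oplus X_{\mathrm r}$ (the singular subspace of $\Sigma_{\mathrm u}$ versus everything else), writes the commutation relation in blocks to obtain $\varphi(\Sigma_{\mathrm{sing}})^*h_{12}=h_{12}\,\varphi(\Sigma_{\mathrm r})$ for analytic polynomials $\varphi$, and then plugs in a sequence of polynomials that tend to zero strongly on $X_{\mathrm r}$ (using that the c.n.u.\ part has an absolutely continuous unitary dilation, Theorem~\ref{thm.dilation}) but stay bounded below on $X_{\mathrm{sing}}$. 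This forces $h_{12}=0$, so $\calH$ and hence $\calM^2=\calH^2$ are block-diagonal; since $q\in X_{\mathrm r}$ is cyclic for $\calM^2$, one concludes $X_{\mathrm{sing}}=\{0\}$.
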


Before proceeding to the proof, we need a lemma.
This lemma is one of the central points of our argument. 
Below we refer to the unitary and c.n.u. parts of a contraction according to Langer's lemma, see Lemma~\ref{lma.b2}. 

\begin{lemma}\label{lma.e3}
Let $\Sigma^*$ be the operator constructed from an abstract spectral datum. Then
the unitary part of $\Sigma$ is either purely absolutely 
continuous or absent.
\end{lemma}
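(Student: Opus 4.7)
The goal is to analyze the Langer decomposition $L^2(\rho) = X_u \oplus X_{\mathrm{cnu}}$ (Lemma \ref{lma.b2}) of $\Sigma$ and show that $\Sigma_u := \Sigma|_{X_u}$, when non-trivial, is a unitary with purely absolutely continuous spectrum. My plan is to exhibit $\Sigma_u$ as a direct summand, on a reducing subspace, of some larger unitary operator that is known to be purely a.c., and then conclude by the standard fact that restrictions of purely a.c.\ unitaries to reducing subspaces remain purely a.c.

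I would begin by localizing $X_u$ using the rank-one defect identities. The identity $I - \Sigma\Sigma^* = \langle\,\cdot\,,q\rangle q$ from \eqref{c13} (and its analogue for $I - \Sigma^*\Sigma$ in case (ii)) vanishes on the reducing subspace $X_u$; iterating this observation yields $X_u \perp \Sigma^{m}\Sigma^{*n}q$ for all $m,n\ge 0$, together with the analogous orthogonality for the second defect vector in case (ii). The next ingredient is the auxiliary operator $\Sigma_0 = \calM^{-1}\wt\calM$, via the factorization $\Sigma = \overline{\Psi}(\calM)\,\Sigma_0\,\overline{\wt\Psi}(\wt\calM)$. By Theorem \ref{lma.*f1}, $\Sigma_0^*$ is asymptotically stable, so $\Sigma_0$ is completely non-unitary (Lemma \ref{lma.b2}), and by the remark following Theorem \ref{thm.dilation} its minimal unitary dilation $V_0$ has purely absolutely continuous spectrum.

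The final and crucial step is to manufacture from $V_0$ and the unitary multipliers $\overline{\Psi}(\calM), \overline{\wt\Psi}(\wt\calM)$ a unitary $V$ on a suitable enlargement of $L^2(\rho)$ such that $X_u$ is reducing for $V$ with $V|_{X_u} = \Sigma_u$, and such that $V$ itself is purely absolutely continuous; then $\Sigma_u$ is purely a.c.\ as a direct summand of $V$, and Proposition \ref{prp.realpart} gives the conclusion. This last step is the heart of the matter and the main obstacle: a naïve assembly $\widetilde A V_0 \widetilde B$ with $\widetilde A, \widetilde B$ trivial unitary extensions of the multipliers is unitary but generally fails to be a dilation of $\Sigma$ (higher powers mix $L^2(\rho)$ with its orthogonal complement uncontrollably), and conjugation of a purely a.c.\ unitary by unmatched unitaries on the two sides need not preserve absolute continuity. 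Overcoming this requires a dedicated construction exploiting the fine structure linking $\calM$ and $\wt\calM$ through the rank-one relation $\wt\calM^2 = \calM^2 - \langle\,\cdot\,,\1\rangle\1$; natural tools are Kato--Rosenblum (Theorem \ref{thm.KR}) or Krein's theorem (Theorem \ref{thm.krein}) applied to the difference $\calM - \wt\calM$, together with the Clark-model techniques of \cite{LT} that the authors single out as a key new ingredient.
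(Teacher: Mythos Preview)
Your proposal has a genuine gap, and you essentially say so yourself: the ``final and crucial step'' --- constructing a purely a.c.\ unitary $V$ that has $\Sigma_u$ as a direct summand --- is never carried out, and the tools you gesture at do not fill it. Kato--Rosenblum and Krein's theorem require trace-class hypotheses on $I-\Sigma_0$ or $\calM-\wt\calM$ that are \emph{not} available in general (they enter only later in Section~\ref{sec.8}, under extra assumptions). The twist $\Sigma=\overline{\Psi}(\calM)\,\Sigma_0\,\overline{\wt\Psi}(\wt\calM)$ involves unmatched unitaries on the two sides, so neither the c.n.u.\ property nor the dilation structure of $\Sigma_0$ transfers to $\Sigma$; you diagnose this correctly but do not get past it.

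The key idea you are missing is the intertwining relation $\Sigma^{*}\calH=\calH\Sigma$ (equivalently $\Sigma^{*n}\calH=\calH\Sigma^n$) coming from \eqref{w5}. The paper's argument runs as follows. Decompose $L^2(\rho)=X_{\rm sing}\oplus X_{\rm r}$, where $X_{\rm sing}$ is the \emph{singular} subspace of the unitary part of $\Sigma$ and $X_{\rm r}$ is everything else (the c.n.u.\ part together with the a.c.\ part of the unitary component). Writing $\calH$ in $2\times2$ block form, the intertwining relation forces $\varphi(\Sigma_{\rm sing})^*h_{12}=h_{12}\,\varphi(\Sigma_{\rm r})$ for every analytic polynomial $\varphi$. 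Now choose analytic polynomials $\varphi_n$ with $\|\varphi_n\|_{H^\infty}\le1$ that tend to $0$ Lebesgue-a.e.\ on $\bbT$ but satisfy $\liminf|\varphi_n|\ge c>0$ a.e.\ with respect to the (singular) spectral measure of $\Sigma_{\rm sing}$; such polynomials exist by an elementary construction. Using the a.c.\ minimal unitary dilation of $\Sigma_{\rm cnu}$ only to bound $\|\varphi_n(\Sigma_{\rm r})f\|$ by an a.c.\ integral, one gets $\varphi_n(\Sigma_{\rm r})\to0$ strongly while $\|\varphi_n(\Sigma_{\rm sing})^*f\|$ stays bounded below, forcing $h_{12}=0$. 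Hence $\calH$, and therefore $\calM^2=\calH^2$, is block-diagonal; but $q\in X_{\rm r}$ (from the defect identity $I-\Sigma\Sigma^*=\langle\cdot,q\rangle q$) is cyclic for $\calM^2$, so $X_{\rm sing}=\{0\}$.

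In short: the paper does not try to exhibit $\Sigma_u$ as a summand of a purely a.c.\ unitary. It instead shows directly that the singular part of $\Sigma_u$ is trivial, and the engine is the anti-linear commutation $\Sigma^*\calH=\calH\Sigma$ together with a polynomial-approximation trick separating singular from a.c.\ behaviour. Your dilation route never invokes $\calH$ at all, and without that relation there is no evident mechanism linking the spectral type of $\Sigma_u$ to the structure of the model.
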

\begin{proof}
We use the model $(\calH,\wt\calH,\Sigma)$ as described in Section~\ref{sec.absmodel}.
Let us write
$$
L^2(\rho)=X_{\rm sing}\oplus X_{\rm r},
$$
where $X_{\rm sing}$ is the singular subspace of the unitary part of $\Sigma$, and $X_{\rm r}$ is 
the ``remainder'' part, i.e. the sum of the completely non-unitary subspace of $\Sigma$ and the 
absolutely continuous subspace of the unitary part of $\Sigma$. Our aim is to show that $X_{\rm sing}=\{0\}$. 

\emph{Step 1: the spectral measures associated with $\Sigma_{\rm r}$.}
By construction, $\Sigma_{\rm r}$ is an orthogonal sum of a unitary part $\Sigma_{\rm u}$ with the purely a.c. spectrum and a completely non-unitary part $\Sigma_{\rm cnu}$. 

For any $f\in X_{\rm r}$ and any polynomial $\varphi$ of $z$, we have
\begin{equation}
\norm{\varphi(\Sigma_{\rm r})f}^2=\norm{\varphi(\Sigma_{\rm u})f_{\rm u}}^2+\norm{\varphi(\Sigma_{\rm cnu})f_{\rm cnu}}^2\ ,
\label{eq.*g1}
\end{equation}
where $f_{\rm u}$ and $f_{\rm cnu}$ are the projections of $f$ onto the corresponding subspaces. 
We can write 
$$
\norm{\varphi(\Sigma_{\rm u})f_{\rm u}}^2=\int_{\bbT}\abs{\varphi(z)}^2d\mu^{\rm u}_f(z),
$$
where $\mu^{\rm u}_f$ is the spectral measure of $\Sigma_{\rm u}$, associated with the vector $f_{\rm u}$. By construction, this measure is absolutely continuous. 

Now let us consider the second term in the r.h.s. of \eqref{eq.*g1}. Since $\Sigma_{\rm cnu}$ is a c.n.u. contraction, we can consider its minimal unitary dilation $V$, which has a purely a.c. spectrum, see Theorem~\ref{thm.dilation}. Taking linear combinations of \eqref{eq.dilation}, we obtain
$$
\varphi(\Sigma_{\rm cnu})f_{\rm cnu}=P_{\rm cnu}\varphi(V)f_{\rm cnu},
$$
where $P_{\rm cnu}$ is the orthogonal projection onto the c.n.u. subspace of $\Sigma$. This yields
$$
\norm{\varphi(\Sigma_{\rm cnu})f_{\rm cnu}}^2
=
\norm{P_{\rm cnu}\varphi(V)f_{\rm cnu}}^2
\leq
\norm{\varphi(V)f_{\rm cnu}}^2
=
\int_{\bbT}\abs{\varphi(z)}^2d\mu^{\rm cnu}_f(z),
$$
where $\mu^{\rm cnu}_f$ is the spectral measure of $V$ associated with the vector $f_{\rm cnu}$. 
By Theorem \ref{thm.dilation}, the measure $\mu^{\rm cnu}_f$ is purely a.c.

Summarizing, we can write
\begin{equation}
\norm{\varphi(\Sigma_{\rm r})f}^2
\leq 
\int_{\bbT}\abs{\varphi(z)}^2 d\mu_f(z),
\label{eq.*g2}
\end{equation}
where $\mu_f=\mu^{\rm u}_f+\mu^{\rm cnu}_f$ is an absolutely continuous measure on $\bbT$.

\emph{Step 2: a commutation relation.}
We have
$$
\Sigma=
\begin{pmatrix}
\Sigma_{\rm sing} & 0
\\
0 & \Sigma_{\rm r}
\end{pmatrix}, 
\quad
\calH=
\begin{pmatrix}
h_{11} & h_{12}
\\
h_{21} & h_{22}
\end{pmatrix}
$$
with respect to our decomposition $L^2(\rho)=X_{\rm sing}\oplus X_{\rm r}$.
Iterating the commutation relation \eqref{e: Sigma* 02}, we find
$$
{\Sigma^*}^n\calH=\calH\Sigma^n. 
$$
In our orthogonal decomposition, we can write this relation as 
$$
\begin{pmatrix}
\Sigma_{\rm sing}^{*n} & 0
\\
0 & \Sigma_{\rm r}^{*n}
\end{pmatrix}
\begin{pmatrix}
h_{11} & h_{12}
\\
h_{21} & h_{22}
\end{pmatrix}
=
\begin{pmatrix}
h_{11} & h_{12}
\\
h_{21} & h_{22}
\end{pmatrix}
\begin{pmatrix}
\Sigma_{\rm sing}^n & 0
\\
0 & \Sigma_{\rm r}^n
\end{pmatrix}.
$$
If we write this as a system of four equations, one of them will read
$$
\Sigma_{\rm sing}^{*n} h_{12}=h_{12}\Sigma_{\rm r}^n. 
$$
Taking a linear combination of these equations and taking into account the anti-linearity of 
$h_{12}$, we obtain
\begin{align}\label{e: Comm Rel 03}
\varphi(\Sigma_{\rm sing})^* h_{12}=h_{12}\varphi(\Sigma_{\rm r})
\end{align}
for any \emph{analytic} polynomial $\f(z) = \sum_{k=0}^n a_k z^k$.

\emph{Step 3: $\calH$ is diagonal.}
Let us choose a sequence $\{\varphi_n\}_{n=1}^\infty$ of analytic polynomials
such that:
\begin{enumerate}
\item
$\norm{\varphi_n}_{H^\infty}\leq1$;
\item
$\varphi_n(z)\to0$ for a.e. $z\in\bbT$ (with respect to the Lebesgue measure);
\item
$\liminf_{n\to\infty}\abs{\varphi_n(z)}\geq c>0$ for a.e $z\in\bbT$ with respect to the 
(singular)  spectral measure of $\Sigma_{\rm sing}$. 
\end{enumerate}
The existence of such polynomials $\f_n$ is given by the following lemma. 
\begin{lemma}
\label{:poly}
Let $\nu$ be a  singular (regular, Borel)  measure on the unit circle $\bbT$. There exists a 
sequence of analytic polynomials $\f_n$, satisfying properties \cond1--\cond3 above. 
\end{lemma}

\begin{proof}
Let $E$ be the set of Lebesgue measure zero ($|E|=0$), supporting $\nu$, i.e.~such that 
$\nu(\bbT\setminus E)=0$. By the regularity of $\nu$ and the Lebesgue measure there exist 
increasing sequences of compacts $K_n\subset E$, $F_n\subset \bbT\setminus E$ such that 
\begin{align*}
  \lim_{n\to\infty} \nu(K_n)=\mu(E), \qquad \lim_{n\to\infty} |\bbT\setminus F_n| = 0. 
\end{align*}
Since $\dist(F_n,K_n)>0$ for all $n$, one can choose continuous functions $f_n:\bbT\to [0,1]$ such that $f_n\big|_{K_n} \equiv 1$, 
$f_n\big|_{F_n} \equiv 0$. 

Using the Weierstrass approximation theorem, let us choose trigonometric polynomials $p_n=\sum_{k=-N_n}^{N_n} a_k z^k$  such that $\|f_n -p_n\|\ci{L^\infty(T)}\leq 2^{-n}$ for all $n\geq1$. 
Then the analytic polynomials $\f_n(z):= z^{N_n} p_n(z)/2$ give the desired sequence. 
\end{proof}

We continue the proof of Lemma~\ref{lma.e3}.
Let us substitute $\varphi_n$ into \eqref{eq.*g2}. Since $\mu_f$ is absolutely continuous, 
by conditions \cond1 and \cond2 and the dominated convergence theorem we find 
$$
\norm{\varphi_n(\Sigma_{\rm r})f}^2
\leq 
\int_{\bbT}\abs{\varphi_n(z)}^2 d\mu_f(z)\to0,\quad n\to\infty,
$$
i.e. $\varphi_n(\Sigma_{\rm r})\to0$ 
strongly. 
On the other hand, condition \cond3 and Fatou's Lemma imply that for any element $f\in X_{\rm 
sing}$ we have 
\[
\liminf_{n\to\infty}\norm{\varphi_n(\Sigma_{\rm sing})^*f}^2 = \liminf_{n\to\infty}\int |\f_n|^2 
d\nu_f \geq \int c^2 d\nu_f = 
c^2\norm{f}^2; 
\]
here $\nu_f$ is the spectral measure of $\Sigma\ti{sing}$ associated with the vector $f$.

Substituting $\f_n$ into \eqref{e: Comm Rel 03} and letting $n\to\infty$ we then conclude that 
$h_{12}=0$. Using the symmetry 
condition 
\[
\jap{\calH f,g}=\jap{\calH g,f}, 
\]
from here it is easy to see that $h_{21}=0$, and so $\calH$ is diagonal in the
orthogonal decomposition $X_{\rm sing}\oplus X_{\rm r}$. 
Thus, $\calH^2=\calM^2$ is also diagonal in this decomposition, and in particular $X_{\rm sing}$ is an invariant subspace for $\calM^2$. 

\emph{Step 4: concluding the proof.}
By \eqref{c13}, we know that $\Sigma$ satisfies
\[
\Sigma\Sigma^* =I-\jap{\fdot,q}q, \qquad q(s)=\overline{\Psi}(s)/s. 
\]
Since $\Sigma\Sigma^*=I\oplus \Sigma_{\rm r}\Sigma_{\rm r}^*$, we conclude that  $q\in X_{\rm r}$. 
On the other hand, $q(s) \ne 0$ $\rho$-a.e. and so $q$ is a cyclic element for $\calM^2$. We find that $X_{\rm sing}$ is an invariant subspace for $\calM^2$, orthogonal to its cyclic 
element $q$. 
Thus, $X_{\rm sing}=\{0\}$. The proof is complete. 
\end{proof}

\begin{proof}[Proof of Theorem~\ref{thm.a6}]
By Langer's lemma (Lemma~\ref{lma.b2}) we have
\begin{align}
\Re \Sigma=\Re \Sigma_{\rm u}\oplus\Re\Sigma_{\rm cnu} \,, 
\label{e2a}
\end{align}
where $\Sigma_{\rm u}$ is unitary and $\Sigma_{\rm cnu}$ is completely non-unitary. 

(i) Suppose \eqref{ker0a} holds and so $\Sigma^*$ has defect indices $(1,0)$. 

First suppose that $\Sigma^*$ is asymptotically stable. Then the unitary part of $\Sigma$ is absent, and so $\Sigma$ is completely non-unitary. Then, by Theorem~\ref{thm.b3a}, $\Sigma$ is unitarily equivalent to the shift operator $S$ and so $\Re \Sigma$ has a purely a.c. spectrum $[-1,1]$ with multiplicity one, as required. 

Conversely, suppose that the a.c. spectrum of $\Re \Sigma$ is $[-1,1]$ with multiplicity one. 
By Lemma~\ref{lma.e3} and Proposition~\ref{prp.realpart}, the spectrum of $\Re \Sigma_{\rm u}$ is purely a.c. Next, 
applying Theorem~\ref{thm.b3a} again, we find that $\Sigma_{\rm cnu}$ is unitarily equivalent to the shift operator $S$ and so $\Re \Sigma_{\rm cnu}$ has a purely a.c. spectrum $[-1,1]$ with multiplicity one. Denoting the unitary equivalence by $\simeq$, we can rewrite \eqref{e2a} for the a.c. parts as
$$
\Re S\simeq \Re \Sigma_{\rm u}\oplus \Re S.
$$
Considering the multiplicity functions of the spectrum on both sides, we see that the term $\Re \Sigma_{\rm u}$ must be absent from this expression. Thus, $\Sigma^*=\Sigma_{\rm cnu}^*\simeq S^*$, and so $\Sigma^*$ is asymptotically stable. 

(ii) Suppose \eqref{ker0a} fails and so $\Sigma$ has defect indices $(1,1)$. 

First suppose that $\Sigma^*$ is asymptotically stable. Then the unitary part of $\Sigma$ is absent, and so $\Sigma$ is completely non-unitary and by Theorem~\ref{thm2} the operator $\Re\Sigma$ has a purely singular spectrum, as required. 

Conversely, suppose that the a.c. spectrum of $\Re \Sigma$ is $[-1,1]$ is absent. By Lemma~\ref{lma.e3}, the spectrum of $\Re \Sigma_{\rm u}$ is purely a.c.; thus, the unitary part of $\Sigma_u$ is absent, and so $\Sigma$ is c.n.u. Applying Theorem~\ref{thm2} again, we find that $\Sigma^*$ is asymptotically stable, as required. 
\end{proof}

%%%%%%%%%%%%%%%%%%%%%%%%%%%%%%%%%%%%
%%%%%%%%%%%%%%%%%%%%%%%%%%%%%%%%%%%%
\section{Reduction to spectral properties of $\wt\Psi(\wt\calM)\Psi(\calM)$}
\label{sec.8}
%%%%%%%%%%%%%%%%%%%%%%%%%%%%%%%%%%%%
%%%%%%%%%%%%%%%%%%%%%%%%%%%%%%%%%%%%

In this section we show that under some additional assumptions the operator $\Sigma^*$ is asymptotically stable if and only if the spectrum of the unitary operator $\wt\Psi(\wt\calM) \Psi(\calM)$ is purely singular. While at first glance this condition does not look much simpler than the conditions in Theorem~\ref{thm.a6}, it will allow us to easily  construct examples and counterexamples. 

Below $\Lambda=(\rho, \Psi, \wt\Psi)$ is an abstract spectral datum and $\calM$, $\wt\calM$ are the operators in $L^2(\rho)$ constructed as in Section~\ref{s: ASD}. We recall that the operator $\Sigma_0^* $ in $L^2(\rho)$  was defined by $\Sigma_0^*:= \wt\calM \calM^{-1}$.

\subsection{Reduction is possible if $I-\Sigma_0\in\bS_1$}
We start with the easiest case, when the difference $I-\Sigma_0$ is trace class.

\begin{theorem}
\label{p: sing Psi wtPsi} 
Let the abstract spectral datum $\Lambda=(\rho, \Psi, \wt\Psi)$ be such that $I-\Sigma_0\in\bS_1$. 
Then $\Sigma^*$ is asymptotically stable if and only if the unitary operator 
$$
\wt\Psi(\wt\calM) \Psi(\calM)
$$ 
has a purely singular spectrum. 
\end{theorem}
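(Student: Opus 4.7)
The plan is to view $\Sigma^*$ as a trace class perturbation of the unitary operator $U := \wt\Psi(\wt\calM)\Psi(\calM)$, and then combine the Kato--Rosenblum theorem with the dichotomy of Theorem~\ref{thm.a6}.

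First I would write
\begin{equation*}
U - \Sigma^* = \wt\Psi(\wt\calM)\bigl(I - \Sigma_0^*\bigr)\Psi(\calM),
\end{equation*}
where $\Sigma_0^* = \wt\calM\calM^{-1}$. Since $\Psi(\calM)$ and $\wt\Psi(\wt\calM)$ are unitary and $\bS_1$ is a two-sided ideal, the hypothesis $I - \Sigma_0 \in \bS_1$ gives $U - \Sigma^* \in \bS_1$ and, by taking adjoints, $U^* - \Sigma \in \bS_1$. Adding these implies $\Re U - \Re\Sigma \in \bS_1$, and the Kato--Rosenblum theorem (Theorem~\ref{thm.KR}) then yields that the absolutely continuous parts of the self-adjoint contractions $\Re U$ and $\Re\Sigma$ are unitarily equivalent; in particular, the a.c.\ part of $\Re\Sigma$ is empty if and only if the a.c.\ part of $\Re U$ is.

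Next I would rule out case (i) of Lemma~\ref{lma.e1}. If it held, then $\Sigma_0$ would have defect indices $(0,1)$ and, by Theorem~\ref{lma.*f1}, the adjoint $\Sigma_0^*$ would be asymptotically stable; consequently $\Sigma_0$ would be a completely non-unitary isometry, which by Theorem~\ref{thm.b3a} is unitarily equivalent to the shift $S$ and hence Fredholm of index $-1$. However, the identity $\Sigma_0 = I - (I-\Sigma_0)$ exhibits $\Sigma_0$ as a trace class, and in particular compact, perturbation of the identity, which is Fredholm of index $0$ --- a contradiction. Thus we are in case (ii), and Theorem~\ref{thm.a6}(ii) yields that $\Sigma^*$ is asymptotically stable if and only if $\Re\Sigma$ has empty a.c.\ part; by the preceding paragraph this is equivalent to $\Re U$ having empty a.c.\ part, and by Proposition~\ref{prp.realpart} to the unitary operator $U$ having purely singular spectrum. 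The most delicate step in this plan is the Fredholm-index argument excluding case (i); the rest is an assembly of the tools collected in Section~\ref{sec.e}.
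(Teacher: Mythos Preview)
Your proof is correct and follows essentially the same route as the paper: write $\Sigma^*$ as a trace class perturbation of the unitary $U=\wt\Psi(\wt\calM)\Psi(\calM)$, pass to real parts, apply Kato--Rosenblum, and then invoke Theorem~\ref{thm.a6}(ii). The only difference is in how you exclude the defect-index $(1,0)$ case: the paper (see Remark~\ref{r: not shift}) simply observes that $\Sigma_0\simeq S$ would force $I-\Sigma_0\simeq I-S\notin\bS_1$, whereas you phrase the same obstruction via the Fredholm index --- both arguments are valid and amount to the same thing.
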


\begin{remark}
\label{r: not shift}
By Lemma~\ref{lma.e1}, the operator $\Sigma_0^*$ has defect indices either $(1,0)$ or $(1,1)$. By Theorem~\ref{lma.*f1}, the operator $\Sigma_0^*$ is asymptotically stable, and hence it is c.n.u. 
By Theorems~\ref{thm.b3a} and \ref{thm2}, we see that there are two possibilities:
\begin{enumerate}[\rm (i)]
\item
$\Sigma_0^*$ has defect indices $(1,0)$, and then it is unitarily equivalent to $S^*$, where $S$ is the shift operator in $H^2$;
\item
$\Sigma_0^*$ has defect indices $(1,1)$, and then it is unitarily equivalent to $S_\theta^*$, where $S_\theta$
is the compressed shift operator $S_\theta$ in a model space $K_\theta$ for some inner function $\theta$. 
\end{enumerate}
Observe that condition $I-\Sigma_0\in\bS_1$ is incompatible with (i), because $I-S$ is not a trace class operator. So the assumption $I-\Sigma_0\in\bS_1$ necessitates that we have (ii). 
\end{remark}

\begin{proof}[Proof of Theorem~\ref{p: sing Psi wtPsi}]
We have 
\begin{align*}
\Psi(\calM)^* \wt\Psi(\wt\calM)^* - \Sigma
&=
\Psi(\calM)^* \wt\Psi(\wt\calM)^* - \Psi(\calM)^*\Sigma_0 \wt\Psi(\wt\calM)^* \\
&=
\Psi(\calM)^*(I-\Sigma_0)\wt\Psi(\wt\calM)^* \in 
\bS_1 
\end{align*}
and so, taking real parts, 
\[
\Re(\wt\Psi(\wt\calM)\Psi(\calM)) - \Re\Sigma  \in 
\bS_1 .
\]
Applying Proposition~\ref{prp.realpart} and the Kato-Rosenblum Theorem, we find that the spectrum of $\wt\Psi(\wt\calM)\Psi(\calM)$ is purely singular if and only if the spectrum of $\Re\Sigma$ is purely singular. 

Finally, as discussed in Remark~\ref{r: not shift}, the operator $\Sigma_0$ has defect indices $(1,1)$, and so $\Sigma$ has the same defect indices. Thus Theorem~\ref{thm.a6}(ii) applies and so the spectrum of $\Re\Sigma$ is purely singular if and only if $\Sigma^*$ is asymptotically stable.
\end{proof}

\subsection{Sufficient conditions for $I-\Sigma_0\in\bS_1$}
The previous theorem leads to the natural question: how to characterise abstract spectral data which correspond to the case $I-\Sigma_0\in\bS_1$? We give some sufficient conditions that guarantee this inclusion. 
We start with the simplest condition. 

\begin{lemma}
\label{l: I-Sigma trace class}
Let the abstract spectral datum $\Lambda=(\rho, \Psi, \wt\Psi)$ be such that $\supp \rho$ is separated away from $0$; then $I-\Sigma_0\in\bS_1$. 
\end{lemma}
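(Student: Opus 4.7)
The plan is to reduce to a rank-one perturbation of squares and then extract trace-class information about the square roots. Under the assumption $\supp\rho\subset[c,\infty)$ with $c>0$, the multiplication operator $\calM$ is bounded and boundedly invertible, with $\|\calM^{-1}\|\le c^{-1}$. Writing
\[
I-\Sigma_0 = I - \calM^{-1}\wt\calM = \calM^{-1}(\calM-\wt\calM),
\]
I see that it suffices to prove $\calM-\wt\calM\in\bS_1$.

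The defining identity \eqref{defwtcalM} gives the rank-one relation $\calM^2 - \wt\calM^2 = \langle\fdot,\1\rangle\1$, which is manifestly trace class. A direct appeal to Krein's Theorem~\ref{thm.krein} with $f(x)=\sqrt{x}$ is awkward here, because $\sqrt{x}$ fails to be smooth at the origin and $0$ may lie in the spectrum of $\wt\calM^2$ even when $\supp\rho$ is separated from $0$ (e.g.\ for $\rho=\delta_1$ one finds $\wt\calM=0$). The plan is instead to exploit the integral representation
\[
\sqrt{x} \;=\; \frac{2}{\pi}\int_0^\infty \frac{x}{x+\lambda^2}\,d\lambda, \qquad x\ge 0,
\]
together with the resolvent identity $(B+\lambda^2)^{-1}-(A+\lambda^2)^{-1}=(B+\lambda^2)^{-1}(A-B)(A+\lambda^2)^{-1}$, applied to $A=\calM^2$, $B=\wt\calM^2$, to obtain
\[
\calM-\wt\calM \;=\; \frac{2}{\pi}\int_0^\infty \lambda^2\,(\wt\calM^2+\lambda^2)^{-1}\bigl(\calM^2-\wt\calM^2\bigr)(\calM^2+\lambda^2)^{-1}\,d\lambda.
\]

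The remaining step is a trace norm estimate under the integral. Since $\calM^2\ge c^2 I$ and $\wt\calM^2\ge 0$, we have the resolvent bounds $\|(\calM^2+\lambda^2)^{-1}\|\le (c^2+\lambda^2)^{-1}$ and $\|(\wt\calM^2+\lambda^2)^{-1}\|\le \lambda^{-2}$. Combined with the trace-norm computation $\|\calM^2-\wt\calM^2\|_1=\|\1\|_{L^2(\rho)}^2=\rho(\bbR)$, this yields
\[
\bigl\|\lambda^2\,(\wt\calM^2+\lambda^2)^{-1}\bigl(\calM^2-\wt\calM^2\bigr)(\calM^2+\lambda^2)^{-1}\bigr\|_1 \;\le\; \frac{\rho(\bbR)}{c^2+\lambda^2},
\]
which is integrable over $[0,\infty)$. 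Hence the Bochner integral converges in $\bS_1$, so $\calM-\wt\calM\in\bS_1$, and multiplying on the left by the bounded operator $\calM^{-1}$ gives $I-\Sigma_0\in\bS_1$. The main subtlety is the compensation between the singular factor $\lambda^{-2}$ from the resolvent of $\wt\calM^2$ near $\lambda=0$ and the $\lambda^2$ in front of the integrand; this is precisely what allows us to bypass the non-smoothness of $\sqrt{x}$ at the origin and avoid Krein's theorem.
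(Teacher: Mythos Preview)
Your proof is correct, and it takes a somewhat different route from the paper. The paper argues via Krein's Theorem~\ref{thm.krein}: since $\sigma(\calM^2)\subset[a,R]$ with $a>0$ and a rank-one perturbation can push at most one eigenvalue below $a$, one has $\sigma(\wt\calM^2)\subset\{\lambda_0\}\cup[a,R]$; thus the square root can be replaced by a $C^\infty_0$ function agreeing with it on this set, and Krein's theorem then gives $\calM-\wt\calM\in\bS_1$ directly. Your argument instead uses the integral representation of the square root and a resolvent-identity/trace-norm estimate, bypassing both Krein's theorem and the spectral interlacing observation; the key point is that the factor $\lambda^2\|(\wt\calM^2+\lambda^2)^{-1}\|\le 1$ absorbs the potential singularity at $\lambda=0$, while the invertibility of $\calM$ supplies the integrable bound $(c^2+\lambda^2)^{-1}$. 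In fact your computation is exactly the special (and cleaner) case of the method the paper later uses to prove the sharper Lemma~\ref{lma.a9}, where $\supp\rho$ need not be separated from $0$. So your approach is more elementary and self-contained for this lemma, while the paper's version leverages an off-the-shelf trace-class perturbation result at the cost of a small spectral argument.
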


\begin{proof}
The idea is to apply Theorem~\ref{thm.krein} to the operators $\calM^2$ and $\wt\calM^2 = \calM^2 -\jap{\fdot, \1}\1\ge0$ and the function $\f(s)=\sqrt s$. The function $\f$ is not sufficiently smooth to comply with the hypothesis of Theorem~\ref{thm.krein}, but we can modify it so that the resulting function is in $C^\infty_0(\bbR)$. 

Indeed, by assumptions $\sigma(\calM^2)\subset[a,R]$ with some $0<a<R<\infty$; since $\wt\calM^2$ is a rank one perturbation of $\calM^2$, we find that $\sigma(\wt\calM^2)\subset\{\lambda_0\}\cup[a,R]$, with some eigenvalue $\lambda_0\geq0$. It is clear that we can modify $\f$ outside the set $\{\lambda_0\}\cup[a,R]$ such that the resulting function $\wt\f$ is in $C^\infty_0(\bbR)$. 
Thus, Theorem~\ref{thm.krein} applies to $\wt\f$  and we get
$$
\wt\f(\wt\calM^2)-\wt\f(\calM^2)=\f(\wt\calM^2)-\f(\calM^2)=\wt\calM-\calM\in\bS_1.
$$
By assumption, the operator $\calM$ is invertible, so left multiplying $\calM-\wt\calM$ by 
$\calM^{-1}$ and recalling that $\Sigma_0=\calM^{-1}\wt\calM$, we get the conclusion of the lemma. 
\end{proof}

Next, we give a slightly more precise sufficient condition. As discussed in Remark~\ref{r: not shift}, if $I-\Sigma_0\in\bS_1$, then condition \eqref{ker0a} is not satisfied, which means that either \eqref{a2} or \eqref{a3} holds. The following lemma says that under conditions that are slightly stronger  than \eqref{a2} or \eqref{a3},  we have $I-\Sigma_0\in\bS_1$.

\begin{lemma}\label{lma.a9}
Assume that for some $\eps>0$, we have either
\begin{align}
\int_0^\infty \frac{d\rho(s)}{s^2}=1, \quad \int_0^\infty \frac{d\rho(s)}{s^{4+\eps}}<\infty
\label{a5}
\end{align}
or 
\begin{align}
\int_0^\infty \frac{d\rho(s)}{s^2}<1, \quad \int_0^\infty \frac{d\rho(s)}{s^{2+\eps}}<\infty.
\label{a6}
\end{align}
Then $I-\Sigma_0$ is trace class. 
\end{lemma}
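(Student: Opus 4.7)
The plan is to represent $I-\Sigma_0 = \calM^{-1}(\calM-\wt\calM)$ as an explicit integral of rank-one operators and bound its $\bS_1$-norm using the extra integrability of $d\rho/s^{2+\eps}$ or $d\rho/s^{4+\eps}$. First I would use the standard integral formula for square roots of positive operators,
\[
A^{1/2}-B^{1/2}=\frac{1}{\pi}\int_0^\infty t^{1/2}(A+t)^{-1}(A-B)(B+t)^{-1}\,dt,
\]
applied to $A=\calM^2\ge B=\wt\calM^2$. Since $A-B=\langle\fdot,\1\rangle\1$ is rank one, each integrand is rank one. Multiplying on the left by $\calM^{-1}$, which commutes with $(\calM^2+t)^{-1}$, yields
\[
I-\Sigma_0=\frac{1}{\pi}\int_0^\infty t^{1/2}\,\langle\fdot,(\wt\calM^2+t)^{-1}\1\rangle\,(\calM^2+t)^{-1}q_0\,dt,
\]
where $q_0(s)=1/s$. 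Taking trace norms,
\[
\|I-\Sigma_0\|_{\bS_1}\le\frac{1}{\pi}\int_0^\infty t^{1/2}\,\|(\wt\calM^2+t)^{-1}\1\|\cdot\|(\calM^2+t)^{-1}q_0\|\,dt.
\]

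Next I would compute both factors explicitly. The second is immediate: $\|(\calM^2+t)^{-1}q_0\|^2=\int d\rho(s)/(s^2(s^2+t)^2)$. For the first, I use that $\wt\calM^2=\calM^2-\langle\fdot,\1\rangle\1$ is a rank-one perturbation, so a short calculation shows $(\wt\calM^2+t)^{-1}\1=(\calM^2+t)^{-1}\1/(1-I_1(t))$ where $I_1(t):=\int d\rho/(s^2+t)$; hence $\|(\wt\calM^2+t)^{-1}\1\|^2=I_2(t)/(1-I_1(t))^2$ with $I_2(t):=\int d\rho/(s^2+t)^2$. The problem reduces to showing convergence of
\[
\int_0^\infty \frac{t^{1/2}\,I_2(t)^{1/2}}{1-I_1(t)}\left(\int\frac{d\rho(s)}{s^2(s^2+t)^2}\right)^{1/2}dt.
\]

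Finally, I would check convergence in the two cases. For large $t$ all three factors give power decay from the finiteness of $\rho$, yielding an integrand $O(t^{-3/2})$. The delicate analysis is near $t=0$, and here I would split each $s$-integral at $s=\sqrt t$ and use the extra integrability hypothesis: for $s\le\sqrt t$, bound $s^{-a}\le t^{(b-a)/2}s^{-b}$ for the appropriate exponent $b=2+\eps$ or $b=4+\eps$, and for $s\ge\sqrt t$ bound $(s^2+t)^{-k}\le s^{-2k}$ and insert the same power trade. Under \eqref{a6}, where $1-I_1(t)\ge 1-\int d\rho/s^2>0$ uniformly, a direct estimate gives an integrand $O(t^{-1+\eps/2})$ near $0$. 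Under \eqref{a5}, the identity $1-I_1(t)=t\int d\rho/(s^2(s^2+t))$ together with $\int d\rho/s^4<\infty$ (implied by $\int d\rho/s^{4+\eps}<\infty$) gives $1-I_1(t)\sim ct$ as $t\to0$; combining this with the splitting argument yields an integrand $O(t^{-1+\eps/4})$, still integrable.

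The main obstacle is the case \eqref{a5}: the denominator $1-I_1(t)$ vanishes linearly at $t=0$, and without the stronger assumption $\int d\rho/s^{4+\eps}<\infty$ (beyond mere $\int d\rho/s^4<\infty$) the extra power of $t$ picked up in the small-$s$ part of the integral would not be enough to cancel this blow-up. The role of $\eps>0$ in both \eqref{a5} and \eqref{a6} is precisely to provide the positive exponent that guarantees integrability near $t=0$.
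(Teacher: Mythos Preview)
Your proposal is correct and follows essentially the same route as the paper. The paper likewise writes $I-\Sigma_0^*$ as an integral of rank-one operators via the square-root integral formula and the resolvent identity for the rank-one perturbation $\wt\calM^2=\calM^2-\langle\fdot,\1\rangle\1$; your perturbation determinant $1-I_1(t)$ is exactly the paper's $\Delta(-t)$, and after the change of variable $t\mapsto t^2$ your integrand coincides (up to taking adjoints) with the paper's $\frac{t^2}{\Delta(-t^2)}\langle\fdot,a_t\rangle b_t$, with the same case-by-case estimates near $t=0$ and $t=\infty$.
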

 The proof is elementary but a little technical; it is given in Appendix~\ref{sec.f}.

\subsection{Reduction is possible if $\Psi$ and $\wt\Psi$ are H\"{o}lder at $0$}
Finally, we turn to the case when $I-\Sigma_0$ is not necessarily trace class. 
We give a more precise condition, whose proof is based on the application of Ismagilov's Theorem. 
\begin{theorem}
\label{thm.a7}
Let the abstract spectral datum $\Lambda=(\rho, \Psi, \wt\Psi)$ be such that the limits $\Psi(0_+)$ and $\wt\Psi(0_+)$ exist and that for some $\eps>0$ we have
$$
\sup_{t>0}t^{-\eps}\abs{\Psi(t)-\Psi(0_+)}<\infty, 
\quad
\sup_{t>0}t^{-\eps}\abs{\wt\Psi(t)-\wt\Psi(0_+)}<\infty.
$$
Then $\Sigma^*$ is asymptotically stable if and only if the unitary operator 
$$
\wt\Psi(\wt\calM)\Psi(\calM)
$$
has a purely singular spectrum. 
\end{theorem}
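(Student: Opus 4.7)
The plan is to extend the argument of Theorem~\ref{p: sing Psi wtPsi} by trading the trace-class hypothesis $I-\Sigma_0\in\bS_1$ for the H\"older regularity of $\Psi,\wt\Psi$ at the origin. The H\"older decay will provide a trace-class estimate for certain ``cross'' contributions, leaving only a non-trace-class ``diagonal'' factor $\Sigma_0^*-I$ whose a.c.\ spectrum is already pinned down by Theorem~\ref{lma.*f1}; Ismagilov's Theorem is then used to separate its spectral role from that of $W$.

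As a preliminary normalization, multiplying $\Psi$ and $\wt\Psi$ by the unimodular scalars $\overline{\Psi(0_+)}$ and $\overline{\wt\Psi(0_+)}$ respectively only multiplies both $\Sigma^*$ and $W$ by a common unimodular scalar, which affects neither asymptotic stability of $\Sigma^*$ nor purely singular spectrum of $W$. I may therefore assume $\Psi(0_+)=\wt\Psi(0_+)=1$, and set $\Psi_1:=\Psi-1$, $\wt\Psi_1:=\wt\Psi-1$, so that $|\Psi_1(s)|,|\wt\Psi_1(s)|\le Cs^\eps$. Expanding $\Sigma^* = (I+\wt\Psi_1(\wt\calM))\Sigma_0^*(I+\Psi_1(\calM))$ and $W = (I+\wt\Psi_1(\wt\calM))(I+\Psi_1(\calM))$ and subtracting produces
\[
\Sigma^* - W - (\Sigma_0^*-I) = R,
\]
where every summand of $R$ contains $\Sigma_0^*-I = (\wt\calM-\calM)\calM^{-1}$ together with at least one of the factors $\Psi_1(\calM)$ or $\wt\Psi_1(\wt\calM)$. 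The core technical step is to prove $R\in\bS_1$: I would use the integral representation
\[
\wt\calM-\calM = -\frac{1}{\pi}\int_0^\infty\sqrt t\,\jap{\fdot,(t+\calM^2)^{-1}\1}\,(t+\wt\calM^2)^{-1}\1\,dt,
\]
which follows from the standard identity $\sqrt A-\sqrt B = \frac{1}{\pi}\int_0^\infty t^{-1/2}(t+A)^{-1}(A-B)(t+B)^{-1}\,dt$ combined with the rank-one relation $\calM^2-\wt\calM^2 = \jap{\fdot,\1}\1$, to write each summand of $R$ as an integral of rank-one operators. The H\"older estimate $|\Psi_1(s)/s|\le Cs^{\eps-1}$, together with $\int s^{2\eps-2}\,d\rho<\infty$ (which follows from \eqref{rholeq1} and the boundedness of $\supp\rho$), then makes the resulting trace-norm integrand summable.

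Given $R\in\bS_1$, Theorem~\ref{thm.KR} applied to real parts gives that the a.c.\ parts of $\Re\Sigma$ and $\Re W + \Re(\Sigma_0^*-I)$ are unitarily equivalent. To decouple the contribution of $W$ from that of $\Sigma_0$ I would invoke Ismagilov's Theorem: verifying, by a similar weighted Schatten-class computation using the H\"older decay and the rank-one structure, that the product of the two summands lies in $\bS_1$, Ismagilov identifies the a.c.\ part of $\Re W + \Re(\Sigma_0^*-I)$ with the direct sum of the a.c.\ parts of $\Re W$ and $\Re\Sigma_0$. Since $\Sigma_0^*$ is asymptotically stable by Theorem~\ref{lma.*f1}, Theorems~\ref{thm.b3a} and \ref{thm2}, together with Proposition~\ref{prp.realpart}, pin down the a.c.\ part of $\Re\Sigma_0$ in each case of Lemma~\ref{lma.e1}: empty in case~(ii), and the interval $[-1,1]$ with multiplicity one in case~(i). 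Combined with Theorem~\ref{thm.a6} applied to $\Sigma$ itself and Proposition~\ref{prp.realpart} applied to $W$, this produces in both cases the desired equivalence ``$\Sigma^*$ is asymptotically stable $\iff$ $W$ has purely singular spectrum''.

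The principal obstacle is the chain of Schatten-class estimates, above all the trace-class hypothesis needed for Ismagilov's Theorem. Both $R\in\bS_1$ and this latter hypothesis reduce to weighted integrals against $\rho$ mixing $s^{-2+2\eps}$ with the resolvent kernels $(t+\calM^2)^{-1}\1$ and $(t+\wt\calM^2)^{-1}\1$, and the H\"older exponent $\eps>0$ is precisely what makes these integrals converge. The delicate point will be to ensure that all the non-commuting products of functional calculi of $\calM$ and $\wt\calM$ that arise can be brought, via the rank-one identity $\calM^2-\wt\calM^2=\jap{\fdot,\1}\1$, into this manageable form.
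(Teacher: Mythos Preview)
Your overall strategy coincides with the paper's: reduce to the identity $\Sigma^*=(\Sigma_0^*-I)+W+R$ with $R\in\bS_1$ (after normalising $c=\wt\Psi(0_+)\Psi(0_+)=1$), then combine Kato--Rosenblum with Ismagilov's theorem and the known a.c.\ part of $\Re\Sigma_0$. Your outline of why $R\in\bS_1$ is correct, and your integral representation for $\wt\calM-\calM$ is equivalent to the one the paper uses.

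However, the Ismagilov step as you wrote it does not work. You propose the summands $A=\Re W$ and $B=\Re(\Sigma_0^*-I)$ and claim $AB\in\bS_1$ ``using the H\"older decay''. But the H\"older decay lives in $W-I$, not in $W$: since $W$ is unitary, $\Re W$ is a contraction of order one (in fact $\Re W\approx I$ near the bottom of the spectrum of $\calM$), so $(\Re W)(\Re\Sigma_0-I)$ contains the term $\Re\Sigma_0-I$ itself, which is \emph{not} trace class in the regime you are trying to cover. Concretely, $W(\Sigma_0-I)$ cannot be trace class because $W$ is unitary and $\Sigma_0-I\notin\bS_1$. A second symptom of the same mistake is that, even if Ismagilov applied, it would yield the a.c.\ parts of $\Re W$ and of $\Re\Sigma_0-I$ (a shifted copy), not of $\Re\Sigma_0$ as you wrote; in the defect-$(1,0)$ case this gives a.c.\ spectrum on $[-2,0]$ rather than $[-1,1]$, and the comparison with Theorem~\ref{thm.a6}(i) would collapse.

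The fix is exactly what the paper does: subtract $I$ from \emph{both} pieces first. Work with $\Re(\Sigma^*-I)=\Re(\Sigma_0^*-I)+\Re(W-I)+\text{trace class}$ and apply Ismagilov to $A=\Re(\Sigma_0^*-I)$, $B=\Re(W-I)$. Now $B$ carries the H\"older factor ($W-I$ is a sum of terms each containing $\Psi_1(\calM)$ or $\wt\Psi_1(\wt\calM)$), so the trace-class lemmas you already need for $R$ (namely $(I-\Sigma_0^*)\calM^\eps\in\bS_1$ and its variants with $\wt\calM^\eps$ and with adjoints) give $AB,BA\in\bS_1$. Ismagilov then yields $(\Re\Sigma-I)_{\rm ac}\simeq(\Re\Sigma_0-I)_{\rm ac}\oplus(\Re W-I)_{\rm ac}$, and shifting by $1$ on both sides produces the correct formula $(\Re\Sigma)_{\rm ac}\simeq(\Re\Sigma_0)_{\rm ac}\oplus(\Re W)_{\rm ac}$, after which your final paragraph goes through.
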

We give the proof in Appendix~\ref{sec.f}.

\subsection{Open question}
The previous Theorem naturally leads to the following question. 

\noindent
\textbf{Open question:} \emph{for a general abstract spectral datum $\Lambda$, is it true that 
$\Sigma^*$ is asymptotically stable if and only if the spectrum of $\wt\Psi(\wt\calM)\Psi(\calM)$ is purely singular?}

At first glance, reduction to $\wt\Psi(\wt\calM) \Psi(\calM)$ does not seem very useful since in general it is not an easy task to decide if this operator has a purely singular spectrum. But in concrete situations this allows us to give convenient sufficient conditions for $\Sigma^*$ to be asymptotically stable, i.e. (see Theorem~\ref{thm.a5}) for a spectral datum $\Lambda$ to be in $\Lambda(\BMOA_\simp)$. Most importantly, it also allows to construct counterexamples.

\subsection{Positive results}

%%%%%%%%%%%%%%%%
\begin{theorem}\label{Psi0}
%%%%%%%%%%%%%%%%
Let  $\Lambda=(\rho, \Psi, \wt\Psi)$ be an abstract spectral datum. 
	Let $\Psi_\#$  be a differentiable unimodular complex valued function on $[0,\infty)$ such that 
	its derivative admits the representation
	$$
	\frac{d}{ds}\Psi_\#(s)=s\int_{-\infty}^\infty e^{is^2t}d\mu(t)
	$$
	with some finite complex-valued measure $\mu$ on $\bbR$. 
	If $(\rho,\Psi,\wt\Psi)\in \Lambda(\BMOA_\simp)$, then 
	$(\rho,\overline{\Psi}_\#\Psi,\Psi_\#\wt\Psi)\in \Lambda(\BMOA_\simp)$; in particular, 
	$(\rho,\overline{\Psi}_\#,\Psi_\#)\in \Lambda(\BMOA_\simp)$.
\end{theorem}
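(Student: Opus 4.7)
The plan is to invoke Theorem~\ref{thm.a5} and show that the contraction $\Sigma_\#^*$ built from the new datum $(\rho,\overline\Psi_\#\Psi,\Psi_\#\wt\Psi)$ is asymptotically stable, starting from the asymptotic stability of the original $\Sigma^*$. Since $\Psi_\#(\wt\calM)$ commutes with $\wt\Psi(\wt\calM)$ and $\overline\Psi_\#(\calM)$ commutes with $\Psi(\calM)$, a direct computation from the defining formula \eqref{w7} gives
$$\Sigma_\#^* = \Psi_\#(\wt\calM)\,\wt\Psi(\wt\calM)\,\wt\calM\calM^{-1}\,\Psi(\calM)\,\overline\Psi_\#(\calM) = \Psi_\#(\wt\calM)\,\Sigma^*\,\overline\Psi_\#(\calM).$$
The strategy is to compare this with the unitarily conjugate operator $\Psi_\#(\calM)\,\Sigma^*\,\overline\Psi_\#(\calM)$; their difference equals $[\Psi_\#(\wt\calM)-\Psi_\#(\calM)]\,\Sigma^*\,\overline\Psi_\#(\calM)$, so everything hinges on the inclusion $\Psi_\#(\wt\calM)-\Psi_\#(\calM)\in\bS_1$.

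To establish this key lemma, integrate the hypothesis $\Psi_\#'(s)=s\int e^{is^2t}d\mu(t)$ on $[0,s]$ and apply Fubini to obtain
$$\Psi_\#(s)=\Psi_\#(0)+\int \frac{e^{is^2t}-1}{2it}\,d\mu(t),$$
where the integrand at $t=0$ is understood as the removable value $s^2/2$. Passing to the functional calculus of the bounded non-negative operators $\calM$ and $\wt\calM$ yields
$$\Psi_\#(\calM)-\Psi_\#(\wt\calM) = \int \frac{e^{it\calM^2}-e^{it\wt\calM^2}}{2it}\,d\mu(t).$$
Duhamel's formula rewrites the integrand as $\tfrac12\int_0^1 e^{ist\calM^2}(\calM^2-\wt\calM^2)\,e^{i(1-s)t\wt\calM^2}\,ds$; the rank one identity \eqref{w6} gives $\calM^2-\wt\calM^2=\jap{\fdot,\1}\1$, which has $\bS_1$-norm $\rho(\R)$, so the unitarity of the exponentials bounds the integrand in trace norm by $\tfrac12\rho(\R)$, uniformly in $t$. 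Integrating against the finite measure $|\mu|$ proves the claim.

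With this in hand, $\Sigma_\#^*=\Psi_\#(\calM)\Sigma^*\overline\Psi_\#(\calM)+K$ with $K\in\bS_1$, so $\Re\Sigma_\#$ differs from the unitary conjugate $\Psi_\#(\calM)(\Re\Sigma)\overline\Psi_\#(\calM)$ of $\Re\Sigma$ by a trace class self-adjoint operator. The Kato--Rosenblum Theorem~\ref{thm.KR} then identifies the absolutely continuous parts of $\Re\Sigma$ and $\Re\Sigma_\#$. Since the defect indices of $\Sigma^*$ and $\Sigma_\#^*$ depend only on $\rho$ by Lemma~\ref{lma.e1}, they coincide, and Theorem~\ref{thm.a6}---applied in whichever of the two cases $(1,0)$ or $(1,1)$ is active---immediately transfers asymptotic stability from $\Sigma^*$ to $\Sigma_\#^*$. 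The ``in particular'' assertion follows by taking $\Psi=\wt\Psi=\1$ and invoking Theorem~\ref{lma.*f1} as the base case $(\rho,\1,\1)\in\Lambda(\BMOA_\simp)$. The main obstacle is the Duhamel trace-norm estimate above; everything else is a routine reduction to the existing machinery.
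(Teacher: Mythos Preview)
Your proof is correct and follows essentially the same route as the paper: relate $\Sigma_\#^*$ to a unitary conjugate of $\Sigma^*$ modulo trace class via $\Psi_\#(\wt\calM)-\Psi_\#(\calM)\in\bS_1$, then invoke Kato--Rosenblum and Theorem~\ref{thm.a6}. The only difference is that where you establish the trace-class inclusion by an explicit Duhamel estimate, the paper simply cites Krein's theorem (Theorem~\ref{thm.krein}) applied to the function $s\mapsto\Psi_\#(\sqrt{s})$ and the rank-one pair $\calM^2,\wt\calM^2$; your Duhamel computation is in effect the proof of that theorem in this special case.
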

\begin{proof}
By our assumptions on $\Psi_\#$, the function $s\mapsto\Psi_\#(\sqrt{s})$ satisfies the hypothesis of Theorem~\ref{thm.krein}. Since $\wt\calM^2-\calM^2$ is a rank one operator, it follows that 
$$
\Psi_\#(\wt\calM)-\Psi_\#(\calM)=
\Psi_\#(\sqrt{\wt\calM^2})-\Psi_\#(\sqrt{\calM^2})\in\bS_1,
$$
and so, left-multiplying by $\Psi_\#(\calM)^*$, we find 
$$
\Psi_\#(\calM)^*\Psi_\#(\wt\calM)-I\in\bS_1.
$$
Next, denote 
$$
\Sigma^*=\wt\Psi(\wt\calM)\Sigma_0^*\Psi(\calM), \quad 
\Sigma_\#^*=\Psi_\#(\wt\calM)\Sigma^*\Psi_\#(\calM)^*;
$$
here the operator $\Sigma$ corresponds to the spectral datum $(\rho,\Psi,\wt\Psi)$ and 
$\Sigma_\#$ corresponds to the spectral datum $(\rho,\overline{\Psi}_\#\Psi,\Psi_\#\wt\Psi)$.
We have
\begin{align*}
\Sigma_\#^*
&=
\Psi_\#(\calM)\bigl(\Psi_\#(\calM)^*\Psi_\#(\wt\calM)\bigr)\Sigma^*\Psi_\#(\calM)^*
\\
&=
\Psi_\#(\calM)\Sigma^*\Psi_\#(\calM)^*+\text{trace class operator}.
\end{align*}
By taking real parts, it follows that 
$$
\Re \Sigma_\#=\Psi_\#(\calM)(\Re \Sigma^*) \Psi_\#(\calM)^*+\text{trace class operator}.
$$
Thus, using the Kato-Rosenblum theorem, we find that $\Re \Sigma_\#$ satisfies the hypothesis of 
Theorem~\ref{thm.a6} if and only if $\Re \Sigma$ satisfies them. Thus, $\Sigma_\#^*$ is 
asymptotically stable if and only if $\Sigma^*$ is. 
Finally, if $\Psi=\wt\Psi=1$, then by Theorem~\ref{lma.*f1} we have $(\rho,1,1)\in\Lambda(\BMOA_\simp)$, and therefore $(\rho,\overline{\Psi}_\#,\Psi_\#)\in \Lambda(\BMOA_\simp)$. 
\end{proof}

\subsection{Counterexamples}

If one of the functions $\Psi$ or $\wt\Psi$ is constant, then the problem of spectral analysis of $\wt\Psi(\wt\calM)\Psi(\calM)$ simplifies significantly and reduces to the spectral analysis of a multiplication operator. 
Recall that the spectral type of a multiplication operator is easy to determine. Namely, if $\calM$ is 
the multiplication by the independent variable $s$ in $L^2(\rho)$, then a spectral measure (of maximal spectral type) of the operator $\Psi(\calM)$ is the pushforward of $\rho$ by $\Psi$; we denote this pushforward measure by $\rho\circ(\Psi^{-1})$. 

We immediately get the following generalization of Theorem~\ref{t: G positive}. 

\begin{theorem}
Let $\Lambda=(\rho, \Psi, \wt\Psi)$ be an abstract spectral datum  such that $I-\Sigma_0\in\bS_1$ and suppose that one of the two measures $\rho\circ(\Psi^{-1})$, $\wt\rho\circ(\wt\Psi^{-1})$ is supported at a single point. Then $\Lambda\in\Lambda(\BMOA_\simp)$ if and only if the other measure is purely singular. 
\end{theorem}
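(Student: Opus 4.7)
The plan is to combine Theorem~\ref{thm.a5} with Theorem~\ref{p: sing Psi wtPsi}. By hypothesis $I-\Sigma_0\in\bS_1$, so Theorem~\ref{p: sing Psi wtPsi} applies: $\Sigma^*$ is asymptotically stable iff the unitary operator $\wt\Psi(\wt\calM)\Psi(\calM)$ has purely singular spectrum. Combined with Theorem~\ref{thm.a5}, this reduces the problem to showing that, under the additional hypothesis that one of $\rho\circ\Psi^{-1}$ or $\wt\rho\circ\wt\Psi^{-1}$ is a single point mass, the spectrum of $\wt\Psi(\wt\calM)\Psi(\calM)$ is purely singular iff the other pushforward measure is purely singular.

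First, suppose $\rho\circ\Psi^{-1}=\delta_\alpha$ for some $\alpha\in\bbT$. Then $\Psi=\alpha$ $\rho$-a.e., so as an element of $L^\infty(\rho)$ the function $\Psi$ equals the constant $\alpha$, and hence $\Psi(\calM)=\alpha I$ on $L^2(\rho)$. Therefore $\wt\Psi(\wt\calM)\Psi(\calM)=\alpha\,\wt\Psi(\wt\calM)$, and its maximal spectral type coincides (up to the rotation $z\mapsto\alpha z$, which preserves the Lebesgue measure class on $\bbT$) with that of $\wt\Psi(\wt\calM)$. Since $\wt\calM$ has simple spectrum with scalar spectral measure $\wt\rho$ and cyclic vector $\1$, the spectral theorem identifies $\wt\Psi(\wt\calM)$ with multiplication by $\wt\Psi$ on $L^2(\wt\rho)$, whose maximal spectral type is the pushforward $\wt\rho\circ\wt\Psi^{-1}$. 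Thus $\wt\Psi(\wt\calM)\Psi(\calM)$ has purely singular spectrum iff $\wt\rho\circ\wt\Psi^{-1}$ is purely singular, as required.

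The reverse case, $\wt\rho\circ\wt\Psi^{-1}=\delta_\beta$, is handled analogously but requires one mild technical disambiguation: the function $\wt\Psi$ is only defined as an element of $L^\infty(\wt\rho_0)$, so if $\wt\rho$ has an atom at $0$ (a one-dimensional eigenspace of $\wt\calM$ by simplicity of spectrum), the value $\wt\Psi(0)$ is arbitrary. Two different choices of $\wt\Psi(0)$ produce operators $\wt\Psi(\wt\calM)$ that differ by a rank-one operator, so the resulting operators $\wt\Psi(\wt\calM)\Psi(\calM)$ differ by a rank-one (hence trace class) perturbation. By the Kato-Rosenblum theorem (Theorem~\ref{thm.KR}), their a.c.\ parts are unitarily equivalent, so the purely singular property does not depend on this choice. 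Taking $\wt\Psi(0):=\beta$ gives $\wt\Psi(\wt\calM)=\beta I$, and the argument of the previous paragraph (with the roles of $\calM,\wt\calM,\Psi,\wt\Psi$ interchanged) yields that the spectrum of $\beta\,\Psi(\calM)$ is purely singular iff $\rho\circ\Psi^{-1}$ is purely singular.

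There is no genuinely hard step: the result is a fairly direct consequence of Theorem~\ref{p: sing Psi wtPsi} together with the elementary observation that a multiplication operator by an essentially constant function collapses to a scalar multiple of the identity. The only mildly delicate point is the disambiguation of $\wt\Psi(0)$ in the second case, which is resolved cleanly via trace-class stability of the a.c.\ spectrum.
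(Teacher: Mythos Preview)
Your proof is correct and follows essentially the same route as the paper: reduce to Theorem~\ref{p: sing Psi wtPsi}, observe that a pushforward supported at a single point forces the corresponding unimodular function to be a constant $\rho$-a.e.\ (resp.\ $\wt\rho$-a.e.), so $\wt\Psi(\wt\calM)\Psi(\calM)$ collapses to a scalar multiple of the other factor, whose spectral type is the remaining pushforward. Your treatment of the $\wt\Psi(0)$ ambiguity via a rank-one Kato--Rosenblum argument is in fact more careful than the paper, which glosses over this point.
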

\begin{proof}
Suppose that $\wt\rho\circ(\wt\Psi^{-1})$ is supported at a point $\zeta$, where $\abs{\zeta}=1$. 
Then 
$$
\wt\Psi(\wt\calM)\Psi(\calM)=\zeta\Psi(\calM).
$$
Thus, the spectrum of $\wt\Psi(\wt\calM)\Psi(\calM)$ is singular if and only if the measure $\rho\circ(\Psi^{-1})$ is singular. It remains to apply Theorem~\ref{p: sing Psi wtPsi}. 

The case when  $\rho\circ(\Psi^{-1})$ is supported a point is considered in the same way. 
\end{proof}

Finally, for definiteness, we give a concrete example of a spectral data $\Lambda$ that is not in $\Lambda(\BMOA_{\simp})$. 

\begin{corollary}
Let $\Lambda=(\rho,\Psi,\wt\Psi)$ be an abstract spectral datum, where the measure $\rho$ is absolutely continuous with $\supp\rho=[a,b]$, $0<a<b<\infty$, and $\Psi(s)=e^{is}$, $\wt\Psi(s)=1$. Then $\Lambda\not\in\Lambda(\BMOA_{\simp})$, i.e. $\Lambda$ does not correspond to any Hankel operator. 
\end{corollary}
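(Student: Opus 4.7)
My plan is to derive the corollary directly from the theorem immediately preceding it (the one combining $I-\Sigma_0\in\bS_1$ with the assumption that one of the pushforwards $\rho\circ(\Psi^{-1})$, $\wt\rho\circ(\wt\Psi^{-1})$ is a point mass). Since $\wt\Psi\equiv 1$, the pushforward $\wt\rho\circ(\wt\Psi^{-1})$ is the Dirac mass at $1$, so the hypothesis on one of the two measures is trivially satisfied. Thus the corollary will follow once I verify two things: (a) $I-\Sigma_0\in\bS_1$, and (b) the measure $\rho\circ(\Psi^{-1})$ is not purely singular.

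For (a), I observe that $\supp\rho=[a,b]\subset(0,\infty)$ is separated away from zero, so Lemma~\ref{l: I-Sigma trace class} immediately gives $I-\Sigma_0\in\bS_1$. This step is essentially free.

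For (b), I need to analyse the pushforward of an a.c.\ measure $\rho$ on $[a,b]$ under the smooth map $\Psi(s)=e^{is}$. The plan is to write $\rho=w(s)\,ds$ with $w\in L^1([a,b])$ and note that $\Psi$ is a local $C^\infty$ diffeomorphism from $\bbR$ onto $\bbT$ (with nonvanishing derivative). Partitioning $[a,b]$ into finitely many subintervals on each of which $\Psi$ is injective, and pushing forward the measure piece by piece via the change-of-variables formula, one obtains that $\rho\circ(\Psi^{-1})$ has an $L^1$ density on $\bbT$ with respect to arclength (it is a finite sum of densities of the form $w(\Psi^{-1}(\zeta))|d\Psi^{-1}/d\zeta|$ supported on arcs). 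In particular $\rho\circ(\Psi^{-1})$ is absolutely continuous and nonzero, hence has a nontrivial a.c.\ part and is not purely singular.

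Applying the preceding theorem, conditions (a) and (b) together force $\Lambda\notin\Lambda(\BMOA_\simp)$, which is exactly the claim. I do not anticipate any genuine obstacle: the only thing requiring care is making the change-of-variables on $[a,b]$ precise when $b-a$ exceeds $2\pi$ so that $\Psi$ is not globally injective, but this is handled by the finite partition argument above. Alternatively, one could bypass the preceding theorem and invoke Theorem~\ref{p: sing Psi wtPsi} directly, since $I-\Sigma_0\in\bS_1$: then $\Sigma^*$ is asymptotically stable iff $\wt\Psi(\wt\calM)\Psi(\calM)=e^{i\calM}$ has purely singular spectrum, and the spectral measure of $e^{i\calM}$ on $\bbT$ is exactly $\rho\circ(\Psi^{-1})$, which by the previous paragraph is absolutely continuous, contradicting pure singularity; Theorem~\ref{thm.a5} then yields $\Lambda\notin\Lambda(\BMOA_\simp)$.
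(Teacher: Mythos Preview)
Your proposal is correct and follows exactly the approach the paper intends: the corollary is stated immediately after the theorem on point-mass pushforwards, and the paper leaves the (routine) verification of hypotheses to the reader. Your checks that $I-\Sigma_0\in\bS_1$ via Lemma~\ref{l: I-Sigma trace class}, that $\wt\rho\circ(\wt\Psi^{-1})=\delta_1$, and that $\rho\circ(\Psi^{-1})$ is absolutely continuous (hence not purely singular) are precisely what is needed.
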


\begin{remark*}
It is known (see \cite[Propositions 9.1.11, 9.1.12]{Bogachev2007}) that for a finite
measure $\mu$ without atoms on $\bbT$, there exists a Borel measurable (and even continuous) 
function $F:\bbT\to[0,1]$ such that the measure $\mu\circ (F^{-1})$ is the Lebesgue measure on 
$[0,1]$. Using this fact, for any given $\rho$ without atoms one can always construct $\Psi$ such that $(\rho,\Psi,1)\notin\Lambda(\BMOA_\simp)$. 
\end{remark*}

%%%%%%%%%%%%%%%%%%%%%%%%%%%%%%%%%%%%
%%%%%%%%%%%%%%%%%%%%%%%%%%%%%%%%%%%%
\section{The self-adjoint case}\label{sec.d}
%%%%%%%%%%%%%%%%%%%%%%%%%%%%%%%%%%%%
%%%%%%%%%%%%%%%%%%%%%%%%%%%%%%%%%%%%

\subsection{A counterexample for self-adjoint Hankel operators}
\label{s:counterex selfadjoint}

In this section we consider the question of surjectivity of the spectral map in the case of self-adjoint Hankel operators $\Gamma_u$. By Theorem~\ref{thm.sa1}, in this case the spectral datum $(\rho,\Psi,\wt\Psi)$ satisfies the additional constraint that $\Psi$ and $\wt\Psi$ take values $\pm1$. It is reasonable to ask whether all abstract spectral datum with this additional constraint are in $\Lambda(\BMOA_\simp)$. It turns out that the answer to this is negative. However, the corresponding counterexample is more subtle and based on a deep result \cite{P} of perturbation theory. 

Let $\rho$ be an absolutely continuous measure on an interval $(a,b)$, 
$0<a<b<\infty$, $\dd \rho(x) = w(x)\dd x$, where $w$ is a strictly positive H\"{o}lder continuous 
function on $(a,b)$. Multiplying $\rho$ by an appropriate positive constant we can ensure
that the normalization condition \eqref{rholeq1} is satisfied. 

Take any $s_0\in (a,b)$,  and define 
\begin{align*}
\Psi(s)=\wt \Psi(s)=
\begin{cases}
-1, & s<s_0, 
\\
1,& s\geq s_0.
\end{cases}
\end{align*}

\begin{theorem}
\label{t: self-adj counterexample}
Under the above assumptions the operator $\Sigma^*=\wt\Psi(\wt\calM)\wt\calM \calM^{-1} 
\Psi(\calM)$ 
is not asymptotically stable, and so (by Theorem~\ref{thm.a5}) we have 
$(\rho, \Psi, \wt\Psi)\notin\Lambda(\BMOA_\simp)$. 
\end{theorem}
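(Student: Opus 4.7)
The strategy is to reduce to a question about the unitary $U:=\wt\Psi(\wt\calM)\Psi(\calM)$, express $U$ as a product of two reflections, and then invoke a deep perturbation result from \cite{P}. Since $\supp\rho\subset[a,b]$ with $a>0$, Lemma~\ref{l: I-Sigma trace class} gives $I-\Sigma_0\in\bS_1$; hence by Theorem~\ref{p: sing Psi wtPsi}, $\Sigma^*$ is asymptotically stable if and only if $U$ has purely singular spectrum, and the goal becomes to exhibit a non-trivial absolutely continuous component in the spectrum of $U$.

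Since $\Psi,\wt\Psi$ take the values $\pm1$, I would write $\Psi(\calM)=2P-I$ and $\wt\Psi(\wt\calM)=2\wt P-I$ where $P=\chi_{[s_0,b]}(\calM)$ and $\wt P=\chi_{[s_0,b]}(\wt\calM)$. Thus $U=(2\wt P-I)(2P-I)$ is a product of two self-adjoint unitaries. By the Halmos two-projections theorem, after splitting off the trivial joint invariants of $(P,\wt P)$, the remaining ``generic'' piece is unitarily equivalent to a direct sum $K\oplus K$ on which $U$ acts as a rotation by twice the angle operator $\Theta\in(0,\pi/2)$, characterized by $|P-\wt P|=\sin\Theta$. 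Consequently, the a.c.\ spectrum of $U$ is non-trivial exactly when $|P-\wt P|$ has non-trivial a.c.\ spectrum in $(0,1)$.

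Finally, $\wt\calM$ is a rank-one perturbation of $\calM$: from $\wt\calM^2-\calM^2=-\jap{\fdot,\1}\1$ and the invertibility of $\wt\calM+\calM$ on $L^2(\rho)$ (both operators are bounded below, since $\supp\rho\subset[a,b]$ is separated from zero), we get $\wt\calM-\calM=-(\wt\calM+\calM)^{-1}\jap{\fdot,\1}\1$, which is rank one. The spectral measures of $\calM$ and $\wt\calM$ are both purely absolutely continuous, with mutually a.c.\ densities (by Kato-Rosenblum and the H\"older continuity and strict positivity of $w$); in particular $s_0$ lies in the interior of the shared a.c.\ spectrum with positive density. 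At this point the deep result of Peller \cite{P} enters: because the characteristic function $\chi_{[s_0,\infty)}$ fails to be operator Lipschitz at its jump inside the a.c.\ spectrum, the difference $\wt P-P=\chi_{[s_0,b]}(\wt\calM)-\chi_{[s_0,b]}(\calM)$ carries a non-trivial a.c.\ component, and combining with the preceding reduction gives the theorem. This last step is the chief obstacle: the Halmos reduction and the reduction to $U$ are fairly routine, but extracting a genuine a.c.\ component from $\wt P-P$ (rather than only point or singular continuous spectrum) requires the full scattering-theoretic/boundary-value machinery of \cite{P}, and the H\"older regularity of $w$ is used precisely to guarantee well-defined non-tangential limits of the relevant Cauchy-type transforms on the spectrum.
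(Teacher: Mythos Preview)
Your overall architecture matches the paper's: reduce to showing the unitary $U=\wt\Psi(\wt\calM)\Psi(\calM)$ has non-trivial a.c.\ spectrum via Theorem~\ref{p: sing Psi wtPsi}, write $U$ as a product of two reflections, apply Halmos' two-projection theory, and finish with \cite{P}. Two points, however, need correction.

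First, the claim that $\wt\calM-\calM$ is rank one is false. Your formula $\wt\calM-\calM=-(\wt\calM+\calM)^{-1}\jap{\fdot,\1}\1$ presupposes that $\calM$ and $\wt\calM$ commute, which they do not: $(\wt\calM+\calM)(\wt\calM-\calM)=\wt\calM^2-\calM^2-[\wt\calM,\calM]$, and the commutator does not vanish. In general the square root of a rank-one perturbation of a positive operator is only a trace-class perturbation, not a rank-one one. The paper sidesteps this entirely by working with $A_0=\calM^2$ and $A_1=\wt\calM^2$, whose difference is rank one \emph{by definition}, and noting that the projections are unchanged since $\chi_{[s_0,\infty)}(\calM)=\chi_{[s_0^2,\infty)}(\calM^2)$ and likewise for $\wt\calM$.

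Second, you have misidentified \cite{P}: it is Pushnitski, not Peller, and the relevant input is not an operator-Lipschitz failure. Knowing that $\chi_{[s_0,\infty)}$ is not operator Lipschitz at $s_0$ only tells you the difference of projections need not be small in norm or trace norm; it does not by itself produce a.c.\ spectrum (nothing rules out pure point or singular continuous spectrum accumulating somewhere). What \cite{P} actually supplies (Lemma~3.2(ii) there, quoted in the paper as Lemma~\ref{l:spectrum oper sin}) is an explicit identification: for a rank-one pair $A_1=A_0-\jap{\fdot,\omega}\omega$ with H\"older spectral densities, the a.c.\ part of $E_{A_0}((-\infty,s))E_{A_1}([s,\infty))E_{A_0}((-\infty,s))$ is unitarily equivalent to multiplication by $x$ on $L^2([0,\varkappa(s)],dx)$, with $\varkappa(s)=\pi^2 F_0'(s)F_1'(s)$. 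One must then verify the H\"older hypothesis for \emph{both} densities and check $\varkappa(s_0^2)>0$. The paper does this for $(\calM^2,\wt\calM^2)$ via the rank-one resolvent identity, which is precisely where the H\"older continuity and strict positivity of $w$ enter (to control boundary values of the Cauchy transform and to ensure $\Im T_0(x+i0)>0$, hence $1-T_0(x+i0)\neq0$). Your last sentence gestures at this step but does not carry it out, and on your route---with $\wt\calM-\calM$ merely trace class---the rank-one resolvent computation is unavailable.
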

In the rest of this section, we present the proof.

\subsection{Overview of the proof}

Since $\supp\rho$ is separated from $0$, the condition $\Sigma_0-I\in\bS_1$ is easily seen to 
be satisfied, see Lemma~\ref{l: I-Sigma trace class}. 
Then by Theorem~\ref{p: sing Psi wtPsi}, in order to show that $\Sigma^*$ is not asymptotically 
stable 
it is sufficient to show that the absolutely continuous spectrum of operator $\wt\Psi(\wt\calM) 
\Psi(\calM)$ is non-empty. Denoting by 
$E\ci A$ the (projection-valued) spectral measure of a self-adjoint operator $A$, we can write 
\begin{align*}
\wt\Psi(\wt\calM) \Psi(\calM) & 
= \left(I-2E\ci{\wt \calM}\left((-\infty,s_0)\right)\right) 
\left(I-2E\ci{\calM}\left((-\infty,s_0)\right) \vphantom{E\ci{\wt \calM}}\right)     \\
& = \left(I-2E\ci{\wt 
\calM^2}\left((-\infty,s_0^2)\right)\right)\left(I-2E\ci{\calM^2}\left((-\infty,s_0^2)\right)\right),
\end{align*}
and so the question reduces to investigating the geometry of the ranges of 
the two spectral projections $E_{\calM^2}\bigl((-\infty,s_0^2)\bigr)$ and $E_{\wt \calM^2}\bigl((-\infty,s_0^2)\bigr)$. 
This question has been studied in \cite{P} in the general framework of scattering theory. 
We recall the relevant results of \cite{P} in the next subsection. They assert that in our case the a.c.~spectrum of 
the product
$$
E\ci{\calM} \bigl((-\infty, s_0)\bigr)    E\ci{\wt\calM} \bigl([s_0, \infty)\bigr)  E\ci{\calM} \bigl((-\infty, s_0)\bigr) 
$$
is non-empty.  
From here, using some general results on the geometry of two subspaces in a Hilbert space
(we use Halmos' paper \cite{Halmos}) it is not difficult to derive that the a.c. 
spectrum of $\wt\Psi(\wt\calM) \Psi(\calM)$ is also non-empty.

%%%%%%%%%%%%%%%%%%%%%%%%%%%%%%%%%
\subsection{Products of spectral projections}
%%%%%%%%%%%%%%%%%%%%%%%%%%%%%%%%%

Here we briefly recall some of the results of \cite{P}, adapted to the particular case at hand. 
Let $A_0$ and $A_1$ be bounded (for simplicity) self-adjoint operators in a Hilbert space $X$, such that the difference $A_1-A_0$ is a (negative) rank one operator:
\[
A_1-A_0=-\jap{\fdot,\omega}\omega,
\]
where $\omega$ is a non-zero element in $X$. 

Assume that the operators $A_0$ and $A_1$ have a purely absolutely continuous spectrum on an  interval $(\alpha, \beta)$. 
Assume also that the derivatives $F_0'(s)$ and $F_1'(s)$, where 
\[
F_0(s):=\jap{E_{A_0}(-\infty,s)\omega,\omega}\ ,
\qquad 
F_1(s):=\jap{E_{A_1}(-\infty,s)\omega,\omega}
\]
exist for $s\in(\alpha,\beta)$ and are H\"older continuous functions of $s$. 
Define 
\begin{align}
\label{e: A(lambda)}
\varkappa(s)=\pi^{2}\left(F_{0}^{\prime}(s)\right)^{1 / 2} F_1^{\prime}(s)\left(F_{0}^{\prime}(s)\right)^{1 / 2}, \qquad s \in (\alpha, \beta). 
\end{align}
The following fact was proved in \cite{P}, see Lemma~3.2(ii) there. 

\begin{lemma}
\label{l:spectrum oper sin}
Under the above assumptions the absolutely continuous part of the operator 
\begin{align}
\label{e:op sin^2 01}
E\ci{A_0} \bigl((-\infty, s)\bigr)    E\ci{A_1} \bigl([s, \infty)\bigr)  E\ci{A_0}\bigl((-\infty, s)\bigr) 
\end{align}
is unitarily equivalent to the operator of multiplication by $x$ in $L^2([0,\varkappa(s)], \dd x)$. 
\end{lemma}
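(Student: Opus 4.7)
My plan is to combine the Aronszajn--Krein theory of rank-one perturbations with the calculus of spectral shift functions and ``angle operators'' developed in \cite{P}. First, set $H_\omega := \cspn\{A_0^n\omega : n\ge 0\}$. Since $A_1-A_0 = -\jap{\fdot,\omega}\omega$ annihilates $H_\omega^\perp$ and both operators preserve $H_\omega$, the restrictions of $A_0$ and $A_1$ to $H_\omega^\perp$ coincide, whence $E_{A_0}((-\infty,s))$ and $E_{A_1}((-\infty,s))$ coincide there and the operator \eqref{e:op sin^2 01} vanishes on $H_\omega^\perp$. Thus it suffices to analyze its restriction to $H_\omega$. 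I would then pass to the spectral representation of $A_0|_{H_\omega}$ as multiplication by $x$ on $L^2(\dd\mu_0)$, where $\dd\mu_0(x) = F_0'(x)\dd x$ on $(\alpha,\beta)$ and $\omega\leftrightarrow \mathbf{1}$; the perturbed operator becomes $A_1 = M_x - \jap{\fdot,\mathbf{1}}\mathbf{1}$. The rank-one resolvent identity yields the Aronszajn--Krein formula $G_1(z) = G_0(z)/(1-G_0(z))$, where $G_j(z) := \jap{(A_j-z)^{-1}\mathbf{1},\mathbf{1}}$. H\"older continuity of $F_0'$ and Privalov's theorem furnish a H\"older continuous boundary value $G_0(s+i0)$, and Stieltjes inversion gives
\[
F_1'(s) = \frac{F_0'(s)}{|1-G_0(s+i0)|^2}.
\]

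Next I would invoke the main construction of \cite{P}, which expresses the a.c.\ part of the angle operator $PQP|_{H_\omega}$, with $P := E_{A_0}((-\infty,s))$ and $Q := E_{A_1}([s,\infty))$, in terms of the spectral shift function $\xi$ for the pair $(A_0,A_1)$: under our H\"older hypotheses the a.c.\ part is unitarily equivalent to multiplication by $x$ on $L^2([0,\sin^2\pi\xi(s)],\dd x)$, the multiplicity-one feature reflecting the rank-one character of $A_1-A_0$. Krein's formula for rank-one perturbations gives $\xi(s) = -\pi^{-1}\arg(1-G_0(s+i0))$, so using $\Im G_0(s+i0) = \pi F_0'(s)$ and the previous identity,
\[
\sin^2(\pi\xi(s)) = \frac{(\Im G_0(s+i0))^2}{|1-G_0(s+i0)|^2} = \pi^2 F_0'(s)F_1'(s) = \varkappa(s),
\]
matching \eqref{e: A(lambda)} and completing the identification.

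The main obstacle is the precise identification of the density as Lebesgue on $[0,\varkappa(s)]$ with multiplicity one, rather than merely ``some measure absolutely continuous with respect to Lebesgue''. The soft inclusion of the essential a.c.\ spectrum in $[0,\varkappa(s)]$ could be extracted by Stieltjes inversion and Kato smoothness applied to $E_{A_0}(\cdot)(A_1-A_0)E_{A_1}(\cdot)$, but sharpness of the density requires the explicit Clark-type intertwiner between $L^2(\dd\mu_0)$ and $L^2(\dd\mu_1)$ together with H\"older regularity of $F_0',F_1'$ near $s$. A natural simplification is to localize at $s$: modify $A_0,A_1$ outside a neighborhood of $s$ by trace-class perturbations (using Theorem~\ref{thm.krein} as a tool), which by Kato--Rosenblum preserves the a.c.\ spectrum and reduces the problem to a model with essentially constant densities, where the computation can be finished in closed form via Hilbert-transform identities for the boundary values of $G_0$.
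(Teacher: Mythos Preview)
The paper does not prove this lemma; it is quoted verbatim from \cite{P}, Lemma~3.2(ii), and treated as a black box. So there is no ``paper's own proof'' to compare against, only the citation.

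Your proposal contains correct and useful material: the reduction to the cyclic subspace $H_\omega$, the Aronszajn--Krein resolvent formula, the Stieltjes inversion yielding $F_1'(s)=F_0'(s)/|1-G_0(s+i0)|^2$, and the identity $\sin^2(\pi\xi(s))=\pi^2 F_0'(s)F_1'(s)=\varkappa(s)$ via Krein's formula for the spectral shift function. All of this is right and clarifies why $\varkappa(s)$ is the relevant quantity.

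However, the step you label ``invoke the main construction of \cite{P}'' --- that the a.c.\ part of $PQP$ is unitarily equivalent to multiplication by $x$ on $L^2([0,\sin^2\pi\xi(s)],\dd x)$ --- \emph{is} Lemma~3.2(ii) of \cite{P}, i.e.\ exactly the statement you are asked to prove. Invoking it here is circular. You recognize this in your final paragraph (``The main obstacle is the precise identification of the density as Lebesgue on $[0,\varkappa(s)]$ with multiplicity one''), and your suggested workaround via trace-class localization and Kato--Rosenblum is plausible in spirit but remains a heuristic sketch: reducing to ``essentially constant densities'' near $s$ and then ``finishing in closed form via Hilbert-transform identities'' is not a proof, and the actual argument in \cite{P} proceeds differently (through stationary representations of wave operators and an explicit computation of the a.c.\ part of the difference of spectral projections in the scattering-theoretic framework). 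In short: your preliminary computations are correct and go somewhat beyond the paper's bare citation, but the core analytic content of the lemma is not established in your proposal any more than it is in the paper itself.
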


In Section~\ref{s: diff proj 04} below we  take $A_0=\calM^2$, $A_1=\wt\calM^2$, $\omega=\1\in L^2(\rho)$ and show that the above hypothesis are satisfied and $\varkappa(s)>0$, and so the operator \eqref{e:op sin^2 01} has a non-trivial absolutely continuous part. 
From there we will deduce that the operator $\wt\Psi(\wt\calM) \Psi(\calM)$ has a non-trivial 
absolutely continuous part. In order to do this, we will use some Hilbert space geometry; this is discussed in the next subsection.

\begin{remark*}
The focus of \cite{P} was the connection between the a.c. spectrum of combinations of spectral 
projections \eqref{e:op sin^2 01} (and other similar ones) and the eigenvalues of the 
\emph{scattering matrix} of the pair of operators $A_0$, $A_1$. 
In the case at hand (when $A_1-A_0$ is a rank one operator), the scattering matrix is simply a unimodular function on the a.c. spectrum of $A_0$, and it can be expressed directly in terms of $\varkappa(s)$. In any case, 
Lemma~\ref{l:spectrum oper sin}, which was an intermediate step in \cite{P}, suffices for our purposes, and so we are not discussing the scattering matrix here. 
\end{remark*}

%%%%%%%%%%%%%%%%
\subsection{Pairs of projections and the a.c.~spectrum of $\wt\Psi(\wt\calM)\Psi(\calM)$}
\label{s: diff proj Halmos}
%%%%%%%%%%%%%%%%
Let us start with a brief discussion of some of the construction of Halmos' beautiful paper 
\cite{Halmos}. 
Let $P$ and $Q$ be two orthogonal projections in a Hilbert space. 
Following Halmos, we will say that $P$ and $Q$ are in \emph{generic position}, if 
each of the four subspaces 
\begin{align}
\Ran P\cap \Ran Q, \quad
\Ker P\cap \Ker Q, \quad
\Ran P\cap \Ker Q, \quad
\Ker P\cap \Ran Q
\label{d2}
\end{align}
are trivial.

%%%%%%%%%%%%%%%
\begin{theorem}\cite{Halmos}\label{thm.d2}
%%%%%%%%%%%%%%%
Let $P$, $Q$ be two orthogonal projections in a generic position. 
Then there exist self-adjoint positive semi-definite commuting contractions $\calS$ and $\calC$, with $\calS^2+\calC^2=I$ 
and $\Ker \calS=\Ker \calC=\{0\}$, such that the pair 
$P$, $Q$ is unitarily equivalent to the pair 
$$
\begin{pmatrix}
I & 0
\\
0 & 0
\end{pmatrix}, \qquad
\begin{pmatrix}
\calC^2 & \calC\calS
\\
\calC\calS & \calS^2
\end{pmatrix} .
$$
\end{theorem}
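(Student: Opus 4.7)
The plan is to exploit the block decomposition of $Q$ relative to the decomposition $H = \Ran P \oplus \Ker P$ induced by $P$, and then use a partial-isometry from the polar decomposition of the off-diagonal block to produce the claimed unitary equivalence. Concretely, write
\[
P = \begin{pmatrix} I & 0 \\ 0 & 0 \end{pmatrix}, \qquad Q=\begin{pmatrix} A & B \\ B^* & D \end{pmatrix},
\]
where $A=A^*$ on $\Ran P$, $D=D^*$ on $\Ker P$, and $B:\Ker P\to \Ran P$. Expanding $Q^2=Q$ yields the three identities
\[
A-A^2 = BB^*,\qquad D-D^2 = B^*B,\qquad AB + BD = B;
\]
in particular $0\le A\le I$ and $0\le D\le I$.

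First I would translate the four generic position conditions \eqref{d2} into spectral statements about $A$ and $D$. A straightforward case analysis, using the block identities above, shows that $\Ran P\cap\Ran Q=\{0\}$ and $\Ran P\cap\Ker Q=\{0\}$ are equivalent to $\Ker(I-A)=\{0\}$ and $\Ker A=\{0\}$ respectively, while $\Ker P\cap \Ker Q=\{0\}$ and $\Ker P\cap\Ran Q=\{0\}$ are equivalent to $\Ker D=\{0\}$ and $\Ker(I-D)=\{0\}$. From $BB^*=A(I-A)$ and $B^*B=D(I-D)$ it then follows that $B$ has trivial kernel and dense range, so the polar decomposition $B=U|B|$ yields a \emph{unitary} $U:\Ker P\to \Ran P$, with $|B|=(D(I-D))^{1/2}$.

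Next I would establish the key compatibility relation
\[
A = U(I-D)U^*.
\]
This is where the three identities interact: from $AB+BD=B$ one has $AU|B| = U(I-D)|B|$, and since $|B|$ is injective with dense range (generic position), one can right-multiply by $|B|^{-1}$ on $\Ran|B|$ and extend by boundedness to obtain $AU = U(I-D)$, hence the claim. Setting
\[
\calC := A^{1/2},\qquad \calS := (I-A)^{1/2},
\]
both acting on $\Ran P$, we get commuting positive contractions with $\calC^2+\calS^2=I$, injective by the generic position conditions on $A$. Moreover $UDU^*=I-A=\calS^2$, and then $(U|B|U^*)^2 = UD(I-D)U^* = \calS^2\calC^2$, so by positivity $U|B|U^* = \calC\calS$.

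Finally I would assemble the unitary equivalence via
\[
W := \begin{pmatrix} I & 0 \\ 0 & U \end{pmatrix}:\ \Ran P\oplus\Ker P\ \to\ \Ran P\oplus\Ran P .
\]
A direct computation gives $WPW^*=\left(\begin{smallmatrix} I & 0\\ 0 & 0\end{smallmatrix}\right)$ and, using $BU^*=U|B|U^*=\calC\calS$,
\[
WQW^* = \begin{pmatrix} A & BU^* \\ UB^* & UDU^* \end{pmatrix} = \begin{pmatrix} \calC^2 & \calC\calS \\ \calC\calS & \calS^2 \end{pmatrix},
\]
which is the desired form. I expect the main obstacle to be the bookkeeping around the polar decomposition: one must use genericity in an essential way both to upgrade $U$ from a partial isometry to a unitary between $\Ker P$ and $\Ran P$ and to justify the cancellation of $|B|$ in the derivation of $AU=U(I-D)$; without genericity neither step is available and only a weaker direct-sum decomposition is obtained.
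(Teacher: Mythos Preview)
Your proof is correct and complete. Note, however, that the paper does not supply its own proof of this theorem: it is simply quoted from Halmos \cite{Halmos}, so there is no in-paper argument to compare against. Your approach---block-decomposing $Q$ relative to $\Ran P\oplus\Ker P$, extracting the relations $A-A^2=BB^*$, $D-D^2=B^*B$, $AB+BD=B$ from $Q^2=Q$, using generic position to make the polar-decomposition partial isometry $U$ a genuine unitary, and then conjugating by $\operatorname{diag}(I,U)$---is essentially Halmos' original argument, and your handling of the one delicate point (cancelling $|B|$ on the right to pass from $AU|B|=U(I-D)|B|$ to $AU=U(I-D)$, justified by the injectivity and dense range of $|B|$ under the generic-position hypothesis) is clean and correct.
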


We can now put this together with Lemma~\ref{l:spectrum oper sin}.
%%%%%%%%%%%%%
\begin{lemma}\label{lma.d3}
%%%%%%%%%%%%%
Assume, in the hypothesis of Lemma~\ref{l:spectrum oper sin}, that for some $s\in(\alpha,\beta)$ we have $\varkappa(s)>0$. Then the a.c. spectrum of the unitary operator 
\[
\biggl(I-2E_{A_0}\bigl((-\infty,s)\bigr)\biggr)\biggl(I-2E_{A_1}\bigl((-\infty,s)\bigr)\biggr)
\]
coincides with the arc on the unit circle 
\begin{align}
\{1-2\sigma^2+i\sigma\sqrt{1-\sigma^2}: \quad \sigma \in [-\varkappa(s),\varkappa(s)]\}.
\label{d7}
\end{align}
In particular, this a.c. spectrum is non-empty.
\end{lemma}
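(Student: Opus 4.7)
The plan is to reduce the problem to a fiber-wise functional calculus via Halmos's decomposition (Theorem~\ref{thm.d2}), applied to the pair of spectral projections $P := E_{A_0}((-\infty,s))$ and $Q := E_{A_1}((-\infty,s))$. First I would split the ambient Hilbert space as the orthogonal direct sum of the four subspaces listed in \eqref{d2} together with the ``generic'' subspace on which $P$ and $Q$ are in generic position. On each of the four degenerate subspaces both $P$ and $Q$ are constant projections ($0$ or $I$), so $(I-2P)(I-2Q)$ acts as $\pm 1$ there and contributes only possible eigenvalues; in particular these subspaces do not affect the absolutely continuous spectrum.

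On the generic part, Theorem~\ref{thm.d2} identifies the pair $(P,Q)$ with the block representation $P=\begin{pmatrix}I & 0\\ 0 & 0\end{pmatrix}$, $Q=\begin{pmatrix}\calC^2 & \calC\calS\\ \calC\calS & \calS^2\end{pmatrix}$, where $\calC,\calS$ are commuting self-adjoint strict contractions with trivial kernels satisfying $\calC^2+\calS^2=I$. A short direct computation, using $I-2\calC^2=\calS^2-\calC^2$, gives
\[
(I-2P)(I-2Q)\simeq\begin{pmatrix}\calC^2-\calS^2 & 2\calC\calS \\ -2\calC\calS & \calC^2-\calS^2\end{pmatrix}.
\]
Because the block entries commute and $(\calC^2-\calS^2)^2+(2\calC\calS)^2=(\calC^2+\calS^2)^2=I$, a unitary change of basis simultaneously diagonalizing $\calC$ (and hence $\calS$) decouples this block into the orthogonal sum of the two normal unitaries $(\calC^2-\calS^2)\pm 2i\calC\calS$ acting on a single copy of the first factor.

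Next I would determine the spectral type of $\calS^2$ using Lemma~\ref{l:spectrum oper sin}. In the block representation one has $P(I-Q)P\simeq\begin{pmatrix}\calS^2 & 0\\ 0 & 0\end{pmatrix}$, so restricted to $\Ran P$ on the generic part, $P(I-Q)P$ is unitarily equivalent to $\calS^2$. But $P(I-Q)P$ is exactly the operator \eqref{e:op sin^2 01}, whose absolutely continuous part, by Lemma~\ref{l:spectrum oper sin}, is multiplication by $x$ on $L^2([0,\varkappa(s)],\dd x)$; the contributions from $\Ran P\cap\Ran Q$ and $\Ran P\cap\Ker Q$ give only the pure-point values $0$ and $1$. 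Thus $\calS^2$ on the generic part has absolutely continuous spectrum $[0,\varkappa(s)]$ with multiplicity one. Writing $\calS=\sin\vartheta$, $\calC=\cos\vartheta$ via functional calculus, the two branch eigenvalues $(\calC^2-\calS^2)\pm 2i\calC\calS$ equal $e^{\pm 2i\vartheta}$; as the spectral parameter sweeps the a.c.\ range and both branches are combined, we obtain precisely the nondegenerate arc described in \eqref{d7}, and nondegeneracy is guaranteed by $\varkappa(s)>0$.

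The main obstacle is bookkeeping rather than substance: one must verify cleanly that the four degenerate Halmos subspaces absorb no a.c.\ mass, and that the ``half'' information provided by Lemma~\ref{l:spectrum oper sin} (which describes only $\calS^2$, not its sign) correctly assembles, under the map $\vartheta\mapsto e^{2i\vartheta}$ together with the two branches $\pm$, into the full symmetric arc around $1$ without double-counting or losing multiplicity. Once the block decomposition is in place, these checks are routine.
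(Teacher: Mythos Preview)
Your proposal is correct and follows essentially the same route as the paper: split off the four degenerate Halmos subspaces (which contribute only eigenvalues $\pm1$), apply Theorem~\ref{thm.d2} on the generic part, identify the a.c.\ part of $\calS^2$ with $[0,\varkappa(s)]$ via Lemma~\ref{l:spectrum oper sin} through the relation $P(I-Q)P|_{\Ran P_{\mathrm{gen}}}\simeq\calS^2$, and then diagonalise the resulting $2\times2$ operator block fibrewise to obtain the two branches $(\calC^2-\calS^2)\pm 2i\calC\calS$. The only differences are cosmetic: the paper computes $(I-2Q)(I-2P)$ rather than $(I-2P)(I-2Q)$ (the two are mutual adjoints, so the spectrum is the same), and it writes out the scalar $2\times2$ eigenvalue computation explicitly rather than invoking the $e^{\pm 2i\vartheta}$ parametrisation; also, the block-diagonalising unitary here is actually a \emph{constant} $2\times2$ matrix (since the block has the form $\begin{pmatrix} a & b\\ -b & a\end{pmatrix}$ with commuting entries), so your phrase ``simultaneously diagonalising $\calC$'' is a slight slip but harmless.
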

\begin{proof}
Denote 
$$
P=E_{A_0}(-\infty,s), \quad Q=E_{A_1}(-\infty,s).
$$ 
These projections are not necessarily in generic position, but for our purposes it is sufficient to 
consider their generic parts.  
Namely, let us write our Hilbert space $X$ as 
$$
X=X_0\oplus X_{\text{gen}},
$$ 
where $X_0$ is the orthogonal sum of the four subspaces \eqref{d2}. 
It is easy to see that each of these four subspaces is invariant for both $P$ and $Q$, 
and therefore $X_{\text{gen}}$ is also invariant for both $P$ and $Q$. 
Furthermore, the pair 
$$
P_{\text{gen}}:=P|_{X_{\text{gen}}}, 
\quad
Q_{\text{gen}}:=Q|_{X_{\text{gen}}}
$$ 
is in a generic position. Thus, according to Theorem~\ref{thm.d2}, we can write 
\begin{align}
P_{\text{gen}}=
U\begin{pmatrix}
I & 0
\\
0 & 0
\end{pmatrix}U^*,\quad 
Q_{\text{gen}}=
U\begin{pmatrix}
\calC^2 & \calC\calS
\\
\calC\calS & \calS^2
\end{pmatrix}U^*,
\label{d5}
\end{align}
where $U$ is a unitary operator. 

Since the restriction of $P$ and $Q$ onto each of the four subspaces \eqref{d2} is either $0$ or 
$I$, the absolutely continuous parts of the the operators $P(I-Q)P$ and $(I-2Q)(I-2P)$ coincide 
with the absolutely continuous parts of their generic counterparts $P\ti{gen} (1-Q\ti{gen}) P\ti{gen} $ and 
$(I-2Q\ti{gen}) (I-2P\ti{gen})$ respectively.

One can see from \eqref{d5}  that the operator $\calS^2$ is unitarily equivalent  to the operator 
$$
P\ti{gen} (1-Q\ti{gen}) P\ti{gen}|_{X\ti{gen}},
$$ 
so the absolutely continuous part of $\calS^2$ is unitarily 
equivalent to the absolutely continuous part of $P(I-Q)P$, which is described by Lemma~\ref{l:spectrum oper sin}. So, the absolutely continuous part of $\calS^2$ is unitarily equivalent to the multiplication by the 
independent variable $x$ in $L^2([0, \varkappa(s)], \dd x)$.  Alternatively: the absolutely 
continuous part of $\calS$ is unitarily equivalent to the multiplication by $x$ in $ L^2([0, 
\sqrt{\varkappa(s)}], \dd x)$.

On the other hand, according to our model \eqref{d5}, we have
$$
U^*(I-2Q_{\text{gen}})(I-2P_{\text{gen}}) U=
\begin{pmatrix}
-I+2\calC^2 & -2\calC\calS
\\
2\calC\calS & I-2\calS^2
\end{pmatrix}
=
\begin{pmatrix}
I-2\calS^2 & -2\calC\calS
\\
2\calC\calS & I-2\calS^2
\end{pmatrix},
$$
where we have used the identity $\calS^2+\calC^2=I$ at the last step.
The numerical matrix  
\begin{align*}
B({\sf s}):=
\begin{pmatrix}
1-2{\sf s}^2 & -2{\sf c}{\sf s} \\ 2{\sf c}{\sf s} & 1-2{\sf s}^2 
\end{pmatrix} , \qquad 
0<{\sf s}<1, \ {\sf c}=\sqrt{1-{\sf s}^2}>0
\end{align*}
has eigenvalues $\lambda_{\pm}({\sf s}):= 1-2{\sf s}^2 \pm 2 i {\sf s} \sqrt{1-{\sf s}^2}$, 
and 
therefore  $B(\sf s)$ can be decomposed as 
\begin{align*}
B({\sf s}) = V(\sf s) 
\begin{pmatrix}
\lambda_+({\sf s}) & 0 \\ 0 &\lambda_-({\sf s}) 
\end{pmatrix} V({\sf s})^*
\end{align*}
where $V({\sf s})=(v_{j,k}({\sf s}))_{j,k=1}^2$ is a unitary $2\times 2$ matrix. The matrix $V({\sf s})$ 
can be explicitly computed, and can be chosen so the function ${\sf s}\mapsto V({\sf s}) $ is 
continuous (and so measurable) on the interval $(0,1)$. Therefore 
\begin{align*}
U^*(I-2Q_{\text{gen}})(I-2P_{\text{gen}})U & = B(\calS)
 =
V(\calS) \begin{pmatrix}
\lambda_+ (\calS) & 0 \\ 0 & \lambda_- (\calS)
\end{pmatrix} V(\calS)^* , 
\end{align*}
where $V(\calS) = (v_{j,k} (\calS))_{j,k=1}^2$. 
So 
$(I-2Q_{\text{gen}})(I-2P_{\text{gen}})$ 
is unitarily equivalent to  the direct sum $\lambda_+(\calS) \oplus \lambda_-(\calS)$. 
From here we see that the a.c. spectrum of $(I-2Q)(I-2P)$ is given by the arc \eqref{d7}, and in particular it is non-empty. 
\end{proof}

%%%%%%%%%%%%%%%%%%%%%%%%%
\subsection{Proof of Theorem~\ref{t: self-adj counterexample}}
\label{s: diff proj 04}
%%%%%%%%%%%%%%%%%%%%%%%%%
As mentioned above, we take $A_0=\calM^2$, $A_1=\wt\calM^2$, $\omega=\1\in X=L^2(\rho)$, $(\alpha,\beta)=(a^2,b^2)$ in Lemma~\ref{l:spectrum oper sin}; we need to check that the hypothesis of this Lemma is satisfied and $\varkappa(s)>0$. 
We have 
$$
F_0(s)=\jap{E_{\calM^2}\bigl((-\infty,s)\bigr)\1,\1}=\int_{-\infty}^{\sqrt{s}}d\rho(s')=\int_{-\infty}^{\sqrt{s}}w(s')ds',
$$
and so the derivative 
$$
F_0'(s)=\frac{d}{ds}\int_{-\infty}^{\sqrt{s}}w(s')ds'=\frac1{2\sqrt{s}}w(\sqrt{s})
$$
exists and (by our assumptions on $w$) is H\"older continuous on $(a^2,b^2)$. 
Let us consider $F_1(s)$. We use the standard rank one identity (which follows from the resolvent identity)
$$
T_1(z) = \frac{T_0(z)}{1 - T_0(z)}\ ,
$$
where
$$
T_0(z)=\jap{(\calM^2-z)^{-1}\1,\1}\ , 
\quad
T_1(z)=\jap{(\wt\calM^2-z)^{-1}\1,\1}\ .
$$
By definition, the operator $T_0(z)$ is the Cauchy transform of a H\"{o}lder continuous function, 
and therefore $T_0(x+i0)$ is H\"{o}lder continuous on $(\alpha, \beta)$.
Note that in our case $\Im T(x+i0)>0$ on the interval $(\alpha,\beta)$, and so $1-T_0(x+i0)\ne 0$ 
on this interval. 
Further, we have 
\begin{align*}
\Im T_1(x+i0) = \frac{\Im T_0(x+i0)}{|1 - T_0(x+i0)|^2}\ ,
\end{align*}
and so the  density $F_1'(s)= \Im T_1(s+i0)$ is also H\"{o}lder continuous and non-vanishing on 
$(\alpha, \beta)$. 

We have checked the hypotheses of Lemma~\ref{l:spectrum oper sin} and we have established that 
$\varkappa(s)>0$ in \eqref{e: A(lambda)}. Using Lemma~\ref{lma.d3}, we find that the a.c. spectrum of $\wt\Psi(\wt\calM)\Psi(\calM)$ is non-empty. 
The proof of Theorem~\ref{t: self-adj counterexample} is complete. \qed

%%%%%%%%%%%%%%%%%%%%%%%%%%%%%%%%%%%%
%%%%%%%%%%%%%%%%%%%%%%%%%%%%%%%%%%%%
\section{Applications to the cubic Szeg\H{o} equation}\label{sec.sz}
%%%%%%%%%%%%%%%%%%%%%%%%%%%%%%%%%%%%
%%%%%%%%%%%%%%%%%%%%%%%%%%%%%%%%%%%%

\subsection{The cubic Szeg\H{o} equation}\label{sec.a11}
The cubic Szeg\H{o} equation is the Hamiltonian evolution equation 
\begin{equation}\label{szego}
i\partial_tu=P(|u|^2u)\ , 
\end{equation}
where $P$ is the Szeg\H{o} projection, i.e.~the orthogonal projection from $L^2(\bbT)$ 
onto $H^2$. Here $u=u(t, z)$, $t\in\R$, $z\in\bbT$, and the projection $P$ is taken in variable 
$z$. 
In this section for typographical reasons we omit the variable  $z$, and will be using $u(t)$ 
instead of more formal $u(t, \fdot)$. 
 
This equation was introduced in \cite{GG00} where it has been proved to be wellposed on the 
intersection of 
$H^2(\bbT)$ with the Sobolev space  $W^{s,2}(\bbT )$, for every $s\geq \frac 12$. More recently, 
the wellposedness was extended to $\BMOA(\bbT )$ in \cite{GK}. In this case, since 
$\BMO(\bbT)\subset \bigcap_{p<\infty}L^p(\bbT)$, the right hand side of \eqref{szego} is in 
$H^2(\bbT)$, so \eqref{szego} can be interpreted as an ODE with $H^2(\bbT)$-valued functions. 

An important property of this equation is that it admits a Lax pair structure involving Hankel 
operators $H_u$ and $\wt H_u$, which stimulated the study of the spectral map $\Lambda $, starting 
with 
functions $u$ in  $H^2(\bbT)\cap W^{\frac 12, 2}(\bbT) $ and $\VMOA(\bbT)$; see \cite{GG1,GGAst}.

Our first result is the following description of the action of the Szeg\H{o} dynamics on the set 
$\Lambda(\BMOA_\simp)$.
%%%%%%%%%%%%%%%%%%%%%%
\begin{theorem}\label{actionszego}
	%%%%%%%%%%%%%%%%%%%%%%
	Let $u_0\in \BMOA_\simp(\bbT)$ with $\Lambda (u_0)=(\rho, \Psi_0,\wt \Psi_0)$. Denote by $u$ 
	the solution   of \eqref{szego} such that $u(0)=u_0$. 
	Then, for every $t\in \bbR$, we have $u(t)\in \BMOA_\simp(\bbT)$  and
	$$\Lambda (u(t))=(\rho, {\rm e}^{-its^2}\Psi_0 (s),  {\rm e}^{its^2}\wt \Psi_0 (s))\ .$$
\end{theorem}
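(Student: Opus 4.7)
The plan is to exploit the Lax pair structure of the cubic Szegő equation. Gérard--Grellier \cite{GG0,GG1} established that \eqref{szego} admits the two Lax representations
$$
\frac{dH_u}{dt}=[B_u,H_u],\qquad \frac{d\wt H_u}{dt}=[\wt B_u,\wt H_u],
$$
with explicit anti-self-adjoint operators $B_u,\wt B_u$; this analysis extends to $\BMOA$ initial data thanks to the wellposedness result of \cite{GK}. Integrating the ODE $\dot U=B_uU$ (and its tilded counterpart) in $B(H^2)$ produces strongly continuous unitary families $U(t),\wt U(t)$ with
$$
H_{u(t)}=U(t)H_uU(t)^*,\qquad \wt H_{u(t)}=\wt U(t)\wt H_u\wt U(t)^*.
$$
Moreover, using the specific form of $B_u$ and the Szegő equation $i\partial_t u=P(|u|^2 u)$, the standard computation $\frac{d}{dt}(U^{-1}u)=-U^{-1}B_uu+U^{-1}\dot u=0$ yields $u(t)=U(t)u_0$.

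From the unitary equivalence $|H_{u(t)}|^2=U(t)|H_u|^2U(t)^*$ and its $\wt H$-analogue, simplicity of spectrum is preserved, whence $u(t)\in\BMOA_\simp(\bbT)$. For the preservation of $\rho$, the identity $u(t)=U(t)u_0$ gives
$$
\rho_{u(t)}^{|H_{u(t)}|}(E)=\langle\chi_E(|H_{u(t)}|)u(t),u(t)\rangle=\langle U(t)\chi_E(|H_u|)U(t)^*U(t)u_0,U(t)u_0\rangle=\rho_{u_0}^{|H_u|}(E)=\rho(E).
$$
Since $\wt\rho$ is uniquely determined by $\rho$ via the construction of Section~\ref{s: normalization rho}, it is preserved as well.

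The phase evolution relies on the integrability fact---proved in \cite{GG1} and central to the whole theory---that $U(t)$ agrees, up to a unitary factor commuting with $H_u$, with $e^{itH_u^2/2}$. Since such a commuting factor drops out of the conjugation $U H_u U^*$, one obtains the clean integrated form
$$
H_{u(t)}=e^{itH_u^2/2}H_ue^{-itH_u^2/2}=e^{itH_u^2}H_u,
$$
where the second equality uses the anti-linearity identity $H_u\phi(H_u^2)=\overline{\phi}(H_u^2)H_u$. Transferring to the model of Section~\ref{sec.model} on $L^2(\rho)$, where $\calH f(s)=s\overline{\Psi_0(s)}\overline{f(s)}$ and $H_u^2\leftrightarrow\calM^2$, I compute
$$
\calH_tf(s)=e^{its^2}\cdot s\overline{\Psi_0(s)}\,\overline{f(s)}=s\cdot\overline{e^{-its^2}\Psi_0(s)}\cdot\overline{f(s)},
$$
and the uniqueness of the unimodular factor (Remark~\ref{r: uniq psi}) forces $\Psi_{u(t)}(s)=e^{-its^2}\Psi_0(s)$. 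The parallel computation for $\wt H_u$, using the model $\wt\calH f(s)=s\wt\Psi_0(s)\overline{f(s)}$ which \emph{lacks} the complex conjugate on $\wt\Psi_0$ (see \eqref{w4}), yields $\wt\Psi_{u(t)}(s)=e^{its^2}\wt\Psi_0(s)$; the opposite sign comes precisely from this convention asymmetry between the two models.

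The main obstacle is establishing the integrated formulas $H_{u(t)}=e^{itH_u^2/2}H_ue^{-itH_u^2/2}$ and $u(t)=U(t)u_0$ rigorously in the $\BMOA$ class, since the Lax operator $B_u$ involves the Toeplitz multiplier $T_{|u|^2}$ and the reduction of $B_u$ to $-iH_u^2/2$ modulo commuting symmetries is purely algebraic in \cite{GG1}. In the $\BMOA$ setting one needs boundedness of $T_{|u|^2}$ on $H^2$ (which holds because $\BMO\subset L^p$ for every finite $p$) together with the global wellposedness of \eqref{szego} in $\BMOA$ provided by \cite{GK}; granted these, the Gérard--Grellier computations transfer without modification, and the proof is complete.
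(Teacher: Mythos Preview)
Your proposal contains several genuine errors that break the argument.

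\textbf{First}, the computation $\frac{d}{dt}(U^{-1}u)=-U^{-1}B_uu+U^{-1}\dot u=0$ is wrong. With $B_u=\frac{i}{2}H_u^2-iT_{|u|^2}$ and $\dot u=-iT_{|u|^2}u$, one gets
\[
\dot u - B_u u = -\tfrac{i}{2}H_u^2 u \neq 0,
\]
so the correct identity (derived in the paper) is $U(t)^*u(t)=e^{-\frac{it}{2}H_{u_0}^2}u_0$, not $u(t)=U(t)u_0$. This extra phase factor is precisely the source of the evolution $\Psi_0\mapsto e^{-its^2}\Psi_0$; by discarding it you lose the mechanism that produces the result you are trying to prove.

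\textbf{Second}, the assertion that $U(t)$ agrees with $e^{itH_{u_0}^2/2}$ up to a unitary commuting with $H_{u_0}$ is false and is not in \cite{GG1}. The Lax operator $B_u$ contains $-iT_{|u|^2}$, and $T_{|u|^2}$ does \emph{not} commute with $H_u$: indeed $[B_u,H_u]=\dot H_u\neq 0$ while $[H_u^2,H_u]=0$, so the whole commutator comes from the Toeplitz piece. Equivalently, your claimed operator identity $H_{u(t)}=e^{itH_{u_0}^2}H_{u_0}$ would give $u(t)=H_{u(t)}z^0=e^{itH_{u_0}^2}u_0$, making the Szeg\H{o} flow linear. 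The paper never asserts such an identity; instead it tracks $U(t)^*u(t)$ and $\wt U(t)^*u(t)$ and reads off the phase evolution from the spectral representation \eqref{e: model A 02} applied at time $0$.

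\textbf{Third}, your claim that $T_{|u|^2}$ is bounded on $H^2$ for $u\in\BMOA$ because $\BMO\subset L^p$ is false: boundedness of a Toeplitz operator requires the symbol in $L^\infty$, and $\BMOA\not\subset L^\infty$. This is exactly why the paper first proves the result for smooth $u_0$ (where $B_u$ is bounded and the unitary flow $U(t)$ exists) and then passes to general $u_0\in\BMOA_\simp$ by an approximation argument (Lemmas~\ref{l: rat approx}--\ref{lma.i7}), together with a separate lemma in Section~\ref{sec.i4} to recover $u(t)\in\BMOA_\simp$ at the limit. Your last paragraph dismisses this as routine, but it is a substantial part of the proof.
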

This result is consistent with Theorem~\ref{Psi0} with $\Psi_\#(s)=e^{its^2}$. 
The proof is given in Sections~\ref{sec.i2}--\ref{sec.i4}.

An important issue in the study of the cubic Szeg\H{o} equation is the long time behavior of its solutions. 
Firstly let us discuss the boundedness of trajectories. Using the Lax pair structure, one can prove 
that 
trajectories are bounded in $W^{\frac 12, 2}(\bbT) $ and in $\BMOA(\bbT )$. However, in 
\cite{GGAst}, it is proved that trajectories are generically unbounded in $W^{s, 2}(\bbT) $ for 
every $s>\frac 12$. 

Secondly comes the problem of almost periodicity of the trajectories in spaces where they are 
bounded. Let us recall that a function $F=F(x)$ on the real line with values in a Banach space $X$ is called \emph{almost periodic} is it can be approximated (in $C(\bbR;X)$) by finite linear combinations of functions of the form $F(x)=e^{ia x}\psi$, where $a\in\bbR$ and $\psi\in X$. 

In \cite{GGAst}, it is proven that
every trajectory in $W^{\frac 12, 2}(\bbT) $  is almost periodic. Using the same method, a similar 
result holds for trajectories in $\VMOA(\bbT )$. It is therefore natural to ask whether this almost 
periodicity holds for every 
trajectory in $\BMOA(\bbT )$. The following theorem shows that the dynamics are much richer in this 
case.
%%%%%%%%%%%%%%%%%
\begin{theorem}\label{notAP}
%%%%%%%%%%%%%%%%%
Let $u_0\in \BMOA_\simp(\bbT)$ with $\Lambda (u_0)=(\rho, \Psi_0,\wt \Psi_0)$. Denote by $u=u(t)$ 
the solution   of \eqref{szego} such that $u(0)=u_0$. 
Then, if $\rho$ is not a pure point measure, then the Fourier coefficient $\widehat u_0(t)$ of $u(t)$ is not almost periodic, and therefore $u=u(t)$ (as a function with values in $\BMOA$) is not almost periodic. 
\end{theorem}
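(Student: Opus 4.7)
The plan is to reduce the non-almost-periodicity of the scalar function $t \mapsto \widehat{u}_0(t)$ (and hence of $u$ itself) to a classical Wiener-type theorem about Fourier transforms of measures. First I would combine Theorem~\ref{actionszego} with the explicit formula of Theorem~\ref{thm.a5a} at $k=0$, which reads $\widehat{u}_0 = \langle \1, q\rangle_{L^2(\rho)}$ with $q(s) = \overline{\Psi_u(s)}/s$. Since $\Psi_{u(t)}(s) = e^{-its^2}\Psi_0(s)$ by Theorem~\ref{actionszego}, one finds $q_t(s) = e^{its^2}\overline{\Psi_0(s)}/s$ and hence
$$\widehat{u}_0(t) = \int_0^\infty e^{-its^2}\, \frac{\Psi_0(s)}{s}\, d\rho(s).$$
The change of variables $\tau = s^2$, a homeomorphism of $(0,\infty)$, converts this into $\widehat{u}_0(t) = \hat\sigma(t)$, where $\sigma$ is the pushforward under $s \mapsto s^2$ of the finite complex measure $d\mu(s) = (\Psi_0(s)/s)\, d\rho(s)$ (finiteness follows from the normalization \eqref{rholeq1} and the boundedness of $\supp\rho$). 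Since $\Psi_0$ is unimodular and $1/s$ is positive and bounded on $\supp\rho \subset (0,\infty)$, the measures $\rho$, $\mu$, and $\sigma$ share the same null sets modulo the smooth change of variables; in particular $\sigma$ has a non-trivial continuous part whenever $\rho$ does.

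Next I would invoke the classical Wiener identity
$$\lim_{T\to\infty}\frac{1}{2T}\int_{-T}^{T}|\hat\sigma(t)|^2\, dt = \sum_{\tau\in\bbR}|\sigma(\{\tau\})|^2$$
together with the Bohr--Fourier theory of almost periodic functions. Decomposing $\sigma = \sigma_d + \sigma_c$ into its discrete and continuous parts, $\hat\sigma_d$ is an absolutely convergent sum of complex exponentials and hence almost periodic. If $\hat\sigma$ were almost periodic, then $\hat\sigma_c = \hat\sigma - \hat\sigma_d$ would be too, and each of its Bohr--Fourier coefficients
$$\lim_{T\to\infty}\frac{1}{2T}\int_{-T}^{T}\hat\sigma_c(t)\, e^{-i\lambda t}\, dt = \sigma_c(\{-\lambda\}) = 0, \qquad \lambda \in \bbR,$$
would vanish, forcing $\hat\sigma_c \equiv 0$ and $\sigma_c = 0$, a contradiction.

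To complete the argument I would pass from scalars back to $\BMOA$-valued trajectories: the map $f \mapsto \widehat{f}_0$ is a continuous linear functional on $\BMOA$ (via the continuous embedding $\BMOA \hookrightarrow H^2$), so an almost periodic $\BMOA$-valued trajectory $t \mapsto u(t)$ would yield an almost periodic scalar trajectory $t \mapsto \widehat{u}_0(t)$, contradicting what has just been shown. The main subtlety is the Wiener step, where both the mean-square identity and the recovery of atoms by Bohr time-averaging must be combined; both ingredients are classical, and everything else is a routine unpacking of the spectral evolution formula in Theorem~\ref{actionszego}.
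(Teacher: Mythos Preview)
Your proposal is correct and follows essentially the same route as the paper: you derive the identical formula $\widehat{u}_0(t)=\int e^{-its^2}(\Psi_0(s)/s)\,d\rho(s)$, recognise it as the Fourier transform of a finite complex pushforward measure, split off the discrete part, and then kill the continuous part via a Wiener-type argument. The only cosmetic difference is that the paper shows $\frac{1}{2T}\int_{-T}^T|\widehat{\sigma_c}(t)|^2\,dt\to0$ directly and invokes the mean-square uniqueness of almost periodic functions, whereas you compute each Bohr--Fourier coefficient separately and invoke Bohr's uniqueness theorem; these are equivalent classical facts. One small slip: $1/s$ need not be bounded on $\supp\rho$ (the support may approach $0$), but this does not matter---finiteness of the measure follows from $\int s^{-2}\,d\rho\le1$ via Cauchy--Schwarz, and the equivalence of null sets only needs $\Psi_0(s)/s\neq0$ $\rho$-a.e., which holds.
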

The proof is given in Section~\ref{sec.i5}. 

We conclude by discussing the role of the simplicity condition \eqref{a4}. 
In \cite{GGAst}, the action of the Szeg\H{o} dynamics on the spectral data was described for all compact operators $H_u$ (without the simplicity assumption). In fact, the formula is exactly the same as in Theorem~\ref{actionszego}, where $\Psi_0(s)$ and $\widetilde\Psi_0(s)$ are functions with values in the set of Blaschke products. The multiplicity of singular values seems to play a role in the phenomenon of weak turbulence (i.e. growth of high Sobolev norms) of solutions to the Szeg\H{o} equations. More precisely, in \cite{GGAst}, using the vicinity of solutions with multiple spectrum, the authors construct a $G_\delta$-dense set of initial conditions such that the corresponding solutions are weakly turbulent.

\subsection{The action of $H_{u(t)}$ and $\wt H_{u(t)}$ for smooth initial data}
\label{sec.i2}
Here we make the first step towards the proof of Theorem~\ref{actionszego}: we describe the evolution under the cubic Szeg\H{o} equation for smooth initial data. 
%%%%%%%%%%%%%%%%%
\begin{lemma}\label{lma.i3}
%%%%%%%%%%%%%%%%%
Assume the hypothesis of Theorem~\ref{actionszego} and assume in addition that $u_0$ is smooth: $u_0\in C^\infty (\bbT)\cap 
H^2(\bbT)$. Then, using our notation \eqref{b1} for spectral measures, we have
\begin{equation}
\rho_{u(t)}^{\abs{H_{u(t)}}}=\rho_{u_0}^{\abs{H_{u_0}}}, 
\quad
\rho_{u(t)}^{\abs{\wt H_{u(t)}}}=\rho_{u_0}^{\abs{\wt H_{u_0}}}
\label{eq.spmeas}
\end{equation}
for all $t>0$. Furthermore, for any continuous $f$ and any $t>0$ we have
\begin{align}
H_{u(t)}f(|H_{u(t)}|)u(t)&=|H_{u(t)}|\overline f(|H_{u(t)}|)\overline \Psi_{u_0} (|H_{u(t)}|){\rm e}^{it 
H_{u(t)}^2}u(t)\ ,\label{Psit}\\
{\wt H}_{u(t)}f(|{\wt H}_{u(t)}|)u(t)&=|{\wt H}_{u(t)}|\overline f(|{\wt H}_{u(t)}|)\wt \Psi_{u_0} (|{\wt H}_{u(t)}|){\rm e}^{it 
{\wt H}_{u(t)}^2}u(t)\ .
\label{tildePsit} 
\end{align}
\end{lemma}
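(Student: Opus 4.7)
The plan is to use the Lax pair structure of the cubic Szeg\H{o} equation. For smooth $u_0$, the solution stays smooth and the Lax identities
\begin{align*}
\frac{d}{dt}H_{u(t)} = [B_{u(t)}, H_{u(t)}], \qquad \frac{d}{dt}\wt H_{u(t)} = [C_{u(t)}, \wt H_{u(t)}]
\end{align*}
hold classically, with skew-adjoint generators $B_u = \tfrac{i}{2}H_u^2 - iT_{|u|^2}$ and $C_u = \tfrac{i}{2}\wt H_u^2 - iT_{|u|^2}$, where $T_b f := P(bf)$ (see \cite{GG00, GG0}). Let $U(t), V(t)$ be the unitary propagators defined by $\dot U = B_{u(t)}U$, $\dot V = C_{u(t)}V$, with $U(0)=V(0)=I$. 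A short calculation using skew-adjointness shows that $U^*H_{u(t)}U$ and $V^*\wt H_{u(t)}V$ are constant in $t$, giving
\begin{align*}
H_{u(t)} = U(t) H_{u_0} U(t)^*, \qquad \wt H_{u(t)} = V(t)\wt H_{u_0} V(t)^*,
\end{align*}
and in particular $|H_{u(t)}| = U(t)|H_{u_0}|U(t)^*$ and $|\wt H_{u(t)}| = V(t)|\wt H_{u_0}|V(t)^*$ by functional calculus.

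The crucial step is to identify how the vector $u(t)$ itself is transported. Rewriting the Szeg\H{o} equation as $\dot u = -iT_{|u|^2}u = \bigl(B_{u}-\tfrac{i}{2}H_u^2\bigr)u = \bigl(C_u-\tfrac{i}{2}\wt H_u^2\bigr)u$, the plan is to verify that $v(t) := U(t)\,e^{-itH_{u_0}^2/2}\,u_0$ satisfies the same time-dependent linear equation $\dot w = (B_{u(t)} - \tfrac{i}{2}H_{u(t)}^2)w$ with $v(0)=u_0$ (the chief observation being that the intertwining $UH_{u_0}^2 = H_{u(t)}^2 U$ converts the $-\tfrac{i}{2}H_{u_0}^2$ coming from differentiating the phase into $-\tfrac{i}{2}H_{u(t)}^2$ acting on $v$). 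Uniqueness then yields the gauge-correction identity
\begin{align*}
u(t) = U(t)\,e^{-itH_{u_0}^2/2}\,u_0 = V(t)\,e^{-it\wt H_{u_0}^2/2}\,u_0,
\end{align*}
which is the cornerstone of everything else.

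Granted this, \eqref{eq.spmeas} is immediate: for continuous $f$,
\begin{align*}
\jap{f(|H_{u(t)}|)u(t),u(t)} = \jap{f(|H_{u_0}|)U(t)^*u(t),U(t)^*u(t)} = \jap{f(|H_{u_0}|)u_0,u_0},
\end{align*}
since $U(t)^*u(t) = e^{-itH_{u_0}^2/2}u_0$ and $e^{-itH_{u_0}^2/2}$ is unitary and commutes with $f(|H_{u_0}|)$; the $\wt H$-case is identical. For \eqref{Psit}, I would intertwine
\begin{align*}
H_{u(t)}f(|H_{u(t)}|)u(t) = U(t)\, H_{u_0}\, f(|H_{u_0}|)\, e^{-itH_{u_0}^2/2}\, u_0,
\end{align*}
move $e^{-itH_{u_0}^2/2}$ past $H_{u_0}$ via the anti-linear commutation $H_{u_0}e^{-itH_{u_0}^2/2} = e^{+itH_{u_0}^2/2}H_{u_0}$ (this sign flip is where the anti-linearity of $H_{u_0}$ enters decisively), then apply Proposition~\ref{prp.spth} at $t=0$ to replace $H_{u_0}f(|H_{u_0}|)u_0$ by $|H_{u_0}|\overline{\Psi}_{u_0}(|H_{u_0}|)\overline f(|H_{u_0}|)u_0$, and finally conjugate back by $U(t)$ using $U(t)g(|H_{u_0}|) = g(|H_{u(t)}|)U(t)$. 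The leftover factor $U(t)e^{itH_{u_0}^2/2}u_0$ equals $e^{itH_{u(t)}^2}u(t)$ by the gauge identity applied twice, producing exactly \eqref{Psit}. The derivation of \eqref{tildePsit} is entirely parallel using $V$, $C_u$, and the $\wt H$-version of Proposition~\ref{prp.spth} (where $\wt\Psi$ appears without a complex conjugate, matching the sign of the phase $e^{+it\wt H^2}$).

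The main obstacle is locating and justifying the gauge shift by $e^{-itH_{u_0}^2/2}$ and $e^{-it\wt H_{u_0}^2/2}$: the bare Lax transport $U(t)u_0$ does \emph{not} coincide with $u(t)$, and the necessary correction is dictated precisely by the discrepancy between the Lax generator $B_u$ and the Szeg\H{o} vector field $-iT_{|u|^2}$. The half-factor $1/2$ in the gauge exponents is then exactly what is needed so that, after absorbing the sign flip from anti-linearity, it doubles into the full phase $e^{itH_{u(t)}^2}$ in \eqref{Psit}, matching the multiplier $e^{-its^2}$ on $\Psi_{u_0}$ predicted by Theorem~\ref{actionszego}.
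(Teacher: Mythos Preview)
Your proof is correct and follows essentially the same Lax-pair approach as the paper. The only notable difference is in how you obtain the gauge identity $U(t)^*u(t)=e^{-itH_{u_0}^2/2}u_0$ (and its $\wt H$-analogue): the paper first computes $W(t)^*z^0$ via an ODE and then applies $u(t)=H_{u(t)}z^0$, whereas you verify directly that $U(t)e^{-itH_{u_0}^2/2}u_0$ solves the same linear equation $\dot w=-iT_{|u(t)|^2}w$ as $u(t)$ and invoke uniqueness---a slightly more streamlined route that avoids the intermediate computation of $W(t)^*\wt W(t)$.
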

\begin{proof}
Throughout the proof, we write $u$ in place of $u(t)$ if there is no danger of confusion.
For $u_0\in C^\infty (\bbT)\cap H^2(\bbT)$, we borrow from \cite{GG2} the following Lax pair identities, 
\begin{align*}
	\frac{dH_u}{dt}&=[B_u,H_u]\ ,\qquad B_u:=\frac i2H_u^2-iT_{\vert u\vert ^2}\ ,
	\\
	\frac{d\wt H_u}{dt}&=[{\wt B}_u,\wt H_u]\ ,\qquad {\wt B}_u:=\frac i2\wt H_u^2-iT_{\vert u\vert ^2}\ ,
\end{align*}
where $T_a$ denotes the \emph{Toeplitz operator} with symbol $a\in L^\infty(\bbT)$, 
$T_a:H^2\to H^2$, 
\[
T_a f= P(af), \qquad f\in H^2. 
\]
Then we define $W=W(t)$, ${\wt W}={\wt W}(t)$ to be the solutions of the following linear ODEs
on the set of bounded linear operators on $H^2(\bbT)$,
$$
\frac {dW}{dt}=B_uW,\quad 
\frac{d\wt W}{dt}={\wt B}_u\wt W,\quad 
W(0)={\wt W}(0)=I\ .
$$
One easily checks that $W(t)$ and  ${\wt W}(t)$ are unitary operators and
\begin{equation}\label{conj}
H_{u(t)}=W(t)H_{u_0} W(t)^*,\quad {\wt H}_{u(t)}={\wt W}(t){\wt H}_{u_0} {\wt W}(t)^*\ .
\end{equation}
Consequently,
$$|H_{u(t)}|=W(t)|H_{u_0}| W(t)^*\ ,\ |{\wt H}_{u(t)}|={\wt W}(t)|{\wt H}_{u_0}| {\wt W}(t)^*\ .$$    
Next, let us  identify $W(t)^*z^0, W(t)^*u(t), {\wt W}(t)^*u(t)$. 
We begin with $W(t)^*z^0$:
$$
\frac{d}{dt}W(t)^*z^0=-W(t)^*B_u z^0\ ,$$
with
$$B_uz^0=\frac i2 H_u^2z^0-iT_{\vert u\vert ^2}z^0=-\frac i2H_u^2z^0\ .$$
Hence
$$\frac{d}{dt}W(t)^*z^0=\frac i2W(t)^*H_u^2z^0=\frac i2H_{u_0}^2W(t)^*z^0\ .$$
This yields
\[
W(t)^*z^0={\rm e}^{i\frac t2 H_{u_0}^2}z^0\ .
\]
Consequently,
$$W(t)^*u(t)=W(t)^*H_{u(t)}z^0=H_{u_0}W(t)^*z^0=H_{u_0}{\rm e}^{i\frac t2 H_{u_0}^2}z^0\ ,$$
and therefore, using the anti-linearity of $H_{u_0}$, 
\begin{equation}\label{U*u}
W(t)^*u(t)={\rm e}^{-i\frac t2 H_{u_0}^2}u_0\ .
\end{equation}
On the other hand,
\begin{align*}
\frac d{dt}W(t)^*{\wt W}(t)&= -W(t)^*B_{u(t)} {\wt W}(t)+W(t)^*{\wt B}_{u(t)}{\wt W}(t) = 
W(t)^*({\wt B}_{u(t)}-B_{u(t)}){\wt W}(t) 
\\
&=\frac i2 W(t)^*({\wt H}_{u(t)}^2-H_{u(t)}^2){\wt W}(t)=\frac i2 (W(t)^*{\wt W}(t){\wt H}_{u_0}^2-H_{u_0}^2W(t)^*{\wt W}(t))\ .
\end{align*}
We infer
$$W(t)^*{\wt W}(t)={\rm e}^{-i\frac t2 H_{u_0}^2}{\rm e}^{i\frac t2 {\wt H}_{u_0}^2}\ ,$$
and consequently
\begin{equation}\label{V*u}
{\wt W}(t)^*u(t)
=
{\rm e}^{-i\frac t2 {\wt H}_{u_0}^2}{\rm e}^{i\frac t2 H_{u_0}^2}W(t)^*u(t)
=
{\rm e}^{-i\frac t2 {\wt H}_{u_0}^2}{\rm e}^{i\frac t2 H_{u_0}^2}{\rm e}^{-i\frac t2 
H_{u_0}^2}u_0={\rm e}^{-i\frac t2 {\wt H}_{u_0}^2}u_0\ .
\end{equation}
Next, using \eqref{U*u} and \eqref{conj},
\begin{align*}
	\jap{f(|H_{u(t)}|)u(t), u(t)}&=\jap{W(t)^*f(|H_{u(t)}|)u(t), W(t)^*u(t)}
	\\
	&=\jap{f(|H_{u_0}|)\, {\rm e}^{-i\frac t2 H_{u_0}^2}u_0,{\rm e}^{-i\frac t2 H_{u_0}^2}u_0}
	\\
	&=\jap{f(|H_{u_0}|)u_0,u_0}
\end{align*}
and we obtain the first one of the identities \eqref{eq.spmeas}. The second one is obtained in a similar way. 

Next, since $u_0\in\BMOA_\simp(\bbT)$, for every continuous function $f$ we have (cf. \eqref{e: model A 02}) 
\begin{align*}
H_{u_0}f(|H_{u_0}|)u_0&=|H_{u_0}|\overline \Psi_{u_0} (|H_{u_0}|)\overline f(|H_{u_0}|)u_0
\\ 
{\wt H}_{u_0}f(|{\wt H}_{u_0}|)u_0&=|{\wt H}_{u_0}|\wt \Psi_{u_0} (|{\wt H}_{u_0}|)\overline 
f(|{\wt H}_{u_0}|)u_0\ .
\end{align*}
Then, using \eqref{U*u}, \eqref{V*u} and \eqref{conj},
\begin{align*}
W(t)^*H_{u(t)}f(|H_{u(t)}|)u(t)&=H_{u_0}f(|H_{u_0}|){\rm e}^{-i\frac t2 H_{u_0}^2}u_0
\\
&=
|H_{u_0}|\overline \Psi_{u_0} (|H_{u_0}|)\overline f(|H_{u_0}|){\rm e}^{i\frac t2 H_{u_0}^2}u_0
\\ 
&=W(t)^*|H_{u(t)}|\overline f(|H_{u(t)}|)\overline \Psi_{u_0} (|H_{u(t)}|){\rm e}^{it H_{u(t)}^2}u(t)\ ,
\\
{\wt W}(t)^*{\wt H}_{u(t)}f(|{\wt H}_{u(t)}|)u(t)&= {\wt H}_{u_0}f(|{\wt H}_{u_0}|){\rm e}^{-i\frac t2 {\wt H}_{u_0}^2}u_0
\\
&=|{\wt H}_{u_0}|\wt \Psi_{u_0} (|{\wt H}_{u_0}|)\overline f(|{\wt H}_{u_0}|){\rm e}^{i\frac t2 {\wt H}_{u_0}^2}u_0
\\
&={\wt W}(t)^*|{\wt H}_{u(t)}|\overline f(|{\wt H}_{u(t)}|)\wt \Psi_{u_0} (|{\wt H}_{u(t)}|){\rm e}^{it {\wt H}_{u(t)}^2}u(t)\ ,
\end{align*}
and finally we arrive at \eqref{Psit} and \eqref{tildePsit}.
\end{proof}

\subsection{Approximation argument}
\label{sec.i3}
%%%%%%%%%%%%%
As the second step, we extend  identities \eqref{eq.spmeas}, \eqref{Psit} and \eqref{tildePsit}  to the general case of 
initial data $u_0\in \BMOA_\simp $. The new difficulty here is that the operator $T_{|u(t)|^2}$ is 
unbounded, hence the unitary operators $W(t)$ and ${\wt W}(t)$ are more difficult to define. Therefore we 
prefer to use an approximation argument.  

\begin{lemma}
\label{l: rat approx}
Let $u\in\BMOA\ti{simp}$; then there exists a sequence of polynomial functions $u_n\in \BMOA\ti{simp}$ converging 
to $u$ strongly in $H^2(\bbT)$ with a uniform bound in the $\BMOA$ norm. 
\end{lemma}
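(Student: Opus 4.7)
The plan is a two-step approximation: first approximate $u$ in $H^2$ by analytic polynomials with uniformly bounded $\BMOA$ norm, then perturb each approximant to a nearby polynomial that lies in $\BMOA_{\simp}$.

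For the first step, I would use Abel means. Setting $u_r(z) := u(rz)$ for $r\in(0,1)$, we have $u_r = P_r * u$, where $P_r$ is the Poisson kernel. Since $\|P_r\|_{L^1(\bbT)} = 1$, convolution with $P_r$ is bounded on $H^1$ with norm $\le 1$, and by duality also on $\BMOA$, giving $\|u_r\|_{\BMOA} \le \|u\|_{\BMOA}$ uniformly in $r$. Moreover $u_r \to u$ in $H^2$ as $r \to 1^-$. Each $u_r$ extends analytically to a neighbourhood of $\overline{\bbD}$, so its Taylor polynomials $T_N u_r$ converge to $u_r$ uniformly on $\bbT$, hence in $\BMOA$. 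A suitable diagonal sequence $p_n := T_{N_n} u_{r_n}$ with $r_n \to 1^-$ then yields analytic polynomials with $p_n \to u$ in $H^2$ and $\sup_n \|p_n\|_{\BMOA} < \infty$.

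For the second step, I would exploit finite-dimensionality. For a polynomial $p$ of degree $\le N$, the Hankel operator $H_p$ has finite rank, and the non-zero eigenvalues of the positive semi-definite matrix representing $(H_p\ute)^2$ are algebraic functions of the coefficients of $p$; the same holds for $\wt H_p = H_{S^*p}$. Simplicity of both essential spectra is therefore equivalent to the non-vanishing of a certain real-analytic discriminant $D$ on $\bbC^{N+1}$. If one can show that a generic polynomial of each degree has all non-zero singular values of both $H_p$ and $\wt H_p$ distinct, then $D \not\equiv 0$, and the set of polynomials of degree $\le N$ lying in $\BMOA_{\simp}$ is open and dense in $\bbC^{N+1}$. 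Consequently, for each $n$ I can choose a polynomial $u_n \in \BMOA_{\simp}$ with $\|u_n - p_n\|_{L^\infty(\bbT)} < 1/n$. Since the $L^\infty(\bbT)$ norm dominates both the $H^2$ and the $\BMOA$ norms on polynomials, we obtain $u_n \to u$ in $H^2$ and $\sup_n \|u_n\|_{\BMOA} < \infty$.

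The main obstacle will be Step 2, in particular verifying that $D \not\equiv 0$, i.e.\ exhibiting, in each degree $N$, at least one polynomial whose non-zero singular values for $H_p$ and for $\wt H_p$ are all distinct. I expect this to be established by a direct finite-dimensional computation: $\Gamma_p^*\Gamma_p$ is a Hermitian matrix of size $\le N+1$ whose entries are polynomial in $\{a_j,\bar a_j\}$, and standard discriminant formulas applied to its characteristic polynomial (after factoring out the known multiplicity of the eigenvalue $0$) furnish $D$ as a real-algebraic function. A single generic example, for instance obtained by starting from a diagonal-like Hankel configuration and applying a small generic perturbation, then suffices to ensure $D \not\equiv 0$.
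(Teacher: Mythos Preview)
Your plan is essentially the paper's proof. Step~1 is identical (dilations $u(rz)$, with the uniform $\BMOA$ bound coming from $\|H_{u_r}\|\le\|H_u\|$, then uniform polynomial approximation of each dilation). In Step~2 the paper likewise shows that $\mathscr P_{N,\simp}$ is the complement of the zero set of a polynomial in the coefficients, but instead of two separate discriminants it uses the single Gram determinant
\[
G_N(u)=\det\bigl(\langle H_u^{2k}u,\,H_u^{2\ell}u\rangle\bigr)_{0\le k,\ell\le N}:
\]
its non-vanishing means $u$ is cyclic for $H_u^2$ on $\mathscr P_N$, which gives simplicity of $(H_u\ute)^2$, and then the rank-one identity $\wt H_u^2=H_u^2-\langle\cdot,u\rangle u$ yields simplicity of $(\wt H_u\ute)^2$ for free. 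This is a bit cleaner than tracking two discriminants and the varying kernel dimension.

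The one piece you leave open --- producing a single $u\in\mathscr P_{N,\simp}$ to show the function is not identically zero --- is where the real content lies, and your suggested ``diagonal-like configuration plus generic perturbation'' is circular: a generic perturbation helps only after you already know the function is not identically zero. The paper supplies the concrete example $u(z)=z^{N-1}+z^N$ and checks directly that $H_u^2$ acts on the basis $z^0,\dots,z^N$ as a tridiagonal (Jacobi) matrix with all off-diagonal entries equal to~$1$; cyclicity of $z^0$ (hence of $u=H_u z^0$) is then immediate by induction.
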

\begin{proof}
\emph{Step 1: approximation by polynomial functions.}
Take $0\le r_n \nearrow 1$, and define $u_n(z):= u(r_n z)$, $z\in\bbT$. Clearly $u_n\to u$ 
strongly in $H^2(\bbT)$, and writing $H_u$ in a matrix form (with respect to the standard basis in $H^2$), it is easy to see that 
$$
\norm{H_{u_n}}\leq \norm{H_u}. 
$$
Observe that one of the equivalent norms on $\BMOA$ is given by $\norm{u}_{\BMOA}=\norm{H_u}$. It follows that 
\begin{align*}
\sup_n \|u_n\|\ci{\BMOA} \leq \|u\|\ci{\BMOA}.
\end{align*}
Functions $u_n$ are analytic in the closed unit disc $\overline{\bbD}$, so they can be approximated by polynomial functions uniformly in $\overline{\bbD}$. Since the norm in $C(\overline{\bbD})$ is stronger than the norm of $\BMOA$, we obtain approximations by polynomial functions in $H^2$ with the uniform bound on the $\BMOA$ norm.

\emph{Step 2: approximation by polynomial functions in $\BMOA_\simp$.}
Denote by $\mathscr P_N$ the vector space of polynomial functions of degree at most $N$, and by $\mathscr P_{N,\simp}$ the subset of  $u\in 
\mathscr P_N$ satisfying the simplicity of spectrum condition \eqref{a4}. To complete the proof of the lemma, it suffices to show that 
$\mathscr P_{N,\simp}$ is dense in $\mathscr P_N$. Given $u\in \mathscr P_N$, we observe that the range of $H_u$ is contained in $\mathscr P_N$. It follows that $u\in \mathscr P_{N,\simp}$ whenever the $(N+1)$ vectors $u,H_u^2(u),\dots ,H_u^{2N}(u)$ are linearly independent, or equivalently whenever the Gram determinant
$$G_N(u):=\det \jap {H_u^{2k}(u) ,  H_u^{2\ell}(u)} _{0\leq k,\ell \leq N}$$
is not zero. Since $G_N(u)$ is a polynomial function of the real parts and of the imaginary parts 
of the Fourier coefficients of $u$, the set $\{ G_N\neq 0\}$ is either empty  or a dense open  
subset of $\mathscr P_N$. Therefore we are reduced to proving that $\mathscr P_{N,\simp }$ is not 
empty. Consider $u(z):=z^{N-1}+z^N$. 
The matrix of the linear Hankel operator  
$\Gamma_u=\Gamma_u^*$ in the basis $\{ z^k, k=0,\dots ,N\}$ of $\mathscr P_N\supset \Ran \Gamma_u$ 
is
\begin{align*} 
\left (  \begin{array}{ccccc} 
0& 0&\cdot &1&1\\
0&\cdot &1&1&0\\
\cdot&\cdot&\cdot&\cdot& \cdot\\
1&1&0&\cdot&0\\
1&0&\cdot&0&0   \end{array}\right ) . 
\end{align*}
Consequently, $\Gamma_u$ and so $H_u$ are injective on $\mathscr P_N$. 
Moreover, since $H_u^2 =\Gamma_u\bC \bC\Gamma_u^* = \Gamma_u\Gamma_u^* =\Gamma_u^2$, one can check 
that the matrix of $H_u^2$ in the same basis is three-diagonal, viz. 
\begin{align*}
H_u^2(z^0)&=2z^0+z, 
\\
H_u^2(z^k)&=z^{k-1}+2z^k+z^{k+1},\quad\text{ if }\quad 1\leq k\leq N-1,
\\
H_u^2(z^N)&=z^{N-1}+z^N.
\end{align*}
From these formulae, we infer, via an induction argument on $k$, that there exist real numbers $c_{k,j}$ such that  $H_u^{2k}(z^0)=z^k+\sum_{j<k}c_{k,j}z^j$ for $k=0,\dots ,N$. 
We conclude that the vectors $H_u^{2k}(z^0), k=0,\dots, N,$ are linearly independent, and, applying $H_u$, that $H_u^{2k}(u), k=0,\dots, N,$ are linearly independent, or that $u\in \mathscr P_{N,\simp}$.
\end{proof}

\bigskip

Before proceeding, for the purposes of clarity we state a (well-known) simple fact as a lemma. 

%%%%%%%%%%%%%%
\begin{lemma}\label{lma.i5}
%%%%%%%%%%%%%%
Let $u,u_n\in\BMOA$, $n\in\bbN$, with $\sup_n\norm{u_n}_{\BMOA}<\infty$, and assume that $\norm{u_n-u}_{H^2}\to0$ as $n\to\infty$. Then we have the strong convergence $H_{u_n}\to H_u$, $\wt H_{u_n}\to\wt H_u$ and $f(\abs{H_{u_n}})\to f(\abs{H_{u}})$, $f(\abs{\wt H_{u_n}})\to f(\abs{\wt H_{u}})$ for any continuous function $f$. 
\end{lemma}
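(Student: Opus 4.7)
The plan is to proceed in three steps: first establish the strong convergence $H_{u_n}\to H_u$ (and $\wt H_{u_n}\to\wt H_u$), then upgrade to strong convergence of the squares $H_{u_n}^2\to H_u^2$, and finally apply a Stone--Weierstrass argument to conclude convergence of the continuous functional calculus.

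For the first step, I would use that for a monomial $f(z)=z^k$ one has $H_v(z^k)=P(v\,\overline{z^k})=(S^*)^k v$, and hence for a polynomial $f=\sum_{k=0}^N\wh f_k z^k$
\begin{equation*}
H_{u_n}f \;=\; \sum_{k=0}^N \overline{\wh f_k}\,(S^*)^k u_n\;\longrightarrow\;\sum_{k=0}^N \overline{\wh f_k}\,(S^*)^k u\;=\;H_uf
\end{equation*}
in $H^2(\bbT)$, using only that $u_n\to u$ in $H^2$ and that $S^*$ is bounded. Since $\|H_v\|$ is equivalent to $\|v\|_{\BMOA}$, the assumption $\sup_n\|u_n\|_{\BMOA}<\infty$ gives $\sup_n\|H_{u_n}\|<\infty$. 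A standard $3\eps$ argument (approximating an arbitrary $f\in H^2$ by polynomials) then extends the strong convergence to all of $H^2$. The statement for $\wt H_{u_n}=S^*H_{u_n}$ follows immediately since left multiplication by the fixed bounded operator $S^*$ preserves strong convergence.

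For the second step, I note that although $H_u$ is anti-linear, the product rule still applies: if $T_n\to T$ strongly among uniformly bounded (possibly anti-linear) operators and $\phi_n\to\phi$ in norm, then $T_n\phi_n\to T\phi$ since
\begin{equation*}
T_n\phi_n-T\phi \;=\; T_n(\phi_n-\phi)+(T_n-T)\phi,
\end{equation*}
and $\|T_n(\phi_n-\phi)\|\le\|T_n\|\,\|\phi_n-\phi\|$ (this inequality holds for anti-linear bounded operators as well). Applied to $\phi_n=H_{u_n}f$, $\phi=H_uf$, $T_n=H_{u_n}$, this gives strong convergence $H_{u_n}^2\to H_u^2$ on $H^2$, with the linear operators $H_{u_n}^2$ uniformly bounded by some $M^2$. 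An identical argument applies to $\wt H_{u_n}^2\to\wt H_u^2$.

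For the final step, set $g(t):=f(\sqrt t)$, which is continuous on $[0,M^2]$. Given $\eps>0$, choose by the Weierstrass approximation theorem a polynomial $p$ with $|g(t)-p(t)|<\eps$ on $[0,M^2]$. By the standard self-adjoint functional calculus,
\begin{equation*}
\bigl\|g(H_{u_n}^2)-p(H_{u_n}^2)\bigr\|\le\eps,\qquad \bigl\|g(H_u^2)-p(H_u^2)\bigr\|\le\eps,
\end{equation*}
while $p(H_{u_n}^2)\to p(H_u^2)$ strongly by iterating the product rule established in step two. A $3\eps$ argument then yields $f(|H_{u_n}|)=g(H_{u_n}^2)\to g(H_u^2)=f(|H_u|)$ strongly, and likewise for the tilded operators. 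The only mildly delicate point is the care needed with the anti-linearity when multiplying operators; once that is handled, the rest is routine.
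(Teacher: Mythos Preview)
Your proof is correct and follows essentially the same route as the paper's: convergence on monomials via $H_v z^k=(S^*)^k v$, uniform operator-norm bounds from the $\BMOA$ assumption, an $\eps/3$ argument for strong convergence, and then passage to the continuous functional calculus through $H_u^2$. The only cosmetic difference is that you handle $\wt H_{u_n}$ via $\wt H_u=S^*H_u$ whereas the paper uses $\wt H_u=H_{S^*u}$, and you spell out the Weierstrass approximation step that the paper leaves implicit.
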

\begin{proof}
For any $m\geq0$ we have 
$$
H_uz^m=H_uS^mz^0=(S^*)^mH_uz^0=(S^*)^m u
$$
and therefore $\norm{H_{u_n}z^m-H_uz^m}_{H^2}\to0$ as $n\to0$. It follows that $\norm{H_{u_n}p-H_up}_{H^2}\to0$ for all polynomials $p$. The uniform bound on $\norm{u_n}_{\BMOA}$ is equivalent to the uniform bound on the operator norms of $H_{u_n}$, and so by the ``$\eps/3$-argument'' we conclude that $H_{u_n}\to H_u$ strongly. It follows that $H_{u_n}^2\to H_u^2$ strongly, and therefore $f(H_{u_n}^2)\to f(H_u^2)$ for any continuous function $f$. Since $\abs{H_u}=\sqrt{H_u^2}$, we also obtain $f(\abs{H_{u_n}})\to f(\abs{H_u})$ for any continuous $f$.

Finally, since $\wt H_{u}=H_{S^*u}$ and $\norm{S^*u_n\to S^*u}_{H^2}\to0$  with the uniform bound on the $\BMO$ norms of $S^*u_n$, we obtain the corresponding statements for $\wt H_{u_n}$. 
\end{proof}

We also quote a corollary of the main result of \cite{GK} on the continuous dependence of the solution to the cubic Szeg\H{o} equation on the initial data. 
%%%%%%%%%%%%%%
\begin{proposition}\cite[Theorem 1]{GK}\label{prp.i6}
%%%%%%%%%%%%%%
Suppose $u_0,u_{0,n}\in\BMOA$, $n\geq1$ are such that $\norm{u_0-u_{0,n}}_{H^2}\to0$ and $\sup_n\norm{u_{0,n}}_{\BMOA}<\infty$. Let $u(t)$, $u_n(t)$ be the solutions to \eqref{szego} with the initial data $u(0)=u_0$, $u_n(0)=u_{0,n}$. Then for any $t>0$, we have $\norm{u_n(t)-u(t)}_{H^2}\to0$ as $n\to\infty$. Furthermore, the $\BMO$ norm is preserved by the Szeg\H{o} dynamics, i.e. 
$$
\norm{u(t)}_{\BMOA}=\norm{u_0}_{\BMOA}, \quad t>0.
$$
\end{proposition}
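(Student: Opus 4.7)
The plan is to establish Proposition~\ref{prp.i6} in three stages: global existence with uniform $\BMOA$-bound via the Lax pair applied to smooth approximants; uniqueness via an Osgood-type nonlinear estimate; and continuous dependence as a consequence of compactness together with uniqueness.

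\textbf{Stage 1: existence and $\BMOA$-bound.} For $u_0 \in \BMOA$, I would approximate by polynomials $u_0^{(n)}$ with $u_0^{(n)} \to u_0$ in $H^2$ and $\sup_n \|u_0^{(n)}\|_{\BMOA} < \infty$ (Lemma~\ref{l: rat approx}). For polynomial data the global solution $u^{(n)}(t)$ is classical, and Lemma~\ref{lma.i3} gives the Lax pair conjugation $H_{u^{(n)}(t)} = W^{(n)}(t) H_{u_0^{(n)}} W^{(n)}(t)^*$ with $W^{(n)}(t)$ unitary. Hence $\|H_{u^{(n)}(t)}\| = \|H_{u_0^{(n)}}\|$, and since this operator norm is equivalent to $\|\cdot\|_{\BMOA}$, the family $\{u^{(n)}\}$ is equibounded in $L^\infty(\bbR;\BMOA)$. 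The equation together with the John--Nirenberg embedding $\BMOA \hookrightarrow L^p$ for every $p<\infty$ bounds $\partial_t u^{(n)}$ uniformly in $L^\infty_{\text{loc}}(\bbR;H^2)$, so Arzel\`a--Ascoli produces an $H^2$-subsequential limit $u \in C(\bbR;H^2) \cap L^\infty(\bbR;\BMOA)$ solving the equation. The identity $\|u(t)\|_{\BMOA} = \|u_0\|_{\BMOA}$ follows from Lemma~\ref{lma.i5} (giving $\|H_{u(t)}\| \leq \liminf_n \|H_{u^{(n)}(t)}\| = \liminf_n \|H_{u_0^{(n)}}\|$) combined with the same argument run backwards in time.

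\textbf{Stages 2--3: uniqueness and continuous dependence.} The key analytical ingredient is a bound
\[
\|P(|u|^2 u - |v|^2 v)\|_{H^2} \leq C(M)\, \Omega(\|u-v\|_{H^2})
\]
whenever $\max(\|u\|_{\BMOA}, \|v\|_{\BMOA}) \leq M$, where $\Omega$ is a modulus of continuity satisfying the Osgood condition $\int_0^1 dr/\Omega(r) = \infty$. Expanding $|u|^2 u - |v|^2 v$ into three terms each linear in $u-v$ and applying H\"older with John--Nirenberg yields $\|P(\cdots)\|_{H^2} \leq C(M) \|u-v\|_{L^{2+\varepsilon}}$; interpolating $L^{2+\varepsilon}$ between $L^2$ and a high $L^p$ then gives at best the H\"older modulus $\Omega(r) = r^{1-\theta}$, which fails Osgood. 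A logarithmic sharpening, via a Brezis--Gallouet--Wainger-type inequality relating $L^\infty$ to $\BMOA$ times a logarithm of a higher Sobolev norm of $u$ (propagated by the flow), is needed to close the argument; this is the main obstacle. Once uniqueness is established, the sequence $u_n(t)$ from the hypothesis is equibounded in $\BMOA$ (by Stage 1 applied to each $u_{0,n}$) and equicontinuous in $H^2$ on compact time intervals; any $H^2$-subsequential limit solves the Szeg\H o equation with initial data $u_0$, hence coincides with $u(t)$ by uniqueness, so the full sequence converges in $H^2$ as claimed.
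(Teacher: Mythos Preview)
The paper does not give its own proof of this proposition; it is imported wholesale from \cite{GK} as a black box. So there is no in-paper argument to compare against, and your task was really to reconstruct the Gérard--Koch well-posedness theorem.

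Your Stage~1 is essentially sound: the Lax pair conjugation $H_{u(t)}=W(t)H_{u_0}W(t)^*$ (which holds for smooth data without any simplicity assumption, even though Lemma~\ref{lma.i3} is stated under the hypotheses of Theorem~\ref{actionszego}) gives the uniform $\BMOA$ bound, and compactness then produces a global weak solution. The argument for exact preservation of $\norm{\cdot}_{\BMOA}$ via ``run the flow backwards'' is fine once uniqueness is in hand, but not before.

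The genuine gap is Stage~2, and you flag it yourself: the trilinear estimate you can actually prove gives only a Hölder modulus $\Omega(r)=r^{1-\theta}$, which fails Osgood, so the Gronwall/Osgood route to uniqueness does not close. You then invoke a ``Brezis--Gallouet--Wainger-type inequality'' without supplying one. This is not a technicality to be filled in later---there is no off-the-shelf logarithmic inequality of this kind available here, because $\BMOA$ does not embed into $L^\infty$ and at this regularity the flow does not propagate any higher Sobolev norm that could serve as the large quantity inside the logarithm. The actual argument in \cite{GK} does not go through energy estimates on the difference $u-v$; it exploits the Lax pair and the explicit solution formula much more directly to obtain a genuine Lipschitz-type bound in $H^2$ on bounded sets of $\BMOA$. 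Without a working uniqueness mechanism, your Stage~3 (compactness plus uniqueness of the limit) cannot conclude, and the proposal remains incomplete at its central step.
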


Now we are ready to extend Lemma~\ref{lma.i3} to non-smooth initial data. 

%%%%%%%%%%%%%%
\begin{lemma}\label{lma.i7}
%%%%%%%%%%%%%%
Assume the hypothesis of Theorem~\ref{actionszego}. 
Then for any continuous $f$ relations  \eqref{eq.spmeas}, \eqref{Psit} and \eqref{tildePsit} hold true. 
\end{lemma}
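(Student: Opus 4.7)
The plan is to reduce the general case to the smooth case of Lemma~\ref{lma.i3} via approximation by polynomials from $\BMOA_\simp(\bbT)$. First I would apply Lemma~\ref{l: rat approx} to obtain polynomials $u_{0,n}\in\BMOA_\simp(\bbT)$ with $u_{0,n}\to u_0$ in $H^2$ and $\sup_n\|u_{0,n}\|_{\BMOA}<\infty$. Proposition~\ref{prp.i6} then produces solutions $u_n(t)\to u(t)$ in $H^2$ with a uniform $\BMOA$ bound for every fixed $t$, and Lemma~\ref{lma.i5} upgrades this to strong operator convergence $H_{u_n(t)}\to H_{u(t)}$, $\wt H_{u_n(t)}\to\wt H_{u(t)}$, $g(|H_{u_n(t)}|)\to g(|H_{u(t)}|)$, and $g(|\wt H_{u_n(t)}|)\to g(|\wt H_{u(t)}|)$ for every continuous $g$, together with the analogous convergences at $t=0$. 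Since $g(s)=e^{i\tau s^2}$ is continuous, in particular $e^{i\tau H_{u_n(t)}^2}\to e^{i\tau H_{u(t)}^2}$ strongly for every $\tau\in\bbR$.

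The spectral-measure identity \eqref{eq.spmeas} is then immediate: Lemma~\ref{lma.i3} applied to $u_{0,n}$ gives $\jap{g(|H_{u_n(t)}|)u_n(t),u_n(t)}=\jap{g(|H_{u_{0,n}}|)u_{0,n},u_{0,n}}$ for every continuous $g$, and passing to $n\to\infty$ on both sides yields $\jap{g(|H_{u(t)}|)u(t),u(t)}=\jap{g(|H_{u_0}|)u_0,u_0}$; the tilded version follows the same way.

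For \eqref{Psit} the difficulty is that the functions $\Psi_{u_{0,n}}$ have no obvious limit. I would bypass this by conjugating through the unitary $W_n(t)$ from the proof of Lemma~\ref{lma.i3}: using the intertwining $W_n(t)H_{u_{0,n}}=H_{u_n(t)}W_n(t)$, the identity $W_n(t)^*u_n(t)=e^{-i(t/2)H_{u_{0,n}}^2}u_{0,n}$, and the spectral representation for $u_{0,n}$ at $t=0$, one rewrites for continuous $f,h$
\begin{align*}
&\jap{H_{u_n(t)}f(|H_{u_n(t)}|)u_n(t),h(|H_{u_n(t)}|)u_n(t)}\\
&\qquad=\jap{H_{u_{0,n}}f(|H_{u_{0,n}}|)e^{-i(t/2)H_{u_{0,n}}^2}u_{0,n},h(|H_{u_{0,n}}|)e^{-i(t/2)H_{u_{0,n}}^2}u_{0,n}}.
\end{align*}
Every operator on the right converges strongly; letting $n\to\infty$ and applying the spectral theorem for $u_0$ to the resulting right-hand side (together with $\rho_{u(t)}=\rho_{u_0}$ from \eqref{eq.spmeas}) produces
\begin{align*}
\jap{H_{u(t)}f(|H_{u(t)}|)u(t),h(|H_{u(t)}|)u(t)}=\jap{|H_{u(t)}|\overline{f}(|H_{u(t)}|)\overline{\Psi}_{u_0}(|H_{u(t)}|)e^{itH_{u(t)}^2}u(t),h(|H_{u(t)}|)u(t)}
\end{align*}
for all continuous $f,h$. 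Ranging $h$ over a dense subset of the cyclic subspace $\jap{u(t)}_{|H_{u(t)}|}$ upgrades this bilinear form identity to the vector identity \eqref{Psit}, and \eqref{tildePsit} follows by the same scheme with $\wt W_n(t)$ in place of $W_n(t)$.

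The main technical obstacle is interpreting $\overline{\Psi}_{u_0}(|H_{u(t)}|)$ and ensuring the cyclicity of $u(t)$: although each $u_n(t)\in\BMOA_\simp(\bbT)$, simplicity of spectrum is not obviously preserved in the limit, and \eqref{Psit} as stated requires that $H_{u(t)}f(|H_{u(t)}|)u(t)\in\jap{u(t)}_{|H_{u(t)}|}$. The resolution is that \eqref{eq.spmeas} together with the strong convergence $H_{u_n(t)}\to H_{u(t)}$ forces the spectral type of $|H_{u(t)}|$ on $\overline{\Ran}H_{u(t)}$ to match that of $|H_{u_0}|$ on $\overline{\Ran}H_{u_0}$, yielding $u(t)\in\BMOA_\simp(\bbT)$ and a canonical isometry $L^2(\rho_{u_0})\to\overline{\Ran}H_{u(t)}$ sending $f\mapsto f(|H_{u(t)}|)u(t)$, through which both sides of \eqref{Psit} are well defined and equal.
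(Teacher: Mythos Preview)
Your approximation scheme and the passage to the limit in \eqref{eq.spmeas} are correct and match the paper. Your route to the bilinear identity for \eqref{Psit} via conjugation by $W_n(t)$ is a mild variant of the paper's Step~2--Step~3 (the paper first extracts weak convergence of $\overline\Psi_{u_{0,n}}\,d\rho_n$ at $t=0$ and then takes inner products with $u_n(t)$; you instead pull both arguments back to $t=0$ at once). Either way one arrives at
\[
\bigl\langle H_{u(t)}f(|H_{u(t)}|)u(t),\,h(|H_{u(t)}|)u(t)\bigr\rangle
=\bigl\langle |H_{u(t)}|\overline f(|H_{u(t)}|)\overline\Psi_{u_0}(|H_{u(t)}|)e^{itH_{u(t)}^2}u(t),\,h(|H_{u(t)}|)u(t)\bigr\rangle
\]
for all continuous $f,h$, i.e.\ the orthogonal projection of $v(t):=H_{u(t)}f(|H_{u(t)}|)u(t)$ onto $\jap{u(t)}_{|H_{u(t)}|}$ equals $w(t)$, the right-hand side of \eqref{Psit}.

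The gap is in your last paragraph. You correctly flag that to upgrade this to $v(t)=w(t)$ one needs $v(t)\in\jap{u(t)}_{|H_{u(t)}|}$, and you propose to obtain this by deducing $u(t)\in\BMOA_\simp$ from \eqref{eq.spmeas} together with strong convergence $H_{u_n(t)}\to H_{u(t)}$. That deduction is not valid: \eqref{eq.spmeas} only identifies the spectral measure of $|H_{u(t)}|$ \emph{with respect to the vector} $u(t)$, which carries no information about $\overline{\Ran}H_{u(t)}\ominus\jap{u(t)}_{|H_{u(t)}|}$; and simplicity of spectrum is not stable under strong operator limits. So at this point in the argument one genuinely does not know that $u(t)\in\BMOA_\simp$.

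The paper closes the gap by a one-line norm comparison: since
\[
\|v(t)\|^2=\int s^2|f(s)|^2\,d\rho_{u(t)}^{|H_{u(t)}|}(s)=\|w(t)\|^2,
\]
and the projection of $v(t)$ onto $\jap{u(t)}_{|H_{u(t)}|}$ is $w(t)$, Pythagoras forces $v(t)=w(t)$. The statement $u(t)\in\BMOA_\simp$ is then proved \emph{afterwards} (Section~\ref{sec.i4}) as a consequence of the just-established inclusions $H_{u(t)}\bigl(\jap{u(t)}_{H_{u(t)}^2}\bigr)\subset\jap{u(t)}_{H_{u(t)}^2}$ coming from \eqref{Psit}, \eqref{tildePsit}. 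In short: replace your final ``spectral type'' argument by this norm argument and the proof goes through.
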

\begin{proof}
Using Lemma~\ref{l: rat approx}, for a given $u_0\in\BMOA_{\simp}$ we construct a sequence of polynomial  functions $u_{0,n}\in\BMOA_{\simp}$ converging to $u_0$ in $H^2(\bbT)$ and uniformly bounded in $\BMO$ norm. For each $u_{0,n}$, the conclusion of Lemma~\ref{lma.i3} holds. Our purpose is to pass to the limit $n\to\infty$ in \eqref{eq.spmeas}, \eqref{Psit} and \eqref{tildePsit}.

\emph{Step 1: convergence of measures: passing to the limit in \eqref{eq.spmeas}.}
For each $n$, we write \eqref{eq.spmeas} in the weak form as 
\begin{align}
\jap{f(\abs{H_{u_n(t)}})u_n(t),u_n(t)}
&=
\jap{f(\abs{H_{u_n(0)}})u_n(0),u_n(0)}\ , 
\label{i3}
\\
\jap{f(\abs{\wt H_{u_n(t)}})u_n(t),u_n(t)}
&=
\jap{f(\abs{\wt H_{u_n(0)}})u_n(0),u_n(0)}
\notag
\end{align}
for any continuous function $f$. By Lemma~\ref{lma.i5}, we can pass to the limit $n\to\infty$ in the right hand side. Similarly, by Proposition~\ref{prp.i6} combined with Lemma~\ref{lma.i5}, we can pass to the limit in the left hand side. We obtain the desired relations \eqref{eq.spmeas}, expressed in the weak form.

\emph{Direction of further proof:}
For every $n$, we have the identities \eqref{Psit} and \eqref{tildePsit}: 
\begin{align}
H_{u_n(t)}f(|H_{u_n(t)}|)u_n(t)&=|H_{u_n(t)}|\overline f(|H_{u_n(t)}|)\overline \Psi_{u_{n}(0)} (|H_{u_n(t)}|){\rm e}^{it H_{u_n(t)}^2}u_n(t)\ ,
\label{Psitn}
\\
{\wt H}_{u_n(t)}f(|{\wt H}_{u_n(t)}|)u_n(t)&=|{\wt H}_{u_n(t)}|\overline f(|{\wt H}_{u_n(t)}|)\wt \Psi_{u_{n}(0)} (|{\wt H}_{u_n(t)}|){\rm e}^{it {\wt H}_{u_n(t)}^2}u_n(t)\ .
\label{tildePsitn} 
\end{align}
Our aim is to pass to the limit here as $n\to\infty$. 

In order to motivate the next step, let us make the following remark. Assume that $\Psi_{u_{n}(0)}$ was a continuous function independent of $n$. Then we could pass to the limit in \eqref{Psitn} by Lemma~\ref{lma.i5}. Unfortunately, this assumption is not true and so we need to use a roundabout argument; we will pass to the limit in the right hand sides of \eqref{Psitn} and \eqref{tildePsitn} by considering the weak forms of these identities. But first we need to establish the weak convergence of spectral measures multiplied by the factors $\overline\Psi_{u_{n}(0)}$ and $\wt \Psi_{u_{n}(0)}$ appearing in the right hand sides.

\emph{Step 2: convergence of measures multiplied by $\overline{\Psi}$, $\wt \Psi$.}
At $t=0$ by Lemma~\ref{lma.i5} we have for every continuous function $f$ 
$$
H_{u_n(0)}f(|H_{u_n(0)}|)u_n(0) 
\to 
H_{u(0)}f(|H_{u(0)}|)u(0)\ .
$$
Since both $u_n(0)$ and $u(0)$ are in $\BMOA_\simp$, we can write this as
\begin{equation}
\overline \Psi_{u_n(0)} (|H_{u_n(0)}|)\overline f(|H_{u_n(0)}|)u_n(0)
\to 
\overline \Psi_{u(0)} (|H_{u(0)}|)\overline f(|H_{u(0)}|)u(0),
\label{i1}
\end{equation}
and similarly we obtain 
$$
\wt \Psi_{u_n(0)} (|\wt H_{u_n(0)}|)\overline f(|\wt H_{u_n(0)}|)u_n(0)
\to 
\wt \Psi_{u(0)} (|\wt H_{u(0)}|)\overline f(|\wt H_{u(0)}|)u(0). 
$$
Taking the inner product of \eqref{i1} with $u_n(0)$ and observing that $\norm{u_n(0)-u(0)}_{H^2}\to0$, we find
\begin{equation}
\int_0^\infty \overline \Psi_{u_n(0)}(s) \overline f(s)\, d\rho_n(s)\to \int_0^\infty \overline \Psi_{u(0)}(s) \overline f(s)\, d\rho(s), 
\label{i2}
\end{equation}
where $\rho_n=\rho^{\abs{H_{u_n(0)}}}_{u_n(0)}$ and $\rho=\rho^{\abs{H_{u(0)}}}_{u(0)}$. 
Similarly, we obtain 
$$	
\int_0^\infty \wt  \Psi_{u_n(0)}(s) \overline f(s)\, d\wt  \rho^n(s)\to \int_0^\infty \wt \Psi_{u(0)}(s) \overline f(s)\, d\wt  \rho(s)\ ,
$$
where $\wt\rho_n=\rho^{\abs{\wt H_{u_n(0)}}}_{u_n(0)}$ and $\wt\rho=\rho^{\abs{\wt H_{u(0)}}}_{u(0)}$.

\emph{Step 3: passing to the limit in \eqref{Psitn}, \eqref{tildePsitn}.}
We will pass to the limit in \eqref{Psitn}; the second identity  \eqref{tildePsitn} can be treated similarly. 
Fix $t>0$ and denote 
$$
v(t):=H_{u(t)}f(|H_{u(t)}|)u(t), \quad w(t):=|H_{u(t)}|\overline f(|H_{u(t)}|)\overline\Psi_{u(0)} 
(|H_{u(t)}|){\rm e}^{it H_{u(t)}^2}u(t);
$$
our aim is to prove that $v(t)=w(t)$. By Proposition~\ref{prp.i6} and Lemma~\ref{lma.i5}, we have 
$$
v_n(t):=H_{u_n(t)}f(|H_{u_n(t)}|)u_n(t)\to H_{u(t)}f(|H_{u(t)}|)u(t)=v(t)
$$
in $H^2$, and therefore
$$
\jap{v_n(t),u_n(t)}\to \jap{v(t),u(t)}.
$$
On the other hand, by \eqref{Psitn} and \eqref{i3}, 
\begin{align*}
\jap{v_n(t),u_n(t)}
&=
\jap{\abs{H_{u_n(t)}}\overline{f}(\abs{H_{u_n(t)}})\overline{\Psi}_{u_n(0)}(\abs{H_{u_n(t)}})e^{itH_{u_n(t)}^2}u_n(t),u_n(t)}
\\
&=
\jap{\abs{H_{u_n(0)}}\overline{f}(\abs{H_{u_n(0)}})\overline{\Psi}_{u_n(0)}(\abs{H_{u_n(0)}})e^{itH_{u_n(0)}^2}u_n(0),u_n(0)}
\\
&=
\int_0^\infty s\overline{\Psi}_{u_n(0)}(s)\overline{f}(s)e^{its^2}d\rho_n(s).
\end{align*}
Using \eqref{i2}, followed by \eqref{eq.spmeas} (which was established at the first step of the proof), we find
$$
\jap{v_n(t),u_n(t)}\to \int_0^\infty s\overline{\Psi}_{u(0)}(s)\overline{f}(s)e^{its^2}d\rho(s)
=
\jap{w(t),u(t)}.
$$
Putting this together, we obtain 
$$
\jap{v(t),u(t)}=\jap{w(t),u(t)}. 
$$
Changing $f$ into $fg$, the above identity implies that 
the orthogonal projection of $v(t)$ onto $\jap {u(t)}_{H_{u(t)}^2}$ equals $w(t)$. 
Since $v(t)$ and $w(t)$ have the same norm, we conclude that these two vectors are equal.
\end{proof}

\subsection{The simplicity of spectrum; concluding the proof of Theorem~\ref{actionszego}}
\label{sec.i4}

It remains to prove that $u(t)\in \BMOA_\simp$ for every $t\in \bbR$. This is a consequence of the 
following lemma.
\begin{lemma}
Let $u\in \BMOA(\bbT)$ be such that 
\begin{equation}
H_u(\jap{u}_{H_u^2})\subset \jap{u}_{H_u^2}\ ,\quad
{\wt H}_u(\jap{u}_{{\wt H}_u^2})\subset \jap{u}_{{\wt H}_u^2}\ .
\label{i4}
\end{equation}
Then $u\in \BMOA_\simp$.
\end{lemma}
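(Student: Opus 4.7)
The plan is to reduce the lemma to proving that $\langle u\rangle_{H_u^2} = \overline{\Ran} H_u$. Once this is established, $u$ is cyclic for $(H_u\ute)^2$, so $(H_u\ute)^2$ has simple spectrum; moreover, since $(\wt H_u\ute)^2 = (H_u\ute)^2 - \langle\fdot,u\rangle u$ is a rank-one perturbation on $\overline{\Ran} H_u$, Lemma~\ref{l: cyclicity}(i) will give $\langle u\rangle_{(\wt H_u\ute)^2} = \langle u\rangle_{(H_u\ute)^2} = \overline{\Ran} H_u$, so $(\wt H_u\ute)^2$ also has simple spectrum, and therefore $u\in\BMOA_\simp$.

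Set $K := \overline{\Ran} H_u$, $V_1 := \langle u\rangle_{H_u^2}\subset K$, and $W := K\ominus V_1$. First, by the anti-linear symmetry $\langle H_u f, g\rangle = \langle H_u g, f\rangle$, the hypothesis $H_u(V_1)\subset V_1$ forces $H_u(V_1^\perp)\subset V_1^\perp$; combined with $\Ran H_u \subset K$, this yields $H_u(W)\subset W$. Next I would verify $\langle u\rangle_{\wt H_u^2} = V_1$: the inclusion $\subset$ holds because $V_1$ is $H_u^2$-invariant and $\langle \fdot, u\rangle u$-invariant (the image lies in $\mathrm{span}(u)\subset V_1$), hence $\wt H_u^2$-invariant, and contains $u$; the reverse inclusion follows from Lemma~\ref{l: cyclicity}(i) applied to $H_u^2|_{V_1}$ and its rank-one perturbation $\wt H_u^2|_{V_1}$. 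The second hypothesis then reads $\wt H_u(V_1)\subset V_1$, and the same symmetry argument delivers $\wt H_u(W)\subset W$. Since $\wt H_u = S^*H_u$, it follows that $S^*(H_u(W))\subset W$; because $H_u|_W$ is injective (as $H_u$ is injective on $K$) and symmetric, its range is dense in $W$, so boundedness of $S^*$ upgrades this to $S^*(W)\subset W$.

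The heart of the proof is to suppose $W\neq\{0\}$ and derive a contradiction. Since $W\perp u$, the rank-one identity reduces to $H_u^2 = \wt H_u^2$ on $W$. For $f\in W$ I would set $g := H_u f\in W$ and expand $H_u^2 f = \wt H_u^2 f$ using $\wt H_u = S^* H_u$, obtaining the key identity
\[
H_u g \;=\; S^* H_u(S^* g),
\]
initially for $g\in H_u(W)$ and then, by density and anti-linear continuity in $g$, for all $g\in W$. Using the $S^*$-invariance of $W$ I would then iterate to $H_u g = (S^*)^n H_u (S^*)^n g$ for every $n\geq 0$; since $(S^*)^n\to 0$ strongly on $H^2$, this forces $H_u g = 0$, and injectivity of $H_u$ on $K$ then gives $g = 0$, so $W=\{0\}$. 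The main obstacle is establishing $S^*$-invariance of $W$, which is precisely where both hypotheses are needed (the first for $H_u$-invariance of $W$, the second, via the identification $\langle u\rangle_{\wt H_u^2} = V_1$, for $\wt H_u$-invariance); without it the iteration killing $W$ would not be available.
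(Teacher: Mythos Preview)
Your proof is correct and follows the same overall architecture as the paper's: define $W=\overline{\Ran}H_u\cap\langle u\rangle_{H_u^2}^\perp$, use the two hypotheses together with the anti-linear symmetry to obtain $H_u(W)\subset W$ and $\wt H_u(W)\subset W$, deduce $S^*(W)\subset W$ via the density of $H_u(W)$ in $W$, and finish by showing $W=\{0\}$.

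The difference lies in the endgame. The paper, having approximated each $h\in W$ as $h=\lim_n H_u h_n$ with $h_n=H_u(H_u^2+\tfrac1n)^{-1}h\in W$, observes directly that
\[
\langle h,z^0\rangle=\lim_n\langle H_u h_n,z^0\rangle=\lim_n\langle u,h_n\rangle=0,
\]
so $W\perp z^0$; combined with $S^*$-invariance this gives $W\perp z^n$ for all $n$, hence $W=\{0\}$. You instead exploit $H_u^2=\wt H_u^2$ on $W$ to derive the identity $H_u g=S^*H_u S^*g$ for $g\in W$, iterate it using $S^*(W)\subset W$ to $H_u g=(S^*)^n H_u(S^*)^n g$, and let $n\to\infty$ using the asymptotic stability of $S^*$. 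Both arguments are valid; the paper's route is shorter and more explicitly orthogonality-based, while yours makes the mechanism (decay of $(S^*)^n$ forcing $H_u|_W=0$) more dynamical and avoids singling out~$z^0$.
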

\begin{proof}
	Recall that, since ${\wt H}_u^2=H_u^2-\jap{\cdot,u}u$, we have 
	$\jap{u}_{{\wt H}_u^2}=\jap{u}_{H_u^2}=\jap{u}$. Denote
	$$
	Z:=\overline{\Ran}H_u\cap \jap{u}^\perp;
	$$
our aim is to prove that $Z=\{0\}$. 
By definition, we have $H_u(Z)\subset Z$ and ${\wt H}_u(Z)\subset Z$. Moreover every 
	$h\in Z$ can be written as
	$$h=\lim_{n\to \infty}H_u h_n\ ,\ h_n:=H_u(H_u^2+\tfrac 1n)^{-1}h\in Z\ .$$
	Consequently, $S^*h=\lim_{n\to \infty}{\wt H}_uh_n\in Z$ and 
$$
\jap{h,z^0}=\lim_{n\to\infty}\jap{H_uh_n,z^0}=\lim_{n\to\infty}\jap{H_uz^0,h_n}=
\lim_{n\to \infty}\jap{u,h_n}=0\ .
$$
	We conclude that $S^*(Z)\subset Z$ and $Z\perp z^0$, hence $Z\perp z^n$ for every $n$, and 
	finally $Z=\{ 0\}$. 
\end{proof}

\begin{proof}[Proof of Theorem~\ref{actionszego}]
By Lemma~\ref{lma.i7}, we have the inclusions \eqref{i4} for $u=u(t)$. It follows that $u(t)\in\BMOA_{\simp}$. The first relation in \eqref{eq.spmeas} shows that the measure $\rho=\rho_{u(t)}^{\abs{H_{u(t)}}}$ is independent of $t$. Relations \eqref{Psit} and \eqref{tildePsit} show that the dynamics of the unimodular functions $\Psi$ and $\wt\Psi$ is as claimed in the statement of the theorem. 
\end{proof}

\subsection{Proof of Theorem~\ref{notAP}} 
\label{sec.i5}
By Theorem~\ref{thm.a5a} (see \eqref{e: u_k 01}), we have
\[
\wh u_k(t) =\jap{(\Sigma(t)^*)^k \1, q(t) }\ci{L^2(\rho)}\ ,
\]
where $\Sigma(t)$ is given by \eqref{w7} with functions $\Psi$, $\wt\Psi$ replaced by 
$$
\Psi_{u(t)}(s)=e^{-its^2}\Psi_0(s)
\quad \text{  and }\quad
\wt\Psi_{u(t)}(s)= e^{its^2}\wt\Psi_0(s)
$$
respectively, 
and the function $q(t) = q(t, \fdot)$ is given by 
\begin{align*}
q(t, s) = \overline{\Psi_{u(t)}(s)}/s = e^{its^2}\overline{\Psi_{0}(s)}/s\ .
\end{align*}
In particular, we get for $k=0$ that 
\[
\wh u_0(t) = \jap{\1, q(t) }\ci{L^2(\rho)} = \int_\bbR e^{-its^2} \frac{\Psi_0(s)}{s} d\rho(s).
\]
That means the function $\wh u_0(t)$ is the Fourier transform of the image (pushforward) of the 
complex measure (of bounded variation)
$$\frac{\Psi_0 (s)}{s}\, d\rho (s)$$ under the map $s\mapsto s^2$. 
Therefore, Theorem~\ref{notAP} follows from the following lemma. 
\begin{lemma}
\label{l: a-p fourier}
	Let $\mu $ be a complex Borel measure  on $\bbR$ of bounded variation such that the Fourier transform
	$$\wh \mu (t)=\int_{\bbR}{\rm e}^{-it\lambda}\, d\mu (\lambda)$$
	is an almost periodic function. Then $\mu $ is pure point.
\end{lemma}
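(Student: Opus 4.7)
The plan is to recover the atoms of $\mu$ from Bohr means of $\widehat\mu$, and then show that the continuous part of $\mu$ must vanish because its Fourier transform would be an almost periodic function with all Fourier--Bohr coefficients equal to zero.

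First, for each $\lambda \in \bbR$ I would compute the Bohr mean
$$c(\lambda) := \lim_{T\to\infty} \frac{1}{2T}\int_{-T}^T \widehat\mu(t)\,e^{i\lambda t}\,dt.$$
Since $|\mu|(\bbR) < \infty$, Fubini's theorem gives
$$\frac{1}{2T}\int_{-T}^T \widehat\mu(t)\,e^{i\lambda t}\,dt = \int_\bbR \frac{\sin\bigl(T(\lambda'-\lambda)\bigr)}{T(\lambda'-\lambda)}\,d\mu(\lambda'),$$
and dominated convergence (the integrand is bounded by $1$ and tends pointwise to $\mathbf{1}_{\{\lambda'=\lambda\}}$) yields $c(\lambda) = \mu(\{\lambda\})$.

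Next, since $\mu$ has finite total variation, the set $\Lambda := \{\lambda : \mu(\{\lambda\})\ne 0\}$ is countable with $\sum_{\lambda \in \Lambda} |\mu(\{\lambda\})| < \infty$. The series
$$g(t) := \sum_{\lambda \in \Lambda} \mu(\{\lambda\})\,e^{-i\lambda t}$$
therefore converges uniformly on $\bbR$ and defines a Bohr almost periodic function which coincides with $\widehat{\mu_{pp}}$, where $\mu_{pp} := \mu\big|_{\Lambda}$. Writing $\mu = \mu_{pp} + \mu_c$ with $\mu_c$ continuous, I get $\widehat{\mu_c} = \widehat\mu - g$, which is a difference of two almost periodic functions and hence itself almost periodic. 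Applying the first step to $\mu_c$ (or by linearity of the Bohr mean) shows that every Bohr--Fourier coefficient of $\widehat{\mu_c}$ vanishes.

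Finally, I would invoke the standard fact that a Bohr almost periodic function whose Fourier--Bohr coefficients all vanish must be identically zero. A quick proof uses Parseval's identity, which yields $M(|\widehat{\mu_c}|^2) = \sum_\lambda |\mu_c(\{\lambda\})|^2 = 0$; but if $\widehat{\mu_c}(t_0) \ne 0$ at any point, continuity forces $|\widehat{\mu_c}| \ge \varepsilon$ on a neighbourhood of $t_0$, and almost periodicity then forces this set to have positive lower density, contradicting the vanishing of the mean square. Therefore $\widehat{\mu_c} \equiv 0$, and the injectivity of the Fourier transform on finite complex measures gives $\mu_c = 0$, so $\mu = \mu_{pp}$ is pure point. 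The main obstacle is the final step: ensuring that $\widehat{\mu_c}$ is genuinely Bohr almost periodic (rather than only Besicovitch almost periodic), which is what allows the continuity argument to turn $M(|\widehat{\mu_c}|^2)=0$ into pointwise vanishing; this is guaranteed by the uniform convergence of the series defining $g$ combined with the almost periodicity of $\widehat\mu$ that is hypothesized.
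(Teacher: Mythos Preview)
Your proof is correct and follows essentially the same route as the paper's: decompose $\mu=\mu_{pp}+\mu_c$, observe that $\widehat{\mu_c}$ is almost periodic, show its mean square vanishes, and conclude $\widehat{\mu_c}\equiv 0$. The only cosmetic difference is that the paper computes $\frac{1}{2T}\int_{-T}^T|\widehat{\mu_c}|^2\,dt\to 0$ directly via Fubini (using that $\mu_c\otimes\overline{\mu_c}$ does not see the diagonal), whereas you first show each Bohr--Fourier coefficient of $\widehat{\mu_c}$ vanishes and then invoke Parseval; the paper's version is marginally more self-contained since it avoids citing Parseval for almost periodic functions, but the underlying idea is the same.
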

\begin{proof}
	We decompose $\mu $ as the sum of a pure point measure and a diffuse measure 
	$$\mu =\sum_{j=1}^\infty a_j\delta (\lambda -\lambda_j)+\mu_d\ ,$$
	where $\sum_{j=1}^\infty |a_j|<\infty $ and $\mu_d(\{ \lambda\})=0$ for every $\lambda \in 
	\bbR$.
	Then 
	$$\wh \mu (t)=\sum_{j=1}^\infty a_j{\rm e}^{-i\lambda_jt}+\wh\mu_d(t)\ ,$$
	and the almost periodicity of $\wh \mu$ implies the almost periodicity of $\wh \mu_d$.
	For every $T>0$, the Fubini theorem yields
	$$\frac{1}{2T}\int_{-T}^T |\wh \mu_d(t)|^2\, dt =\int_{\bbR}\int_{\bbR} \frac{\sin 
	T(\lambda-\lambda')}{T(\lambda-\lambda')}d\mu_d (\lambda)d\overline \mu_d (\lambda')\ .$$
	As $T\to +\infty $, the integrand in the right hand side tends to $0$ for every $\lambda \ne 
	\lambda '$. Since $\mu_d$ does not see points, $\mu_d\otimes \overline \mu_d$ does not see the 
	diagonal. Therefore the dominated convergence theorem implies that the right hand side tends to 
	$0$. The almost periodic function $\wh \mu_d$ satisfies
	$$\frac{1}{2T}\int_{-T}^T |\wh \mu_d(t)|^2\, dt \to 0$$
	as $T\to +\infty$, hence it is identically $0$. From the injectivity of the Fourier 
	transformation, this implies $\mu_d=0$, hence $\mu $ is pure point.
\end{proof}

\appendix

%%%%%%%%%%%%%%%%%%%%%%%%%%%%%%%%%%
%%%%%%%%%%%%%%%%%%%%%%%%%%%%%%%%%%
\section{Proof of reductions to spectral properties of $\wt\Psi(\wt\calM)\Psi(\calM)$}\label{sec.f}
%%%%%%%%%%%%%%%%%%%%%%%%%%%%%%%%%%
%%%%%%%%%%%%%%%%%%%%%%%%%%%%%%%%%%

Here we give the proofs of two technical statements: Lemma~\ref{lma.a9} and Theorem~\ref{thm.a7}.

\subsection{Sufficient conditions for $\Sigma_0-I\in\bS_1$ in terms of $\rho$}

\begin{proof}[Proof of Lemma~\ref{lma.a9}] 
We start with the formula
$$
\calM=\frac{2}{\pi}\int_0^\infty \calM^2(\calM^2+t^2 I)^{-1}\, dt = 
\frac{2}{\pi}\int_0^\infty  \left( I - t^2  ( \calM^2+t^2 I)^{-1} \right) dt ;
$$
if $\calM$ is a nonnegative real number, this is a trivial identity, and if $\calM$ is a positive semi-definite self-adjoint operator, it suffices to combine the scalar identity with the spectral representation of $\calM$. 
Of course, the same identity holds for $\wt\calM$.

The operator $\wt\calM^2= \calM^2 - \left\langle \fdot, \1 \right\rangle\1$ is a rank one 
perturbation of $\calM$, so by
the standard  resolvent identity
$$
(\wt\calM^2+t^2I)^{-1}=
(\calM^2+t^2I)^{-1}+
\frac1{\Delta(-t^2)}
\jap{\fdot ,(\calM^2+t^2I)^{-1}\1}(\calM^2+t^2I)^{-1}\1\ ,
$$
where $\Delta$ is the perturbation determinant, 
\[
\Delta(-t^2)=1-\jap{(\calM^2+t^2 I)^{-1}\1,\1}=1-\int_0^\infty \frac{d\rho(s)}{s^2+t^2} \, ,
\]
we get that 
\begin{align}
\wt \calM=\calM-\frac{2}{\pi}
\int_0^\infty \frac{t^2}{\Delta(-t^2)}
\jap{\fdot ,(\calM^2+t^2 I)^{-1}\1}(\calM^2+t^2 I)^{-1}\1\, dt\ .
\label{e4}
\end{align}
Recall that $\Sigma_0^*=\wt\calM\calM^{-1}$. Multiplying \eqref{e4} by $\calM^{-1}$ on the right, we find that $I-\Sigma_0^*$ can be represented as an integral of rank one operators:
\begin{align}
I-\Sigma_0^* &=
\frac{2}{\pi}
\int_0^\infty \frac{t^2}{\Delta(-t^2)}
\jap{\fdot ,\calM^{-1}(\calM^2+t^2 I)^{-1}\1}(\calM^2+t^2 I)^{-1}\1\, dt
\notag
\\
&=\frac{2}\pi\int_0^\infty \frac{t^2}{\Delta(-t^2)} 
\langle\fdot,a_t\rangle b_t \ dt, 
\label{e4b}
\\  
a_t(s) &:=s^{-1}(s^2+t^2)^{-1}, 
\quad
b_t(s):=(s^2+t^2)^{-1}\ .
\notag
\end{align}
First assume \eqref{a6}. Then 
$$
\Delta(-t^2)\geq \Delta(0)=1-\int_0^\infty \frac{d\rho(s)}{s^2}>0.
$$
We estimate the norms of $a_t$ and $b_t$ as 
follows:
\begin{align*}
\norm{a_t}^2&=\int_0^\infty s^{-2}(s^2+t^2)^{-2}d\rho(s)
\leq 
t^{-4}\int_0^\infty  s^{-2}d\rho(s)=Ct^{-4}, \quad t>0,
\notag
\\
\norm{b_t}^2&=\int_0^\infty (s^2+t^2)^{-2}d\rho(s)
\leq 
t^{-2+\eps}\int_0^\infty  s^{-2-\eps}d\rho(s)=Ct^{-2+\eps}, \quad 0<t<1,
\notag
\\
\norm{b_t}^2&=\int_0^\infty (s^2+t^2)^{-2}d\rho(s)
\leq 
t^{-4}\int_0^\infty  d\rho(s)=Ct^{-4}, \quad t>1. 
\end{align*}
Then 
$$
\norm{I-\Sigma_0^*}_{\bS_1}
\leq 
C\int_0^\infty t^2\norm{a_t}\norm{b_t}dt
\leq
C\int_0^1 t^2 t^{-2}t^{-1+\eps/2}dt+C\int_1^\infty t^2 t^{-4}dt<\infty.
$$
Next, assume \eqref{a5}. 
Then 
\begin{align}
\notag
\Delta(-t^2)
& =\int_0^\infty\frac{d\rho(s)}{s^2}-\int_0^\infty \frac{d\rho(s)}{s^2+t^2}
\\ & =t^2\int_0^\infty \frac{d\rho(s)}{s^2(s^2+t^2)}
\geq
t^2 \int_0^\infty s^{-4} d\rho(s)=ct^2.
\label{e4a}
\end{align}
We estimate the 
norms of $a_t$ and $b_t$ as follows:
\begin{align*}
\norm{a_t}^2&=\int_0^\infty s^{-2}(s^2+t^2)^{-2}d\rho(s)
\leq 
t^{-2+\eps}\int_0^\infty  s^{-4-\eps}d\rho(s)=Ct^{-2+\eps}, \quad t>0,
\notag
\\
\norm{b_t}^2&=\int_0^\infty (s^2+t^2)^{-2}d\rho(s)
\leq 
\int_0^\infty  s^{-4}d\rho(s)=C, \quad 0<t<1,
\notag
\\
\norm{b_t}^2&=\int_0^\infty (s^2+t^2)^{-2}d\rho(s)
\leq 
t^{-4}\int_0^\infty  d\rho(s)=Ct^{-4}, \quad t>1. 
\end{align*}
Then 
$$
\norm{I-\Sigma_0^*}_{\bS_1}
\leq 
C\int_0^\infty \norm{a_t}\norm{b_t}dt
\leq
C\int_0^1 t^{-1+\eps/2}dt+C\int_1^\infty t^{-1+\eps/2}t^{-2}dt<\infty.
$$
The proof is complete. 
\end{proof}

\subsection{Trace class inclusions for $(I-\Sigma_0^*)\calM^\eps$}
In this subsection we prove preliminary statements that will be used below in the proof of Theorem~\ref{thm.a7}. 

\begin{lemma}\label{lma.g3}
For any $\eps>0$, the operator $(I-\Sigma_0^*)\calM^\eps$ is trace class. 
\end{lemma}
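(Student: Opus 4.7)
The plan is to adapt the integral representation \eqref{e4b} of $I-\Sigma_0^*$ from the proof of Lemma~\ref{lma.a9}, multiplying it by $\calM^\eps$ on the right, and then to control the trace norm via Minkowski's inequality combined with an unconditional cancellation between $1/\Delta(-t^2)$ and the $L^2(\rho)$-norms of the resulting Schmidt vectors. Since $\calM$ is self-adjoint in $L^2(\rho)$, multiplying \eqref{e4b} by $\calM^\eps$ and moving the factor into the linear functional yields
$$
(I-\Sigma_0^*)\calM^\eps = \frac{2}{\pi}\int_0^\infty \frac{t^2}{\Delta(-t^2)}\,\jap{\fdot, A_t}\,b_t\,dt,
\qquad A_t(s) := s^{\eps-1}(s^2+t^2)^{-1},\ b_t(s):=(s^2+t^2)^{-1}.
$$
Each rank-one operator under the integral has trace norm $\|A_t\|_{L^2(\rho)}\|b_t\|_{L^2(\rho)}$, so by the Bochner--Minkowski inequality in $\bS_1$ it suffices to prove that
$I_\eps := \int_0^\infty \frac{t^2}{\Delta(-t^2)}\|A_t\|\|b_t\|\,dt < \infty$.

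The key input is the unconditional inequality
$$
\int \frac{d\rho(s)}{s^2(s^2+t^2)} \leq \frac{\Delta(-t^2)}{t^2}, \qquad t>0,
$$
obtained from the partial-fraction identity $\frac{1}{s^2(s^2+t^2)} = \frac{1}{t^2}\bigl(\frac{1}{s^2} - \frac{1}{s^2+t^2}\bigr)$, the definition of $\Delta(-t^2)$, and the normalization $\int s^{-2}d\rho \leq 1$. Combined with $(s^2+t^2)^{-1}\leq s^{-2}$ this immediately gives $\|b_t\|^2 \leq \Delta(-t^2)/t^2$. For $\|A_t\|^2$, I would write
$\|A_t\|^2 = \int \frac{s^{2\eps}}{s^2+t^2}\cdot \frac{d\rho(s)}{s^2(s^2+t^2)}$
and use $s^{2\eps}\leq (s^2+t^2)^{\eps}$ to bound $\frac{s^{2\eps}}{s^2+t^2}\leq (s^2+t^2)^{\eps-1}\leq C_\eps \max(t^{2\eps-2},1)$ uniformly for $s\in\supp\rho\subset[0,R]$ and $t\leq R$. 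This gives $\|A_t\|^2 \leq C_\eps\max(t^{2\eps-2},1)\cdot\Delta(-t^2)/t^2$ for $t\leq R$. Substituting, the factor $\Delta(-t^2)$ \emph{cancels}, and the integrand of $I_\eps$ on $(0,R]$ is bounded by $C_\eps\max(t^{\eps-1},1)$, which is integrable at zero since $\eps>0$.

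For $t \geq R$ I would use crude estimates instead: $(s^2+t^2)^{-1}\leq t^{-2}$ together with $\int s^{-2}d\rho \leq 1$ gives $\|A_t\|,\|b_t\|\leq C t^{-2}$, while $\Delta(-t^2)\to 1$ as $t\to\infty$ yields $\Delta(-t^2)\geq c>0$ for $t$ sufficiently large. The integrand is then $\leq C/t^{2}$ at infinity and is manifestly bounded on any compact piece, so the tail contribution is finite. The main point (and the only non-trivial ingredient) is the key inequality above, which handles the degenerate subcase $\int s^{-2}d\rho = 1$: there $\Delta(-t^2)\to 0$ as $t\to 0^+$ and $\rho$ may be arbitrarily singular at zero, but the extra factor $\calM^\eps$ (materialized in the exponent $\eps-1$ of $A_t$) supplies precisely the damping needed near $s=0$ to make the trace-class bound converge, in place of the additional integrability hypotheses imposed in Lemma~\ref{lma.a9}.
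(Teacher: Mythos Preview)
Your proof is correct and follows the same overall strategy as the paper's: start from the rank-one integral representation \eqref{e4b}, absorb $\calM^\eps$ into the Schmidt vector, and bound the $\bS_1$-norm by $\int_0^\infty \frac{t^2}{\Delta(-t^2)}\|A_t\|\,\|b_t\|\,dt$. The difference lies only in how you carry out the estimates. The paper splits into the two cases $\int s^{-2}d\rho<1$ and $\int s^{-2}d\rho=1$, and in the delicate second case further decomposes the integrals at the scale $s=t$ to exhibit the cancellation between $1/\Delta(-t^2)$ and the norms. Your single partial-fractions identity $\int\frac{d\rho(s)}{s^2(s^2+t^2)}\le \Delta(-t^2)/t^2$ packages exactly this cancellation in one stroke, uniformly over both cases, which is a cleaner bookkeeping device; the paper's dyadic split is effectively a two-term approximation to the same identity. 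So the two arguments are equivalent in substance, with yours slightly more streamlined.
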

\begin{proof}
We may assume $0<\eps <1$. 
As in \eqref{e4b}, we represent $(I-\Sigma_0^*)\calM^\eps$ as an integral of rank one operators: 
\begin{align*}
(I-\Sigma_0^*)\calM^\eps &=\frac{2}\pi\int_0^\infty \frac{t^2}{\Delta(-t^2)} 
\langle\fdot,a_t\rangle b_t \ dt, \\ a_t(s) & =s^{-1+\eps}(s^2+t^2)^{-1}, 
\qquad
b_t(s)=(s^2+t^2)^{-1}\ .
\end{align*}
First assume that 
\[
\int_0^\infty \frac{d\rho(s)}{s^2}<1. 
\]
Then $\Delta(-t^2)\geq \Delta(0)>0$. We estimate the norms of $a_t$ and $b_t$ as follows:
\begin{align*}
\norm{a_t}^2
&=
\int_0^\infty s^{-2+2\eps}(s^2+t^2)^{-2}d\rho(s)
\leq 
(t^2)^{-2+\eps}
\int_0^\infty
s^{-2+2\eps}(s^2+t^2)^{-\eps}d\rho(s)
\\
&\leq
t^{-4+2\eps}\int_0^\infty s^{-2+2\eps}s^{-2\eps}d\rho(s)=Ct^{-4+2\eps}, \quad t>0,
\\
\norm{b_t}^2
&=
\int_0^\infty (s^2+t^2)^{-2}d\rho(s)\leq t^{-2}\int_0^\infty s^{-2}d\rho(s)=Ct^{-2}, \quad 0<t<1,
\\
\norm{b_t}^2
&=
\int_0^\infty (s^2+t^2)^{-2}d\rho(s)\leq t^{-4}\int_0^\infty d\rho(s)=Ct^{-4}, \quad t>1.
\end{align*}
Then 
$$
\norm{(I-\Sigma_0^*)\calM^\eps}_{\bS_1}
\leq
\int_0^\infty t^2\norm{a_t}\norm{b_t}dt
\leq
C\int_0^1t^2t^{-2+\eps}t^{-1}dt+C\int_1^\infty t^2t^{-2+\eps}t^{-2}dt<\infty.
$$
Now consider the case
$$
\int_0^\infty \frac{d\rho(s)}{s^2}=1.
$$
For $t>1$, as in \eqref{e4a}, we have $\Delta(-t^2)\geq ct^2$ 
and the above estimates for $\norm{a_t}$ and $\norm{b_t}$ will do. 
For $0<t<1$ we need to be more careful. We write
$$
t^{-2}\Delta(-t^2)
=\int_0^t  \frac{d\rho(s)}{s^2(s^2+t^2)}+\int_t^\infty  \frac{d\rho(s)}{s^2(s^2+t^2)}
\geq 
\frac1{2t^2}\int_0^t  \frac{d\rho(s)}{s^2}
+
\frac12 \int_t^\infty  \frac{d\rho(s)}{s^4}.
$$
Next, 
\begin{align*}
\norm{b_t}^2
&=
\int_0^\infty \frac{d\rho(s)}{(s^2+t^2)^{2}}
=
\int_0^t \frac{d\rho(s)}{(s^2+t^2)^{2}}
+
\int_t^\infty \frac{d\rho(s)}{(s^2+t^2)^{2}}
\\
&\leq 
\frac1{t^2}\int_0^t \frac{d\rho(s)}{s^2}
+
\int_t^\infty \frac{d\rho(s)}{s^4}
\end{align*}
and similarly 
\begin{align*}
\norm{a_t}^2
&=
\int_0^\infty \frac{s^{2\eps}d\rho(s)}{s^2(s^2+t^2)^2}
=
\int_0^t \frac{s^{2\eps}d\rho(s)}{s^2(s^2+t^2)^2}
+
\int_t^\infty \frac{s^{2\eps}d\rho(s)}{s^2(s^2+t^2)^2}
\\
&\leq 
\frac1{t^{4-2\eps}}\int_0^t \frac{d\rho(s)}{s^2}+\frac1{t^{2-2\eps}}\int_t^\infty 
\frac{d\rho(s)}{s^4}
\\
&=t^{-2+2\eps}\biggl(\frac1{t^2}\int_0^t \frac{d\rho(s)}{s^2}+\int_t^\infty 
\frac{d\rho(s)}{s^4}\biggr)\ .
\end{align*}
Integrating, we find 
\begin{align*}
\norm{(I-\Sigma_0^*)\calM^\eps}_{\bS_1}
&\leq
\frac{2}\pi\int_0^1\frac{t^2}{\Delta(-t^2)} \norm{a_t}\norm{b_t}dt
+
C\int_1^\infty \norm{a_t}\norm{b_t}dt 
\\
&\leq
2\int_0^1 t^{-1+\eps}dt+C\int_1^\infty t^{-2+\eps}t^{-2}dt<\infty\ .
\end{align*}
The proof is complete. 
\end{proof}

\begin{lemma}\label{lma6}
For any $\eps>0$, the operators $\calM^\eps(I-\Sigma_0^*)$, $\wt \calM^\eps(I-\Sigma_0^*)$, 
$(I-\Sigma_0^*)\wt \calM^\eps$  
are trace class. 
\end{lemma}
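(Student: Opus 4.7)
The strategy is to reduce all three assertions to the single claim that $\calM^\eps(I-\Sigma_0^*) \in \bS_1$ for every $\eps>0$, and then to prove this by an integral estimate paralleling Lemma~\ref{lma.g3}.

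I would first carry out the reductions via the Löwner--Heinz inequality. Since $\wt\calM^2 \leq \calM^2$, Löwner--Heinz gives $\wt\calM^\eps \leq \calM^\eps$ for every $\eps \in (0,2]$; by Douglas' Lemma~\ref{l: DLemma}, the operator $C_\eps := \wt\calM^\eps \calM^{-\eps}$ then extends to a contraction on $L^2(\rho)$, and its adjoint $C_\eps^* = \calM^{-\eps}\wt\calM^\eps$ is also a contraction. This yields $\wt\calM^\eps = C_\eps \calM^\eps = \calM^\eps C_\eps^*$, and hence for $\eps \in (0,2]$
\[
(I-\Sigma_0^*)\wt\calM^\eps = (I-\Sigma_0^*)\calM^\eps \cdot C_\eps^*, \qquad \wt\calM^\eps(I-\Sigma_0^*) = C_\eps \cdot \calM^\eps(I-\Sigma_0^*).
\]
The first is trace class by Lemma~\ref{lma.g3}; the second will follow once we know $\calM^\eps(I-\Sigma_0^*) \in \bS_1$. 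For $\eps>2$ I would bootstrap by writing $\eps = 2k+\eps'$ with $k\in\bbN$, $\eps'\in(0,2]$, and absorbing the bounded factor $\wt\calM^{2k}$.

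Next, I would reduce the remaining claim to a commutator estimate. Taking adjoints in Lemma~\ref{lma.g3} gives $\calM^\eps(I-\Sigma_0) \in \bS_1$, so it suffices to show that
\[
\calM^\eps(I-\Sigma_0^*) - \calM^\eps(I-\Sigma_0) = \calM^\eps(\Sigma_0-\Sigma_0^*) = \calM^{\eps-1}[\wt\calM,\calM]\calM^{-1}
\]
is trace class. Substituting the integral representation \eqref{e4} for $\calM-\wt\calM$ and writing $[\wt\calM,\calM] = -[K,\calM]$ with $K = \tfrac{2}{\pi}\int \tfrac{t^2}{\Delta(-t^2)}\langle\cdot,g_t\rangle g_t\,dt$, a direct calculation yields
\[
\calM^{\eps-1}[\wt\calM,\calM]\calM^{-1} = \frac{2}{\pi}\int_0^\infty \frac{t^2}{\Delta(-t^2)}\,T_t\,dt, \qquad T_t := \langle\cdot, a_t\rangle\calM^\eps g_t - \langle\cdot, g_t\rangle\calM^{\eps-1}g_t,
\]
where $a_t$, $g_t$ are as in the proof of Lemma~\ref{lma.g3}. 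Since $g_t = \calM a_t$, we have $\calM^{\eps-1}g_t = \calM^\eps a_t$, and the integral kernel of the rank-two operator $T_t$ equals $K_t(s,s') = s^{\eps-1}(s-s')/\bigl(s'(s'^2+t^2)(s^2+t^2)\bigr)$, which vanishes on the diagonal $s=s'$.

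The main obstacle will be to establish that $\int_0^\infty \tfrac{t^2}{\Delta(-t^2)}\|T_t\|_1\,dt < \infty$. The naive triangle-inequality bound $\|T_t\|_1 \leq \|a_t\|\|\calM^\eps g_t\| + \|\calM^{\eps-1}g_t\|\|g_t\|$ controls only the second term (via Lemma~\ref{lma.g3}, using $\calM^{\eps-1}g_t = \calM^\eps a_t$); the first term $\|a_t\|\|\calM^\eps g_t\|$ is not separately integrable against $t^2/\Delta(-t^2)$, and the cancellation in the $(s-s')$ factor of $K_t$ must be exploited. My plan is to use the rank-two estimate $\|T_t\|_1 \leq \sqrt{2}\,\|T_t\|_{HS}$, compute $\|T_t\|_{HS}^2$ as a double integral, interchange the $t$- and $(s,s')$-integrations via Fubini, and then bound the resulting double integral by a case split between $\int s^{-2}\,d\rho < 1$ and $\int s^{-2}\,d\rho = 1$, paralleling the proof of Lemma~\ref{lma.g3}. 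Executing this delicate estimate cleanly is the technical heart of the argument.
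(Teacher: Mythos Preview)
Your reductions via the L\"owner--Heinz inequality are correct and match the paper's exactly. The difference is in the key step: showing $\calM^\eps(I-\Sigma_0^*)\in\bS_1$ once you already know $\calM^\eps(I-\Sigma_0)\in\bS_1$ by taking adjoints in Lemma~\ref{lma.g3}.

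You propose to control the difference $\calM^\eps(\Sigma_0-\Sigma_0^*)=\calM^{\eps-1}[\wt\calM,\calM]\calM^{-1}$ by a delicate integral estimate that you do not carry out and yourself flag as the technical heart. The paper bypasses this entirely with a one-line algebraic trick. From the rank-one defect identity
\[
\Sigma_0\Sigma_0^*=\calM^{-1}\wt\calM^2\calM^{-1}=\calM^{-1}(\calM^2-\jap{\fdot,\1}\1)\calM^{-1}=I-\jap{\fdot,q_0}q_0,\qquad q_0(s)=1/s,
\]
one writes
\[
\calM^\eps(I-\Sigma_0^*)=\calM^\eps(I-\Sigma_0\Sigma_0^*)+\calM^\eps(\Sigma_0-I)\Sigma_0^*=\jap{\fdot,q_0}\calM^\eps q_0+\bigl(\calM^\eps(\Sigma_0-I)\bigr)\Sigma_0^*.
\]
The first term is rank one (note $\calM^\eps q_0\in L^2(\rho)$ since $\rho$ has bounded support and $\int s^{-2}\,d\rho\le1$), and the second is trace class because $\calM^\eps(\Sigma_0-I)\in\bS_1$ and $\Sigma_0^*$ is bounded. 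No commutator estimate, no Hilbert--Schmidt computation, no case split is needed.

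So your route is not wrong in spirit, but it replaces a two-line identity with an unexecuted and admittedly delicate analytic argument. The observation you are missing is that $\Sigma_0$ and $\Sigma_0^*$ differ not by something you must estimate from scratch, but are linked through the rank-one relation $\Sigma_0\Sigma_0^*=I-\text{rank one}$, which lets you trade $I-\Sigma_0^*$ for $(\Sigma_0-I)\Sigma_0^*$ modulo a rank-one correction.
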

\begin{proof}
Taking adjoints in the previous lemma, we find $\calM^\eps(I-\Sigma_0)\in\bS_1$ for any $\eps>0$. 
Since 
$$
\Sigma_0\Sigma_0^*=I-\jap{\fdot,q_0}q_0, \quad q_0(s)=1/s,
$$
we have
$$
\calM^\eps(I-\Sigma_0^*)=\calM^\eps(\Sigma_0-I)\Sigma_0^*+\text{rank one operator}, 
$$
and so $\calM^\eps(I-\Sigma_0^*)$ is trace class. 

Next, from $\wt \calM^2\leq \calM^2$ by Heinz inequality we have $\wt \calM^{2\eps}\leq 
\calM^{2\eps}$ for any $0<\eps<1$, and so by Lemma~\ref{l: DLemma} $\wt \calM^\eps \calM^{-\eps}$  
is a bounded 
operator. Therefore the operators
$$
\wt \calM^\eps(I-\Sigma_0^*)=(\wt \calM^\eps \calM^{-\eps})\bigl(\calM^\eps(I-\Sigma_0^*)\bigr)
$$
and 
$$
\wt \calM^\eps(I-\Sigma_0)=(\wt \calM^\eps \calM^{-\eps})\bigl(\calM^\eps(I-\Sigma_0)\bigr)
$$
are trace class. 
\end{proof}

%%%%%%%%%%%%%%%%%%%%%%%%%%%%%%%%%%%%
%%%%%%%%%%%%%%%%%%%%%%%%%%%%%%%%%%%%
\subsection{Proof of Theorem~\ref{thm.a7} (reduction to a.c. part of $\wt\Psi(\wt\calM) \Psi(\calM)$)}
\label{sec.j}
%%%%%%%%%%%%%%%%%%%%%%%%%%%%%%%%%%%%
%%%%%%%%%%%%%%%%%%%%%%%%%%%%%%%%%%%%

We denote  
$$
c=\wt\Psi(0_+)\Psi(0_+), \quad W=\wt\Psi(\wt \calM)\Psi(\calM).
$$
First we prove a lemma.
%%%%%%%%%%%%%%%%%
\begin{lemma}\label{thm.g1}
%%%%%%%%%%%%%%%%%
Under the hypothesis of Theorem~\ref{thm.a7}, we have
\begin{align}
\Sigma^*=c\Sigma_0^*-cI+W+\text{trace class operator}, 
\label{g1}
\end{align}
and the products 
\begin{equation}
(\Sigma_0^*-I)(W-cI), 
\quad 
(W-cI)(\Sigma_0^*-I), 
\quad
(\Sigma_0-I)(W-cI), 
\quad 
(W-cI)(\Sigma_0-I)
\label{g1a}
\end{equation}
are trace class. 
\end{lemma}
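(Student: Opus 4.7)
The plan is to split off the constant parts $\Psi(0_+)$ and $\wt\Psi(0_+)$ and use the H\"older-type hypothesis of Theorem~\ref{thm.a7} to control the remainders through Lemmas~\ref{lma.g3} and~\ref{lma6}. Set
\[
\psi(s):=\Psi(s)-\Psi(0_+),\qquad \wt\psi(s):=\wt\Psi(s)-\wt\Psi(0_+),
\]
so that $|\psi(s)|\le C s^{\eps}$ and $|\wt\psi(s)|\le C s^{\eps}$. The key elementary observation is that, writing $g(s):=\psi(s)/s^{\eps}$ and $\wt g(s):=\wt\psi(s)/s^{\eps}$ (with value $0$ at $s=0$), these are bounded Borel functions on the spectra of $\calM$ and $\wt\calM$, and by functional calculus
\[
\psi(\calM)=g(\calM)\,\calM^{\eps},\qquad \wt\psi(\wt\calM)=\wt g(\wt\calM)\,\wt\calM^{\eps}.
\]

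Next, I would record the trace class estimates. Lemma~\ref{lma6} (together with Lemma~\ref{lma.g3} and taking adjoints) gives that each of $\calM^{\eps}(I-\Sigma_0^*)$, $(I-\Sigma_0^*)\calM^{\eps}$, $\wt\calM^{\eps}(I-\Sigma_0^*)$, $(I-\Sigma_0^*)\wt\calM^{\eps}$ and the analogues with $\Sigma_0$ in place of $\Sigma_0^*$ are trace class. Combined with the factorisations above, the boundedness of $g(\calM)$ and $\wt g(\wt\calM)$, and the fact that $g(\calM)$ commutes with $\calM^{\eps}$ (and similarly on the tilded side), this yields that
\[
\psi(\calM)(\Sigma_0^*-I),\ (\Sigma_0^*-I)\psi(\calM),\ \wt\psi(\wt\calM)(\Sigma_0^*-I),\ (\Sigma_0^*-I)\wt\psi(\wt\calM)
\]
are trace class, and likewise with $\Sigma_0$ in place of $\Sigma_0^*$.

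To prove \eqref{g1}, I would then simply expand
\[
\Sigma^*=\bigl(\wt\Psi(0_+)I+\wt\psi(\wt\calM)\bigr)\,\Sigma_0^*\,\bigl(\Psi(0_+)I+\psi(\calM)\bigr),
\]
\[
W=\bigl(\wt\Psi(0_+)I+\wt\psi(\wt\calM)\bigr)\bigl(\Psi(0_+)I+\psi(\calM)\bigr),
\]
and compute
\begin{align*}
\Sigma^*-c\Sigma_0^*-(W-cI)
&=\wt\Psi(0_+)(\Sigma_0^*-I)\psi(\calM)+\Psi(0_+)\wt\psi(\wt\calM)(\Sigma_0^*-I)\\
&\quad+\wt\psi(\wt\calM)(\Sigma_0^*-I)\psi(\calM),
\end{align*}
where the last term is trace class as a bounded operator times $(\Sigma_0^*-I)\psi(\calM)$. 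Each summand is trace class by the previous step, giving \eqref{g1}.

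For \eqref{g1a}, I would expand
\[
W-cI=\wt\Psi(0_+)\psi(\calM)+\Psi(0_+)\wt\psi(\wt\calM)+\wt\psi(\wt\calM)\psi(\calM)
\]
and multiply by $(\Sigma_0^*-I)$ (and separately by $(\Sigma_0-I)$) on either side. Every resulting summand is of the form \emph{bounded}$\,\cdot\,$\emph{trace class}, again by the factorisation step. The only mildly delicate point—which I expect to be the main bookkeeping obstacle—is making sure that in each occurrence the factor $\calM^{\eps}$ or $\wt\calM^{\eps}$ sits on the side where the corresponding trace class statement from Lemmas~\ref{lma.g3}--\ref{lma6} applies; this is handled by using that $g(\calM)$ commutes with $\calM^{\eps}$ (and the analogue on the tilded side) to move the bounded factors freely.
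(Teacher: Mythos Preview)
Your proposal is correct and follows essentially the same approach as the paper: factor $\Psi(\calM)-\Psi(0_+)I=\varphi(\calM)\calM^\eps$ and $\wt\Psi(\wt\calM)-\wt\Psi(0_+)I=\wt\varphi(\wt\calM)\wt\calM^\eps$ with bounded $\varphi,\wt\varphi$, invoke Lemmas~\ref{lma.g3}--\ref{lma6} (and their adjoints) for the trace class inclusions, and then expand. The only cosmetic difference is that the paper presents \eqref{g1} as a chain of equalities rather than computing $\Sigma^*-c\Sigma_0^*-(W-cI)$ directly, and it proves \eqref{g1a} before \eqref{g1}; your identity for that difference is correct and the three summands you obtain are each visibly trace class by the preliminary step.
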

\begin{proof}
First let us prove that the operators 
\begin{align*}
(\Psi(\calM)-\Psi(0_+)I)(I-\Sigma_0^*), \quad
(I-\Sigma_0^*)(\Psi(\calM)-\Psi(0_+)I), 
\\
(\wt\Psi(\wt \calM)-\wt\Psi(0_+)I)(I-\Sigma_0^*), \quad
(I-\Sigma_0^*)(\wt\Psi(\wt \calM)-\wt\Psi(0_+)I)
\end{align*}
are trace class.
The first two inclusions follow from Lemmas~\ref{lma.g3} and \ref{lma6} by writing 
$$
\Psi(\calM)-\Psi(0_+)I=\calM^\eps\varphi(\calM)=\varphi(\calM)\calM^\eps, 
$$
where 
$$
\varphi(t)=t^{-\eps}(\Psi(t)-\Psi(0_+)), \quad \varphi\in L^\infty.
$$
The second two inclusions are obtained in the same way from Lemma~\ref{lma6}.

Now  consider the four operator products \eqref{g1a}. For the first one, we have
\begin{align*}
(\Sigma_0^*-I)&(\wt\Psi(\wt \calM)\Psi(\calM)-\wt\Psi(0_+)\Psi(0_+)I)
\\
=&
(\Sigma_0^*-I)(\wt\Psi(\wt \calM)-\wt\Psi(0_+)I)\Psi(\calM)
+
\wt\Psi(0_+)(\Sigma_0^*-I)(\Psi(\calM)-\Psi(0_+)I),
\end{align*}
where the right hand side is trace class by the first part of the proof.
The other three operators are considered in the same way. 

Let us prove \eqref{g1}. We have
\begin{align*}
\Sigma^*
=&
\wt\Psi(\wt \calM)\Sigma_0^*\Psi(\calM)
\\
=&\wt\Psi(\wt \calM)(\Sigma_0^*-I)\Psi(\calM)+W
\\
=&\wt\Psi(0_+)(\Sigma_0^*-I)\Psi(\calM)
+(\wt\Psi(\wt \calM)-\wt\Psi(0_+))(\Sigma_0^*-I)\Psi(\calM)+W
\\
=&\wt\Psi(0_+)(\Sigma_0^*-I)\Psi(0_+)
+\wt\Psi(0_+)(\Sigma_0^*-I)(\Psi(\calM)-\Psi(0_+))
\\
&+(\wt\Psi(\wt \calM)-\wt\Psi(0_+))(\Sigma_0^*-I)\Psi(\calM)+W
\\
=&c(\Sigma_0^*-I)+W+\text{trace class operator},
\end{align*}
where we have used the first part of the proof at the last step. 
\end{proof}

\begin{proof}[Proof of Theorem~\ref{thm.a7}]
Now let us give the proof of Theorem~\ref{thm.a7}.
We shall denote by $A_{\rm ac}$ the a.c. part of a self-adjoint operator $A$ and by $\simeq$ the 
unitary equivalence between operators. 

From Lemma~\ref{thm.g1} it follows that 
$$
\Re (\Sigma^*-cI)=\Re(c\Sigma_0^*-cI)+\Re(W-cI)+\text{trace class operator}
$$
and 
$$
\Re(c\Sigma_0^*-cI)\Re(W-cI)\in \bS_1, 
\quad
\Re(W-cI)\Re(c\Sigma_0^*-cI)\in \bS_1.
$$
Applying Ismagilov's theorem and the Kato-Rosenblum theorem (see Section~\ref{sec.e}), we find
$$
(\Re(\Sigma^*-cI))_{\rm ac}\simeq (\Re(c\Sigma_0^*-cI))_{\rm ac}\oplus(\Re(W-cI))_{\rm ac}. 
$$
Shifting all operators here by $\Re c$, this simplifies to 
\begin{align}
(\Re \Sigma^*)_{\rm ac}\simeq (\Re(c\Sigma_0^*))_{\rm ac}\oplus(\Re W)_{\rm ac}. 
\label{g2}
\end{align}
This is our key formula. The rest of the proof proceeds slightly differently, depending on the 
defect indices of  $\Sigma^*$.

\emph{The case of defect indices $(1,1)$:}
Recall that in this case by Theorem~\ref{thm2} (applied to $c\Sigma_0^*$), 
$\Re(c \Sigma_0^*)$ has a purely singular spectrum. 
By \eqref{g2}, it follows that 
\[
(\Re \Sigma^*)_{\rm ac}\simeq (\Re W)_{\rm ac}. 
\]
Now by Theorem~\ref{thm.a6}(ii), $\Sigma^*$ is asymptotically stable iff the spectrum of $\Re W$ is 
singular.  Applying Proposition~\ref{prp.realpart}, we see that this is true iff the spectrum of $W$ is singular. 
The proof in this case is complete. 

\emph{The case of defect indices $(1,0)$:}
In this case, the proof is similar but we have to look at the multiplicity of the a.c. spectrum. 

Here $\Sigma_0^*\simeq S^*$ and so $\Re(c \Sigma_0^*)\simeq \Re(c S^*)$, where $\Re(c S^*)$ (which 
is a Jacobi matrix) has a purely a.c. spectrum $[-1,1]$ of multiplicity one. From \eqref{g2} we find
\begin{align}
(\Re \Sigma^*)_{\rm ac}\simeq(\Re(c S^*))_{\rm ac}\oplus (\Re W)_{\rm ac}. 
\label{g6}
\end{align}
Looking at the multiplicity function of the a.c. spectrum and 
applying Theorem~\ref{thm.a6}(i), we find that $\Sigma^*$ is asymptotically stable if and only if 
the second term in \eqref{g6} disappears, i.e. if and only if the spectrum of $W$ is singular.
The proof is complete.
\end{proof}

%%%%%%%%%%%%%%%%%%%%%%%%%%%%%%%%
\section{Proof of Theorem~\ref{thm.b3aa}}\label{app.b}
%%%%%%%%%%%%%%%%%%%%%%%%%%%%%%%%
Denote for brevity $R=\Ran H_u$. We first prove that $\Ker H_u=\{0\}$ is equivalent to $z^0\in\overline{R}\setminus R$. 

Assume that $\Ker H_u=\{0\}$. Then $\overline{R}=H^2$ and so, of course, $z^0\in \overline{R}$; we need to prove that $z^0\notin R$. Suppose $z^0\in R$; then $z^0=H_uw$ for some $w\in H^2$. Denote $\psi=zw$; then $H_u\psi=H_uSw=S^*H_uw=0$, and so $\psi\in\Ker H_u$, which contradicts our assumption. 

Assume that $\Ker H_u\not=\{0\}$. Suppose $z^0\in\overline{R}$; we need to check that $z^0\in R$. By Beurling's theorem, $\Ker H_u=\varphi H^2$ for some inner function $\varphi$. Since $z^0\in\overline{R}$, we have $z^0\perp \Ker H_u=\varphi H^2$ and so $z^0\perp \varphi$. Then $\varphi=Sw$ for some inner function $w$. We have $0=H_u\varphi=H_uSw=S^*H_uw$, so $H_uw$ is a constant function. This constant function is non-zero, because otherwise we would have $w\in\Ker H_u=\varphi H^2=zw H^2$, which is impossible. Thus, renormalising $w$ if necessary, we find that $z^0=H_uw$, and so $z^0\in R$. 

Next, we prove that $z^0\in \overline{R}$ is equivalent to the first condition in \eqref{e: triv ker}. 
Indeed, $z^0\in \overline{R}$ is equivalent to $\int_0^\infty d\rho_{z^0}^{\abs{H_u}}(s)=1$. 
Since 
$$
d\rho_u^{\abs{H_u}}(s)=d\rho_{H_uz^0}^{\abs{H_u}}(s)=s^2d\rho_{z^0}^{\abs{H_u}}(s),
$$
this is equivalent to $\int_0^\infty s^{-2}d\rho_{u}^{\abs{H_u}}(s)=1$, which is 
the first condition in \eqref{e: triv ker}.

Finally, suppose $z^0\in \overline{R}$; let us prove that $z^0\notin R$ is equivalent to the second condition in \eqref{e: triv ker}. If $z^0=H_uw$ with $w\in H^2$, then 
$$
d\rho_{z^0}^{\abs{H_u}}(s)=d\rho_{H_u w}^{\abs{H_u}}(s)=s^2d\rho_w^{\abs{H_u}}(s),
$$
and so 
$$
\int_0^\infty s^{-4}d\rho_u^{\abs{H_u}}(s)
=\int_0^\infty s^{-2}d\rho_{z^0}^{\abs{H_u}}(s)
=\int_0^\infty d\rho_w^{\abs{H_u}}(s)<\infty.
$$
Conversely, if $\int_0^\infty s^{-4}d\rho_u^{\abs{H_u}}(s)<\infty$, then 
$u=H_u^2w$ for some $w\in H^2$. It follows that 
$$
H_u(z^0-H_uw)=u-H_u^2w=0,
$$
and so $z^0-H_uw\in\Ker H_u$. Since by assumption $z^0\in\overline{R}$, we have $z^0-H_uw=0$, so $z^0\in R$. The proof of Theorem~\ref{thm.b3aa} is complete. \qed

%%%%%%%%%%%%%%%%%%%%%%%%%%%%%%%%%%%%%%%%%
%%%%%%%%%%%%%%%%%%%%%%%%%%%%%%%%%%%%%%%%%
\section{Proof of Theorem~\ref{thm2}}\label{app.c}
%%%%%%%%%%%%%%%%%%%%%%%%%%%%%%%%%%%%%%%%%
%%%%%%%%%%%%%%%%%%%%%%%%%%%%%%%%%%%%%%%%%
While it is probably possible to give an ``elementary'' proof of Theorem~\ref{thm2}, bypassing the Sz.-Nagy--Foia\c{s} functional  model, we prefer a more ``high brow'' approach, since it highlights a lot of interesting connections.  

\subsection{Functional model for c.n.u. contractions with defect indices $(1,1)$}
\label{s: N-F model}
Let us recall some known facts about the Sz.-Nagy--Foia\c{s} functional model for contractions, focussing on the case of defect indices $(1,1)$. 

Any  c.n.u. contraction $T$ is unitarily equivalent to its \emph{functional model}, which is 
completely determined by the so-called characteristic function $\theta$ of the operator $T$. This 
characteristic function $\theta$ is generally an operator-valued one; but in the case of defect 
indices $(1,1)$ is is a scalar-valued \emph{strictly contractive} (i.e. $\abs{\theta(0)}<1$) analytic function in the unit disc $\bbD$. 

If the characteristic function $\theta$ of $T$ is an inner function, then the model space for $T$ is 
the space $K_\theta$ defined above in Section \ref{sec.e1}, and the operator $T$ is  unitarily 
equivalent  to the compressed shift $S_\theta$. 

If $\theta$ is not inner, then the model is more complicated; in particular, in 
this case the model space consists of vector-valued functions with values in $L^2$. However, we do not need the 
complete description of the model here: we only need the following well-known fact. 

\begin{proposition}
\label{p: CharFunct and asy stability}
Let $T$ be a c.n.u. contraction with defect indices $(1,1)$, and let $\theta$ be its 
characteristic function. 
\begin{enumerate}
\item If $\theta$ is inner, then both $T$ and $T^*$  are asymptotically stable. 
\item If $\theta$ is not inner, then neither $T$ nor $T^*$ are asymptotically stable. 
\end{enumerate}
\end{proposition}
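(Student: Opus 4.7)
The plan is to invoke the Sz.-Nagy--Foia\c{s} classification of c.n.u. contractions and derive both statements as essentially immediate consequences of known criteria for the classes $C_{0\cdot}$ and $C_{\cdot 0}$.

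First I would recall the definitions: a contraction $T$ lies in $C_{0\cdot}$ iff $T^n\to 0$ strongly, and in $C_{\cdot 0}$ iff $T^{*n}\to 0$ strongly. The fundamental link with the characteristic function is the criterion
$$
T\in C_{\cdot 0}\iff \theta\text{ is inner},
$$
see e.g. \cite[Proposition VI.3.5]{SzNF2010}; this I would quote as a black box. To treat $T$ itself rather than $T^*$, I would apply the same criterion to $T^*$: its characteristic function is
$$
\theta^\sim(z):=\overline{\theta(\bar z)},\qquad z\in\bbD,
$$
so $T^*\in C_{\cdot 0}$ iff $\theta^\sim$ is inner, which is in turn equivalent to $T\in C_{0\cdot}$ (since $(T^*)^{*n}=T^n$).

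The decisive observation in the scalar-valued case at hand is the boundary symmetry
$$
|\theta^\sim(e^{i\varphi})|=|\theta(e^{-i\varphi})|\quad\text{for a.e.\ }\varphi\in[0,2\pi),
$$
which shows that $\theta$ is inner iff $\theta^\sim$ is inner. Combining with the criterion above, $\theta$ being inner is equivalent to $T\in C_{\cdot 0}\cap C_{0\cdot}$, which immediately yields both assertions: if $\theta$ is inner then $T^n\to 0$ and $T^{*n}\to 0$ strongly (part (i)); and if $\theta$ is not inner, then $\theta^\sim$ fails to be inner as well, so neither $T\in C_{\cdot 0}$ nor $T\in C_{0\cdot}$ holds, which is part (ii).

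There is no serious obstacle in this approach: the argument is a bookkeeping exercise matching the scalar symmetry $\theta\leftrightarrow\theta^\sim$ to the duality $T\leftrightarrow T^*$, and all of the analytic content is absorbed into the Sz.-Nagy--Foia\c{s} criterion. The only point requiring mild care is to verify that the convention used for the characteristic function in this paper agrees with the one in \cite{SzNF2010} up to a unimodular constant factor, which is irrelevant for the inner/non-inner dichotomy.
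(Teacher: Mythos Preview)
Your proposal is correct and follows essentially the same approach as the paper: both invoke \cite[Proposition VI.3.5]{SzNF2010} as a black box and then observe that for scalar-valued characteristic functions the notions of inner and $*$-inner coincide (your $\theta\leftrightarrow\theta^\sim$ symmetry is exactly this observation). The paper adds the side remark that part (i) can alternatively be seen directly from the asymptotic stability of $S_\theta$ and $S_\theta^*$, but this is not an essential difference.
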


This proposition follows, for example, from \cite[Proposition VI.3.5]{SzNF2010}. In this 
proposition $T\in C_{\cdot 0}$ means that $T^*$ is asymptotically stable, and $T\in C_{0\cdot}$ 
means that $T$ is asymptotically stable. Note that for scalar-valued function the notion of inner and 
$*$-inner functions coincide. 

Of course, part (i) of Proposition~\ref{p: CharFunct and asy stability} follows directly from the fact that both the compressed shift $S_\theta$ and its adjoint $S_\theta^*$ are asymptotically stable; this is an easy exercise.

\subsection{Rank one unitary extensions and characteristic function}
\label{s: rk 1 extension}
For a contraction $T$ with defect indices $(1,1)$ there exists a rank one perturbation $K$ such 
that the operator $V=T+K$ (which we will call a \emph{rank one unitary extension} of $T$) is 
unitary. 

To construct such $V$, it suffices to notice that $T$ acts unitarily from 
$(\calD\ci{T^*})^\perp$ onto $(\calD\ci{T})^\perp$, and therefore it maps the one-dimensional defect space $\calD\ci{T^*}$ onto the defect space $\calD\ci{T}$.  Replacing the action of $T$ on $\calD\ci{T^*}$ by a unitary operator from $\calD\ci{T^*}$ to $\calD\ci{T}$ yields the desired rank one unitary extension $V$. Clearly, such extension is not unique and any two such extensions differ by a rank one operator.

For a unitary operator $V$ in a Hilbert space, a subspace $E$ is called \emph{$*$-cyclic} if the linear span of the set $\{ V^n E : n\in \bbZ\}$ is dense in our Hilbert space. If $V=T+K$ is a rank one unitary extension of a contraction $T$ with defect  indices $(1,1)$, 
then we can say that $T=V-K$ is a rank one perturbation of the unitary operator $V$. 

Let 
$b\in \Ran K$ be a unit norm vector. It is a simple exercise (see e.g. \cite[Section~1]{LT} or \cite[Section~1]{Liaw-Treil_APDE_2019}) to show that $T$ can be represented as  
\begin{align}
\label{e: rank 1 pert}
T=V + (\gamma-1) \jap{\fdot, V^* b}b
\end{align}
with $\gamma\in\bbD$. 

Is is also not hard to see that if $\spn\{b\}=\Ran K$ is  $*$-cyclic  for $V$ and 
$|\gamma|<1$, then $T$ is c.n.u. For the formal proof see \cite[Lemma 
1.4]{Liaw-Treil_APDE_2019}, where a more general case of finite rank perturbations was treated. We 
mention that in our case the matrix $\Gamma$ from \cite{Liaw-Treil_APDE_2019} reduces to a scalar 
 $|\gamma|<1$.

On the other hand, if $\Ran K$ is not a $*$-cyclic subspace for $V$, then trivially $T$ is 
not c.n.u. Indeed, in this case the subspace $(\spn\{V^nb: n\in\bbZ\})^\perp$ is a reducing subspace 
for both $V$ and $T$, and $T$ coincides with $V$ there. 

Combining these facts, we get the following statement. 

\begin{proposition}
\label{p: cnu cyclicity}
Let $T$ be a c.n.u. contraction with defect  indices $(1,1)$, and let $V=T+K$ be a rank one 
unitary extension of $T$. Then $\Ran K$ is a $*$-cyclic subspace for $V$. 
\end{proposition}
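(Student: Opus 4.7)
\medskip

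The plan is to prove the contrapositive: assume that $\Ran K$ is not $*$-cyclic for $V$ and construct a non-trivial reducing subspace on which $T$ acts as a unitary, contradicting the c.n.u. hypothesis.

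Let $b$ be a unit vector spanning $\Ran K$. By the formula \eqref{e: rank 1 pert} (which I would invoke as given), the rank one perturbation has the concrete form
\begin{equation*}
T = V + (\gamma-1)\jap{\fdot, V^*b}b, \qquad |\gamma|\le 1,
\end{equation*}
so in particular $T^* = V^* + (\overline{\gamma}-1)\jap{\fdot, b}V^*b$. Set
\begin{equation*}
M := \overline{\spn}\{V^n b : n\in\bbZ\}, \qquad N := M^\perp.
\end{equation*}
Since $V$ is unitary, the subspace $M$ is reducing for $V$, and so is $N$. The failure of $*$-cyclicity is precisely the statement that $N\neq\{0\}$.

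The key step is to show that $N$ is reducing for $T$ and that $T\big|_N = V\big|_N$; equivalently, that the perturbation $K$ and its adjoint $K^*$ annihilate $N$. For $x\in N$, write
\begin{equation*}
Kx = (\gamma-1)\jap{x, V^*b}b = (\gamma-1)\jap{Vx, b}b.
\end{equation*}
Because $N$ is $V$-reducing we have $Vx\in N$, and $b\in M\perp N$, so $\jap{Vx,b}=0$ and hence $Kx=0$. In the same way, for $x\in N$ we have $\jap{x,b}=0$, so $K^*x=0$. Therefore
\begin{equation*}
T\big|_N = V\big|_N, \qquad T^*\big|_N = V^*\big|_N,
\end{equation*}
which shows that $N$ is reducing for $T$ and that $T$ acts as the unitary $V\big|_N$ on $N$.

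Since $N\neq\{0\}$, this contradicts the assumption that $T$ is completely non-unitary. The only potential obstacle is making sure that the invariance argument gives a two-sided (reducing) decomposition rather than a one-sided invariant one; this is taken care of by checking both $K|_N=0$ and $K^*|_N=0$ as above, which is immediate from the explicit form of the rank one perturbation. The rest is bookkeeping.
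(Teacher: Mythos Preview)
Your proof is correct and follows exactly the approach the paper sketches in the paragraph preceding the proposition: the paper simply asserts that if $\Ran K$ is not $*$-cyclic then $(\spn\{V^n b: n\in\bbZ\})^\perp$ is reducing for both $V$ and $T$ with $T$ coinciding with $V$ there, and you have filled in the details of that claim using the explicit form \eqref{e: rank 1 pert}. The only cosmetic point is that \eqref{e: rank 1 pert} comes with $\gamma\in\bbD$ rather than $|\gamma|\le 1$, but your argument never uses the size of $\gamma$, so this is immaterial.
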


Finally, the following well-known statement relates the spectral properties of a rank one unitary 
extension and the characteristic function of a c.n.u. contraction. 

\begin{proposition}
\label{p: SingSp <-> inner}
Let $T$ be a c.n.u. contraction with defect  indices $(1,1)$, and let $V=T+K$ be a rank one 
unitary extension of $V$. 
Then the characteristic function $\theta$ of $T$ is inner if and only if $V$ has purely singular 
spectrum. 
\end{proposition}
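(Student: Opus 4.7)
The plan is to reduce the proposition to a classical Clark-model statement about the boundary behaviour of the characteristic function of a rank-one perturbation of a unitary.

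First, I would use Proposition~\ref{p: cnu cyclicity} to conclude that a unit vector $b\in\Ran K$ is $*$-cyclic for $V$. By the spectral theorem this provides a unitary equivalence of $V$ with the operator of multiplication by the independent variable on $L^2(\bbT,\mu)$, where $\mu=\rho_b^V$ is a probability measure on $\bbT$ and $b$ corresponds to the constant function $\1$; in particular ``$V$ has purely singular spectrum'' is the same as ``$\mu$ is singular with respect to the normalised Lebesgue measure $m$ on $\bbT$''. It therefore suffices to show that $\theta$ is inner iff $\mu\perp m$.

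Next, I would use the representation~\eqref{e: rank 1 pert}, $T=V+(\gamma-1)\langle\fdot,V^*b\rangle b$ with $|\gamma|<1$, to compute $I-T^*T$ and $I-TT^*$ (each of rank one, by the defect index hypothesis) and identify unit defect vectors in $\calD\ci{T}$ and $\calD\ci{T^*}$ explicitly in terms of $b$, $V^*b$ and $\gamma$. Plugging these into the Sz.-Nagy--Foia\c{s} formula
\[
\Theta_T(z)=\bigl(-T+z(I-TT^*)^{1/2}(I-zT^*)^{-1}(I-T^*T)^{1/2}\bigr)\big|_{\calD\ci{T}},\qquad z\in\bbD,
\]
and applying Krein's rank-one resolvent identity (which writes $(I-zT^*)^{-1}$ as $(I-zV^*)^{-1}$ plus a rank-one correction), I would simplify the resulting scalar expression to an explicit M\"obius-type rational function of the Cauchy integral
\[
F(z):=\bigl\langle(I-zV^*)^{-1}b,b\bigr\rangle=\int_\bbT\frac{d\mu(\zeta)}{1-\bar\zeta z},\qquad z\in\bbD,
\]
with coefficients depending only on $\gamma$. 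This is the classical Clark-model formula and, if desired, can be imported directly from~\cite{LT}.

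The decisive step is then the classical observation that $1-|\theta(\zeta)|^2$ equals, up to a positive constant depending only on $\gamma$, the Radon--Nikodym derivative $d\mu_{\rm ac}/dm$ of the a.c.\ part of $\mu$ at almost every $\zeta\in\bbT$; indeed, by Fatou's theorem this density is the non-tangential boundary value of the Poisson integral $2\Re F-1$, and the M\"obius-type formula for $\theta$ makes $1-|\theta|^2$ manifestly proportional to it. Hence $\theta$ is inner iff $\mu_{\rm ac}=0$, i.e.\ iff $\mu$ is singular, and by Step~1 this is the same as $V$ having purely singular spectrum. The main obstacle will be the bookkeeping in deriving the explicit formula for $\theta$ --- correctly normalising the unit defect vectors for $T$ and $T^*$, which combine $b$, $V^*b$ and $\gamma$, and reducing the scalar expression into a form in which the role of $F$ is transparent --- but this is standard Clark / Sz.-Nagy--Foia\c{s} algebra and the cleanest write-up would presumably quote the formula from~\cite{LT}.
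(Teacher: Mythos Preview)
Your proposal is correct and follows essentially the same route as the paper: reduce to the spectral measure $\mu=\rho^V_b$ of a $*$-cyclic vector, invoke the explicit Clark--model formula expressing the characteristic function $\theta$ as a M\"obius transform of the Cauchy/Poisson integral of $\mu$ (the paper quotes this from \cite{LT} rather than deriving it from the Sz.-Nagy--Foia\c{s} formula, just as you suggest doing), and conclude via the boundary identity relating $1-|\theta|^2$ to $d\mu_{\rm ac}/dm$. The only cosmetic difference is that the paper dispatches the forward implication by citing Clark's original result on unitary extensions of $S_\theta$ directly, whereas you extract both directions from the explicit formula; either works.
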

\begin{remark*}
The choice of the extension $V$ is not important. Indeed, if $V_1$ and $V_2$ are two such extensions, then $V_1-V_2$ is a rank one operator, and so by the Kato--Rosenblum theorem, $V_1$ has a purely singular spectrum if and only if $V_2$ has the same property.  
\end{remark*}

\begin{proof}[Proof of Proposition \ref{p: SingSp <-> inner}]
D.~Clark \cite{Clark} has described all rank one unitary extensions of the compressed shift $S_\theta$; in particular, he showed that all these extensions have purely singular spectrum. This proves that if the characteristic function $\theta$ is inner, then all rank one unitary extensions have 
purely singular spectrum.

To prove the converse, we compute the characteristic function of the operator $T=T_\gamma$ given by \eqref{e: rank 1 pert}. For the case $\gamma=0$ the characteristic function $\theta=\theta_0$ is given by the relation 
\begin{align}
\label{e: theta0}
\frac{1+\theta_0(z)}{1-\theta_0(z)} =\int_\bbT \frac{1+z\overline{\xi}}{1-z\overline{\xi}} 
d\rho^V_f(\xi) , 
\end{align}
where $\rho^V_f$ is the spectral measure of $V$, corresponding to the unit vector $b$. For $\gamma\ne 
0$ the corresponding characteristic functions $\theta=\theta_\gamma$ can be computed as a linear 
fractional transformation of $\theta_0$, 
\begin{align}
\label{e: LFT theta0}
\theta_\gamma = \frac{\theta_0-\gamma}{1-\overline{\gamma}\theta_0}, 
\end{align}
see \cite[Section~2.4]{LT} for the details. One can see immediately from \eqref{e: theta0} that $\theta_0$ is inner if and only if the measure $\rho^V_f$ is purely singular. Identity \eqref{e: LFT theta0} 
implies that the same holds for all $\theta_\gamma$. 
\end{proof}

\subsection{Proof of Theorem~\ref{thm2}} It is convenient to 
introduce one more equivalent condition:

\begin{enumerate}
\setcounter{enumi}{4}
\item \emph{ Any rank one unitary perturbation $V$ of $T$ has purely singular spectrum}.
\end{enumerate}
The statement \cond4 means that the characteristic function of $T$ is inner, 
see Section~\ref{s: N-F model}. 

Equivalence of \cond1$\iff$\cond2$\iff$\cond4 follows from 
Proposition \ref{p: CharFunct and asy stability}. 

Equivalence \cond5$\iff$\cond4 follows from Proposition \ref{p: SingSp <-> inner}. 

To show that \cond5$\iff$\cond3, let us notice that $\Re V$ and $V$ have purely singular spectrum 
simultaneously, see Proposition \ref{prp.realpart}. But $\Re T$ is a finite rank perturbation of 
$\Re V$, so the Kato-Rosenblum 
Theorem implies the desired equivalence.  

The proof of Theorem~\ref{thm2} is complete. \qed

\def\cprime{$'$}

\end{document}